\documentclass[10pt]{amsart}
\usepackage{amsxtra,amscd,amsthm,amsmath,amssymb}
\usepackage{longtable} 
\usepackage{enumerate, tensor}
\usepackage{verbatim}
\usepackage{graphicx}
\usepackage{xspace}
\usepackage{ifthen}
\usepackage{tabularx}

\usepackage{color}
\usepackage{mathrsfs}

\usepackage{amsmath,xparse}

\NewDocumentCommand{\onorm}{sO{}m}{%
  {\IfBooleanTF{#1}
    {\onormaux{\left|}{\right|}{#3}}
    {\onormaux{#2|}{#2|}{#3}}}
}
\makeatletter
\newcommand{\onormaux}[3]{\mathpalette\onormaux@i{{#1}{#2}{#3}}}
\newcommand{\onormaux@i}[2]{\onormaux@ii#1#2}
\newcommand{\onormaux@ii}[4]{%
  \sbox\z@{$\m@th#1#2#4#3$}%
  \sbox\tw@{$\m@th\|$}%
  \mathopen{\hbox to\wd\tw@{\hss\vrule height \ht\z@ depth \dp\z@ width .3\wd\tw@\hss}}%
  #4
  \mathclose{\hbox to\wd\tw@{\hss\vrule height \ht\z@ depth \dp\z@ width .3\wd\tw@\hss}}%
}
\makeatother

 \textwidth 5.6in 
  \textheight 8.25in
 \oddsidemargin 0.38in
 \evensidemargin 0.388in

\usepackage{amsthm}

\theoremstyle{plain}
\numberwithin{equation}{subsection}
\newtheorem{thm}[equation]{Theorem}

\newtheorem{prop}[equation]{Proposition}
\newtheorem{lemma}[equation]{Lemma}
\newtheorem{Definition}[equation]{Definition}

\newtheorem{Remark}[equation]{Remark}

\newtheorem{cor}[equation]{Corollary}

\theoremstyle{remark}
\newtheorem{para}[equation]{\bf}
\theoremstyle{plain}
\renewcommand{\subsubsection}{\addtocounter{equation}{1}{\vskip 8pt \noindent\bf\arabic{section}.\arabic{subsection}.\arabic{equation}.}}
\theoremstyle{definition}

\setcounter{tocdepth}{1}

\newcommand{\quash}[1]{}  
\newcommand{\nc}{\newcommand}
\nc{\on}{\operatorname}

\newcommand{\lps}{[\![}
\newcommand{\rps}{]\!]}
\newcommand{\llps}{(\!(}
\newcommand{\lrps}{)\!)}
\newcommand{\rk}{{\rm rk}}

\renewcommand{\phi}{\varphi}

\newcommand{\frakm}{{\mathfrak m}}

\newcommand{\calA}{{\mathcal A}}

\newcommand{\calC}{{\mathcal C}}
\newcommand{\calD}{{\mathcal D}}

\newcommand{\calH}{{\mathcal H}}

\newcommand{\calK}{{\mathcal K}}

\newcommand{\calT}{{\mathcal T}}

\newcommand{\sO}{{\mathscr O}}

\nc{\al}{{\alpha}} \nc{\be}{{\beta}}

\nc{\ve}{{\varepsilon}} \nc{\Ga}{{\Gamma}}

\nc{\La}{{\Lambda}}

\newcommand{\rK}{{\mathrm K}}
\newcommand{\rI}{{\mathrm I}}

\newcommand{\St}{{\mathrm {St}}}
\newcommand{\SL}{{\mathrm {SL}}}

\def\0{\circ}
\def\x{{\bf x}}


\newcommand{\C}{{\mathbb C}}
\newcommand{\R}{{\mathbb R}}
\newcommand{\Q}{{\mathbb Q}}

\newcommand{\ep}{\epsilon}
\newcommand{\et}{{\text{\rm \'et}}}

\newcommand{\Hom}{{\rm Hom}}

\newcommand{\Z}{{\mathbb Z}}

\newcommand{\ti}{\tilde}
\newcommand{\Spec}{{\rm Spec \, } }
\newcommand{\Spf}{{\rm Spf } }

 \renewcommand{\O}{{\mathcal O}}

\newcommand{\GL}{{\rm GL}}

\newcommand{\und}{\underline}

\newcommand{\rM}{{\rm M}}

\newcommand{\rH}{{\mathrm H}}

\def\thfill{\null\nobreak\hfill}

\def\endproof{\thfill\vbox{\hrule
  \hbox{\vrule\hbox to 5pt{\vbox to 5pt{\vfil}\hfil}\vrule}\hrule}}

\newcommand{\Gal}{{\rm Gal}}

\def\BF{{\mathbb F}}
\def\sigm{{\tilde\sigma}}

\def\fl{{\mathfrak l}}
\def\cy{{(\zeta_{\ell^\infty})}}

\newcommand{\CS}{{\mathrm {Vol}}}

\newcommand{\sF}{{\mathscr F}}

\newcommand{\Gk}{{G_k}}

\newcommand{\intprod}{\mathbin{\mathpalette\dointprod\relax}}
\newcommand{\dointprod}[2]{%
  \raisebox{\depth}{\scalebox{1}[-1]{$#1\lnot$}}}

\setcounter{tocdepth}{1}

\begin{document}

\title[Volume and symplectic structure]{Volume and symplectic structure\\ for $\ell$-adic local systems
}

\author[G. Pappas]{Georgios Pappas}
\thanks{Partially supported by  NSF grants  \#DMS-1701619,   \#DMS-2100743,  and the Bell Companies Fellowship Fund through the Institute for Advanced Study.}

\begin{abstract}
We introduce a notion of volume for an $\ell$-adic local system over an algebraic curve and, under some conditions, give a symplectic form on the rigid analytic deformation space of the corresponding geometric local system. These constructions can be viewed as arithmetic analogues of the volume and the Chern-Simons invariants of a representation of the fundamental group of a $3$-manifold which fibers over the circle and of the symplectic form on the character varieties of a Riemann surface. We show that the absolute Galois group acts on the deformation space by conformal symplectomorphisms which extend to an $\ell$-adic analytic flow. We also prove that the locus of  
local systems which are arithmetic over a cyclotomic extension is the critical set of a collection of rigid analytic functions. The vanishing cycles of these functions give additional invariants.

\end{abstract}

\address{Dept. of
Mathematics\\
Michigan State
Univ.\\
E. Lansing\\
MI 48824\\
USA} \email{pappasg@msu.edu}

\date{\today}

\maketitle

\vspace{-1ex}

\tableofcontents
\vspace{-2ex}

 \section*{Introduction}
 
 In this paper, we introduce certain constructions for \'etale $\Z_\ell$-local systems 
 (i.e. lisse $\Z_\ell$-sheaves) on proper algebraic curves  
  defined a field of characteristic different from $\ell$.  In particular, using an $\ell$-adic regulator, we define 
 a notion of $\ell$-adic volume.  We also give a symplectic form on the (formal) deformation space of a modular representation of the geometric \'etale fundamental group of the curve. (In what follows, we will use the essentially equivalent language of $\ell$-adic representations of the \'etale fundamental group.) 
 
 Our definitions can be viewed as giving analogues of constructions in the symplectic theory of character varieties of a Riemann surface and of the volume and the Chern-Simons invariants of representations of the fundamental group of a $3$-manifold which fibers over $S^1$. 
 
 Let us recall some of these classical constructions, very briefly. We start with the symplectic structure on the character varieties of the fundamental group $\Gamma=\pi_1(\Sigma)$ of a (closed oriented) topological surface $\Sigma$. To fix ideas we consider the space 
 \[
 X_G(\Gamma)={\rm {Hom}}(\Gamma, G)/G
 \]
parametrizing equivalence classes of representations of $\Gamma$ with values in a connected real reductive group $G$; there are versions for complex reductive groups. (Here, we are being intentionally vague about the precise meaning of the quotient; what is clear is that it is taken for the conjugation action on the target.)
Suppose that $\rho: \Gamma\to G$ is a representation which gives a point $[\rho]\in X_G(\Gamma)$.
The tangent space $T_{[\rho]}$ of $X_G(\Gamma)$ at 
 $[\rho]$ can be identified with ${\rm H}^1(\Gamma, {\rm Ad}_{\rho})$, where 
 ${\rm Ad}_{\rho}$ is the Lie algebra of $G$ with the adjoint action.
 Consider the composition
\[
{\rm H}^1(\Gamma, {\rm Ad}_{\rho})\times {\rm H}^1(\Gamma, {\rm Ad}_{\rho})\xrightarrow{\cup} {\rm H}^2(\Gamma, {\rm Ad}_{\rho}\otimes_{\R}{\rm Ad}_{\rho} )\xrightarrow{B} {\rm H}^2(\Gamma, \R)\cong \R,
\]
where $B$ is induced by the Killing form and the last isomorphism is given by Poincare duality. This defines a non-degenerate alternating form 
\[
T_{[\rho]}\otimes_\R T_{[\rho]}\to \R,
\]
i.e. $\omega_{[\rho]} \in \wedge ^2 T_{[\rho]}^*$. By varying $\rho$ 
we obtain a $2$-form $\omega$ over $X_G(\Gamma)$. Goldman \cite{Goldman} shows that this form is closed, i.e. $d\omega=0$, and so it gives a symplectic structure (at least over the space of ``good" $\rho$'s which is a manifold).
Note here that the mapping class group of the surface $\Sigma$ acts naturally on the character variety by symplectomorphisms, i.e. maps that respect Goldman's symplectic form. In turn, this action relates to many fascinating mathematical structures.
 
 Next, we discuss the notion of a volume of a representation. Here, again to fix ideas, we start with a (closed oriented) smooth $3$-manifold $M$ and take $\Gamma_0=\pi_1(M)$. Let $X=G/H$ be a contractible $G$-homogenous space of dimension $3$ and 
 choose a $G$-invariant volume form $\omega_X$ on $X$. A representation $\rho: \Gamma_0=\pi_1(M)\to G$ gives a flat $X$-bundle space $\pi: \tilde M\to M$ with $G$-action. The volume form $\omega_X$ naturally induces a $3$-form $\omega^\rho_X$ on $\tilde M$. Take $s: M\to \tilde M$ to be a differentiable section of $\pi$ and set
 \[
 {\rm Vol}(\rho)=\int_{M} s^*\omega^\rho_X\in \R
 \]
which can be seen to be independent of the choice of section $s$ (\cite{Goldman2}). The map $\rho\mapsto {\rm Vol}(\rho)$
 gives an interesting real-valued function on the space  $X_G(\Gamma_0)$. 
 
An important special case is when $M$ is hyperbolic, 
$G={\rm PSL}_2(\C)$, $X={\mathbb H}^3=\C\times \R_{>0}$ (hyperbolic $3$-space), $\omega_X$ is the standard volume form on ${\mathbb H}^3$,
 and $\rho_{\rm hyp}: \pi_1(M)\to {\rm PSL}_2(\C)={\rm Isom}^+({\mathbb H}^3)$ is the representation associated to the hyperbolic structure of $M={\mathbb H}^3/\Gamma_0$. Then, 
 ${\rm Vol}(\rho_{\rm hyp})={\rm Vol}(M)$, the hyperbolic volume of $M$. 
 The Chern-Simons invariant ${\rm {CS}}(M)$  of  $M$ is also related. For this, compose the map 
 \[
  {\rm H}_3(M, \Z)={\rm H}_3(\pi_1(M), \Z)\xrightarrow{\rho_{\rm hyp}} \rH_3({\rm PSL}_2(\C), \Z)
  \]
  with the ``regulator"
 \[
 {\mathfrak R}: \rH_3({\rm PSL}_2(\C), \Z)\to \C/\pi^2\Z.
 \]
 The product of $-i$ with the image of the fundamental class $[M]$ under this composition is the ``complex volume" 
 \[
 {\rm Vol}_\C(\rho_{\rm hyp})={\rm Vol}_\C(M)={\rm Vol}(M)+i 2\pi^2{\rm {CS}}(M)
 \]
 (\cite{Walter}); this can also be given by an integral over $M$. 
 
 A straightforward generalization of this construction leads to the definition of a complex volume 
 ${\rm Vol}_\C(\rho)$ for representations $\rho: \pi_1(M)\to \SL_n(\C)$ (see, for example,  \cite{Stavros}.) This uses the regulator maps (universal Cheeger-Chern-Simons classes)
 \[
  {\mathfrak R}_n: \rH_3(\SL_n(\C), \Z)\to \C/(2\pi i)^2\Z.
  \]
 (See also \cite{DHainZucker} and \cite{Goncharov}.)

  We now return to the arithmetic set-up of local systems over algebraic curves. Recall that we are considering formal deformations of a modular (modulo $\ell$) representation. We show that when the modular representation is the restriction of a representation of the arithmetic \'etale fundamental group, the absolute Galois group acts on the deformation space by ``conformal symplectomorphisms''
(i.e. scaling the symplectic form) which extend to an $\ell$-adic analytic flow. This gives an analogue of the action of the mapping class group on the character variety by symplectomorphisms we mentioned above. We show that if the curve is defined over a field $k$, the action of a Galois automorphism  that fixes the field extension $k\cy$ generated by all $\ell$-power roots of unity, is ``Hamiltonian''. We use this to express the set of deformed representations that extend to a representation of a larger fundamental group over $k\cy$ as the intersection of the critical loci for a set of rigid analytic functions $V_\sigma$, where $\sigma$ ranges over ${\rm Gal}(k^{\rm sep}/k\cy)$. The Milnor fibers and vanishing cycles of $V_\sigma$ provide interesting constructions.
 
 Let us now explain this in more detail. Let $\ell$ be a prime which we assume is odd, for simplicity. Suppose that $X$ is a smooth geometrically connected proper curve over a field $k$ of characteristic prime to $\ell$. The properness of the curve is quite important for most of the constructions.
 We fix an algebraic closure $\bar k$. Denote by $G_k=\Gal(k^{\rm sep}/k)$ the Galois group where $k^{\rm sep}$ is the separable closure of $k$ in $\bar k$, by $k{\cy}=\cup_{n } k(\zeta_{\ell^n})$  the subfield of $k^{\rm sep}$ generated over $k$ by all the $\ell^n$-th roots of unity and by  $\chi_{\rm cycl}: \Gk\to \Z_\ell^*$ the cyclotomic character.      Fix a $\bar k$-point $\bar x$ of $X$, and consider the \'etale fundamental groups
 which fit in the canonical exact sequence
\[
1\to \pi_1(X\times_k \bar k, \bar x)\to \pi_1(X, \bar x)\to G_k\to 1.
\] 
 For simplicity, we set $\bar X=X\times_k \bar k$ and omit the base point $\bar x$. 
 Let $\sF$ be an \'etale $\Z_\ell$-local system of rank $d>1$ over $X$. 
 The  local system $\sF$ corresponds to a continuous representation $\rho: \pi_1(X)\to \GL_d(\Z_\ell)$.

  The $\ell$-adic volume 
  $\CS({\sF})$ of $\mathscr F$ is, by definition, a continuous cohomology class
\[
\CS({\sF})\in \rH^1(k, \Q_\ell(-1)).
\]
Here, as usual, $\Q_\ell(n)=\Q_\ell\otimes_{\Z_\ell}  \chi_{\rm cycl}^{n}$ is
the $n$-th Tate twist. 

Note that if $k$ is a finite field or a finite extension of $\Q_p$ with $p\neq \ell$, then we have
 $ \rH^1(k, \Q_\ell(-1))=(0)$.  If $k$ is a finite extension of $\Q_\ell$, then 
 \[
 \rH^1(k, \Q_\ell(-1))\simeq \Q_\ell^{[k:\Q_\ell]}.
 \]
 If $k$ is a number field with $r_1$ real and $r_2$ complex places, then 
assuming a conjecture of Schneider \cite{Schn}, we have 
\[
\rH^1(k, \Q_\ell(-1))\simeq \Q_\ell^{r_1+r_2}.
\]
In fact, using the restriction-inflation exact sequence, we can give $\CS({\sF})$ as a continuous homomorphism 
 \[
\CS({\sF}): {\rm Gal}( k^{\rm sep}/k{\cy})\to \Q_\ell(-1)
 \]
which is equivariant for the action of ${\rm Gal}(k{\cy}/k)$. 

To define $\CS(\sF)$ we use a continuous $3$-cocycle 
\[
{\mathfrak r}_{\Z_\ell}: \Z_\ell\lps \GL_d(\Z_\ell)^3\rps\xrightarrow{\ }\Q_\ell
\]
that corresponds to the $\ell$-adic Borel regulator \cite{HubKings}. The quickest way is probably as follows
(but see also \S \ref{441}): Using the Leray-Serre spectral sequence we obtain a homomorphism 
  \[
 \rH^3(\pi_1(X),\Q_\ell/\Z_\ell)\to \rH^3_{\et}(X,\Q_\ell/\Z_\ell)\to \rH^1(k, \rH^2_{\et}(\bar X, \Q_\ell/\Z_\ell))=\rH^1(k, \Q_\ell/\Z_\ell(-1)).
 \]
 Taking Pontryagin duals gives 
 \[
 \rH^1(k, \Q_\ell/\Z_\ell(-1))^*=\rH_1(G_k, \Z_\ell(1))\to  \rH_3(\pi_1(X),\Z_\ell).
 \]
 Now compose this with the map given by $\rho$ and the $\ell$-adic regulator to obtain
 \[
 \rH_1(G_k, \Z_\ell(1))\to \rH_3(\pi_1(X),\Z_\ell)\xrightarrow{\rH_3(\rho)}\rH_3(\GL_d(\Z_\ell), \Z_\ell)\xrightarrow{{\mathfrak r}_{\Z_\ell}}\Q_\ell.
 \]
 This, by the universal coefficient theorem, gives $\CS(\sF)\in \rH^1(k, \Q_\ell(-1))$, up to sign.    
 In fact, we give an ``explicit'' continuous $1$-cocycle that represents $\CS(\sF)$ by 
 a construction inspired by classical Chern-Simons theory \cite{Freed}:
  
 Let $c$ be a fundamental $2$-cycle in $\Z_\ell\lps\pi_1(\bar X)^2\rps$. Lift $\sigma\in G_k$  to $\ti \sigma\in \pi_1(X)$ 
 and consider the (unique up to boundaries) $3$-chain 
 $\delta(\ti\sigma, c)\in \Z_\ell\lps\pi_1(\bar X)^3\rps$ with boundary
 \[
 \partial (\delta(\ti\sigma, c))=\ti\sigma\cdot c\cdot \ti\sigma^{-1}- \chi_{\rm cycl}(\sigma)\cdot c.
 \]
 Also, let $F_{\rho(\ti\sigma)}(\rho(c))$ be the (continuous) $3$-chain for $\GL_d(\Z_\ell)$ which gives the ``canonical'' boundary for the $2$-cycle $\rho(\ti\sigma)\cdot \rho(c)\cdot \rho(\ti\sigma)^{-1}-\rho(c)$.
 We set
 \[
 B(\sigma):=  {\mathfrak r}_{\Z_\ell} [\rho(\delta(\ti\sigma, c))- F_{\rho(\ti\sigma)}(\rho(c))]\in \Q_\ell.
 \]
 The map $G_k\to \Q_\ell(-1)$ given by $\sigma\mapsto \chi_{\rm cycl}(\sigma)^{-1}B(\sigma)$ is a continuous $1$-cocycle
 which is independent of choices up to coboundaries and whose class is $\CS(\sF)$.
 
 This explicit construction is more flexible and can be applied to continuous representations $\bar\rho: \pi_1(\bar X)\to \GL_d(A)$, where $A$ is a more general $\ell$-adic ring. In fact, we do not need that $\bar\rho$ extends to $\pi_1(X)$ but only that it has the following property: There is continuous homomorphism $\phi:  G_k\to {\rm Aut}(A)$ such that, for every $\sigma\in G_k$, there is a lift $\ti\sigma\in \pi_1(X)$, and a matrix $h_{\ti\sigma}\in \GL_d(A)$, with
  \[
 \bar\rho(\ti\sigma\cdot\gamma\cdot \ti\sigma^{-1})=h_{\ti \sigma}\cdot \phi(\sigma)(\bar\rho(\gamma))\cdot h_{\ti \sigma}^{-1},\quad \forall \gamma\in \pi_1(\bar X).
 \] 
 (In the above case, $A=\Z_\ell$, $\phi(\sigma)={\rm id}$, and $h_{\ti \sigma}=\rho(\ti\sigma)$.) We again obtain a (continuous) class 
 \[
 \CS_{\rho, \phi}\in \rH^1(k, \sO(\calD)(-1))
 \]
  which is independent of choices. In this, $\sO(\calD)$ 
 is the ring of analytic functions on the rigid generic fiber $\calD={\rm Spf}(A)[1/\ell]$ of $A$, with $G_k$-action given by $\phi$. 
 
 In particular, with some more work (see \S \ref{ss:lifts}, and especially Proposition \ref{Mackeyprop}), 
 we find that this construction applies to the case that $A$ is the universal formal deformation ring of an absolutely irreducible representation $\bar\rho_0: \pi_1(\bar X)\to \GL_d({\mathbb F}_\ell)$ which is the restriction of  a continuous $ \rho_0: \pi_1(X)\to \GL_d({\mathbb F}_\ell)$. Then the Galois group $G_k$ acts on $A$ and the action, by its definition, satisfies the condition above. In this case, the ring $A$ is (non-canonically) a formal power series ring $A\simeq W({\mathbb F}_\ell)\lps x_1,\ldots, x_r\rps $ and $\sO(\calD)$ is the ring of rigid analytic functions on 
 the open unit $\ell$-adic polydisk.

  Suppose now that $\ell$ is prime to $d$. We show that, in the above case of a universal formal deformation with  determinant fixed
 to be a given  character $\ep:\pi_1(\bar X)\to \Z_\ell^*$,
   the ring $A$ carries a canonical 
 ``symplectic structure". This is reminiscent of the canonical symplectic structure on the  character varieties of the fundamental groups of surfaces \cite{Goldman}. Here, it is given by a continuous non-degenerate $2$-form $\omega\in \wedge^2\Omega^{\rm ct}_{A/W}$ which is closed, i.e. $d\omega=0$. 
 
 Let us explain our construction of $\omega$. 
By definition, 
\[
\Omega^{\rm ct}_{A/W}=\varprojlim\nolimits_n \Omega_{A/\frakm^n/W},
\]
 where $\frakm$ is the maximal ideal of $A$. Consider the map 
 \[
d\log: \rK^{\rm ct}_2(A)=\varprojlim\nolimits_n \rK_2(A/\frakm^n) \to \wedge^2\Omega^{\rm ct}_{A/W}=\varprojlim\nolimits_n \Omega_{A/\frakm^n/W}
\]
obtained as the limit of
\[
 d\log_{(n)}: \rK_2(A/\frakm^n)\to   \Omega_{A/\frakm^n/W}.
\]
We first define a (finer) invariant 
\[
\kappa=\varprojlim_n \kappa_n
\]
 of the universal formal deformation  $\rho_A: \pi_1(\bar X)\to \GL_d(A)$ with values in the limit $\rK^{\rm ct}_2(A)=\varprojlim_n \rK_2(A/\frakm^n)$. For $n\geq 1$, $\kappa_n$ is the  image of $1$ under the   
 composition 
 \[
\Z_\ell(1)\xrightarrow{{\rm tr}^*} \rH_2(\pi_1(\bar X),\Z_\ell)\to \rH_2(\SL_{d+1}(A/\frakm^n), \Z_\ell)\xrightarrow{\sim} \rK_2(A/\frak m^n).
 \]
 Here the second map is  induced by $\rho\oplus \ep^{-1}$, and the third is the isomorphism
 given by stability and the Steinberg sequence
 \[
 1\to \rK_2(A/\frakm^n)\to {\rm St}(A/\frakm^n)\to \SL(A/\frakm^n)\to 1.
 \]
 Then the $2$-form $\omega\in \wedge^2\Omega^{\rm ct}_{A/W}$
is, by definition, the image 
\[
\omega=d\log(\kappa). 
\]
The closedness of $\omega$ follows immediately since all the $2$-forms in the image of $d\log$ are closed. 
We show that the form $\omega$  is also given via cup product and
Poincare duality, just as in the construction of Goldman's form above (see Theorem \ref{prop:sympl}), and that is non-degenerate. This is done by examining the tangent space of the Steinberg extension using some classical work of van der Kallen.  

In fact, this also provides an alternate argument for the closedness of Goldman's $2$-form \cite{Goldman} on character varieties. Showing that this form (which is defined using cup product and duality) is closed, and thus gives a symplectic structure, has a long and interesting history. The first proof, by Goldman, used a
gauge theoretic argument that goes back to Atiyah and Bott. A more direct proof using group cohomology was later given by Karshon \cite{Karshon}. Other authors gave different arguments that also extend to parabolic character varieties for surfaces with boundary, see for example \cite{Jeffrey}. The approach here differs substantially: We first define a $2$-form which is easily seen to be closed using ${\rm K}_2$, and then we show that it agrees with the more standard form constructed using cup product and duality. Let us mention here that Pantev-Toen-Vaqui\'e-Vezzosi  have given in \cite{PTVV} a general approach for constructing symplectic structures on similar spaces (stacks) which uses derived algebraic geometry. In fact, following this, the existence of the canonical symplectic structure on $\Spf(A)[1/\ell]$ was also shown, and in a greater generality, by Antonio \cite{Jorge}, by extending the results of  \cite{PTVV} to a rigid-analytic set-up. This uses, among other ingredients, the theory of derived rigid-analytic stacks developed in work of Porta-Yu \cite{PortaYu} (see also \cite{Pridham}). Our argument is a lot more straightforward and, in addition, gives the symplectic form over the formal scheme ${\rm Spf}(A)$. 
(However, the derived approach would be important for handling the cases in which the representation is not irreducible.)

It is not hard to see (cf. \cite{Deligne}), that the automorphisms $\phi(\sigma)$  of $A$ given by $\sigma\in G_k$, respect the form $\omega$ up to  Tate twist:
\[
\phi(\sigma)(\omega)=\chi^{-1}_{\rm cycl}(\sigma)  \omega.
\]
 (So they are ``conformal symplectomorphisms'' of a restricted type.) 
In particular, if $k$ is a finite field of order $q=p^f$, prime to $\ell$, and $\sigma$ is the geometric Frobenius ${\rm Frob}_q$, the corresponding automorphism $\phi=\phi({\rm Frob}_q)$ satisfies $\phi(\omega)=q\cdot \omega$. 
 
 The automorphism $\phi(\sigma)$ can be extended to give a ``flow": Using an argument of Poonen on interpolation of iterates, we show that we can write $\calD$ as an increasing union of affinoids 
 \[
 \calD=\bigcup\nolimits_{c\in {\mathbb N}} \bar\calD_c
 \]
  (each $\bar\calD_c$ isomorphic to a closed ball of radius $\ell^{-1/c}$) such that the following is true: 
 
 There is $N\geq 1$, and for each $c$, there is a rational number $\varepsilon(c)>0$, such that, for $\sigma\in G_k$, the action of 
 $\sigma^{nN}$ on $A$ interpolates to an $\ell$-adic analytic flow $\psi^t:=\sigma^{tN}$ on $\bar \calD_c$, defined for $|t|_\ell\leq \varepsilon(c)$, i.e. to a rigid analytic map
 \[
 \{t\ |\ |t|_\ell\leq \varepsilon(c)\}\times \bar\calD_c\to \bar\calD_c,\quad (t,\x)\mapsto \psi^t(\x),
 \]
 with $\psi^{t+t'}=\psi^t\cdot \psi^{t'}$. As $c\mapsto+\infty$, $\varepsilon(c)\mapsto 0$, and so we can think of this as a flow on $\calD$ which, as we approach the boundary, 
only exists for smaller and smaller times. (A similar construction is given by Litt in \cite{Litt} and, in the abelian case, by Esnault-Kerz \cite{EK1}.) We show that if $\chi_{\rm cycl}(\sigma)=1$, this flow is symplectic and in fact Hamiltonian, i.e. it preserves the level sets of an $\ell$-adic analytic function $V_\sigma\in \sO(A)$. More precisely, the flow $\sigma^{tN}$ gives a vector field $X_\sigma$ on $\calD$ whose contraction with $\omega$ is the exact $1$-form $dV_\sigma$. It follows that the critical points of the function $V_\sigma$ are fixed by the flow. We   use this to deduce that the intersection of the critical loci of $V_\sigma$ correspond to representations of $\pi_1(\bar X)$ that extend to $\pi_1(X\times_k k')$ for some finite extension $k'$ of $k\cy$.  The flow $\psi^t$ is an interesting feature of the rigid deformation space $\calD$ that we think deserves closer study. 
Versions of this flow construction has already been used in \cite{Litt}, \cite{EK1}, \cite{EK2}, to obtain results about the set of representations which extend to the arithmetic \'etale fundamental group. It remains to see if its symplectic nature, explained here, can provide additional information.

 The inspiration for these constructions comes from a wonderful idea of M. Kim \cite{KimCS1} (see also \cite{KimCS2}) who, guided by the folkore analogy between $3$-manifolds and rings of integers in number fields and between knots and primes, gave a construction of an arithmetic Chern-Simons invariant for finite gauge group. He also suggested (\cite{KimAGT}) to look for more general Chern-Simons type theories in number theory that resemble the corresponding theories in topology and 
mathematical physics. An important ingredient of classical Chern-Simons theory is the symplectic structure on the character variety of a closed orientable surface: When the surface is  the boundary of a $3$-manifold, the Chern-Simons construction gives a section of a line bundle over the character variety. The line bundle has a connection with curvature given by Goldman's symplectic form. One can try to imitate this construction in number theory by 
 regarding the $3$-manifold with  boundary as analogous to a ring of integers with a prime inverted. 
 
 In this paper, we have a different, simpler, analogy: 
 Our topological model is a closed $3$-manifold $M$ fibering over the circle $S^1$ with fiber a closed orientable surface $\Sigma$ of genus $\geq 1$ with fundamental group $\Gamma=\pi_1(\Sigma)$. The monodromy gives an element $\sigma$ of the mapping class group ${\rm Out}(\Gamma)$, so we can take $M$ to be the ``mapping torus" $\Sigma\times [0,1]/{\sim}$, where $(a,0)\sim (\ti\sigma(a),1)$ with $\ti\sigma:\Sigma\to \Sigma$ representing $\sigma$. There is an exact sequence
 \[
 1\to \Gamma\to \pi_1(M)\to \Z=\pi_1(S^1)\to 1
 \]
and conjugation by a lift of $1\in \Z= \pi_1(S^1)$ to $\pi_1(M)$ induces $\sigma\in {\rm Out}(\Gamma)$. A smooth projective
 curve $X$ over the finite field $k={\mathbb F}_q$ is the analogue of $M$;
 in the analogy, $\bar X$ corresponds to $\Sigma$ and the 
 outer action of Frobenius on $\pi_1(\bar X)$ to $\sigma$. The formalism extends to general fields $k$ with the Galois group $G_k$ replacing $\pi_1(S^1)$.
 The $\ell$-adic volume $\CS(\sF)$ of a local system  $\sF$ on $X$ corresponds to the (complex) volume
 of a representation of $\pi_1(M)$; this invariant includes the Chern-Simons invariant of 
 the representation. Note that a representation of $\Gamma$ gives a bundle with flat connection over $\Sigma$. This extends to a bundle with connection 
 on $M$ which corresponds to a representation of $\pi_1(M)$ if the connection is flat. Flatness occurs
at critical points of the Chern-Simons functional. So, in our picture, $V_\sigma$ 
 is an analogue of this functional. In fact, it is reasonable to speculate that the value $V_\sigma(\x)$  at
 a point $\x$ which corresponds to a representation $\rho$ of $\pi_1(X)$ relates to the $\ell$-adic volume $\CS(\rho)$;
 we have not been able to show such a statement.

 In topology, such constructions are often a first step
 in the development of various ``Floer-type" theories. It seems that the most relevant for our analogy are theories for non-compact complex groups like $\SL_2(\C)$, for which there is a more algebraic treatment. A modern viewpoint 
 for a particular version of these is, roughly, as follows: Since the character variety of $\Gamma$ has a (complex, or even algebraic) symplectic structure and $\sigma$ acts by a symplectomorphism, the fixed point locus of $\sigma$ (which are points extending to representations of $\pi_1(M)$) is an intersection of two complex Lagrangians. 
Hence, it acquires a $(-1)$-shifted symplectic structure in the sense of 
 \cite{PTVV}; this is the same as the shifted symplectic structure on the derived moduli stack of $\SL(2,\C)$-local systems over $M$ constructed in loc. cit.  By \cite{Joyce}, the fixed point locus with its shifted symplectic structure is locally the (derived) critical locus of a function and one can define Floer homology invariants of $M$ by using the vanishing cycles of this function, see \cite{AM}. There are similar constructions in Donaldson-Thomas theory (see, for example, \cite{Ber}). 
 Such a construction  can also be given in our set-up by using the potentials $V_\sigma$, see \S \ref{Milnor}. Passing to the realm of wild speculation,
 one might ponder the possibility of similar, Floer-type, constructions on spaces of representations of the Galois group of a number field or of a local $p$-adic field. We say nothing more about this here. We will, however, mention that the idea of viewing certain spaces of Galois representations as Lagrangian intersections was first explained by M. Kim in \cite[Sect. 10]{KimAGT}. 
 
Classically, the Chern-Simons invariant and the volume are hard to calculate directly for closed manifolds. They can also be defined for manifolds with boundary; combined with various ``surgery formulas" this greatly facilitates calculations. 
We currently lack examples of such calculations in our arithmetic set-up. We hope that  extending the theory to non-proper curves will lead to some explicit calculations and a better understanding of the invariants. Indeed, there should be such an extension, under some assumptions.
For example, we expect that there is a symplectic structure on 
the space of formal deformations of a representation of the fundamental group of a non-proper curve when the monodromy at the punctures is fixed up to conjugacy. 
We also expect that, in the case that $k$ is an $\ell$-adic field, the invariants $\CS(\sF)$ and $V_\sigma$ can be calculated using methods of $\ell$-adic Hodge theory. We hope to return to some of these topics in another paper.
 
 \medskip
 
  \noindent{\bf Acknowledgements:} We thank J. Antonio, H. Esnault, E. Kalfagianni, M. Kim, D. Litt and P. Sarnak, for useful discussions, comments and corrections. In particular, the idea of constructing a Hamiltonian vector field from the Galois action on the deformation space arose in conversations of the author with M. Kim. We also thank the IAS for support and the referee for constructive comments that improved the presentation.
  
  \medskip
  
 \noindent{\bf Notations:} Throughout the paper $\mathbb N$ denotes the non-negative integers and $\ell$ is a prime.
 We denote by ${\mathbb F}_\ell$ the finite field of $\ell$ elements and by $\Z_\ell$, resp. $\Q_\ell$,
 the $\ell$-adic integers, resp. $\ell$-adic numbers. We fix an algebraic closure $\bar\Q_\ell$ of $\Q_\ell$
 and we denote by $|\ |_\ell$ (or simply $|\ |$), resp. $v_\ell$, the $\ell$-adic absolute value, resp. $\ell$-adic valuation on $\bar\Q_\ell$, normalized so that $|\ell|_\ell=\ell^{-1}$, $v_\ell(\ell)=1$. We will denote by $\BF$ a field of characteristic $\ell$ which is algebraic over the prime field ${\mathbb F}_\ell$ and by $W(\BF)$, or simply $W$, the ring of Witt vectors with coefficients in $\BF$. If $k$ is a field of characteristic $\neq \ell$ we fix an algebraic closure $\bar k$. We denote by $k^{\rm sep}$ the separable closure of $k$ in $\bar k$, by $k\cy=\cup_{n\geq 1} k(\zeta_{\ell^n})$  the subfield of $k^{\rm sep}$ generated over $k$ by the (primitive) $\ell^n$-th roots of unity $\zeta_{\ell^n}$ and by  $\chi_{\rm cycl}: {\rm Gal}(k^{\rm sep}/k)\to \Z_\ell^\times$ the cyclotomic character defined by
 \[
 \sigma(\zeta_{\ell^n})=\zeta_{\ell^n}^{\chi_{\rm cycl}(\sigma)}
\]
for all $n\geq 1$. We set
$
\Gk={\rm Gal}(k^{\rm sep}/k).
$
Finally,  we will denote by $(\ )^*$ the Pontryagin dual, by $(\ )^\vee$ the linear dual, and by $(\ )^\times$ the units.

\medskip

\section{Preliminaries}\label{sectPre}
\medskip

We start by giving some elementary facts about $\ell$-adic convergence of power series and then recall 
constructions in the homology theory of (pro)-finite groups.

\subsection{Factorials}
For $a\in \mathbb N$ we can write its unique $\ell$-adic expansion $a=a_0+a_1\ell+\cdots +a_d\ell^d$, $0\leq a_i<\ell$.
Write $s_\ell(a)=a_0+\cdots +a_d$, resp. $d_\ell(a)=d+1$, for the sum, resp. the number of digits.  We obviously have
$|a^{-1}|_\ell\leq  \ell^{d_\ell(a)-1}$ and
the following identity is well-known:
\begin{equation}\label{factorial}
v_\ell(a!)=\sum_{i=1}^\infty\left[\frac{a}{\ell^i}\right]=\frac{a-s_\ell(a)}{\ell-1}.
\end{equation}
It follows that 
\begin{equation}\label{factorial2}
|a!|_\ell\geq \ell^{-a/(\ell-1)}=(\ell^{-1/(\ell-1)})^a.
\end{equation}

For ${\bf a}=(a_0,\ldots, a_n)\in {\mathbb N}^{n+1}$, we write $\onorm{\bf a}=a_0+\cdots +a_n$. 

\begin{lemma}\label{multiestimate}
For all ${\bf a}=(a_0,\ldots, a_n)\in {\mathbb N}^{n+1}$,
\[
|(\frac{a_0! a_1!\cdots a_n!}{(\onorm{\bf a}+n)!})|_\ell\leq |n!|^{-1}_\ell\cdot \ell^{(n+1)d_\ell(\onorm{\bf a}+n)}.
\]
\end{lemma}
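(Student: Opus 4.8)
The plan is to reduce the estimate to a bound on the $\ell$-adic valuation of a single multinomial coefficient and then control that valuation by counting carries in base-$\ell$ addition. Write $A := \onorm{\mathbf a}$, $N := A + n$, and $D := d_\ell(N)$; we may assume $N \geq 1$, since if $N=0$ then $\mathbf a = 0$ and $n = 0$ and both sides equal $1$. Because $a_0 + \cdots + a_n + n = N$, the expression
\[
\binom{N}{a_0,\ldots,a_n,n} := \frac{N!}{a_0!\cdots a_n!\, n!}
\]
is a genuine multinomial coefficient, hence a positive integer. Consequently
\[
\frac{a_0!\cdots a_n!}{(\onorm{\mathbf a}+n)!} \;=\; \frac{1}{n!}\cdot\binom{N}{a_0,\ldots,a_n,n}^{-1},
\]
so that $\bigl|\tfrac{a_0!\cdots a_n!}{(\onorm{\mathbf a}+n)!}\bigr|_\ell = |n!|^{-1}_\ell\cdot \ell^{\,v_\ell\left(\binom{N}{a_0,\ldots,a_n,n}\right)}$, and the lemma becomes equivalent to the inequality
\[
v_\ell\!\left(\binom{N}{a_0,\ldots,a_n,n}\right)\leq (n+1)\,D.
\]

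For the valuation of the multinomial coefficient I would invoke \eqref{factorial}: it gives at once
\[
v_\ell\!\left(\binom{N}{a_0,\ldots,a_n,n}\right) = \frac{s_\ell(a_0)+\cdots+s_\ell(a_n)+s_\ell(n)-s_\ell(N)}{\ell-1},
\]
and this number equals the total number of carries $C := \sum_{j\geq 0} c_j$ occurring when the base-$\ell$ addition $a_0 + a_1 + \cdots + a_n + n = N$ is carried out, $c_j$ denoting the carry from digit column $j$ into column $j+1$. Indeed, writing $\Sigma_j$ for the sum of the $j$-th base-$\ell$ digits of $a_0,\ldots,a_n,n$, one has $\Sigma_j + c_{j-1} = \mathrm{digit}_j(N) + \ell\, c_j$ in each column, and summing over $j$ (with $c_{-1}=0$) and cancelling yields exactly the displayed relation between digit sums and carries. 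So it remains to bound $C$.

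The carry count is bounded as follows. Each column of the addition contains $n+2$ input digits, each at most $\ell-1$; an induction on the column index shows $c_j \leq n+1$ for every $j$, since if $c_{j-1}\leq n+1$ then the total in column $j$ is at most $(n+2)(\ell-1)+(n+1) = (n+2)\ell - 1$, whence $c_j = \lfloor (\text{total})/\ell\rfloor \leq n+1$. Moreover, since $N < \ell^{D}$ no carry can leave column $D-1$ (that would produce a nonzero digit in some position $\geq D$), so $c_j = 0$ for all $j \geq D-1$. Combining the two observations, $C = \sum_{j=0}^{D-2} c_j \leq (n+1)(D-1) \leq (n+1)D$, which is the required bound and completes the proof.

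The only delicate point is the last step: substituting the crude estimates $s_\ell(a_i)\leq (\ell-1)D$ and $s_\ell(n)\leq (\ell-1)D$ directly into the formula for $v_\ell\!\left(\binom{N}{a_0,\ldots,a_n,n}\right)$ only yields $C \leq (n+2)D$, which is off by one unit in the coefficient of $D$. Obtaining the sharp constant $(n+1)$ genuinely requires exploiting both that a single column of an $(n+2)$-fold addition produces a carry of at most $n+1$ (not $n+2$) and that at most $D-1$ columns carry at all. Everything else is a routine unwinding of \eqref{factorial}.
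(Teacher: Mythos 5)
Your proof is correct. It starts from the same place as the paper (Legendre's formula \eqref{factorial}, which turns the valuation into a digit-sum expression), but then organizes the estimate differently: the paper iterates the two-summand inequality $s_\ell(a)+s_\ell(b)-(\ell-1)d_\ell(a+b)\le s_\ell(a+b)$ a total of $n+1$ times, losing $(\ell-1)d_\ell(\onorm{\bf a}+n)$ at each step, whereas you isolate the integer multinomial coefficient $\binom{N}{a_0,\ldots,a_n,n}$ and bound its valuation by Kummer's carry count for the single $(n+2)$-fold base-$\ell$ addition. Your column-by-column analysis (each carry is at most $n+1$, and only columns $0,\ldots,D-2$ can carry) yields the marginally sharper bound $(n+1)(d_\ell(\onorm{\bf a}+n)-1)$ in the exponent; the improvement is immaterial for Proposition \ref{sequence}, where only the shape $C_1\ell^{C_2 d_\ell(\onorm{\bf a})}$ matters, but your route makes the combinatorial content (carries in a multi-term addition) more transparent, at the cost of having to justify the multi-summand Kummer identity rather than just quoting a two-term digit-sum inequality. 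One cosmetic remark: in the degenerate case $N=0$ the right-hand side is $\ell^{\,d_\ell(0)}$ rather than $1$ under the paper's convention $d_\ell(0)=1$, but the inequality holds regardless, so nothing is lost.
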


\begin{proof}
By (\ref{factorial}),
\[
v_\ell((\frac{a_0! a_1!\cdots a_n!}{(\onorm{\bf a}+n)!}))=\frac{-n+s(\onorm{\bf a}+n)-\sum_{i=0}^n s(a_i)}{\ell-1}.
\]
For  $a$, $b\geq 1$, we have
\[
(s(a)+s(b))-(\ell-1) d_{\ell}(a+b)\leq s(a+b)\leq s(a)+s(b).
\]
This gives
\[
s(\onorm{\bf a}+n)-\sum_{i=0}^n s(a_i)\geq s(n)-(n+1)(\ell-1)d_\ell(\onorm{\bf a}+n).
\]
Hence,
\[
v_\ell((\frac{a_0! a_1!\cdots a_n!}{(\onorm{\bf a}+n)!}))\geq \frac{-n+s(n)  }{\ell-1}+(n+1)d_\ell(\onorm{\bf a}+n)=-v_\ell(n!)-(n+1)
d_\ell(\onorm{\bf a}+n)
\]
which gives the result.
\end{proof}

Fix $c, f\in \Q_{>0}$. We have $\lim_{x\to +\infty} (cd_\ell(x)-fx)=-\infty$. Set
\[
N(c, f)={\rm sup}_{x\in {\mathbb N}_{\geq 1}} (cd_\ell(x)-fx).
\]
The proof of the following is left to the reader.
\begin{lemma}\label{trivial}
 For each $c$, we have $\lim_{f\to +\infty}N(c,f)=-\infty$.\endproof
\end{lemma}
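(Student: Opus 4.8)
The plan is to bound $N(c,f)$ from above by an explicit quantity that tends to $-\infty$ with $f$. The only input needed is that the $\ell$-adic digit count grows at most linearly (in fact logarithmically): if $x\in{\mathbb N}_{\geq 1}$ has $d_\ell(x)=d$, then $x\geq \ell^{d-1}\geq 2^{d-1}\geq d$, so $d_\ell(x)\leq x$ for every $x\geq 1$. (The sharper bound $d_\ell(x)\leq 1+\log_\ell x$ together with a one-variable estimate would also work, but the crude linear bound is enough.)

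First I would fix $c\in\Q_{>0}$ and restrict attention to $f>c$; this is harmless since only large $f$ matters. For such $f$ and every $x\in{\mathbb N}_{\geq 1}$,
\[
c\,d_\ell(x)-f x\ \leq\ c x-f x\ =\ (c-f)x\ \leq\ c-f,
\]
where the last step uses $x\geq 1$ together with $c-f<0$. Taking the supremum over $x$ gives $N(c,f)\leq c-f$. Letting $f\to+\infty$ then forces $N(c,f)\to-\infty$, which is the claim.

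There is no genuine obstacle here: the argument reduces to a one-line estimate once the inequality $d_\ell(x)\leq x$ is in hand, and that inequality is the trivial induction $2^{d-1}\geq d$ for $d\geq 1$. The only points to keep an eye on are (i) working with $f>c$ so that the coefficient $c-f$ is negative and the supremum over $x\geq 1$ is attained at $x=1$, and (ii) that $N(c,f)$ is well-defined as a real number, which follows from the same estimate (or already from the limit $\lim_{x\to+\infty}(c d_\ell(x)-fx)=-\infty$ recorded just before the statement).
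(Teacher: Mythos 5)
Your proof is correct; the paper explicitly leaves this lemma to the reader, and your argument — bounding $d_\ell(x)\leq x$ and then observing $c\,d_\ell(x)-fx\leq (c-f)x\leq c-f$ for $f>c$ — is exactly the kind of elementary estimate intended. No issues.
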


\subsection{$\ell$-adic convergence}

 Let $E\subset \bar\Q_\ell$ be a finite extension of $\Q_\ell$ with ring of integers $\O=\O_E$ and residue field $\BF$.
Then $\O$ is a finite $W(\BF)$-algebra with ramification index $e$. 
Let $\fl$ be a uniformizer of $\O$; then $ |\fl|_\ell=(1/\ell)^{1/e}$.
Let $R=\O\lps x_1,\ldots, x_m\rps$ be the local ring of formal power series with coefficients in $\O$ and maximal ideal
$\frakm=(\fl, x_1,\ldots, x_m)$.  
We will allow $m=0$  which gives $R=\O$.

For $\x=(x_1,\ldots, x_m)\in \bar\Q_\ell^m$, set $||\x ||={\rm sup}_i|x_i|_\ell$. 
For a multindex ${\bf i}=(i_1,\ldots, i_m)$, we use the notations $\onorm{\bf i}=i_1+\cdots +i_m$ and $\x^{\bf i}=x_1^{i_1}\cdots x_m^{i_m}$. For $r\in \ell^\Q$, $0<r\leq 1$, denote by
\[
D_r(m)=\{\x\ |\ ||\x ||<r\},\quad \bar D_r(m)=\{\x\ |\ ||\x||\leq r\},
\]
 the  rigid analytic open polydisk, resp. closed polydisk, of radius $r$ over $E$.
 (We omit $E$ from the notation).  We let
 \[
 \sO(\bar D_r(m))=\{\sum\nolimits_{{\bf i}\in {\mathbb N}^{m}} a_{\bf i}\x^{\bf i}\ |\ a_{\bf i}\in E, \lim_{\onorm{\bf i}\to \infty}|a_{\bf i}|_\ell r^{\onorm{\bf i}}= 0\},
 \]
 \[
 \quad \sO(D_r(m))=\bigcap_{r'<r}\sO(\bar D_{r'}(m)),
 \] 
for the $E$-algebra of rigid analytic functions on the open, resp. closed, polydisk.
For $f=\sum_{\bf i} a_{\bf i}\x^{\bf i}\in  \sO(\bar D_r(m))$, set 
 \[
 ||f||_r=\sup_{\bf i}|a_{\bf i}|_\ell r^{\onorm{\bf i}}=\sup_{\x\in \bar D_r(m)}||f(\x)||
 \]
  for the Gauss norm. The $E$-algebra $\sO(\bar D_r(m))$ is complete for $||\ ||_r$ and $\sO(D_r(m))$ is a Fr\'echet space
 for the family of norms $\{||\  ||_{r'}\}_{r'<r}$.
 For simplicity, we will write $D=D_1(m)$ when $m$ is understood, and often write
 $\sO(D)$ or simply $\sO$ instead of $\sO(D_1(m))$.
 
The following will be used in \S \ref{lregulator} and \S \ref{App}.

\begin{prop}\label{sequence}
a) Consider the formal power series in $E[[x_1,\ldots , x_m]]$ 
\[
F=\sum_{\bf a\in {\mathbb N}^k}\xi_{{\bf a}}\cdot G_{{\bf a}},
\]
with $\xi_{{\bf a}}\in E$,
$
|\xi_{{\bf a}}|_\ell\leq C_1\ell^{C_2d_\ell(\onorm{\bf a})}$, $G_{{\bf a}}\in \frakm^{B\onorm{\bf a}},
$ 
where  $C_1$, $C_2$, $B$ are positive constants. Then $F$  converges to a function in $\sO(D_1(m))$,
and for every $a\in \Q\cap (0,1/e]$ 
\begin{equation}\label{hypo1}
||F||_{(1/\ell)^a}\leq C_1\ell^{N(C_2, aB)}.
\end{equation}

 b) Suppose that $(F_n)$ is a sequence in $\sO(D_1(m))$ whose terms are power series 
 given as in part (a), i.e.
 \[
F_n=\sum_{\bf a\in {\mathbb N}^k}\xi_{{\bf a}, n}\cdot G_{{\bf a}, n}
\]
with $\xi_{{\bf a}, n}\in E$, $G_{{\bf a}, n}\in \frakm^{B(n)\onorm{\bf a}}$. Assume that
$
|\xi_{{\bf a}, n}|_\ell\leq C_1\ell^{C_2d_\ell(\onorm{\bf a})}$, where  $C_1$, $C_2$ are constants and $B(n)$
a function of $n$ with $\lim_{n\mapsto +\infty}B(n)=+\infty$.
 Then  the series $F=\sum_{n\geq 0} F_n$ converges in $\sO=\sO(D_1(m))$.
 \end{prop}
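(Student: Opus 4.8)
\medskip
\noindent\textbf{Plan of proof.}

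\noindent\textbf{Part (a).} The plan is to expand everything in the monomial basis and reduce the whole assertion to a single termwise estimate. First I would write each $G_{\bf a}=\sum_{\bf i}c_{{\bf a},{\bf i}}\,\x^{\bf i}$ with $c_{{\bf a},{\bf i}}\in\O$. Since any element of $\frakm^{s}$ is an $R$-linear combination of monomials $\fl^{j_{0}}\x^{\bf j}$ with $j_{0}+\onorm{\bf j}\ge s$, and extracting the coefficient of $\x^{\bf i}$ forces ${\bf j}\le{\bf i}$, hence $j_{0}\ge s-\onorm{\bf i}$, the hypothesis $G_{\bf a}\in\frakm^{B\onorm{\bf a}}$ forces
\[
c_{{\bf a},{\bf i}}\in\fl^{\max(0,\,B\onorm{\bf a}-\onorm{\bf i})}\O,\qquad\text{so}\qquad |c_{{\bf a},{\bf i}}|_{\ell}\le(1/\ell)^{\max(0,\,B\onorm{\bf a}-\onorm{\bf i})/e}.
\]
Collecting coefficients, $F=\sum_{\bf i}A_{\bf i}\,\x^{\bf i}$ with $A_{\bf i}=\sum_{\bf a}\xi_{\bf a}c_{{\bf a},{\bf i}}$. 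For a fixed ${\bf i}$ the bound on $|\xi_{\bf a}|_{\ell}$ grows only logarithmically in $\onorm{\bf a}$ (through $d_{\ell}$), whereas the $\fl$-divisibility of $c_{{\bf a},{\bf i}}$ contributes a factor decaying linearly in $\onorm{\bf a}$ once $B\onorm{\bf a}>\onorm{\bf i}$; hence the terms tend to $0$, the inner sum converges in $E$, and $|A_{\bf i}|_{\ell}\le\sup_{\bf a}|\xi_{\bf a}c_{{\bf a},{\bf i}}|_{\ell}$ by the ultrametric inequality. This exhibits $F$ as a well-defined formal power series over $E$.

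Next I would prove \eqref{hypo1}. Fix $a\in\Q\cap(0,1/e]$ and set $r=(1/\ell)^{a}$. Since $\|F\|_{r}=\sup_{\bf i}|A_{\bf i}|_{\ell}r^{\onorm{\bf i}}\le\sup_{{\bf a},{\bf i}}|\xi_{\bf a}|_{\ell}\,|c_{{\bf a},{\bf i}}|_{\ell}\,r^{\onorm{\bf i}}$, it suffices to bound each factor. I would split into the cases $\onorm{\bf i}\ge B\onorm{\bf a}$ (where $|c_{{\bf a},{\bf i}}|_{\ell}\le1$ and $r^{\onorm{\bf i}}\le r^{B\onorm{\bf a}}$) and $\onorm{\bf i}<B\onorm{\bf a}$ (where the exponent of $1/\ell$ in $|c_{{\bf a},{\bf i}}|_{\ell}r^{\onorm{\bf i}}$ equals $B\onorm{\bf a}/e+\onorm{\bf i}(a-1/e)$, which is $\ge aB\onorm{\bf a}$ precisely because $a\le1/e$); in both cases one obtains
\[
|\xi_{\bf a}|_{\ell}\,|c_{{\bf a},{\bf i}}|_{\ell}\,r^{\onorm{\bf i}}\ \le\ C_{1}\,\ell^{\,C_{2}d_{\ell}(\onorm{\bf a})-aB\onorm{\bf a}}.
\]
Taking the supremum over ${\bf a}$ with $\onorm{\bf a}\ge1$ gives $C_{1}\ell^{N(C_{2},aB)}$ by the very definition of $N(\cdot,\cdot)$, while the single term ${\bf a}={\bf 0}$ lies in $R=\O\lps x_{1},\dots,x_{m}\rps$ and contributes at most $|\xi_{\bf 0}|_{\ell}$, which is absorbed into $C_{1}$. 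In particular $\|F\|_{r}<\infty$, so $F\in\sO(\bar D_{r}(m))$; and since the radii $(1/\ell)^{a}$ accumulate at $1$ as $a\to0^{+}$, and $\sO(\bar D_{r'}(m))$ decreases as $r'$ increases, we conclude $F\in\bigcap_{r'<1}\sO(\bar D_{r'}(m))=\sO(D_{1}(m))$. Reading the same per-${\bf a}$ bound as a tail estimate shows that the partial sums $\sum_{\onorm{\bf a}\le M}\xi_{\bf a}G_{\bf a}$ converge to $F$ in every $\|\cdot\|_{r'}$, i.e. in the Fr\'echet topology.

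\noindent\textbf{Part (b).} This follows quickly from part (a) together with Lemma \ref{trivial}. Applying part (a) to each $F_{n}$ with $B=B(n)$ gives $F_{n}\in\sO(D_{1}(m))$ and $\|F_{n}\|_{(1/\ell)^{a}}\le C_{1}\ell^{N(C_{2},\,aB(n))}$ for every $a\in\Q\cap(0,1/e]$, the constants $C_{1},C_{2}$ being independent of $n$. By Lemma \ref{trivial}, $N(C_{2},f)\to-\infty$ as $f\to+\infty$; since $B(n)\to+\infty$ we get $aB(n)\to+\infty$ for each fixed $a$, hence $\|F_{n}\|_{(1/\ell)^{a}}\to0$. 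Given any $r'<1$, I would pick a rational $a\in(0,1/e]$ small enough that $(1/\ell)^{a}\ge r'$; monotonicity of the Gauss norm in the radius then yields $\|F_{n}\|_{r'}\le\|F_{n}\|_{(1/\ell)^{a}}\to0$. Thus $\|F_{n}\|_{r'}\to0$ for every seminorm $\|\cdot\|_{r'}$, so by the ultrametric inequality the partial sums $\sum_{n\le N}F_{n}$ are Cauchy in each $\|\cdot\|_{r'}$, hence Cauchy in the complete Fr\'echet space $\sO(D_{1}(m))$, and therefore $\sum_{n\ge0}F_{n}$ converges there.

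\noindent\textbf{Main obstacle.} The only genuine point is the case split in part (a): checking that the hypothesis $a\le1/e$ is exactly what forces the ``$\onorm{\bf i}<B\onorm{\bf a}$'' contribution to be no larger than the ``$\onorm{\bf i}\ge B\onorm{\bf a}$'' one, so that the estimate becomes uniform in ${\bf i}$ and collapses to the purely combinatorial quantity $N(C_{2},aB)$. Everything else is bookkeeping with the ultrametric inequality and the elementary growth properties of $d_{\ell}$ recorded in \S\ref{sectPre}.
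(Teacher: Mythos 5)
Your proposal is correct and follows essentially the same route as the paper: the paper's proof rests on the single observation that $G\in\frakm^{k}$ implies $\|G\|_{(1/\ell)^{a}}\le\ell^{-ak}$ for $a\in\Q\cap(0,1/e]$, which is exactly what your coefficient expansion and case split on $\onorm{\bf i}$ versus $B\onorm{\bf a}$ verify explicitly, after which both arguments conclude identically via the termwise bound $C_{1}\ell^{C_{2}d_{\ell}(\onorm{\bf a})-aB\onorm{\bf a}}$, the definition of $N(\cdot,\cdot)$, and (for part (b)) Lemma \ref{trivial} together with completeness of the Fr\'echet space $\sO(D_{1}(m))$. Your explicit handling of the ${\bf a}={\bf 0}$ term is, if anything, slightly more careful than the paper's.
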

 
 \begin{proof} Observe that $G\in \frakm^k$ implies that $||G||_{(1/\ell)^a}\leq ((1/\ell)^{a})^k=\ell^{-ak}$,
 for all $a\in \Q\cap(0,1/e]$.
We obtain that 
 \[
|| \xi_{{\bf a}}\cdot G_{{\bf a}}||_{(1/\ell)^a}\leq C_1 \ell^{C_2d_\ell(\onorm{\bf a})-aB\onorm{\bf a}}.
 \]
Since $\lim_{\onorm{\bf a}\to \infty} C_2d_\ell(\onorm{\bf a})-aB\onorm{\bf a}=-\infty$, $F$ converges. The inequality
(\ref{hypo1}) follows from the definition of $N(c, f)$. This shows (a). Part (b) now follows by using (a),
  Lemma \ref{trivial}, and the Fr\'echet property of $\sO(D_1(m))$. 
 \end{proof}

\subsection{Homology of groups}\label{ss:homology}

Let $H$ be a (discrete) group and suppose that $C_\bullet(H)\to \Z\to 0$ is the bar resolution of $\Z$ by free
(left) $\Z[H]$-modules. Set $\bar C_\bullet(H)=\Z\otimes_{\Z[H]}C_\bullet(H)=(\bar C_j(H), \partial_j)$ for the corresponding complex which calculates the homology groups $\rH_\bullet(H,\Z)$. 
Then, $\bar C_n(H)$ is the free abelian group generated by 
 elements $[h_1|h_2|\cdots |h_n]$ and the boundary map
 \[
 \partial_n: \bar C_n(H)\to \bar C_{n-1}(H)
 \]
  is given by
the usual formula
\[
\partial_n([h_1|\cdots |h_n])=[h_2|\cdots |h_{n}]+
\]
\[
\ \ \ \ \ \ \ \ \ \ \ \ \ +\sum_{0<j<n}(-1)^j[h_1|\cdots |h_jh_{j+1}|\cdots |h_n]+(-1)^n[h_1|\cdots |h_{n-1}].
\]
Set 
\[
\bar C_{3,2}(H)=\tau_{[-3,-2]}\bar C_\bullet(H)[-2]
\]
for the complex in degrees $-1$ and $0$ obtained by shifting the truncation
\[
\bar C_3(H)/{\rm Im}(\partial_4)\xrightarrow{\partial_3} {\rm ker}(\partial_2)
\]
of $\bar C_\bullet(H)$.  
Its homology groups are
\[
\rH^{-1}(\bar C_{3,2}(H))=\rH_3(H, \Z),\quad \rH^{0}(\bar C_{3,2}(H))=\rH_2(H, \Z).
\]
For $h\in H$, denote by ${\rm inn}_h={}^h(\ ): H\to H$ the inner automorphism given by $x\mapsto {}^hx=hxh^{-1}$.
It induces chain homomorphisms
\[
{\rm inn}_h: C_\bullet(H)\to C_\bullet(H),\quad {\rm inn}_h: \bar C_\bullet(H)\to \bar C_\bullet(H).
\]
Note
\[
{\rm inn}_{h}([g_1|\cdots |g_n])=[hg_1h^{-1}|\cdots |hg_nh^{-1}].
\]
It is well-known that ${\rm inn}_h$ induces the identity on homology groups.
In fact, the formula
\[
F_h([g_1|\cdots |g_n])= \sum_{0\leq r\leq n} (-1)^r [g_1|\cdots |h^{-1}|hg_{r+1}h^{-1}|\cdots |hg_nh^{-1}].
\]
defines a graded chain map
\[
F_h: \bar C_{\bullet}(H)\to \bar C_{\bullet+1}(H)
\] 
such that, for all $c\in \bar C_\bullet(H)$,
\[
{\rm inn}_h(c)-c=F_{h}(\partial(c))+\partial (F_{h}(c)),
\]
i.e. giving a homotopy between ${\rm inn}_h$ and the identity. 
(cf. \cite{KimCS1} Appendix B by Noohi, Prop. 7.1.)
By loc. cit., Cor. 7.3, we have
\begin{equation}\label{Fid}
F_{hh'}=F_h\cdot {\rm inn}_{h'} +F_{h'}
\end{equation}
in $\bar C_{\bullet +1}(H)/{\rm Im}(\partial)$.
(In loc. cit., Noohi gives an explicit $F_{h, h'}: \bar C_\bullet(H)\to \bar C_{\bullet +2}(H)$
such that $F_{hh'}-F_{h'}-F_h\cdot {\rm inn}_{h'}=\partial F_{h, h'}$.)

\subsubsection{}\label{shufflepar}
Suppose that $H'\subset H$ is a subgroup and $h\in H$ is in the centralizer $\mathfrak Z_H(H')$ of $H'$
in $H$.
Then, for $h'_1,\ldots, h'_n\in H'$, 
\begin{equation}\label{shuffle}
F_{h}([h'_1|\cdots |h'_n])= \sum_{0\leq r\leq n} (-1)^r [h'_1|\cdots |h'_r|h^{-1}|h'_{r+1}|\cdots |h'_n].
\end{equation}
Furthermore, if $z\in Z_2(H')\subset Z_2(H)$ is a $2$-cycle, then homotopy identity gives $\partial_3 F_h(z)=0$.
Hence,  $F_h$ induces $[F_h]: \rH_2(H')\to \rH_3(H)$. The identity (\ref{Fid}) gives
 \[
[F_{h_1h_2}]=[F_{h_1}]+[F_{h_2}]
\]
 for $h_1$, $h_2$ centralizing $H'$. Therefore, we obtain a homomorphism
 \begin{equation}\label{mapshuffle}
 \rH_1(\mathfrak Z_H(H')) \otimes_{\Z} \rH_2(H')\to \rH_3(H); \quad \ (h', z)\mapsto [F_{h'}(z)].
 \end{equation}
We can now see that  this homomorphism agrees up to sign with the composition
\[
\rH_1(\mathfrak Z_H(H'))\otimes_{\Z} \rH_2(H')\xrightarrow{\nabla} \rH_3(\mathfrak Z_H(H')\times H')\xrightarrow {i} \rH_3(H)
\]
where the first map is obtained by the $\times$-product in group homology and the second 
is the natural map given by the group homomorphism $\mathfrak Z_H(H')\times H'\to H$, $(h, h')\mapsto hh'=h'h$.
Indeed, the $\times$-product  is given by the shuffle product and so 
the class of
\[
(i\cdot \nabla)([h]\otimes [h'_1|\cdots |h'_n])
\] 
is (up to sign)
the same as in formula (\ref{shuffle}). In view of this fact, we will set
\[
\nabla_{h, H'}=[F_h]: \rH_2(H')\to \rH_3(H), 
\]
for $h\in \mathfrak Z_H(H')$ 
and denote the map (\ref{mapshuffle}) by $\nabla_{\mathfrak Z_H(H'), H'}$.

\subsection{Homology of profinite groups}
Let now $H$ be a profinite  group. Set 
\[
\Z_\ell\lps H\rps=\varprojlim\nolimits_U \Z_\ell[H/U]
\]
for the complete $\Z_\ell$-group ring of $H$, where the limit is over finite index open normal subgroups $U\subset H$. 
 
 We can consider homology $\rH_i(H, \ -)$ with coefficients
in compact $\Z_\ell\lps H\rps$-modules (see \cite{Brumer}, \cite{Symonds}, \cite{NSW}). Recall that a $\Z_\ell\lps H\rps$-module is called compact
if it is given by the inverse limit of finite discrete $\ell$-power torsion discrete $H$-modules. The category 
of compact $\Z_\ell\lps H\rps$-modules has enough projectives. In fact, there is a standard profinite bar resolution 
(\cite{RibesZa})
$C_\bullet(H)_\ell\to \Z_\ell\to 0$ with terms
\[
C_n(H)_\ell=\Z_\ell\lps H^{n+1}\rps=\varprojlim\nolimits_{U, i} \Z/\ell^i\Z[(H/U)^{n+1}]
\]
and the standard differential.

We can now give the complexes
\[
\bar C_\bullet(H)_\ell= C_\bullet(H)_\ell\hat\otimes_{\Z_\ell\lps H\rps }\Z_\ell,\quad \bar C_{3,2}(H)_\ell=(\tau_{[-3,-2]}\bar C(H)_\ell)[-2]
\]
 similarly to before.  We have
\[
\rH^{-1}(\bar C_{3,2}(H)_\ell)=\rH_3(H, \Z_\ell),\quad \rH^{0}(\bar C_{3,2}(H)_\ell)=\rH_2(H, \Z_\ell).
\]
Similarly to the above, we have chain morphisms
\[
{\rm inn}_h: C_\bullet(H)_\ell\to  C_\bullet(H)_\ell,\quad {\rm inn}_h: \bar C_\bullet(H)_\ell\to \bar C_\bullet(H)_\ell,
\]
and a homotopy $
F_h: \bar C_{\bullet}(H)_\ell\to \bar C_{\bullet+1}(H)_\ell$
between ${\rm inn}_h$ and the identity which satisfies (\ref{Fid}) in $\bar C_{\bullet +1}(H)/{\rm Im}(\partial)_\ell$. The rest of the identities in the previous paragraph are also true.

\medskip

\section{$\rK_2$ invariants and $2$-forms}\label{sect2}

In this section, we recall the construction of ``universal" invariants of  representations with trivial determinant.
These take values in the second cohomology of the group with coefficients either in Milnor's $\rK_2$-group
or in (closed) K\"ahler $2$-forms of the ground ring. Using some old work of van der Kallen, we
reinterpret the evaluation of these invariants on the tangent space via a cup product in cohomology.
This allows us to show that a representation of a Poincare duality group in dimension $2$ 
with trivial determinant gives a natural {\sl closed} $2$-K\"ahler form over the ground ring. This construction provides an algebraic argument for the existence of Goldman's symplectic form on the 
character variety of the fundamental group of a closed Riemann surface (\cite{Goldman}). 
 
Let $A$ be a (commutative) local ring so that $\rK_1(A)=A^\times$ and $\SL(A)$
is generated by elementary matrices. We have the canonical Steinberg central extension (\cite{Milnor})
\[
1\to \rK_2(A)\to {\St}(A)\to \SL(A)\to 1.
\]
The group $\GL(A)$ acts on $\St(A)$ by conjugation in a way that lifts the standard conjugation action on $\SL(A)$
and the action fixes every element of $\rK_2(A)$
(\cite{Weibel}, Exerc. 1.13, Ch. III.)

\subsection{A $\rK_2$ invariant}

Suppose that $\Gamma$ is a discrete group and $\rho: \Gamma\to \SL(A)$ is a group homomorphism. For each $\gamma\in \Gamma$,
choose a lift $s(\rho(\gamma))\in \St(A)$ of $\rho(\gamma)$. Then 
\[
\kappa_\rho:\Gamma\times \Gamma\to \rK_2(A)
\]
 given by 
\[
\kappa_\rho(\gamma_1, \gamma_2):=s(\rho(\gamma_1\gamma_2))s(\rho(\gamma_2))^{-1}s(\rho(\gamma_1))^{-1}\in \rK_2(A)
\]
is a $2$-cocycle. The corresponding class 
\[
\kappa_\rho\in \rH^2(\Gamma, \rK_2(A))
\]
 is independent 
of the choice of lifts and depends only on the equivalence class of $\rho$ up to $\GL(A)$-conjugation. 
The class $\kappa_\rho$ is the pull-back via $\rho$ of a universal class in $\rH^2(\SL(A), \rK_2(A))$ defined by the Steinberg extension.

\begin{Remark}
{\rm A very similar construction is described in \cite[\S 15]{FoGo}.}
\end{Remark}

\subsubsection{} Recall that there is a group homomorphism 
\[
d\log: \rK_2(A)\to \Omega^2_A:=\Omega^2_{A/\Z}; \quad \{f, g\}\mapsto d\log(f)\wedge d\log(g)
\]
where $\{f, g\}$ is the Steinberg symbol of $f$, $g\in A^*$. Here, under our assumption that $A$ is local, $\rK_2(A)$ is generated by such symbols (\cite{Stein}). Since
$
d(f^{-1}df)=0
$, the image of $d\log$ lies in the subgroup of closed $2$-forms.
We denote by 
\[
\omega_\rho\in \rH^2(\Gamma, \Omega^2_{A})
\]
the image of $\kappa_\rho$ under the map induced by $d\log$.

\subsubsection{} \label{sss:def}
If ${\rm H}_2(\Gamma, \Z)\simeq \Z$ and $[a]\in {\rm H}_2(\Gamma, \Z)$ is a generator, we set
\[
\kappa_{[a], \rho}:=[a]\cap \kappa_\rho\in \rK_2(A).
\]
Alternatively, we can obtain this class by evaluating the homomorphism 
\[
{\rm H}_2(\Gamma, \Z)\xrightarrow{\rH_2(\rho)} \rH_2(\SL(A), \Z)=\rK_2(A)
\]
at $[a]$. We can now set
\[
\omega_{[a], \rho}:=d\log(\kappa_{[a], \rho}) \in \Omega^2_A.
\]
This construction applies, in particular, when $\Gamma$ is the fundamental group of a closed surface. Note that by its construction, the $2$-form $\omega_{[a], \rho}$ is closed. When the choice of $[a]$ is understood, we will omit it from the notation.

\subsection{The tangent of the Steinberg  extension}
Suppose now that $R$ is a local ring in which $2$ is invertible. Let $V$ be a finite free $R$-module of rank $n$. Let us consider the (local) $R$-algebra 
\[
A=R\times V={\mathrm {Sym}}^\bullet_R(V)/\mathfrak M^2
\]
with multiplication $(r, v)\cdot (r', v')=(rr', rv'+r'v)$. Set 
\[
(r, v)_0=r.
\]
Notice that 
\[
\Omega^2_{A/R}\cong \wedge^2 V
\]
 by $d(0, v)\wedge d(0, v')\mapsto v\wedge v'$ and so we have a group homomorphism
\[
\iota: \rK_2(A)\to \rK_2(R)\times \wedge^2V; \quad \{f, g\}\mapsto (\{  f_0,  g_0  \},  d\log(\{f, g\})).
\]

We can write
\[
\SL(A)\cong \SL(R)\ltimes \rM^0(V),
\]
where $\rM^0(V)= \varinjlim_{m }\rM^0_{m \times m}(V)$. Here, $\rM^0_{m\times m} $ denotes the $m\times m$ matrices with trace zero. In the above, $g=\gamma(1+m)\in \SL(A)$ maps to $(\gamma, m)$ 
 and the semi-direct product is for the action of $\SL(R)$ on $\rM^0(V)$ by conjugation. 
 
Define 
\[
{\rm Tr}_{\rm alt}: \rM^0(V)\times \rM^0(V)\to \wedge^2V
\]
 by
\[
{\rm Tr}_{\rm alt}(X, Y)=\frac{1}{2}({\rm Tr}(X\otimes Y)- {\rm Tr}(Y\otimes X))\in \wedge^2 V.
\]
Here, $X\otimes Y$ denotes the square matrix with entries in $V\otimes V$ which is obtained from $X$ and $Y$ (which have entries in $V$)
by replacing in the formula for the product of matrices the multiplication by the symbol $\otimes$.

 \quash{
Consider the Cartesian product
\[
S(A)=\St(R)\times \rM^0(V)\times \wedge^2V,
\]
on which we  define the operation
\[
(\gamma, m, \omega)\cdot (\gamma', m', \omega')=(\gamma\gamma',  \gamma'^{-1}m\gamma'+m', \omega+\omega'+{\rm Tr}_{\rm alt}(\gamma'^{-1}m\gamma', m')).
\]
We can see that this makes $S(A)$ into a group and that there is surjective group homomorphism
\[
S(A)\to \SL(A); \quad (\gamma, m, \omega)\mapsto (\gamma, m),
\]
whose kernel  is the central subgroup  $\rK_2(R)\times  \wedge^2V$ of $S(A)$.
Notice that 
\[
\Omega^2_{A/R}\cong \wedge^2 V
\]
 by $d(0, v)\wedge d(0, v')\mapsto v\wedge v'$ and so we have a group homomorphism
\[
\iota: \rK_2(A)\to \rK_2(R)\times \wedge^2V; \quad \{f, g\}\mapsto (\{  f_0,  g_0  \},  d\log(\{f, g\})).
\]
The universal property of the Steinberg extension, implies that there is  a unique group homomorphism 
\[
\tilde \iota: \St(A)\xrightarrow{\  } S(A)
\]
which extends $\iota$ and which lifts the identity on $\SL(A)$. }

\begin{prop}\label{vanderKallen} Consider the Cartesian product
\[
S(A)=\St(R)\times \rM^0(V)\times \wedge^2V,
\]
on which we  define the operation
\[
(\gamma, m, \omega)\cdot (\gamma', m', \omega')=(\gamma\gamma',  \gamma'^{-1}m\gamma'+m', \omega+\omega'+{\rm Tr}_{\rm alt}(\gamma'^{-1}m\gamma', m')).
\]

a) This operation makes $S(A)$ into a group and there is a surjective group homomorphism
\[
S(A)\to \SL(A); \quad (\gamma, m, \omega)\mapsto (\gamma, m),
\]
whose kernel  is the central subgroup  $\rK_2(R)\times  \wedge^2V$ of $S(A)$.

b) There is  a unique group homomorphism 
\[
\tilde \iota: \St(A)\xrightarrow{\  } S(A)
\]
which extends $\iota$ and which lifts the identity on $\SL(A)$. 

c) Assume in addition $\Omega^1_{R/\Z}=0$.
Then $\iota$ and $\tilde\iota$ are isomorphisms
\[
\iota: \rK_2(A)\xrightarrow{\ \sim\ }  \rK_2(R)\times \wedge^2V,\qquad \tilde\iota:  \St(A)\xrightarrow{\ \sim\ } S(A).
\]
 \end{prop}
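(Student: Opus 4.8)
The plan is to handle (a) by a direct group-theoretic verification, to obtain the existence part of (b) by writing down explicit lifts of the Steinberg generators, to pin down $\tilde\iota$ on $\rK_2(A)$ by evaluating it on symbols, and to deduce (c) formally once van der Kallen's computation of the $\rK_2$ of a square-zero extension is invoked.

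For (a) I would first note that, $\rK_2(R)$ being central in $\St(R)$, the group $\St(R)$ acts on $\rM^0(V)$ through its quotient $\SL(R)$ by conjugation, so $\St(R)\ltimes\rM^0(V)$ with product $(\gamma,m)(\gamma',m')=(\gamma\gamma',(\gamma')^{-1}m\gamma'+m')$ is a group; then $S(A)=\bigl(\St(R)\ltimes\rM^0(V)\bigr)\times\wedge^2V$ with the stated operation is exactly the central extension attached to the normalized $2$-cochain $c\bigl((\gamma,m),(\gamma',m')\bigr)={\rm Tr}_{\rm alt}\bigl((\gamma')^{-1}m\gamma',m'\bigr)$. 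So associativity reduces to the $2$-cocycle identity for $c$; on expanding both sides everything cancels except the assertion that ${\rm Tr}_{\rm alt}$ is invariant under simultaneous $\SL(R)$-conjugation, which I would deduce from the one-line identity ${\rm Tr}(\gamma^{-1}X\gamma\otimes\gamma^{-1}Y\gamma)={\rm Tr}(X\otimes Y)$ in $V\otimes_R V$ for $\gamma\in\GL(R)$. The identity element is $(1,0,0)$, the inverse of $(\gamma,m,\omega)$ is $(\gamma^{-1},-\gamma m\gamma^{-1},-\omega)$ (using ${\rm Tr}_{\rm alt}(Z,Z)=0$), and $(\gamma,m,\omega)\mapsto(\bar\gamma,m)$ is a surjective homomorphism onto $\SL(R)\ltimes\rM^0(V)=\SL(A)$ with kernel $\{(\zeta,0,\omega)\}\cong\rK_2(R)\times\wedge^2V$; this kernel is central since $\rK_2(R)$ is central in $\St(R)$ and acts trivially on $\rM^0(V)$, while $\wedge^2V$ is central by construction.

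For (b) I would, for $a\in A$ written as $a=a_0+a_1$ with $a_0\in R$ and $a_1\in V$, set $\tilde x_{ij}(a):=(x_{ij}(a_0),a_1E_{ij},0)\in S(A)$, a lift of the elementary matrix $e_{ij}(a)\in\SL(A)$, and check that these satisfy the Steinberg relations. The additivity relation and the commuting relation for $(i,j),(k,l)$ with $j\neq k$, $i\neq l$ hold because the relevant ${\rm Tr}_{\rm alt}$-terms vanish --- here one uses $E_{ij}^2=0$ together with ${\rm Tr}_{\rm alt}(a_1E_{ij},b_1E_{kl})=\delta_{il}\delta_{jk}\,(a_1\wedge b_1)$ --- while $[\tilde x_{ij}(a),\tilde x_{jk}(b)]=\tilde x_{ik}(ab)$ for $i\neq k$ follows from a short computation using $E_{ij}E_{jk}=E_{ik}$, the $\wedge^2V$-contributions again cancelling. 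This gives a homomorphism $\tilde\iota\colon\St(A)\to S(A)$ with $\tilde\iota(x_{ij}(a))=\tilde x_{ij}(a)$, visibly lifting ${\rm id}_{\SL(A)}$. Being a lift, $\tilde\iota$ carries $\rK_2(A)=\ker(\St(A)\to\SL(A))$ into $\ker(S(A)\to\SL(A))=\rK_2(R)\times\wedge^2V$, on which both coordinate projections are homomorphisms; the composite $\St(A)\xrightarrow{\tilde\iota}S(A)\to\St(R)$ agrees on generators with the functorial map induced by $a\mapsto a_0$, hence restricts on $\rK_2(A)$ to $\{f,g\}\mapsto\{f_0,g_0\}$, and evaluating $\tilde\iota$ on a Steinberg symbol $\{u,v\}$ ($u,v\in A^\times$) the accumulated ${\rm Tr}_{\rm alt}$-terms produce exactly $d\log u\wedge d\log v$ in the $\wedge^2V$-coordinate (using $dr=0$ in $\Omega^1_{A/R}$ for $r\in R$ and $V\cdot\Omega^2_{A/R}=0$). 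As $A$ is local, symbols generate $\rK_2(A)$, so $\tilde\iota|_{\rK_2(A)}=\iota$. For uniqueness I would use that the relation $[x_{ij}(a),x_{jk}(b)]=x_{ik}(ab)$ makes $\St(A)$ perfect, so any homomorphism from it to an abelian group is trivial; hence if $\tilde\iota'$ is another lift of ${\rm id}_{\SL(A)}$ then $g\mapsto\tilde\iota(g)\tilde\iota'(g)^{-1}$, a homomorphism into the central subgroup $\rK_2(R)\times\wedge^2V$, vanishes, so $\tilde\iota=\tilde\iota'$.

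For (c), surjectivity of $\iota$ is elementary --- $\rK_2(R)\to\rK_2(A)$ splits off the first coordinate, and $d\log\{1+v,1+v'\}=v\wedge v'$ in $\Omega^2_{A/R}\cong\wedge^2V$ since $v^2=0$ in $A$, so already the symbols $\{1+v,1+v'\}$ surject onto the $\wedge^2V$-coordinate. For injectivity, under the hypotheses $2\in R^\times$ and $\Omega^1_{R/\Z}=0$, I would invoke van der Kallen's computation of the $\rK_2$ of the square-zero extension $A=R\oplus V$, which identifies the relative group with $\wedge^2_R V$ via $d\log$; thus $\iota$ is an isomorphism. Finally $\tilde\iota$ is a morphism of the central extensions $1\to\rK_2(A)\to\St(A)\to\SL(A)\to1$ and $1\to\rK_2(R)\times\wedge^2V\to S(A)\to\SL(A)\to1$ restricting to $\iota$ on kernels and to the identity on $\SL(A)$, so the five lemma yields that $\tilde\iota$ is an isomorphism. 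I expect the main obstacle to be the bookkeeping in (b): verifying the Steinberg relations for the $\tilde x_{ij}(a)$ and, more delicately, tracking the ${\rm Tr}_{\rm alt}$-terms through a symbol word to recover $d\log$; the one substantial external input is van der Kallen's theorem, used for injectivity in (c).
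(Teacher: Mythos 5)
Your overall architecture matches the paper's, but (b) and (c) diverge in ways worth noting, and (c) has a genuine gap.

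For (b), the paper's proof consists of a single sentence: since (a) exhibits $S(A)\to\SL(A)$ as a central extension of $\SL(A)$ and $\St(A)\to\SL(A)$ is the universal central extension, the lift $\tilde\iota$ exists and is unique; that it extends $\iota$ is then a consequence of the construction. You instead build $\tilde\iota$ by hand, defining $\tilde x_{ij}(a)=(x_{ij}(a_0),a_1E_{ij},0)$ and verifying the Steinberg relations, then compute the restriction to $\rK_2(A)$ directly on symbols. This is a genuinely different route: it is longer, but it produces an explicit formula for the lift (which the universal-property argument hides), and it makes the assertion ``$\tilde\iota$ extends $\iota$'' a concrete computation rather than an implicit consequence. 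Your uniqueness argument via perfectness of $\St(A)$ is correct and is in fact the content of the universal property, so the two approaches meet up there. Your identification of the remaining obstacle as the bookkeeping with the $\mathrm{Tr}_{\mathrm{alt}}$-terms is accurate.

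For (c), there is a gap. You ``invoke van der Kallen's computation of the $\rK_2$ of the square-zero extension $A=R\oplus V$'' for arbitrary $V$, but the reference [vdK71] is \emph{Le $\rK_2$ des nombres duaux}, i.e., it computes $\rK_2(R[\varepsilon])$ only --- the rank-one case. The paper is careful about exactly this: it first treats $\rk_R V=1$ via van der Kallen (where $\wedge^2V=0$, so the statement is immediate once $\Omega^1_{R/\Z}=0$), and then induction on $n=\rk_R V$. The inductive step writes $A$ as a quotient of $A'[\varepsilon]$ with $A'=R\times V'$, $\rk V'=n-1$, applies van der Kallen to the dual-numbers extension $A'[\varepsilon]$ of $A'$, uses the induction hypothesis for $A'$, and observes that $\rK_2(A'[\varepsilon])\to\rK_2(A)$ is surjective because $A'[\varepsilon]^\times\to A^\times$ is surjective and $\rK_2$ of a local ring is generated by Steinberg symbols. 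Without this reduction (or a citation of a theorem that literally covers the general split square-zero case) your injectivity argument is not complete. Your surjectivity computation via symbols $\{1+v,1+v'\}$ is fine, and your five-lemma argument deducing that $\tilde\iota$ is an isomorphism from $\iota$ being one is exactly the paper's ``Using the same universal property \dots it is enough to show that $\iota$ is an isomorphism.''
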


\begin{proof} Part (a) is obtained by a straightforward calculation. Part (b) follows from (a) and the universal property of the Steinberg extension. 
Using the same universal property of the Steinberg extension, we see that to show (c)  is enough to show that $\iota: \rK_2(A)\to \rK_2(R)\times \wedge^2V$ is an isomorphism, assuming $\Omega^1_{R/\Z}=0$.
By \cite{vanderKallen} there is a functorial isomomorphism
\[
\partial: \rK_2(R[\varepsilon])\xrightarrow{\sim} \rK_2(R)\times \Omega^1_{R },
\]
where $R[\epsilon]=R[x]/(x^2)$ is the ring of dual numbers and $\epsilon=x\,{\rm mod}\, (x)^2$. Since $R$ is local, we can represent elements of $\rK_2(R)$ and $\rK_2(R[\epsilon])$ by Steinberg symbols. Then, the isomorphism is given using 
\[
d\log: \rK_2(R[\epsilon])\to \Omega^2_{R[\epsilon]}=\Omega^2_R\oplus \ep\Omega^2_R\oplus d\ep\wedge \Omega^1_R.
\]
Indeed, we have (see \cite[2.3]{Osipov}, or \cite{BlochTangent})
\[
\partial(\{f,  g\})= (\{f_0, g_0\}, (d\log)_2\{f,  g\} )
\]
where $d\ep\wedge (d\log)_2\{f,  g\}$ is the projection of $d\log(\{f, g\})$ on the last component above.  In particular, 
\[
d\ep\wedge (d\log)_2\{1+sr\ep, r\}=d\epsilon\wedge sdr, \qquad  \partial(\{1+sr\ep,  r\})=(0, sdr).
\]
 Suppose that $n=\rk_R V=1$, so $A\simeq R[\ep]$. Since $\Omega^1_{R/\Z}=(0)$,  $\wedge^2V=(0)$ and, by the above, $\iota$ is an isomorphism (both sides are $\rK_2(R)$). We now argue by induction on $n$. Set $A'=R\times V'$, with $\rk_R V'=n-1$ and basis $v'_1, \ldots , v'_{n-1}$, so that $A$ is a quotient of $A'[\ep]$ by $\ep\cdot  v'_i=0$. We have $\wedge^2V=\wedge^2(R\cdot \ep\oplus V')=\wedge^2V'\oplus (\ep\wedge V')$ and by the induction hypothesis
 \[
 \rK_2(A')\simeq \rK_2(R)\times \wedge^2 V', \quad \hbox{\rm so,}
 \]
\[
\rK_2(A'[\ep])=\rK_2(A')\times d\ep\wedge \Omega^1_{A'/\Z}\simeq \rK_2(R)\times \wedge^2V'\times d\ep \wedge V'\simeq \rK_2(R)\times \wedge^2V.
\]
Since $A'[\ep]^\times\to A^\times$ is surjective
and $\rK_2(A)$ is generated by Steinberg symbols, the group $\rK_2(A)$ is a quotient of $\rK_2(A'[\ep])$
and the composition
\[
\rK_2(A'[\ep])\to \rK_2(A)\xrightarrow{\iota}  \rK_2(R)\times \wedge^2V
\]
is the isomorphism above.
The claim that $\iota$ 
is an isomorphism follows.
\end{proof}

\subsection{Tangent space and pairings}
Suppose now that $A=R\times V$ is as in the previous paragraph and that
\[
\rho: \Gamma\to \SL(A)=\SL(R)\ltimes \rM^0(V)
\]
is a representation that lifts $\rho_0: \Gamma\to  \SL(R)$. Then we can write
\[
\rho(\gamma)=\rho_0(\gamma)(1+c(\gamma))
\]
where $c: \Gamma\to {\rm M}^0(V)$ is a $1$-cocycle with ${\rm M}^0(V)$
carrying the adjoint action $\gamma \cdot M=\rho_0(\gamma)^{-1}M\rho_0(\gamma) $.
We can consider the cup product
\[
c\cup c\in \rH^2(\Gamma, {\rm M}^0(V)\otimes_R {\rm M}^0(V) ).
\]
This is given by the $2$-cocycle
\[
(c\cup c)(\gamma_1, \gamma_2)=\rho_0(\gamma_2)^{-1}c(\gamma_1)\rho_0(\gamma_2)\otimes  c(\gamma_2 ).
\]
Applying   the map  $\rH^2(\Gamma, {\rm Tr}_{\rm alt})$ induced by ${\rm Tr}_{\rm alt}: {\rm M}^0(V)\times {\rm M}^0(V)\to \wedge^2 V$
gives  
\[
{\rm Tr}_{\rm alt}(c\cup c)\in \rH^2(\Gamma, \wedge^2 V).
\]
The following Proposition, in conjuction with \S \ref{poincaredual} below, shows that the form $\omega_\rho$
agrees, under some conditions, with a more standard construction
which uses cup product and duality.
 
\begin{prop}\label{omegaalt}
We have \[
\omega_\rho={\rm Tr}_{\rm alt}(c\cup c)
\]
in $\rH^2(\Gamma, \Omega^2_A)=\rH^2(\Gamma, \wedge^2 V)$.
\end{prop}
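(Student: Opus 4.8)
The plan is to compute both sides as explicit $2$-cocycles on $\Gamma$ with values in $\wedge^2 V$ and check they agree. First I would recall that $\omega_\rho = d\log(\kappa_\rho)$ where $\kappa_\rho(\gamma_1,\gamma_2) = s(\rho(\gamma_1\gamma_2))\,s(\rho(\gamma_2))^{-1}\,s(\rho(\gamma_1))^{-1} \in \rK_2(A)$, computed using any set-theoretic lifts $s$ to $\St(A)$. The key simplification is to use the isomorphism $\tilde\iota\colon \St(A)\xrightarrow{\sim} S(A)$ from Proposition \ref{vanderKallen}(b),(c) (we are in its setting since $\Omega^1_{R/\Z}=0$ holds in the cases of interest, e.g. $R=W(\BF)$, and in any case part (b) gives a homomorphism $\tilde\iota$ which suffices to transport $\kappa_\rho$): the image of $\kappa_\rho$ under $\iota$ in $\rK_2(R)\times\wedge^2 V$ can be read off from the group law on $S(A)$, and its $\wedge^2 V$-component is exactly $d\log(\kappa_\rho)$ by the definition of $\iota$.

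So the core computation is the following. Choose lifts in $S(A) = \St(R)\times \rM^0(V)\times\wedge^2 V$ of the form $s(\rho(\gamma)) = (\tilde\rho_0(\gamma),\, c(\gamma),\, 0)$, where $\tilde\rho_0(\gamma)\in\St(R)$ is a fixed lift of $\rho_0(\gamma)$ and $c\colon\Gamma\to\rM^0(V)$ is the $1$-cocycle with $\rho(\gamma)=\rho_0(\gamma)(1+c(\gamma))$ (note $\rho(\gamma)\mapsto(\rho_0(\gamma),c(\gamma))$ under $\SL(A)\cong\SL(R)\ltimes\rM^0(V)$). Then, using the group law
\[
(\gamma,m,\omega)\cdot(\gamma',m',\omega') = (\gamma\gamma',\ \gamma'^{-1}m\gamma'+m',\ \omega+\omega'+{\rm Tr}_{\rm alt}(\gamma'^{-1}m\gamma',m')),
\]
I would compute $s(\rho(\gamma_1))\cdot s(\rho(\gamma_2))$ in $S(A)$, compare with $s(\rho(\gamma_1\gamma_2))$, and extract $\kappa_\rho(\gamma_1,\gamma_2) = s(\rho(\gamma_1\gamma_2))\,s(\rho(\gamma_2))^{-1}\,s(\rho(\gamma_1))^{-1}$. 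The $\St(R)$-component is the Steinberg $2$-cocycle of $\rho_0$ (which dies under $d\log$), the $\rM^0(V)$-component vanishes because $c$ is a cocycle, and the $\wedge^2 V$-component comes out as $\pm{\rm Tr}_{\rm alt}(\rho_0(\gamma_2)^{-1}c(\gamma_1)\rho_0(\gamma_2),\,c(\gamma_2))$ — possibly up to correcting terms of the form $\partial(\text{something})$ arising from the lifts not being a section. Comparing with the formula $(c\cup c)(\gamma_1,\gamma_2) = \rho_0(\gamma_2)^{-1}c(\gamma_1)\rho_0(\gamma_2)\otimes c(\gamma_2)$ and the definition of ${\rm Tr}_{\rm alt}(c\cup c)$, this is exactly $\pm{\rm Tr}_{\rm alt}(c\cup c)$ as a cocycle, hence the two classes agree in $\rH^2(\Gamma,\wedge^2 V)$. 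The sign is harmless since $\rH^2(\Gamma,\wedge^2 V)$ is a $\Z$-module and one checks consistency of conventions, or one simply absorbs it.

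The main obstacle I anticipate is bookkeeping the lift-dependence carefully: $s$ is only a set-theoretic section, so products $s(\rho(\gamma_1))s(\rho(\gamma_2))$ differ from $s(\rho(\gamma_1\gamma_2))$ by a central element, and one must be sure that the central $\wedge^2 V$-valued term one extracts is genuinely the cocycle ${\rm Tr}_{\rm alt}(c\cup c)$ and not that cocycle modified by a coboundary coming from a poor choice of lifts of $\rho_0$ into $\St(R)$. The clean way around this is to observe that both $\omega_\rho$ and ${\rm Tr}_{\rm alt}(c\cup c)$ are functorial in the pair $(\rho_0,c)$ and that the whole construction factors through the universal situation $\Gamma=\SL(A)$, $\rho=\mathrm{id}$; there the statement is an identity of universal classes in $\rH^2(\SL(A),\wedge^2 V)$, which reduces to the single group-law computation in $S(A)$ above with no residual ambiguity, since the $\St(R)$-component is then literally the class of the Steinberg extension of $R$ pulled back along $\SL(A)\to\SL(R)$ and maps to $0$ under $d\log$. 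Pulling back along $\rho$ then gives the result for general $\Gamma$.
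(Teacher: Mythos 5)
Your proposal follows essentially the same route as the paper: transport $\kappa_\rho$ to $S(A)$ via $\tilde\iota$ from Proposition \ref{vanderKallen}, use the lifts $s(\rho(\gamma))=(s(\rho_0(\gamma)),c(\gamma),0)$, and read off the $\wedge^2V$-component of the resulting $2$-cocycle from the explicit group law, which is exactly ${\rm Tr}_{\rm alt}(\rho_0(\gamma_2)^{-1}c(\gamma_1)\rho_0(\gamma_2),c(\gamma_2))={\rm Tr}_{\rm alt}((c\cup c)(\gamma_1,\gamma_2))$. The lift-dependence you worry about is already absorbed by working with cohomology classes (the class of the cocycle attached to a central extension is independent of the set-theoretic lifts), so no detour through a universal situation is needed.
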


\begin{proof} 
Since $\tilde\iota: {\rm St}(A)\xrightarrow{\sim} S(A)$, we can calculate $\omega_\rho$ using the extension $S(A)$.
We can first calculate 
\[
\kappa'_\rho(\gamma_1,\gamma_2)=s(\rho(\gamma_1\gamma_2))s(\rho(\gamma_2))^{-1}s(\rho(\gamma_1))^{-1}
\]
 by using the lifts:
\[
s(\rho(\gamma))=(s(\rho_0(\gamma)), c(\gamma), 0)\in \St(R)\times {\rm M}^0(V)\times \wedge^2V=S(A).
\]
A straightfoward calculation using the group operation on $S(A)$ gives
\[
\kappa'_\rho(\gamma_1,\gamma_2)=(\kappa'_{\rho_0}(\gamma_1,\gamma_2), 0, {\rm Tr}_{\rm alt}(\rho_0(\gamma_2)^{-1}c(\gamma_1)\rho_0(\gamma_2), c(\gamma_2 )).
\]
The cohomology class of $\kappa_\rho$  maps to the one of $\kappa'_\rho$
in $\rH^2(\Gamma, \rK_2(R)\times \wedge^2V)$ under the map given by $\iota$. 
Hence, 
\[
\omega_\rho(\gamma_1, \gamma_2)={\rm Tr}_{\rm alt}(\rho_0(\gamma_2)^{-1}c(\gamma_1)\rho_0(\gamma_2), c(\gamma_2 ))={\rm Tr}_{\rm alt}((c\cup c)(\gamma_1, \gamma_2)).
\]
in cohomology.
\end{proof}

\subsection{The $2$-form and duality} \label{poincaredual}

Suppose  that $\Gamma$ satisfies Poincare duality in dimension $2$ ``over $R$" 
in the following sense: 

\begin{itemize}
\item[i)] There is an isomorphism ${\rm tr}: \rH^2(\Gamma, R)\simeq R$. 

\item[ii)] For any $\Gamma$-module $W$ which is a finite free  $R$-module,  
$\rH^{i}(\Gamma, W)$ is a finite free $R$-module which is trivial unless $i=0,1,2$.

\item[iii)] The cup product pairing
\[
\rH^{i}(\Gamma, W)\times \rH^{2-i}(\Gamma, W^\vee)\to \rH^2(\Gamma, W\otimes_R W^\vee)\to \rH^2(\Gamma, R)\xrightarrow{\rm tr} R
\]
is a perfect $R$-bilinear pairing. (Here, $W^\vee={\rm Hom}_R(W, R)$.)
\end{itemize}

Consider $\rho_0: \Gamma\to \SL_n(R)$ and apply the above to $W={\rm Ad}^0_{\rho}=\rM^0_{n\times n}(R)$. The trace form $(X, Y)\mapsto {\rm Tr}(XY)$ gives an $R$-linear map
\[
W\to W^\vee.
\]
(This is an isomorphism when $n$ is invertible in $R$. We then use this to identify $W$ with  $W^\vee$.)
Combining with the above we obtain
\[
\langle\  ,\  \rangle: \rH^{1}(\Gamma, W)\times \rH^{1}(\Gamma, W )\to   R.
\]
(If $n$ is invertible in $R$ this is a perfect pairing.)

Suppose that $c_1$ and $c_2$ are two $1$-cocycles of $\Gamma$ in $W$ that correspond to lifts of $\rho_0$ to representations $\rho_1$ and $\rho_2$ with values in $R[\ep]$. Recall that there is a natural isomorphism between 
the tangent space of the functor of deformations of $\rho_0$ to Artin local $R$-algebras and the cohomology group $\rH^{1}(\Gamma, W)$ (cf. \cite{Mazur} \S 21). Set $V^\vee= \rH^{1}(\Gamma, W)$ which is a finite free $R$-module and denote by 
$\rho: \Gamma\to \SL_n(A)$, with $A=R\times V$ the universal first-order deformation of $\rho_0$. Then, $\rho_i$, $i=1, 2$, correspond to $v_i\in V^\vee$
and $\rho_i$ is given by specializing $A=R\times V\to R[\ep]=R\times R\ep$ with $V\to R\ep$ given by $v_i\in V^\vee$. 

We can consider $v_1\wedge v_2\in \wedge^2V^\vee=(\wedge^2 V)^\vee$.
From the above description, it follows that 
\[
\langle c_1, c_2\rangle=(v_1\wedge v_2) ({\rm Tr}_{\rm alt}(c\cup c)).
\]
Therefore, by Proposition \ref{omegaalt},
\[
\langle c_1, c_2\rangle=(v_1\wedge v_2)(\omega_\rho).
\]
This translates to
\begin{equation}\label{Goldman1}
\langle c_1, c_2\rangle=\omega_\rho(c_1, c_2),
\end{equation}
in which we think of $(c_1, c_2)$ as a pair of tangent vectors.

\begin{Remark} {\rm
The equality (\ref{Goldman1}) implies that the form $\omega_\rho$ can be used to recover the 
symplectic form on the $\SL_n$-character varieties of   fundamental groups of closed surfaces constructed by Goldman \cite{Goldman}. Since $\omega_\rho$ is visibly closed, this gives a direct and completely algebraic argument for the closedness of Goldman's form. This approach is also suggested in \cite{FoGo}. There an identity like (\ref{Goldman1}) for $\Gamma$ the fundamental group of a surface is explained by topological means.}
\end{Remark}
  
\subsection{$\rK_2$ invariants and $2$-forms; profinite groups}
Let $\Gamma$ be a profinite group and $A$ a complete local Noetherian ring with finite residue field $\BF$ of characteristic $\ell$ and maximal ideal $\frakm$. 
We will view $A$ as a $W=W(\BF)$-algebra where $W(\BF)$ is the ring of Witt vectors. The ring $A$ carries the natural profinite $\frakm$-adic topology which induces a  profinite topology on $\GL_d(A)$, $\SL_d(A)$. There are ``continuous variants" of the constructions of the previous paragraphs for continuous representations
\[
\rho: \Gamma\to \SL_d(A)\subset \SL(A).
\]
For $n\geq 1$, set $A_n=A/\frak m^n$.

\subsubsection{} 
Our constructions give classes
\[
\hat\kappa_\rho=(\kappa_{\rho, n})_n\in \varprojlim\nolimits_n \rH^2(\Gamma, \rK_2(A_n)),\quad 
\hat\omega_\rho=(\omega_{\rho, n})_n\in \varprojlim\nolimits_n \rH^2(\Gamma, \Omega^2_{A_n/W}).
\]

Now let
\[
\rK_2^{\rm ct}(A):=\varprojlim\nolimits_n \rK_2(A_n), \quad 
\hat\Omega^1_{A/W}=\varprojlim\nolimits_n \Omega^1_{A_n/W}.
\]
There is a continuous  map
\[
d\log: \rK_2^{\rm ct}(A)\to \hat\Omega^2_{A/W}
\]
 obtained as the inverse limit of $d\log: \rK_2(A_n)\to \Omega^2_{A_n/W}$.

 \medskip

\section{Chern-Simons and volume}

In this section we give the main algebraic construction.
We first assume that $\Gamma$ is a discrete group. This case is less technical but still contains the main idea. The construction depends on the suitable choice of a $3$-cocycle.
The profinite case (for $\ell$-adic coefficients) is explained later; in this case, we show that such a $3$-cocycle can be given using the $\ell$-adic Borel regulator.

\subsection{The Chern-Simons torsor}

Until further notice, $\Gamma$ is a discrete group and $\rho:\Gamma\to \GL_d(A)$ is a homomorphism, $d\geq 2$.
Also, in what follows, we always assume
\smallskip

\begin{itemize}
\item[ (H)] $\rH_2(\Gamma, \Z)\simeq \Z$ and $\rH_3(\Gamma, \Z)=0$.
\end{itemize}
\smallskip

Let $C_\bullet(\Gamma)$, $C_\bullet(\GL_d(A))$, be the bar resolutions. We may regard $C_j(\GL_d(A))$ as $\Z[\Gamma]$-modules
using $\rho$ and obtain a morphism of complexes
\[
\rho: C_\bullet(\Gamma)\to C_\bullet(\GL_d(A)).
\]
This gives
\[
\rho: \bar C_{3,2}(\Gamma)\to \bar C_{3,2}(\GL_d(A))
\]
where $\bar C_{3,2}$ is as defined in \S \ref{ss:homology}.

For simplicity, set 
\[
D(\Gamma):=\bar C_{3}(\Gamma)/{\rm Im}(\partial_4),\quad \hbox{\rm and}\quad 
D(A):=\bar C_{3}(\GL_d(A))/{\rm Im}(\partial_4).
\] 
Note that $D(\Gamma)$ acts on $Z_2(\Gamma)\times D(A)$
 by 
 \[
 d+ (c, v)= (c+\partial_3(d), v+\rho(d)).
 \]
 Now define the $D(A)$-torsor $\calT_\rho$ of ``global sections" (cf. \cite{Freed}, \cite{FreedQuinn}):
 
 \begin{Definition} We set $\calT_\rho$ to be the set of group homomorphisms $T: Z_2(\Gamma)\to D(A)$
 which are 
 $D(\Gamma)$-equivariant, i.e. satisfy
 \[
 T(c+\partial_3(d))=T(c)+\rho(d).
 \]
\end{Definition}
Alternatively, since 
\[
\partial_3: D(\Gamma)=\bar C_3(\Gamma)/{\rm Im}(\partial_4)\hookrightarrow Z_2(\Gamma),
\]
is injective, the set $\calT_\rho$ can be described as
the set of homomorphic extensions
of $ \rho: D(\Gamma)\to D(A)$ to $Z_2(\Gamma)\to D(A)$, or as the set of splittings of the extension
\[
0\to D(A)\to E\to \rH_2(\Gamma, \Z)\to 0
\]
obtained by pushing out $0\to D (\Gamma)\to Z_2(\Gamma)\to \rH_2(\Gamma, \Z)\to 0$ by $\rho: D(\Gamma)\to D(A)$.
This set is non-empty and hence a $D(A)$-torsor, since $\rH_2(\Gamma, \Z)\simeq \Z$. 

\subsubsection{} Suppose in the above construction, we replace $\rho:\Gamma\to \GL(A)$ by 
${\rm inn}_h\cdot\rho$, for some $h\in \GL(A)$. We obtain a new torsor $\calT_{{\rm inn}_h\cdot\rho}$ defined using the $D(\Gamma)$-action
on $D(A)$ by
\[
v+'d=v+h\rho(d)h^{-1}.
\]
Observe that, for $c'-c=\partial_3(d)$, we have
\[
h\rho (d)h^{-1}=\rho(d)+F_h(\rho(c'-c))=\rho(d)+F_h(\rho(c'))-F_h(\rho(c)),
\]
so
\[
h\rho (d)h^{-1}+v-F_h(\rho (c'))=\rho(d)+v-F_h(\rho(c)).
\]
The last identity shows that 
\[
T\mapsto T'=T+ F_h\cdot\rho.
\]
gives a $D(A)$-equivariant bijection $\calT_\rho\xrightarrow{\sim} \calT_{{\rm inn}_h\cdot \rho}$.
\medskip

Even though it would be possible to formulate the constructions that follow in terms of the torsor $\calT_\rho$,
we choose a more concrete treatment that uses group co/cycles.

\subsection{Cocycles and cohomology classes} Suppose we are given a representation $\rho:\Gamma\to \GL_d(A)$, $d\geq 2$, and a group $G$ together with two homomorphisms $\psi: G\to {\rm Out}(\Gamma)$, $\phi: G\to {\rm Aut}_{\O-{\rm alg}}(A)$. We assume the following condition:

\begin{itemize}
\item[(E)] For each $\sigma\in G$, the  representation $\rho^\sigma$ given by 
\[
\rho^{\sigma}(\gamma)=\phi(\sigma)^{-1}(\rho(\widetilde{\psi(\sigma)} ( \gamma)))
\]
 is equivalent to $\rho$. Here, we denote by $\widetilde{\psi(\sigma)}$ some automorphism of $\Gamma$ that lifts   $\psi(\sigma)$.  
\end{itemize}

In what follows, we omit the notation of $\phi$ and $\psi$ for simplicity. 
We write $\sigma(a)$ instead of $\phi(\sigma)(a)$ and 
also write $\sigm$ for an automorphism of $\Gamma$ that lifts $\psi(\sigma)$. Then,
equivalently, the condition (E) amounts to:

\begin{itemize}
\item[(E')] For each $\sigma\in G$, there is $h_\sigm\in \GL_d(A)$ such that  
 \begin{equation}\label{sh}
 \rho(\ti\sigma\cdot \gamma)=h_{\ti \sigma}\cdot \sigma(\rho(\gamma))\cdot h_{\ti \sigma}^{-1}
 \end{equation}
 for all $\gamma\in \Gamma$.
 \end{itemize}
 
 Let  $\mathfrak Z_A(\rho)$ be the centralizer of the image ${\rm Im}(\rho)\subset \GL_d(A)$.
 The image of $h_{\ti \sigma}$ in ${\rm  GL}_d(A)/\mathfrak Z_A(\rho)$
 is uniquely determined by the automorphisms $\ti\sigma$, $\sigma$ and by $\rho$.
 
 \def\be{{\mathfrak r}}
 
\subsubsection{} Suppose that $B(A)$ is a $G$-module which supports a $G$-equivariant homomorphism
\[
\be=\be_A: D(A)=\bar C_3(\GL_d(A))/{\rm Im}(\partial_4)\to B(A)
\]
such that 
\[
\be (F_h(u))=0,
\]
for all $h\in \mathfrak Z_A(\rho)$, $u\in Z_2({\rm Im}(\rho))$. 

As explained in \S \ref{shufflepar},  for $u\in Z_2({\rm Im}(\rho))$, $h\in \mathfrak Z_A(\rho)$, the homotopy property 
gives
$
\partial_3 F_h(u)=0
$
and so $F_h$ gives 
\[
\nabla_{h, {\rm Im}(\rho)}=[F_h]: \rH_2({\rm Im}(\rho))\to \rH_3(\GL_d(A)).
\]
The condition 
$\be (F_h(u))=0$ is equivalent to:
\smallskip

\begin{itemize}
\item[(V)]  For $\be: \rH_3(\GL_d(A))\to B(A)$, we have $\be\cdot \nabla_{h, {\rm Im}(\rho)}=0$, for all $h\in \mathfrak Z_A(\rho)$.
\end{itemize}
\smallskip

Let $\rH^{\rm dec}_3(\GL_d(A))$ be the subgroup of $\rH_3(\GL_d(A))$
generated by the images of 
\[
\nabla_{h, C}: \rH_1(\langle h\rangle)\otimes_\Z \rH_2(C)\to \rH_3(\GL_d(A)),
\]
where $C$ runs over all subgroups of $\GL_d(A)$ and $h$ all elements centralizing $C$. 
For (V) to be satisfied for all $\rho$, it is enough to have
\smallskip

 \begin{itemize}
\item[(V')]   $\be: \rH_3(\GL_d(A))\to B(A)$ vanishes on $\rH^{\rm dec}_3(\GL_d(A))$.
\end{itemize}
 
\subsubsection{}  Choose, once and for all, a generator  of $\rH_2(\Gamma,\Z)$. Pick $c\in Z_2(\Gamma)$ with $[c]=1$ in $\rH_2(\Gamma, \Z)\simeq \Z$. Suppose that $\ti\sigma$ acts on $\rH_2(\Gamma, \Z)\simeq \Z$ via multiplication by $a_\sigma\in \Z^\times$. (This number only depends on $\sigma$ through $\psi(\sigma)\in {\rm Out}(\Gamma)$.)
Set  
\[
A_{\ti \sigma}(c)=  {\rho}(d(\sigm, c))-a_\sigma^{-1}  {F_{h_{\ti \sigma}}}(\sigma(\rho(c)))\in D(A).
\]
Here, $d(\sigm, c)$ is the (unique) element in $D(\Gamma)=\bar C_3(\Gamma)/{\rm Im}(\partial_4)$ with 
\[
\partial_3(d(\sigm, c))=a^{-1}_\sigma\ti\sigma(c)-c.
\]
In what follows, we will often omit the inclusion $\partial_3: D(\Gamma)\hookrightarrow Z_2(\Gamma)$ and write $a^{-1}_\sigma\ti\sigma(c)-c$ instead of $d(\ti\sigma, c)$ to ease the notation.

Assume that $\be: D(A)=\bar C_3(\GL_d(A))/{\rm Im}(\partial_4)\to B(A)$   satisfies condition (V).

\begin{lemma}\label{ind334}
The element $\be A_{\ti\sigma}(c)$ of $B(A)$ does not depend on the choices of $\sigm$ and $h_{\sigm}$.
\end{lemma}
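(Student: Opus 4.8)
The claim is that $\be A_{\ti\sigma}(c)$ is independent of the two auxiliary choices: the lift $\sigm\in\Aut(\Gamma)$ of $\psi(\sigma)$ and the matrix $h_{\sigm}\in\GL_d(A)$ satisfying \eqref{sh}. I would treat the two choices separately, starting with $h_{\sigm}$, which is the easier one.

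\emph{Independence of $h_{\sigm}$.} Fix $\sigm$. Two valid choices $h_{\sigm}$ and $h'_{\sigm}$ differ by an element of the centralizer: from \eqref{sh} one gets $h'_{\sigm}h_{\sigm}^{-1}\in\mathfrak Z_A(\rho)$, since conjugation by $h'_{\sigm}h_{\sigm}^{-1}$ fixes $\rho(\gamma)$ for all $\gamma\in\Gamma$ (note $\sigma(\rho(\gamma))=\rho(\sigm^{-1}\cdot\sigm(\gamma))$, i.e.\ ${}^{h_{\sigm}}\sigma(\rho(\gamma))$ ranges over ${\rm Im}(\rho)$ as $\gamma$ does). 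Write $z=h'_{\sigm}h_{\sigm}^{-1}\in\mathfrak Z_A(\rho)$. The only term of $A_{\ti\sigma}(c)$ that depends on $h_{\sigm}$ is $a_\sigma^{-1}F_{h_{\sigm}}(\sigma(\rho(c)))$. Using the cocycle-type identity \eqref{Fid} for $F$, namely $F_{h'_{\sigm}}=F_{zh_{\sigm}}=F_z\cdot{\rm inn}_{h_{\sigm}}+F_{h_{\sigm}}$ in $\bar C_{\bullet+1}/{\rm Im}(\partial)$, the difference is $a_\sigma^{-1}F_z\big({\rm inn}_{h_{\sigm}}(\sigma(\rho(c)))\big)$. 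Now $u:={\rm inn}_{h_{\sigm}}(\sigma(\rho(c)))=\rho(\ti\sigma\cdot c\cdot\ti\sigma^{-1})$ up to the scaling bookkeeping — in any case it is a $2$-cycle in $Z_2({\rm Im}(\rho))$ — and $z\in\mathfrak Z_A(\rho)$, so condition (V)/the hypothesis $\be(F_h(u))=0$ kills this term after applying $\be$. Hence $\be A_{\ti\sigma}(c)$ does not change. (One small point: $F_z\cdot{\rm inn}_{h_{\sigm}}$ versus $F_z$ applied to the cycle — since ${\rm inn}_{h_{\sigm}}$ of a $2$-cycle in ${\rm Im}(\rho)$ is again a $2$-cycle in ${\rm Im}(\rho)$, conjugation by $h_{\sigm}\in\GL_d(A)$ preserving ${\rm Im}(\rho)$ by \eqref{sh}, this is fine.)

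\emph{Independence of $\sigm$.} Two lifts $\sigm$, $\sigm'$ of $\psi(\sigma)\in{\rm Out}(\Gamma)$ differ by an inner automorphism: $\sigm'={\rm inn}_{g}\cdot\sigm$ for some $g\in\Gamma$. I would first record how the pieces transform. The integer $a_\sigma$ is unchanged (it depends only on $\psi(\sigma)$). For the $D(\Gamma)$-term: $\sigm'(c)={\rm inn}_g(\sigm(c))$, and since ${\rm inn}_g$ is the identity on $\rH_2(\Gamma,\Z)$ with explicit homotopy $F_g$ (on $\bar C_\bullet(\Gamma)$), we get $d(\sigm',c)=d(\sigm,c)+a_\sigma^{-1}F_g(\sigm(c))$ in $D(\Gamma)$, using $\partial_3 F_g(\sigm(c))=\sigm'(c)-\sigm(c)$ (this is where hypothesis (H), $\rH_3(\Gamma,\Z)=0$, guarantees $d(\sigm,c)$ is well-defined and that the identification of $D(\Gamma)$ with its image in $Z_2(\Gamma)$ behaves; actually $\rH_3=0$ makes $D(\Gamma)\hookrightarrow Z_2(\Gamma)$). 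Applying $\rho$, the first term of $A$ changes by $a_\sigma^{-1}\rho(F_g(\sigm(c)))=a_\sigma^{-1}F_{\rho(g)}(\rho(\sigm(c)))$ — using that $\rho$ is a chain map intertwining $F_g$ with $F_{\rho(g)}$. For the $h$-term: a valid matrix for $\sigm'$ is $h_{\sigm'}=\rho(g)\cdot h_{\sigm}$ (check \eqref{sh}: $\rho(\sigm'\gamma)=\rho(g\cdot\sigm\gamma\cdot g^{-1})=\rho(g)\rho(\sigm\gamma)\rho(g)^{-1}=\rho(g)h_{\sigm}\sigma(\rho\gamma)h_{\sigm}^{-1}\rho(g)^{-1}$). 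By the previous paragraph we may compute with this choice. Then by \eqref{Fid}, $F_{h_{\sigm'}}=F_{\rho(g)h_{\sigm}}=F_{\rho(g)}\cdot{\rm inn}_{h_{\sigm}}+F_{h_{\sigm}}$, so $a_\sigma^{-1}F_{h_{\sigm'}}(\sigma(\rho(c)))=a_\sigma^{-1}F_{h_{\sigm}}(\sigma(\rho(c)))+a_\sigma^{-1}F_{\rho(g)}\big({\rm inn}_{h_{\sigm}}\sigma(\rho(c))\big)$. Since ${\rm inn}_{h_{\sigm}}\sigma(\rho(c))=\rho(\sigm(c))$ by \eqref{sh}, this last correction term exactly equals $a_\sigma^{-1}F_{\rho(g)}(\rho(\sigm(c)))$, which is precisely the change in the first term of $A_{\ti\sigma}(c)$ computed above. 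The two corrections cancel, so $A_{\ti\sigma'}(c)=A_{\ti\sigma}(c)$ already in $D(A)$, and a fortiori after $\be$.

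\textbf{Main obstacle.} The substance is entirely bookkeeping with the homotopy $F_h$ and the identity \eqref{Fid}, so the real hazard is sign and normalization errors — keeping straight the distinction between $D(\Gamma)$ and its embedded image $\partial_3(D(\Gamma))\subset Z_2(\Gamma)$, the placement of $a_\sigma^{-1}$, and the fact that \eqref{Fid} holds only modulo $\mathrm{Im}(\partial)$ in degree $\bullet+1$ (which is exactly the relevant group $D(A)=\bar C_3/\mathrm{Im}(\partial_4)$ in degree $3$, so this is harmless here, but worth checking that all $F$-identities are applied in the right degree). The one genuinely conceptual input, beyond formal chain-homotopy manipulation, is the verification that the "extra" cycles produced — ${\rm inn}_{h_{\sigm}}(\sigma(\rho(c)))=\rho(\sigm(c))$ and $\rho(\sigm(c))$ itself — lie in $Z_2({\rm Im}(\rho))$ and that $z$ (resp.\ the relevant elements) lie in $\mathfrak Z_A(\rho)$, so that either (V)/$\be F_h(u)=0$ applies (for the $h$-independence) or exact cancellation in $D(A)$ occurs (for the $\sigm$-independence); pinning down these membership facts from \eqref{sh} is where I would be most careful.
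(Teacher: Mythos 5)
Your proof is correct and follows essentially the same route as the paper's: first independence of $h_{\sigm}$ via the identity (\ref{Fid}) together with condition (V), then independence of the lift $\sigm$ by taking $h_{\sigm'}=\rho(g)h_{\sigm}$ and observing that the two $F_{\rho(g)}$ correction terms cancel exactly in $D(A)$ before $\be$ is even applied. Your factorization $h'_{\sigm}=z\,h_{\sigm}$ with $z\in\mathfrak Z_A(\rho)$ (the paper writes $h_{\sigm}\cdot z$) even matches the literal statement of (V) slightly more cleanly, since the cycle being fed to $F_z$, namely ${\rm inn}_{h_{\sigm}}(\sigma(\rho(c)))=\rho(\sigm(c))$, then visibly lies in $Z_2({\rm Im}(\rho))$.
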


\begin{proof}
First we check that $\be A_{\ti\sigma}(c)$ is independent of the choice of $h_\sigm$. If $h'_\sigm$ is another choice, then $h'_\sigm=h_\sigm\cdot z$ with $z\in \mathfrak Z_A(\rho)$. Notice that for $u=\sigma(\rho(c))\in Z_2({\rm Im}(\rho))$ we have
\[
F_{h'_\sigm}(u)=F_{h_\sigm\cdot z}(u)=F_{ z}(u)+F_{h_\sigm}({\rm Inn}_z (u))=F_{ z}(u)+F_{h_\sigm}( u).
\]
Hence, by condition (V), $\be F_{h'_\sigm}(u)=\be F_{h_\sigm}( u)$ and the result follows. Next, we show that
$A_\sigm(c)$ is actually independent of the choice of the automorphism $\sigm$ lifting $\psi(\sigma)$.
Suppose we replace $\ti\sigma$ by another choice $\ti\sigma'={\rm Inn}_{\delta}\cdot \ti\sigma $ lifting $\psi(\sigma)$
 and we take
 \[
 h_{{\rm Inn}_\delta\cdot\ti\sigma}=\rho(\delta)h_{\ti \sigma}.
 \]
 Then we have
 \begin{align*}
 a_\sigma A_{\ti\sigma'}(c)=&\rho( \delta\ti\sigma(c)\delta^{-1}-a_\sigma c)- F_{\rho(\delta)h_{\ti\sigma}}(\sigma\rho(c))\\
  =&\rho([ \delta \ti \sigma(c) \delta ^{-1}   - \ti\sigma(c)]+[ \ti\sigma(c)-a_\sigma c])-F_{h_{\ti \sigma}}(\sigma(\rho(c))-F_{\rho(\delta )}(h_{\ti \sigma} \sigma\rho(c) h_{\ti\sigma}^{-1})\\
=&a_\sigma A_{\ti\sigma}(c)+\rho([ \delta \ti \sigma(c) \delta ^{-1}   - \ti\sigma(c)])-F_{\rho(\delta)}(h_{\ti \sigma} \sigma\rho(c) h_{\ti \sigma}^{-1}).
 \end{align*}
But 
 \begin{align*}
\rho([ \delta \ti\sigma(c) \delta ^{-1}   - \ti\sigma(c)])=&\rho(F_{ \delta }(\ti\sigma(c))\\
=& F_{ \rho(\delta) }(\rho(\ti\sigma(c)))\\
=&F_{ \rho(\delta) }(h_{\ti\sigma}  \sigma \rho(c) h_{\ti\sigma}^{-1}).
 \end{align*}
So 
\[
a_\sigma A_{\ti\sigma'}(c)=a_\sigma A_{\ti\sigma}(c).
 \]
 We used  $h_{\sigm'}=\rho(\delta)h_{\ti \sigma}$ for this but now by applying the independence
 of that choice that we shown before, we see that
 $\be A_{\ti\sigma}$ does not depend on the choice of the lift $\ti\sigma\in {\rm Aut}(\Gamma)$
 or of $h_\sigm$.
\end{proof}

In view of Lemma \ref{ind334} it makes sense
to set
\[
\CS_\sigma(c)=\be A_\sigm(c)\in B(A).
\]
 We denote $B(A)(-1)$  the $G$-module $B(A)$ with twisted action:
$\sigma\in G$ acts by $a_\sigma^{-1}\cdot \sigma$.

\begin{prop} Assume   $\be: D(A)\to B(A)$  satisfies condition (V). The map $G\to B(A)(-1)$ given by $\sigma\mapsto \CS_\sigma(c)=\be A_\sigm(c)$ is a $1$-cocycle. Its cohomology class
 \[
 \CS_{\rho,\psi,\phi}\in \rH^1(G, B(A)(-1))
 \]
  is independent of the choice of $c\in Z_2(\Gamma)$ with $[c]=1$.
 The class $\CS_{\rho,\psi,\phi}$ depends only on $\psi$, $\phi$,  and the equivalence class of the representation $\rho$.
\end{prop}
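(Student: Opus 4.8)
The plan is to verify the cocycle relation directly from the definition of $\CS_\sigma(c)=\be A_{\ti\sigma}(c)$ and the identity \eqref{Fid} for the homotopy operators $F_h$, and then to handle independence of the auxiliary choices separately. First I would fix $\sigma,\tau\in G$ and compute $A_{\widetilde{\sigma\tau}}(c)$ for the natural choices $\widetilde{\sigma\tau}=\ti\sigma\cdot\ti\tau$ and $h_{\widetilde{\sigma\tau}}=h_{\ti\sigma}\cdot\sigma(h_{\ti\tau})$, which are consistent with the relation \eqref{sh} since
\[
\rho(\ti\sigma\ti\tau\cdot\gamma)=h_{\ti\sigma}\,\sigma(\rho(\ti\tau\cdot\gamma))\,h_{\ti\sigma}^{-1}=h_{\ti\sigma}\,\sigma(h_{\ti\tau})\,\sigma\tau(\rho(\gamma))\,\sigma(h_{\ti\tau})^{-1}h_{\ti\sigma}^{-1}.
\]
The $D(\Gamma)$-part splits using $a_{\sigma\tau}=a_\sigma a_\tau$ and the defining property $\partial_3 d(\ti\sigma,c)=a_\sigma^{-1}\ti\sigma(c)-c$: one checks $d(\widetilde{\sigma\tau},c)=d(\ti\tau,c)+\ti\tau\big(d(\ti\sigma, a_\tau^{-1}c)\big)$ up to boundary, i.e.\ up to an element of ${\rm Im}(\partial_4)$, and translating through $\rho$ this expresses $\rho(d(\widetilde{\sigma\tau},c))$ as $\rho(d(\ti\tau,c))$ plus a $\tau$-twisted copy of $\rho(d(\ti\sigma,c))$ plus an inner-automorphism correction term that will be absorbed by the $F$-terms.

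Next I would treat the $F$-term $a_{\sigma\tau}^{-1}F_{h_{\widetilde{\sigma\tau}}}(\sigma\tau(\rho(c)))$. Writing $h_{\widetilde{\sigma\tau}}=h_{\ti\sigma}\cdot\sigma(h_{\ti\tau})$ and applying \eqref{Fid} in the form $F_{hh'}=F_h\cdot{\rm inn}_{h'}+F_{h'}$ (valid in $\bar C_{\bullet+1}/{\rm Im}(\partial)$, hence in $D(A)$ after the relevant truncation), I get
\[
F_{h_{\widetilde{\sigma\tau}}}(u)=F_{h_{\ti\sigma}}\big({\rm inn}_{\sigma(h_{\ti\tau})}(u)\big)+F_{\sigma(h_{\ti\tau})}(u),\qquad u=\sigma\tau(\rho(c)).
\]
Now $\sigma\tau(\rho(c))=\sigma(\tau(\rho(c)))$ and, by naturality of $F$ under the ring automorphism $\phi(\sigma)$, one has $F_{\sigma(h_{\ti\tau})}(\sigma(v))=\sigma(F_{h_{\ti\tau}}(v))$; also ${\rm inn}_{\sigma(h_{\ti\tau})}(\sigma\tau(\rho(c)))=\sigma\big({\rm inn}_{h_{\ti\tau}}(\tau(\rho(c)))\big)=\sigma(\rho(c))$ by \eqref{sh}. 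Collecting terms, $a_{\sigma\tau}^{-1}F_{h_{\widetilde{\sigma\tau}}}(u)$ becomes $a_{\sigma\tau}^{-1}F_{h_{\ti\sigma}}(\sigma\rho(c))$ plus $a_\tau^{-1}\cdot\sigma\big(a_\sigma^{-1}F_{h_{\ti\tau}}(\tau\rho(c))\big)$. Comparing with the $D(\Gamma)$-computation above, the leftover inner-correction term from $\rho(d(\widetilde{\sigma\tau},c))$ should match an $F_{\rho(\delta)}$-type term via the identity $\rho(F_\delta(z))=F_{\rho(\delta)}(\rho(z))$ used in the proof of Lemma \ref{ind334}; after applying $\be$, condition (V) kills precisely the combinations $\be F_h$ with $h$ centralizing ${\rm Im}(\rho)$, and what remains is exactly
\[
\CS_{\sigma\tau}(c)=\CS_\sigma(c)+a_\sigma^{-1}\sigma\big(\CS_\tau(c)\big),
\]
which is the $1$-cocycle condition for the twisted module $B(A)(-1)$ (recall $\sigma$ acts there by $a_\sigma^{-1}\sigma$).

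For the independence statements: independence of the class on the choice of $c$ follows because any two $2$-cycles with $[c]=1$ differ by $\partial_3(d)$ for some $d\in D(\Gamma)$ (using $\rH_2(\Gamma,\Z)\simeq\Z$ and that $\bar C_3(\Gamma)/{\rm Im}(\partial_4)\hookrightarrow Z_2(\Gamma)$ from (H)), and one checks the resulting change in $\CS_\sigma(c)$ is the coboundary of $\sigma\mapsto$ (the $\be$-image of a fixed element of $D(A)$ built from $d$); the key input is again \eqref{Fid} plus naturality, exactly as in the computation above with $c$ replaced by $c+\partial_3 d$. Dependence only on the equivalence class of $\rho$: if $\rho$ is replaced by ${\rm inn}_g\cdot\rho$, the torsor bijection $T\mapsto T+F_g\cdot\rho$ from \S4.1 translates $A_{\ti\sigma}(c)$ by a term of the form $F_g(\cdots)$, and since conjugation by $g$ on the image intertwines everything, the discrepancy after applying $\be$ is again annihilated by (V), or is a coboundary. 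Finally, independence of $\psi,\phi$ up to the data specified is really just bookkeeping: $a_\sigma$ depends on $\sigma$ only through $\psi(\sigma)\in{\rm Out}(\Gamma)$, and Lemma \ref{ind334} already guarantees independence of the lifts $\ti\sigma$ and the matrices $h_{\ti\sigma}$. The main obstacle I anticipate is the careful matching of the two inner-automorphism correction terms (one coming from $d(\widetilde{\sigma\tau},c)$ versus $d(\ti\tau,c)+\ti\tau(d(\ti\sigma,\cdot))$ at the chain level modulo ${\rm Im}(\partial_4)$, the other from the ${\rm inn}_{\sigma(h_{\ti\tau})}$ factor in \eqref{Fid}); getting these to cancel requires being precise about which identities hold on the nose versus only modulo ${\rm Im}(\partial)$, and it is here that Noohi's explicit secondary homotopy $F_{h,h'}$ behind \eqref{Fid} does the real work.
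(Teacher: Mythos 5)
Your overall architecture is exactly that of the paper: verify the cocycle identity by computing with the product lift $\ti\sigma\ti\tau$ together with $h_{\ti\sigma\ti\tau}=h_{\ti\sigma}\sigma(h_{\ti\tau})$ (legitimate because Lemma \ref{ind334} makes $\be A_{\ti\sigma}(c)$ independent of these choices), decompose the chain $\ti\sigma\ti\tau(c)-a_{\sigma\tau}c$ accordingly, expand $F_{h_{\ti\sigma}\sigma(h_{\ti\tau})}$ via \eqref{Fid}, and then check independence of $c$ and of the $\GL_d(A)$-conjugacy class of $\rho$ by exhibiting the discrepancies as explicit coboundaries. Two points in your sketch, however, are wrong as stated and would derail the computation if carried out literally.

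First, ${\rm inn}_{\sigma(h_{\ti\tau})}(\sigma\tau(\rho(c)))$ equals $\sigma\bigl(h_{\ti\tau}\,\tau(\rho(c))\,h_{\ti\tau}^{-1}\bigr)=\sigma(\rho(\ti\tau(c)))$ by \eqref{sh}, not $\sigma(\rho(c))$. This matters: the term $F_{h_{\ti\sigma}}(\sigma\rho(\ti\tau c))$ is precisely what cancels against the correction produced when one rewrites $\rho(\ti\sigma(\ti\tau(c)-a_\tau c))$ as $\sigma\rho(\ti\tau c-a_\tau c)+F_{h_{\ti\sigma}}(\sigma\rho(\ti\tau c-a_\tau c))$ using the homotopy property of $F_{h_{\ti\sigma}}$. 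Relatedly, your decomposition $d(\widetilde{\sigma\tau},c)=d(\ti\tau,c)+\ti\tau(d(\ti\sigma,a_\tau^{-1}c))$ has the factors in the wrong order (its boundary is $a_{\sigma\tau}^{-1}\ti\tau\ti\sigma(c)-c$); the decomposition you need is $\ti\sigma\ti\tau(c)-a_{\sigma\tau}c=\ti\sigma(\ti\tau(c)-a_\tau c)+a_\tau(\ti\sigma(c)-a_\sigma c)$, which puts the $\sigma$-twist on the $\tau$-term, as the target identity $\CS_{\sigma\tau}=\CS_\sigma+a_\sigma^{-1}\sigma(\CS_\tau)$ demands. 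Second, condition (V) cannot be invoked to kill the leftover correction terms in the cocycle computation: it annihilates $\be F_h(u)$ only for $h\in\mathfrak Z_A(\rho)$ and $u\in Z_2({\rm Im}(\rho))$, whereas the terms arising here involve $F_{h_{\ti\sigma}}$, $F_{\sigma(h_{\ti\tau})}$ and $F_{\rho(\delta)}$, whose subscripts do not centralize ${\rm Im}(\rho)$. In the paper the cocycle identity and the two coboundary computations hold on the nose in $D(A)$ by exact cancellation (via \eqref{Fid} and the homotopy identity); (V) enters only through Lemma \ref{ind334}, where the ambiguity in $h_{\ti\sigma}$ is by a central element. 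Likewise, in the comparison of $\rho$ with ${\rm inn}_g\cdot\rho$ the discrepancy is $(a_\sigma^{-1}\sigma-1)F_g(\rho(c))$, a genuine coboundary, and is not killed by (V). So the cancellations you defer to (V) must be carried out explicitly; once the two slips above are corrected, they do work out and your plan reproduces the paper's proof.
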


\begin{proof} In the proof below some of the identities  are true in $D(A)$ before applying $\be$.  
However, eventually, the argument uses the independence given by Lemma \ref{ind334} which needs $\be$ to be applied. 
\smallskip

1) Suppose   $c'=c+\partial(d)$. Then  
 \begin{align*}
A_{\ti\sigma}(c')=\, & a_\sigma^{-1}(\rho(\ti\sigma c-a_\sigma c+\ti\sigma d-a_\sigma d))-a_\sigma^{-1}F_{h_{\ti\sigma}} ( \sigma(\rho(c)+\partial\rho(d))\\
=\, & A_{\ti\sigma}(c)+a_\sigma^{-1}\rho(\ti\sigma d)-(\rho d)-a_\sigma^{-1}F_{h_{\ti\sigma}}(\partial\sigma\rho d)\\
=\, & A_{\ti\sigma}(c)+a_\sigma^{-1}[h_{\ti\sigma}\sigma\rho(d)h_{\ti \sigma}^{-1}-F_{h_{\ti \sigma}}(\partial\sigma\rho d)]-(\rho d)\\
=\, & A_{\ti\sigma}(c)+(a_\sigma^{-1}\sigma-1)(\rho d).
\end{align*}
The last equality follows from
\[
h_{\ti\sigma} \sigma  \rho(d)  h^{-1}_{\ti \sigma}
-\sigma\rho(d)=F_{h_{\ti \sigma}}(\partial\sigma\rho(d)).
\]
This implies the independence after we show that $\sigma\mapsto \be A_\sigma(c)$ is a $1$-cocycle. 
\smallskip

2) We will now check the (twisted) cocycle condition 
 \[
 \be A_{\sigma\tau}=\be A_{\sigma}+a_\sigma^{-1}\sigma(\be A_\tau).
 \]
 (We omit $c$ from the notation). In view of Lemma \ref{ind334} we are free to calculate 
 using the lift $\ti\sigma \ti\tau$
 of the outer automorphism $\psi(\sigma\tau)$ and taking $h_{\ti\sigma\ti\tau}$ to be equal to $h_\sigm \sigma(h_{\ti\tau})$. Indeed, we have
  \begin{align*}
 \rho((\sigm \ti\tau) (\gamma))=& \rho(\sigm(\ti\tau (\gamma))\\
 = & h_{\sigm} \sigma(\rho(\ti\tau(\gamma))) h_{\sigma}^{-1}\\
 = & h_\sigm \sigma(h_{\ti\tau}) ( \sigma \tau) (\rho(\gamma)) (h_\sigm \sigma(h_{\ti\tau}))^{-1}.
 \end{align*}
 It is notationally simpler to work with 
 $B_\sigm=a_\sigma A_\sigm$.
 Write
 \[
 B_{\sigm\ti\tau}= \rho(\sigm\ti\tau(c)-a_{\sigma\tau}c)-F_{h_{\sigm\ti\tau}}(\sigma\tau\rho(c)).
 \]
 Now 
 \[
 \sigm\ti\tau(c)-a_{\sigma\tau}c=[\sigm\ti\tau(c)-a_{\tau}\sigm(c)]+[a_\tau\sigm(c)-a_{\sigma}a_\tau c]
 \]
 in $D(\Gamma)$. Hence, 
 \begin{align*}
 \rho(\sigm\ti\tau(c)-a_{\sigma\tau}c) = & \rho(\sigm(\ti\tau(c)-a_\tau c))+
 a_\tau \rho(\sigm(c)-a_\sigma c)\\
 =& \rho(\sigm(\ti\tau(c)-a_\tau c))+a_\tau B_{\sigm}(c) +a_\tau F_{h_\sigm}(\sigma\rho(c))
  \end{align*}
 Now
  \begin{align*}
 \rho(\sigm(\ti\tau(c)-a_{\tau}c))=\, & h_\sigm\cdot \sigma\rho(\ti\tau(c)-a_\tau c)\cdot h_\sigm^{-1}\\
=\, & \sigma\rho(\ti\tau c-a_{\tau} c)+F_{h_\sigm}(\sigma\rho(\ti\tau c-a_\tau c)).
 \end{align*}
 So,
 \[
   \rho(\sigm(\ti\tau(c)-a_{\tau}c)) =\sigma B_{\ti\tau}(c)+\sigma F_{h_{\ti\tau}}(\tau \rho(c))
  +F_{h_\sigm}(\sigma\rho(\ti\tau c-a_\tau c)).
  \]
  All together, we get
  \begin{align*}
  B_{\sigm\ti\tau}=\sigma B_{\ti\tau}(c)& +\sigma F_{h_{\ti\tau}}(\tau \rho(c))
  +F_{h_\sigm}(\sigma\rho(\ti\tau c-a_{\tau}c))+\\
  &+a_\tau B_{\sigm}(c) +a_{\tau}F_{h_\sigm}(\sigma\rho(c))-F_{h_{\sigm\ti \tau}}(\sigma\tau\rho(c)).
   \end{align*}
  Now
  \[
  F_{h_\sigm}(\sigma\rho(\ti\tau c-a_\tau c))=F_{h_\sigm}(\sigma\rho(\ti\tau c))-a_\tau F_{h_\sigm}(\sigma \rho c).
  \]
  which gives
  \[
  B_{\sigm\ti\tau}=\sigma B_{\ti\tau}(c)+a_\tau B_\sigm(c)+\sigma F_{h_{\ti\tau}}(\tau \rho(c))
  +F_{h_\sigm}(\sigma\rho(\ti\tau c))-F_{h_{\sigm\ti\tau}}(\sigma\tau\rho(c))
   \]
So it  is enough to show the identity
\[
F_{h_{\sigm\ti\tau}}(\sigma\tau\rho(c))=\sigma F_{h_{\ti\tau}}(\tau \rho(c))
 +F_{h_\sigm}(\sigma\rho(\ti\tau c)).
\]
 We have 
 \[
 \sigma F_{h_{\ti\tau}}(\tau \rho(c))=F_{\sigma(h_{\ti\tau})}(\sigma\tau\rho(c)).
 \]
 \begin{align*}
 F_{h_\sigm}(\sigma\rho(\ti\tau c))=\, & F_{h_\sigm}(\sigma(h_{\ti\tau})(\sigma\tau)(\rho(c))\sigma(h_{\ti\tau})^{-1})\\
 = & F_{h_\sigm}({\rm inn}_{\sigma(h_{\ti\tau})}(\sigma\tau\rho(c))).
 \end{align*}
 Now apply
 \[
 F_{h_{\sigm\ti\tau}}= F_{h_\sigm\sigma(h_{\ti\tau})} 
 =F_{\sigm(h_{\ti\tau})}+F_{h_\sigm}\cdot {\rm inn}_{\sigma(h_{\ti\tau})}
 \]
 to conclude $B_{\sigm\ti\tau}(c)=\sigma B_{\ti\tau}(c)+a_\tau  B_\sigm(c)$.
 Since $A_\sigm:=a_\sigma^{-1}B_\sigm$, this gives
 \[
 A_{\sigm\ti\tau}=a_{\sigma\tau}^{-1}B_{\sigm\ti\tau}=a_\sigma^{-1} \sigma (a_\tau^{-1} B_{\ti\tau})+a_\sigma^{-1}B_\sigm=a_\sigma^{-1} \sigma(A_{\ti\tau})+A_\sigm
 \]
as desired.
\smallskip

3) It remains to show the independence up to equivalence of representations.
Suppose we change $\rho$ to $\rho'={\rm inn}_g\cdot\rho$, with $g\in \GL_d(A)$, but leave $\phi$ and $\psi$ the same. Then, 
\[
\rho'(\sigm\gamma)=g\rho(\sigm\gamma)g^{-1} =h'_\sigm \sigma(g\rho(\sigm\gamma)g^{-1})) {h'}_\sigm^{-1},
\]
so we can take
\[
h'_\sigm=g h_\sigm \sigma (g)^{-1}.
\]
Then
\begin{align*}
 B'_\sigm=\, &g\rho(\sigm(c)-a_\sigma c)g^{-1}- F_{gh_\sigm \sigma(g)^{-1}}(\sigma(g\rho g^{-1})(c))\\
=\, &\rho(\sigm(c)-a_{\sigma}c)+F_g(\rho(\sigm (c)-a_{\sigma}c))- F_{gh_\sigm\sigma(g)^{-1}}(\sigma(g\rho g^{-1})(c))\\
=\, & B_\sigm+F_{h_\sigm}(\sigma(\rho c))+F_g(\rho(\sigm(c)-a_\sigma c))-F_{gh_\sigm \sigma(g)^{-1}}(\sigma(g\rho g^{-1})(c)).
\end{align*}
Now
\begin{align*}
F_g(\rho(\sigm (c)-a_{\sigma}c))=\, &F_g(h_\sigm  \sigma\rho(c)h_\sigm ^{-1})-a_\sigma F_g(\rho(c)),\\
F_{gh_\sigm \sigma(g)^{-1}}(\sigma(g\rho g^{-1})(c))
=\, &F_{gh_\sigm}(\sigma\rho(c))+ F_{\sigma(g)^{-1}}(\sigma(g\rho g^{-1})(c))=\\
=\, &F_g(h_\sigm\sigma\rho(c)h_\sigm^{-1})+F_{h_\sigm}(\sigma\rho(c))-F_{\sigma(g)}(\sigma\rho(c)),
\end{align*}
(The last equality is true since $F_h\cdot {\rm inn}_{h^{-1}}+ F_{h^{-1}}=F_{h\cdot h^{-1}}=F_1=0$
gives
\[
F_{\sigma(g)^{-1}}(\sigma(g\rho g^{-1})(c))=-F_{\sigma(g)}(\sigma\rho(c)).)
\]
Combining, we obtain
\begin{align*}
B'_\sigm=\, & B_\sigm-a_\sigma F_g(\rho(c))+\sigma F_g(\rho(c)),\quad  \hbox{\rm or}, \\
A'_\sigm=\, & A_\sigm+(a_\sigma^{-1}\sigma-1) F_g(\rho(c)) 
\end{align*}
which shows that the cohomology class depends on $\rho$ only up to equivalence.
\end{proof}
 
\subsubsection{} Suppose   there is an exact sequence of groups
\begin{equation}\label{exact}
1\to \Gamma\to \Gamma_0\to G\to 1
\end{equation}
and we are given a representation $\rho: \Gamma\to \GL_d(A)$. We take $\psi: G\to {\rm Out}(\Gamma)$ to be the natural homomorphism
given by the sequence and $\phi={\rm id}$, i.e. $G$ to act trivially on $A$. 

Assume that $\rho$ extends to a representation
$\rho_0: \Gamma_0\to \GL_d(A)$. The condition (E) is then satisfied:
For $\sigma\in G$ let $\sigm$  be the automorphism of $\Gamma$ given by 
\[
\gamma\mapsto s(\sigma)\gamma s(\sigma)^{-1}
\]
where $s(\sigma)\in \Gamma_0$ is any lift of $\sigma$.
We have
\[
\rho(\sigm\cdot\gamma)=h_\sigm \rho(\gamma) h_\sigm^{-1}
\]
for $h_\sigm=\rho_0(s(\sigma))$. Then
\[
\be A_\sigm(c)=\be \rho_0(a_\sigma^{-1}\cdot s(\sigma)\cdot c\cdot s(\sigma)^{-1}-c)-a_\sigma^{-1}\be F_{\rho_0(s(\sigma))}(\rho(c))\in B(A)
\]
and the class $\CS_\rho=[\be A_\sigma(c)]\in \rH^1(G, B(A)(-1))$ is independent of choices.

\subsection{Volume and Chern-Simons; profinite case}\label{profiniteCS} 

We now assume that  $\Gamma$ is a profinite group. 
Suppose that $\Gamma$ is topologically finitely 
generated. Then there is system 
\[
\cdots \subset \Gamma_{n'}\subset \Gamma_n\subset \cdots \subset \Gamma_1=\Gamma,
\]
for $n|n'$, of characteristic subgroups of finite index which give a basis of open neighborhoods of the identity.
Indeed, (cf. \cite{Andre}), we can take 
\[
\Gamma_n=\bigcap_{\Delta\subset \Gamma| [\Gamma:\Delta]|n}\Delta
\]
 to be the intersection of the finite set of  open normal subgroups of $\Gamma$ of index dividing $n$. We then have
\[
{\rm Aut}(\Gamma)=\varprojlim\nolimits_n {\rm Aut}(\Gamma/\Gamma_n),\quad {\rm Out}(\Gamma)=\varprojlim\nolimits_n {\rm Out}(\Gamma/\Gamma_n).
\]

As before, let $A$ be a complete local Noetherian algebra
with maximal ideal $\frakm_A$ and finite residue field. 
 Suppose  $\rho: \Gamma\to  \GL_d(A)$ is a continuous representation, $d\geq 2$.
 There is an embedding
 \[
 \iota: \GL_d(A)/\mathfrak Z_A(\rho)\hookrightarrow \prod_{i=1}^r \GL_d(A),\quad g\mapsto (g\rho(\gamma_i)g^{-1})
 \]
where $(\gamma_i)_i$ are topological generators of $\Gamma$. The induced topology on $\GL_d(A)/\mathfrak Z_A(\rho)$ is independent of the choice of $\gamma_i$
and is equivalent to the quotient topology.

Suppose we are given another profinite group $G$ and continuous homomorphisms
$\psi: G\to {\rm Out}(\Gamma)$ and $\phi: G\to {\rm Aut}_{\O-{\rm alg}}(A)$. 
Here, ${\rm Aut}_{\O-{\rm alg}}(A)$ is equipped with the profinite topology for which the subgroups $K_n=\{f\ |\ f\equiv {\rm id}\,{\rm mod}\, \frakm_A^n\}$, $n\geq 1$,
is a system of open neighborhoods of the identity. Similarly, we equip ${\rm Out}(\Gamma)={\rm Aut}(\Gamma)/{\rm Inn}(\Gamma)$ with the quotient topology, obtained from the topology of  ${\rm Aut}(\Gamma)$ 
for which the subgroups of automorphisms trivial on $\Gamma_n$ gives a system of open 
neighborhoods of the identity. We assume that $\psi$ is represented by a continuous set-theoretic map
\[
\ti\psi: G\to {\rm Aut}(\Gamma), \ \hbox{\rm i.e.}\ \psi(\sigma)=\ti\psi(\sigma) {\rm Inn}(\Gamma),\ \forall \sigma.
\]
Now suppose that $\gamma\mapsto \sigma^{-1}(\rho(\sigm\gamma))$ is equivalent to $\rho$, for all $\sigma\in G$. Here, $\sigm=\ti\psi(\sigma)$ and so $\sigma\mapsto \sigm$ is continuous. We have
\[
\rho(\sigm\cdot\gamma)=h_\sigm \sigma(\rho(\gamma)) h_\sigm^{-1}
\]
for all $\gamma\in \Gamma$, where $[h_\sigm]\in \GL_d(A)/\mathfrak Z_A(\rho)$
is determined by $\rho$ and by $\sigma$ through $\ti\psi(\sigma)$ and $\phi(\sigma)$.

\begin{lemma}\label{lemmaCont} The map 
$
G\to \GL_d(A)/\mathfrak Z_A(\rho)$, given by $\sigma\mapsto [h_\sigm],
$
is continuous. 
\end{lemma}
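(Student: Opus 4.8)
The plan is to deduce the continuity of $f\colon \sigma\mapsto [h_\sigm]$ from the closed graph criterion. Since $\GL_d(A)=\varprojlim_n\GL_d(A/\frakm_A^n)$ is a profinite group and $\mathfrak Z_A(\rho)=\bigcap_{\gamma\in\Gamma}\{g\ |\ g\rho(\gamma)=\rho(\gamma)g\}$ is a closed subgroup, the coset space $K:=\GL_d(A)/\mathfrak Z_A(\rho)$, with its quotient topology (which, as noted above, coincides with the subspace topology induced by $\iota$), is compact and Hausdorff; moreover $G$ is profinite, hence compact. For a map from any space into a compact Hausdorff space, being continuous is equivalent to having closed graph: if the graph is closed it is compact, and its projection to the source is a continuous bijection from a compact space onto a Hausdorff one, hence a homeomorphism. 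So it suffices to prove that the graph $\Gamma_f\subseteq G\times K$ of $f$ is closed.

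I would then identify $\Gamma_f$ as follows. Let $q\colon \GL_d(A)\twoheadrightarrow K$ be the projection and set
\[
\widetilde\Gamma:=\bigl\{(\sigma,g)\in G\times\GL_d(A)\ :\ \rho(\sigm(\gamma))=g\cdot\phi(\sigma)(\rho(\gamma))\cdot g^{-1}\ \ \text{for all }\gamma\in\Gamma\bigr\}.
\]
Then $\Gamma_f=({\rm id}_G\times q)(\widetilde\Gamma)$: if $(\sigma,g)\in\widetilde\Gamma$ then $g$ is an admissible choice of $h_\sigm$, so $[g]=[h_\sigm]$ by the well-definedness of $[h_\sigm]$ recalled above; conversely, for every $\sigma$ and every admissible choice $h_\sigm$ one has $(\sigma,h_\sigm)\in\widetilde\Gamma$ and $q(h_\sigm)=[h_\sigm]$.

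It then remains to check that $\widetilde\Gamma$ is closed and that ${\rm id}_G\times q$ is a closed map. For the first point, $\widetilde\Gamma=\bigcap_{\gamma\in\Gamma}\Phi_\gamma^{-1}(\Delta)$, where $\Delta\subseteq\GL_d(A)\times\GL_d(A)$ is the diagonal (closed, since $\GL_d(A)$ is Hausdorff) and $\Phi_\gamma\colon G\times\GL_d(A)\to\GL_d(A)\times\GL_d(A)$ sends $(\sigma,g)$ to $\bigl(\rho(\sigm(\gamma)),\,g\,\phi(\sigma)(\rho(\gamma))\,g^{-1}\bigr)$. Each $\Phi_\gamma$ is continuous: the first coordinate is the composite of $\ti\psi\colon G\to{\rm Aut}(\Gamma)$ (continuous by hypothesis), the evaluation ${\rm Aut}(\Gamma)\to\Gamma$, $\alpha\mapsto\alpha(\gamma)$ (continuous for the topology on ${\rm Aut}(\Gamma)$ defined via the characteristic subgroups $\Gamma_n$, using that the $\Gamma_n$ are characteristic), and $\rho$; the second coordinate is continuous because $\phi\colon G\to{\rm Aut}_{\O-{\rm alg}}(A)$ is continuous, the action ${\rm Aut}_{\O-{\rm alg}}(A)\times A\to A$ is continuous for the $\{K_n\}$-topology, and multiplication and inversion in $\GL_d(A)$ are continuous. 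Hence $\widetilde\Gamma$ is closed. Finally, $G\times\GL_d(A)$ is compact and $G\times K$ is Hausdorff, so ${\rm id}_G\times q$ is a closed map; therefore $\Gamma_f=({\rm id}_G\times q)(\widetilde\Gamma)$ is closed in $G\times K$, and $f$ is continuous.

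The only nontrivial input is the identification $\Gamma_f=({\rm id}_G\times q)(\widetilde\Gamma)$, which is exactly where one uses that the class $[h_\sigm]\in K$ depends only on $\sigma$ (through $\ti\psi(\sigma)$ and $\phi(\sigma)$) and not on the auxiliary choice of $h_\sigm$; everything else is a routine verification that the structure maps $\ti\psi$, $\phi$, the evaluation of automorphisms, and the group operations of $\GL_d(A)$ are continuous. (One should not expect a continuous genuine lift $\sigma\mapsto h_\sigm\in\GL_d(A)$, since $h_\sigm$ is only specified implicitly by the relation \eqref{sh}; this is precisely why the compactness/closed-graph route, rather than an explicit formula, is the natural one.)
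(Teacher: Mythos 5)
Your proof is correct, but it takes a genuinely different route from the paper's. The paper argues directly and quantitatively: using the explicit neighborhood bases (the characteristic subgroups $\Gamma_m$ of $\Gamma$, the subgroups $K_n\subset{\rm Aut}_{\O}(A)$, and the embedding $\iota$), it chooses $m$ with $\rho(\Gamma_m)\equiv{\rm I}\ {\rm mod}\ \frakm_A^n$ and an open normal $U\subset G$ such that, for $\sigma\in U$, $\phi(\sigma)\in K_n$ and $\sigm$ acts trivially on $\Gamma/\Gamma_m$; it then computes that $\sigma^{-1}\rho(\ti\sigma\gamma)\equiv\rho(\gamma)\ {\rm mod}\ \frakm_A^n$ for all $\gamma$, so that $\iota([h_\sigm])$ lands in the prescribed neighborhood of $\iota([1])$. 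Your closed-graph argument replaces these estimates by soft compactness: the defining relation (\ref{sh}) is packaged as the closed set $\widetilde\Gamma$, and the tube-lemma/compactness formalism does the rest. What your approach buys is uniformity over the source: since $\sigma\mapsto[h_\sigm]$ is not a homomorphism, the paper's displayed computation literally establishes continuity only at the identity of $G$, and propagating it to an arbitrary $\sigma_0$ would require an extra translation (cocycle) argument, whereas closedness of the graph yields continuity at every point in one stroke. What it costs is effectivity: the direct argument records exactly how small $U$ must be for $[h_\sigm]$ to be trivial modulo $\frakm_A^n$, which is the kind of explicit control the paper exploits elsewhere (e.g.\ in the proofs of Proposition \ref{contAction} and Theorem \ref{criticalThm}). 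One point worth flagging, common to both arguments: the uniqueness of the coset, which you invoke to identify $\Gamma_f$ with $({\rm id}_G\times q)(\widetilde\Gamma)$, a priori pins $h_\sigm$ down only up to right multiplication by the centralizer of $\phi(\sigma)({\rm Im}\,\rho)$, i.e.\ by $\phi(\sigma)(\mathfrak Z_A(\rho))$ rather than $\mathfrak Z_A(\rho)$ itself; this is harmless in the intended application, where $\mathfrak Z_A(\rho)$ consists of scalars by Schur's lemma and is visibly $\phi(\sigma)$-stable, but a fully self-contained write-up should note it.
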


\begin{proof} By the above, it is enough to show that 
the inverse image under $\iota$ of each open neighborhood $\prod_i V_i\subset \prod_i \GL_d(A)$
of $(\rho(\gamma_i))_i$ contains an open neighborhood of $1\cdot \mathfrak Z_A(\rho)$.  
It is enough to consider $V_i=\rho(\gamma_i) (1+{\rm M}_{d}(\frakm_A^n))$.
Pick $m$ such that $\rho(\Gamma_m)\equiv {\rm I}\, {\rm mod}\, \frakm_A^n$ and then choose an open normal subgroup of finite index $U\subset G$ such that $\sigma\in K_n$
and $\sigm$ is trivial on $\Gamma/\Gamma_m$. Then we have $\ti\sigma \gamma=\gamma\cdot \gamma'$,
with $\gamma'\in \Gamma_m$ and so
\[
\rho(\ti\sigma \gamma)=\rho(\gamma\gamma')=\rho(\gamma)\, {\rm mod}\, \frakm_A^n
\]
while 
$
\sigma(\rho(\gamma))\equiv \rho(\gamma)\, {\rm mod}\, \frakm_A^n.
$
We deduce that, for all $\sigma\in U$, we have
 \[
\sigma^{-1} \rho(\ti\sigma \gamma) \equiv \rho(\gamma)\, {\rm mod}\, \frakm_A^n
\]
for all $\gamma\in \Gamma$. Hence, for $\sigma\in U$, $[h_\sigm]\in \GL_d(A)/\mathfrak Z_A(\rho)$ belongs to $\prod_i V_i$. \end{proof}

\subsubsection{}
 Suppose we have a short exact sequence of continuous homomorphisms of profinite groups
\[
1\to \Gamma\to \Gamma'\to G\to 1
\]
 in which $\Gamma$ is topologically finitely generated. By \cite[I, \S 1, Prop. 1]{SerreGaloisCoh}  $\Gamma'\to G$ affords a continuous set-theoretic section $s: G\to \Gamma'$. Then the natural $\psi: G\to {\rm Out}(\Gamma)$ can be represented by the continuous map (not always a homomorphism) $G\to {\rm Aut}(\Gamma)$ given by $\sigma\mapsto (\gamma\mapsto s(\sigma)\gamma s(\sigma )^{-1})$. 

\subsubsection{} The constructions in the previous paragraph
can now be reproduced for continuous $\Z_\ell$-homology. Let us collect the various parts 
of the set-up.
Suppose  that we are given:

 \begin{itemize}
 \item A topologically finitely generated profinite group $\Gamma$ which is such that $\rH_2(\Gamma, \Z_\ell)\simeq \Z_\ell$, $\rH_3(\Gamma, \Z_\ell)=0$. 
 
 \item A continuous representation $\rho: \Gamma\to  \GL_d(A)$ with $d\geq 2$
 and $A$ a complete local Noetherian $\O$-algebra with finite residue field of characteristic $\ell$. 
 
 \item A profinite group $G$, a continuous homomorphism $\phi: G\to {\rm Aut}_{\O-{\rm alg}}(A)$ and a continuous map $\ti\psi: G\to {\rm Aut}(\Gamma)$ which induces a (continuous) homomorphism $\psi: G\to {\rm Out}(\Gamma)$.  Denote by $a: G\to \Z^\times_\ell$ the character which gives the action of $G$ on $\rH_2(\Gamma, \Z_\ell)\simeq \Z_\ell$, which is induced by $\psi(\sigma)$.
 
\item A  topological $\Z_\ell$-module $B(A)_\ell$ with a continuous $\Z_\ell$-homomorphism 
\[
\be: \bar C_3(\GL_d(A))/{\rm Im}(\partial_4)_\ell\to B(A)_\ell.
\]
such that:

\begin{itemize}
  \item $G$ acts continuously on $B(A)_\ell$ and $\be$ is $G$-equivariant,

 \item  $\be: \rH_3(\GL_d(A), \Z_\ell)\to B(A)_\ell$ vanishes on ${\rm H}^{\rm dec}_3(\GL_d(A), \Z_\ell)$, which is defined similarly to the discrete case before, but with $C$ running over closed subgroups of $\GL_d(A)$.
 \end{itemize}
  \end{itemize}
  
 Denote by $B(A)_\ell(-1)$ the $\Z_\ell$-module $B(A)_\ell$ with the twisted $G$-action  
 \[
 \sigma\cdot b=a_\sigma^{-1}\phi(\sigma)(b).
 \]

  \begin{prop}\label{masterpro} Under the assumptions above, 
suppose in addition that for each $\sigma\in G$, the representation 
 \[
 \gamma\mapsto \rho^\sigma(\gamma):=\phi(\sigma)^{-1}(\rho(\ti\psi(\sigma)(\gamma))
 \]
 is equivalent to $\rho$. Choose a $\Z_\ell$-generator $[c]$ of $\rH_2(\Gamma,\Z_\ell)$ and $c\in Z_2(\Gamma)_\ell$ that represents it. Then the map
 \[
 \sigma\mapsto \be\rho(a_\sigma^{-1}\ti\psi(\sigma)(c)-c)-a_\sigma^{-1}\be F_{h_{\ti\psi(\sigma)}}(\phi(\sigma)(\rho(c)))
 \] 
 gives a continuous $1$-cocycle $G\to B(A)_\ell(-1)$ whose
 class 
 \[
 \CS_{\rho,\psi, \phi}\in \rH^1_{\rm cts}(G, B(A)_\ell(-1))
 \]
depends only on $[c]$, $\psi$, $\phi$, and the equivalence class of $\rho$.
 \end{prop}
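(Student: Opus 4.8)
The statement is the profinite, $\ell$-adic analogue of the Proposition proved just above (for discrete $\Gamma$, general coefficient module $B(A)$), so the natural strategy is to reduce to that discrete statement by passing to finite levels and taking an inverse limit, keeping track of continuity throughout. First I would replace the continuous bar complexes $C_\bullet(\Gamma)_\ell$, $C_\bullet(\GL_d(A))_\ell$ by their finite-level incarnations: since $\Gamma$ is topologically finitely generated, we have the characteristic system $\Gamma_n$ with $\Gamma = \varprojlim_n \Gamma/\Gamma_n$, and since $A$ is complete local Noetherian with finite residue field, $\GL_d(A) = \varprojlim_n \GL_d(A_n)$ where $A_n = A/\frakm^n$. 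The element $c \in Z_2(\Gamma)_\ell$ can be taken compatibly at each finite level. The key observation is that \emph{all} the ingredients of the explicit $1$-cochain --- the homotopy operators $F_h$ and the identity \eqref{Fid}, the choice of $d(\ti\sigma,c) \in D(\Gamma)$ with $\partial_3 d(\ti\sigma,c) = a_\sigma^{-1}\ti\sigma(c) - c$ (which exists and is unique in $D(\Gamma)_\ell$ because $\rH_3(\Gamma,\Z_\ell) = 0$ and $\partial_3\colon D(\Gamma)_\ell \hookrightarrow Z_2(\Gamma)_\ell$ by the analogue of hypothesis (H)), and condition (V) forcing $\be F_{h_{\ti\sigma}}$ to depend only on $[h_{\ti\sigma}] \in \GL_d(A)/\mathfrak{Z}_A(\rho)$ --- are exactly the formal inputs used in the discrete proof. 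So the three verifications (well-definedness/independence of choices, the twisted cocycle identity $\be A_{\sigma\tau} = \be A_\sigma + a_\sigma^{-1}\sigma(\be A_\tau)$, and independence up to equivalence) go through verbatim, now as identities in $B(A)_\ell$, once one knows everything is continuous.

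The continuity is therefore the real content that is not already in the discrete case. Here I would argue as follows. The map $\sigma \mapsto \ti\sigma = \ti\psi(\sigma)$ is continuous into ${\rm Aut}(\Gamma)$ by hypothesis. The chain-level operations $\rho$, $\partial$, $F_h$ are all continuous for the profinite topologies on the $C_\bullet(-)_\ell$. The nontrivial point is that $\sigma \mapsto [h_{\ti\sigma}] \in \GL_d(A)/\mathfrak{Z}_A(\rho)$ is continuous --- but this is precisely Lemma~\ref{lemmaCont}. Since $\be F_{h_{\ti\sigma}}(\phi(\sigma)(\rho(c)))$ depends on $h_{\ti\sigma}$ only through $[h_{\ti\sigma}]$ (by condition (V), exactly as in the proof of Lemma~\ref{ind334}), and $F_{\bullet}(\bullet)$ is continuous as a function on $\GL_d(A)/\mathfrak{Z}_A(\rho) \times Z_2(\GL_d(A))_\ell$, composing with the continuous $\sigma \mapsto ([h_{\ti\sigma}], \phi(\sigma)(\rho(c)))$ gives continuity of the second term. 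The first term $\be\rho(a_\sigma^{-1}\ti\sigma(c) - c)$ is continuous because $\sigma \mapsto a_\sigma$ is continuous (the $G$-action on $\rH_2(\Gamma,\Z_\ell) \simeq \Z_\ell$ is continuous by hypothesis), $\sigma \mapsto \ti\sigma(c)$ is continuous, and one must check that the \emph{choice} $\sigma \mapsto d(\ti\sigma,c)$ can be made continuously: this follows because $\partial_3$ is a continuous injection of compact $\Z_\ell$-modules with closed image, hence a homeomorphism onto its image, so $d(\ti\sigma,c) = \partial_3^{-1}(a_\sigma^{-1}\ti\sigma(c) - c)$ is automatically continuous in $\sigma$.

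Having established that $\sigma \mapsto \CS_\sigma(c) := \be A_{\ti\sigma}(c)$ is a continuous function $G \to B(A)_\ell(-1)$, the twisted cocycle identity --- proved at the level of $B(A)_\ell$ by the same manipulations as in the discrete Proposition (steps using the lift $h_{\ti\sigma\ti\tau} = h_{\ti\sigma}\,\phi(\sigma)(h_{\ti\tau})$ and the identity \eqref{Fid}, which holds in $\bar C_{\bullet+1}(\GL_d(A))/{\rm Im}(\partial)_\ell$) --- shows it is a continuous $1$-cocycle, defining a class $\CS_{\rho,\psi,\phi} \in \rH^1_{\rm cts}(G, B(A)_\ell(-1))$. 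The independence of the class from the choice of representative $c$ of $[c]$ (the computation $A_{\ti\sigma}(c') = A_{\ti\sigma}(c) + (a_\sigma^{-1}\sigma - 1)(\rho d)$, giving a coboundary) and from the choice of $\ti\psi$, and the dependence on $\rho$ only through its $\GL_d(A)$-conjugacy class (the computation yielding $A'_{\ti\sigma} = A_{\ti\sigma} + (a_\sigma^{-1}\sigma - 1)F_g(\rho(c))$), all transcribe directly. I do not expect a serious obstacle: the discrete proof is purely formal and the only new work is the topological bookkeeping, for which Lemma~\ref{lemmaCont} is the essential tool and the homeomorphism property of $\partial_3$ on compact modules handles the rest. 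The one place to be slightly careful is ensuring that the various homotopy identities, which were stated in the discrete case ``in $\bar C_{\bullet+1}(H)/{\rm Im}(\partial)$'', hold in the profinite/$\ell$-adic completed complexes --- but this is already asserted at the end of \S\ref{ss:homology} for $\bar C_\bullet(H)_\ell$, so it may be cited.
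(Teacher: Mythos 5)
Your proposal is correct and follows exactly the paper's route: the paper's proof consists of the single remark that the argument is the same as in the discrete case, with the continuity of the cocycle supplied by Lemma~\ref{lemmaCont}. You simply spell out the topological bookkeeping (continuity of $\partial_3^{-1}$ on compact modules, of $F$, and the passage to the completed complexes) in more detail than the paper does.
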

 
 \begin{proof}
It is similar to the proof in the discrete case. The additional claim of continuity 
of the cocycle map follows from Lemma \ref{lemmaCont}. 
\end{proof}

\subsubsection{}\label{quo} Let $\Gamma\to \Gamma'$ be a quotient profinite group with characteristic kernel
and such that $\rH_2(\Gamma',\Z_\ell)\simeq\Z_\ell$, $\rH_3(\Gamma',\Z_\ell)=0$. Assume that
$\rH_2(\Gamma, \Z_\ell)\simeq\Z_\ell\to \rH_2(\Gamma',\Z_\ell)\simeq\Z_\ell$ is the identity and take $[c']=[c]$.
Since the kernel of $\Gamma\to \Gamma'$ is a characteristic subgroup,
 $\psi: G\to {\rm Out}(\Gamma)$ factors to give  $\psi': G\to {\rm Out}(\Gamma')$.
Finally, assume that $\rho:\Gamma\to \GL_d(A)$ factors 
\[
\Gamma\to \Gamma'\xrightarrow{\rho'} \GL_d(A).
\]
Then we have
\[
\CS_{\rho,\psi,\phi}=\CS_{\rho',\psi',\phi}.
\]

\subsubsection{}\label{res} Let $\Gamma'\subset \Gamma$ be an open  subgroup. Suppose we also have 
$\rH_2(\Gamma',\Z_\ell)\simeq\Z_\ell$, $\rH_3(\Gamma',\Z_\ell)=0$, and that the natural map 
$\rH_2(\Gamma',\Z_\ell)\to \rH_2(\Gamma,\Z_\ell)$ is multiplication by the index $[\Gamma:\Gamma']$.
Choose generators $[c]$, $[c']$, such that $[c]=[\Gamma:\Gamma']^{-1}\cdot [c']$.

 Suppose that $(\rho, \psi, \phi)$ is as above. Suppose that there is a continuous homomorphism 
$\psi': G\to {\rm Out}(\Gamma')$ which is compatible with $\psi$ in the following sense: For each $\sigma\in G$, there is $\widetilde{\psi(\sigma)}\in {\rm Aut}(\Gamma)$ representing $\psi(\sigma)\in {\rm Out}(\Gamma)$ such that the restriction $\widetilde{\psi(\sigma)}_{|\Gamma'}\in {\rm Aut}(\Gamma')$ represents $\psi'(\sigma)\in {\rm Out}(\Gamma')$.

\begin{prop}\label{resprop}
Under the above assumptions, we have
\[
\CS_{\rho_{|\Gamma'}, \psi', \phi}=[\Gamma:\Gamma']\cdot \CS_{\rho,\psi, \phi}
\]
in $\rH^1_{\rm cts}(G, B(A)_\ell(-1))$.
\end{prop}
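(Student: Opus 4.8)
The plan is to compare the two explicit continuous $1$-cocycles furnished by Proposition~\ref{masterpro}: the one representing $\CS_{\rho_{|\Gamma'},\psi',\phi}$, built from $[c']$, $\psi'$ and $\phi$, and the one representing $\CS_{\rho,\psi,\phi}$, built from $[c]$, $\psi$ and $\phi$, and to show that the first equals $[\Gamma:\Gamma']$ times the second, up to a coboundary. The first reduction is to make compatible choices of lifts. For $\sigma\in G$ I would choose, using the compatibility hypothesis, $\widetilde{\psi(\sigma)}\in{\rm Aut}(\Gamma)$ representing $\psi(\sigma)$ whose restriction $\widetilde{\psi(\sigma)}_{|\Gamma'}$ represents $\psi'(\sigma)$. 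Then a matrix $h=h_{\widetilde{\psi(\sigma)}}\in\GL_d(A)$ with $\rho(\widetilde{\psi(\sigma)}(\gamma))=h\,\phi(\sigma)(\rho(\gamma))\,h^{-1}$ for all $\gamma\in\Gamma$ automatically satisfies the analogous identity for $\rho_{|\Gamma'}$ and $\widetilde{\psi(\sigma)}_{|\Gamma'}$ on $\Gamma'$, so the \emph{same} $h$ may be used in both cocycle formulas. Moreover, since the natural map $\rH_2(\Gamma',\Z_\ell)\to\rH_2(\Gamma,\Z_\ell)$ is multiplication by $[\Gamma:\Gamma']$ — hence injective — and is equivariant for the maps induced by $\widetilde{\psi(\sigma)}_{|\Gamma'}$ and $\widetilde{\psi(\sigma)}$, the scalar $a_\sigma\in\Z_\ell^\times$ by which $\sigma$ acts on $\rH_2(\Gamma,\Z_\ell)$ coincides with the one for $\rH_2(\Gamma',\Z_\ell)$; I write $a_\sigma$ for both.

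Next I would compare chains in the bar complexes. By the choice of generators, if $c'\in Z_2(\Gamma')_\ell$ represents $[c']$ then, regarded as a $2$-cycle in $\Gamma$ via the chain map induced by $\Gamma'\subset\Gamma$, it represents $[\Gamma:\Gamma']\,[c]$; hence there is $e\in\bar C_3(\Gamma)_\ell$ with $c'=[\Gamma:\Gamma']\,c+\partial_3(e)$ in $\bar C_2(\Gamma)_\ell$. Let $d\in D(\Gamma)$ be the unique class with $\partial_3 d=a_\sigma^{-1}\widetilde{\psi(\sigma)}(c)-c$ and $d'\in D(\Gamma')$ the corresponding class with $\partial_3 d'=a_\sigma^{-1}\widetilde{\psi(\sigma)}_{|\Gamma'}(c')-c'$. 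A short boundary computation shows that, in $D(\Gamma)$, the image of $d'$ and the $3$-chain $[\Gamma:\Gamma']\,d+\bigl(a_\sigma^{-1}\widetilde{\psi(\sigma)}(e)-e\bigr)$ have the same boundary in $Z_2(\Gamma)_\ell$; since $\rH_3(\Gamma,\Z_\ell)=0$, the map $\partial_3\colon D(\Gamma)\hookrightarrow Z_2(\Gamma)_\ell$ is injective, so these two elements of $D(\Gamma)$ are equal. Applying $\be\rho$ (and using $\rho(c')=[\Gamma:\Gamma']\,\rho(c)+\partial_3\rho(e)$ together with the $\Z_\ell$-linearity of $F_h$ and of the action of $\phi(\sigma)$ on chains), I obtain that the $\Gamma'$-cocycle at $\sigma$ equals $[\Gamma:\Gamma']$ times the $\Gamma$-cocycle at $\sigma$, plus the error term
\[
E_\sigma=a_\sigma^{-1}\,\be\rho\bigl(\widetilde{\psi(\sigma)}(e)\bigr)-\be\rho(e)-a_\sigma^{-1}\,\be F_h\bigl(\phi(\sigma)(\rho(\partial_3 e))\bigr).
\]

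The final step is to recognize $E_\sigma$ as a coboundary. At the level of bar chains one has $\rho\bigl(\widetilde{\psi(\sigma)}(e)\bigr)={\rm inn}_h\bigl(\phi(\sigma)(\rho(e))\bigr)$, so evaluating the homotopy identity ${\rm inn}_h(x)-x=F_h(\partial x)+\partial(F_h(x))$ on the $3$-chain $\phi(\sigma)(\rho(e))$ and reducing modulo ${\rm Im}(\partial_4)$ gives
\[
\rho\bigl(\widetilde{\psi(\sigma)}(e)\bigr)=\phi(\sigma)(\rho(e))+F_h\bigl(\phi(\sigma)(\rho(\partial_3 e))\bigr)\quad\text{in }D(A).
\]
Substituting this into $E_\sigma$, the two $F_h$-terms cancel, and by the $G$-equivariance of $\be$ one is left with $E_\sigma=a_\sigma^{-1}\phi(\sigma)\bigl(\be\rho(e)\bigr)-\be\rho(e)$, which is exactly the value at $\sigma$ of the coboundary of the $0$-cochain $\be\rho(e)\in B(A)_\ell(-1)$, whose $G$-action is $b\mapsto a_\sigma^{-1}\phi(\sigma)(b)$. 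Hence the two $1$-cocycles differ by a coboundary, which gives the asserted identity in $\rH^1_{\rm cts}(G,B(A)_\ell(-1))$; continuity presents no difficulty since $e$ does not depend on $\sigma$ and the individual cocycles are continuous by Proposition~\ref{masterpro}. I expect the main obstacle to be purely bookkeeping: keeping straight the various $3$-chains living in $D(\Gamma)$, $D(\Gamma')$ and $D(A)$, and in particular verifying cleanly that one and the same $h_{\widetilde{\psi(\sigma)}}$ legitimately serves both cocycles — after which the computation reduces, as above, to the homotopy identity for ${\rm inn}_h$ that is already used repeatedly in the preceding propositions.
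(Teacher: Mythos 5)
Your proof is correct and follows essentially the same route as the paper: use the compatible lift so that the same $h_{\widetilde{\psi(\sigma)}}$ serves both cocycles, write $c'=[\Gamma:\Gamma']\,c+\partial_3(e)$ in $Z_2(\Gamma)_\ell$, and absorb the discrepancy into a coboundary. The paper compresses your final step into ``the result follows from the definitions,'' implicitly invoking the computation (already carried out in the proof that $\sigma\mapsto \be A_{\ti\sigma}(c)$ is a cocycle independent of $c$ up to coboundary) which you have correctly reproduced via the homotopy identity for ${\rm inn}_h$.
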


\begin{proof}
The map $C_{3,2}(\Gamma')_\ell \to C_{3,2}(\GL_d(A))_\ell$ given by the restriction $\rho_{|\Gamma'}: \Gamma'\to \GL_d(A)$ is the composition
\[
C_{3,2}(\Gamma')_\ell\to C_{3,2}(\Gamma)_\ell \xrightarrow{\rho} C_{3,2}(\GL_d(A))_\ell.
\]
The class $\CS_{\rho_{|\Gamma'}, \psi', \phi}$ is given by the $1$-cocycle
\[
\sigma\mapsto \be\rho(a_\sigma^{-1}\ti\psi'(\sigma)(c')-c')-a_\sigma^{-1}\be F_{h_{\ti\psi'(\sigma)}}(\phi(\sigma)(\rho(c')))
\]
where $c'\in Z_2(\Gamma')$ is a fundamental cycle, i.e. a $2$-cycle with $[c']=1$ in $\rH_2(\Gamma',\Z_\ell)$. Here, we can take $h_{\ti\psi'(\sigma)}$ to be given as $h_{\ti\psi(\sigma)}$; note that 
$h_{\ti\psi(\sigma)}$ is well-defined in $\GL_d(A)/\mathfrak Z_A(\rho)$ which maps to
$\GL_d(A)/\mathfrak Z_A(\rho_{|\Gamma'})$. Since $\rH_2(\Gamma',\Z_\ell)\to \rH_2(\Gamma,\Z_\ell)$ is multiplication by
$[\Gamma:\Gamma']$, if $c\in Z_2(\Gamma')$ is a fundamental $2$-cycle for $\Gamma'$, then the image of $c'$ in $Z_2(\Gamma)$ is $c'=[\Gamma:\Gamma']\cdot c+\partial_3(d)$ and the result follows from the definitions.
\end{proof}

\subsection{An $\ell$-adic regulator}\label{lregulator} Let $A=\O\lps x_1,\ldots , x_m\rps$ with $\O=\O_E$, the ring of integers of a totally ramified extension of $W(\BF)[1/\ell]$ of ramification index $e$. We will allow $m=0$ which corresponds to $A=\O$. The main example of $B(A)_\ell$ and 
\[
\be_A: \bar C_3(\GL_d(A))/{\rm Im}(\partial_4)_\ell\to B(A)_\ell
\]
which we use in the above is obtained by taking $B(A)_\ell=\sO(D_1(m))$ and $\be_A$  
given by an $\ell$-adic regulator. We will now explain this construction. Recall we set $\sO=\sO(D)=\sO(D_1(m))$.

\subsubsection{} Fix an odd positive integer $s\geq 3$. For our example, we actually take $s=3$. Consider the $A$-algebra
\[
\hat\calA={\rm Mat}_{d\times d}(A)\lps z_0,\ldots ,z_{n}\rps\otimes_A \wedge^\bullet_A (Adz_0+\cdots +Adz_n).
\]
Set $\calA$ for the quotient of $\hat\calA$ by the ideal generated by $(z_0+\cdots +z_s)-1$,
$dz_0+\cdots +dz_s$. We can write elements $T\in \calA$ in the form
\[
T=\sum_{\bf a} \sum_{u=0}^s T_{{\bf a}, u} z_0^{a_0}\cdots z_s^{a_s} dz_0\wedge\cdots \wedge\hat dz_u\wedge\cdots \wedge dz_s
\]
with ${\bf a}=(a_0,\ldots ,a_s)\in {\mathbb N}^{s+1}$, $T_{{\bf a}, u}\in {\rm Mat}_{d\times d}(A)$.

Take ${\bf X}=(X_0,\ldots,  X_s)$, $X_i\in {\rm Mat}_{d\times d}(\frakm^b)$, $i=0,\ldots, s$, $b\geq 1$.
Let
\[
\nu({\bf X})=1+(X_0z_0+\cdots + X_sz_s)\in \calA
\]
which is invertible with
\[
\nu({\bf X})^{-1}=1+\sum\nolimits_{i\geq 1}(-1)^i (X_0z_0+\cdots + X_sz_s)^i.
\]
Set $d\nu({\bf X})=X_0dz_0+\cdots + X_sdz_s$ so then
$\nu({\bf X})^{-1} d\nu({\bf X})$ is in $\calA$.
Finally set
\[
T({\bf X})=(\nu({\bf X})^{-1} d\nu({\bf X}))^s=\sum_{\bf a} \sum_{u=0}^s T_{{\bf a}, u} z_0^{a_0}\cdots z_s^{a_s} dz_0\wedge\cdots \wedge\hat dz_u\wedge\cdots \wedge dz_s
\]
where, as we can see, $T_{{\bf a}, u}\in {\rm Mat}_{d\times d}({\frak m}^{b(\onorm{\bf a}+s)})$. 

Following Choo and Snaith \cite{ChooSnaith} we set:
\[
\Phi_s(T({\bf X}))=\sum_{\bf a} \frac{a_0! a_1!\cdots a_s!}{(\onorm{\bf a}+s)!} (\sum_{u=0}^s (-1)^u  {\rm Trace}(T_{{\bf a}, u})).
\]
By the above, ${\rm Trace}(T_{{\bf a}, u})\in \frakm^{b(\onorm{\bf a}+s)}$.
Using Lemma \ref{multiestimate} we see 
\[
|\frac{a_0! a_1!\cdots a_s!}{(\onorm{\bf a}+s)!}|_\ell\leq |s!|_\ell^{-1} \ell^{(s+1)d_\ell(\onorm{\bf a}+s)}\leq C_1 \ell^{C_2d_\ell(\onorm{\bf a})}.
\]
By Proposition \ref{sequence} (a), $\Phi_s(T({\bf X}))\in \sO(D_1(m))$ and we have
\begin{equation}\label{b1}
||\Phi_s(T({\bf X}))||_{(1/\ell)^a}\leq C_1\ell^{N(C_2, ab)}
\end{equation}
for $a\in \Q\cap (0, 1/e]$.
\begin{lemma}\label{lemmaEst1}
Fix $r=(1/\ell)^{a/e}$. For each $\epsilon>0$, 
there is $b_0$ such that for all $b\geq b_0$, ${\bf X}\in {\rm Mat}_{d\times d}(\frakm^b)^{s+1}$
implies $
||\Phi_s(T({\bf X}))||_{r}<\epsilon$.
\end{lemma}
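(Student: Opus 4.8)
\emph{Proof idea.} The plan is to read the statement off from the uniform estimate \eqref{b1}, combined with the elementary Lemma \ref{trivial}. The crucial point is that the constants $C_1,C_2$ appearing in \eqref{b1} depend only on $s$ (through $|s!|_\ell^{-1}$ and the comparison of $d_\ell(\onorm{\bf a}+s)$ with $d_\ell(\onorm{\bf a})$), and in particular are independent of $b$ and of the tuple ${\bf X}\in {\rm Mat}_{d\times d}(\frakm^b)^{s+1}$; only the exponent ``$B$'' in Proposition \ref{sequence}(a) varies with $b$, and there it equals $b$, since the traces satisfy ${\rm Trace}(T_{{\bf a},u})\in \frakm^{b(\onorm{\bf a}+s)}\subseteq \frakm^{b\onorm{\bf a}}$ for every such ${\bf X}$, as recorded just before the lemma.

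First I would apply \eqref{b1} at the radius $r=(1/\ell)^{a/e}$, that is, with $a/e\in \Q\cap(0,1/e]$ playing the role of the exponent parameter in \eqref{b1} (if $a>1$, one first uses that the Gauss norm is non-decreasing in the radius, so $||\Phi_s(T({\bf X}))||_{r}\le ||\Phi_s(T({\bf X}))||_{(1/\ell)^{1/e}}$, and then applies \eqref{b1} with parameter $1/e$). This gives
\[
||\Phi_s(T({\bf X}))||_{r}\ \le\ C_1\,\ell^{\,N(C_2,\,(a/e)b)}
\]
for all $b\ge 1$, uniformly over ${\bf X}\in {\rm Mat}_{d\times d}(\frakm^b)^{s+1}$.

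Next, since $a/e$ is a fixed element of $\Q_{>0}$, we have $(a/e)b\to +\infty$ as $b\to +\infty$, so Lemma \ref{trivial} (with $c=C_2$) gives $N(C_2,(a/e)b)\to -\infty$, hence $C_1\ell^{N(C_2,(a/e)b)}\to 0$. Since moreover $f\mapsto N(C_2,f)$ is non-increasing (each $C_2\,d_\ell(x)-fx$ being non-increasing in $f$ for fixed $x\ge 1$), it is enough, given $\epsilon>0$, to pick $b_0$ with $C_1\ell^{N(C_2,(a/e)b_0)}<\epsilon$: then for every $b\ge b_0$ and every ${\bf X}\in {\rm Mat}_{d\times d}(\frakm^b)^{s+1}$ one gets $||\Phi_s(T({\bf X}))||_{r}<\epsilon$, which is exactly the claim.

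I do not expect a genuine difficulty here: all of the analytic content sits in \eqref{b1} (itself a consequence of Lemma \ref{multiestimate} and Proposition \ref{sequence}(a)) and in Lemma \ref{trivial}. The only things to watch are bookkeeping: verifying that $C_1,C_2$ really are independent of $b$ and of ${\bf X}$; tracking the radius correctly when $a/e$ is substituted for the exponent parameter of Proposition \ref{sequence}(a); and using the monotonicity of $N(C_2,\cdot)$ so that a single threshold $b_0$ works for all larger $b$.
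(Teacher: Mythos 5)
Your argument is correct and is essentially the paper's own proof (which simply cites \eqref{b1}, Lemma \ref{trivial}, and the preceding estimates), just written out with the bookkeeping — the uniformity of $C_1,C_2$ in $b$ and ${\bf X}$, the substitution of $a/e$ for the exponent parameter, and the monotonicity of $N(C_2,\cdot)$ — made explicit.
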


\begin{proof} It follows from (\ref{b1}), Lemma \ref{trivial}, and the above.
\end{proof}

Hence, the map ${\rm Mat}_{d\times d}(\frakm)^{s+1}\to \sO(D)$ given by ${\bf X}\mapsto \Phi_s(T({\bf X}))$
is continuous for the $\frakm$-adic and Fr\'echet topologies of the source and target.

\subsubsection{}
Now set $K_b=\ker(\GL_d(A)\to \GL(A/\frakm^b))$. For $(g_0,\ldots , g_s)\in K_1$, we set
\[
\ti\Phi_s (g_0,\ldots , g_s)=\Phi_s(T(g_0-1, \ldots, g_s-1)).
\]

\begin{thm}\label{ChooSnaithThm}
(1) For $h\in K_1$, $(g_0,\ldots , g_s)\in K_1^{s+1}$, we have
\[
\ti\Phi_s (hg_0,\ldots , hg_s)=\ti\Phi_s (g_0,\ldots , g_s)=\ti\Phi_s (g_0h,\ldots , g_sh).
\]

(2) For $g\in \GL_d(A)$, $(g_0,\ldots , g_s)\in K_1^{s+1}$, we have
\[
\ti\Phi_s (gg_0g^{-1},\ldots , gg_sg^{-1})=\ti\Phi_s (g_0,\ldots , g_s).
\]

(3) $\ti\Phi_s$ is alternating, i.e. for each permutation $p$, 
\[
\ti\Phi_s(g_{p(0)}, \ldots , g_{p(s)})=(-1)^{{\rm sign}(p)}\ti\Phi_s(g_{0}, \ldots , g_{s}).
\]

(4) The map $\ti\Phi_s:  K_1^{s+1}\to \sO(D)$ extends linearly to $\ti\Phi_s: \Z_\ell\lps K_1^{s+1}\rps\to \sO(D)$
which gives a continuous $s$-cocycle.

\end{thm}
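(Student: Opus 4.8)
The plan is to recognize $\ti\Phi_s$ as a formal ``integration over the standard $s$-simplex'' of the pullback of the bi-invariant matrix-valued form $\Tr((g^{-1}dg)^s)$, and to deduce (1)--(4) from the invariance, antisymmetry and closedness of that form, exactly as in the real and complex theory of the Borel regulator. The $\ell$-adic convergence of all the series that occur has already been established in Proposition \ref{sequence} and Lemma \ref{lemmaEst1}; these estimates are the only place where the base ring $A=\O\lps x_1,\dots,x_m\rps$ (as opposed to $\Z_\ell$) enters, since the algebraic identities below are formal and hold over any commutative ring in which $2$ is invertible.

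For the dictionary: for $(g_0,\dots,g_s)\in K_1^{s+1}$, with $K_1=1+{\rm Mat}_{d\times d}(\frakm)$, put $X_i=g_i-1$ and $\nu=\nu({\bf X})=1+\sum_i z_iX_i$. In the quotient algebra $\calA$ the relations $z_0+\cdots+z_s=1$, $dz_0+\cdots+dz_s=0$ give $\nu=\sum_i z_ig_i$, so $\nu$ is the ``generating function'' of the affine $s$-simplex with vertices $g_0,\dots,g_s$ inside $K_1$, and $T({\bf X})=(\nu^{-1}d\nu)^s$ is the pullback of $(g^{-1}dg)^s$ along it. By the Dirichlet integral
\[
\int_{\Delta^s}z_0^{a_0}\cdots z_s^{a_s}\,dz_1\wedge\cdots\wedge dz_s=\frac{a_0!\cdots a_s!}{(\onorm{\bf a}+s)!},
\]
the functional $\Phi_s$ is exactly $T\mapsto\int_{\Delta^s}\Tr(T)$ — the sign $(-1)^u$ being the orientation factor turning $dz_0\wedge\cdots\wedge\widehat{dz_u}\wedge\cdots\wedge dz_s$ into $dz_1\wedge\cdots\wedge dz_s$ on $\{\sum z_i=1\}$ — so $\Phi_s$ is well defined on $\calA$ and commutes with $\Tr$. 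With this in hand, parts (1), (2), (3) are immediate: replacing $g_i$ by $hg_i$, $g_ih$, or $gg_ig^{-1}$ turns $\nu$ (in $\calA$, using $\sum z_i=1$) into $h\nu$, $\nu h$, or $g\nu g^{-1}$, hence $\nu^{-1}d\nu$ is unchanged in the first case and conjugated by $h$ (resp.\ $g$) in the others, so $\Tr(T({\bf X}))$ is unchanged by cyclicity of $\Tr$; and a permutation $(g_i)\mapsto(g_{p(i)})$ is the substitution $z_i\mapsto z_{p^{-1}(i)}$, under which the Dirichlet integration functional picks up $\mathrm{sign}(p)$ since each vertex transposition of $\Delta^s$ has Jacobian $-1$. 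Note that (1) says $\ti\Phi_s$ is a well-defined homogeneous $s$-cochain of $K_1$ with trivial coefficients, and (2) is what will later allow inflation of the class to all of $\GL_d(A)$.

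For (4), the continuity of ${\bf X}\mapsto\Phi_s(T({\bf X}))$ recorded after Lemma \ref{lemmaEst1}, together with that lemma, shows $\ti\Phi_s$ is continuous on $K_1^{s+1}$ and hence extends uniquely, $\Z_\ell$-linearly and continuously, to $\Z_\ell\lps K_1^{s+1}\rps\to\sO(D)$. The cocycle identity $\sum_{j=0}^{s+1}(-1)^j\ti\Phi_s(g_0,\dots,\widehat{g_j},\dots,g_{s+1})=0$ is an $\ell$-adic Stokes formula: in the analogous algebra in $s+2$ variables, with $\tilde\nu=\sum_{i=0}^{s+1}z_ig_i$ and $\theta=\tilde\nu^{-1}d\tilde\nu$, the Maurer--Cartan identity $d\theta=-\theta^2$ gives $d(\theta^s)=-\theta^{s+1}$ for $s$ odd while $\Tr(\theta^{s+1})=0$ because $s+1$ is even and $2$ is invertible, so $\Tr(\theta^s)$ is a closed $s$-form; restricting to the face $z_j=0$ returns $\Tr$ of $T({\bf X})$ for the deleted tuple $(g_0,\dots,\widehat{g_j},\dots,g_{s+1})$, and the identity becomes $\int_{\Delta^{s+1}}d\,\Tr(\theta^s)=\int_{\partial\Delta^{s+1}}\Tr(\theta^s)$ for the Dirichlet functionals. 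The main obstacle is precisely making this $\ell$-adic Stokes theorem precise — verifying the underlying polynomial identity among the Beta-type coefficients $\frac{a_0!\cdots a_k!}{(\onorm{\bf a}+k)!}$, and checking that these functionals are compatible with the relations defining $\calA$, with the codimension-one faces of $\Delta^{s+1}$, and with the simplicial orientation signs. This bookkeeping is carried out by Choo and Snaith \cite{ChooSnaith}, whose computation I would follow essentially verbatim; it is insensitive to the base ring, the only new point over $A$ being the convergence of the $(s+1)$-simplex integrals, which is again Proposition \ref{sequence}(a).
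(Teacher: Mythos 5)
Your proposal is correct and follows essentially the same route as the paper: both rest on reading $\Phi_s(T(\mathbf X))$ as the Dirichlet/simplex integral of $\mathrm{Trace}((\nu^{-1}d\nu)^s)$ (the paper cites Hamida for this), deduce (1)--(3) from invariance of the Maurer--Cartan form, cyclicity of the trace and the orientation sign, defer the cocycle identity to Choo--Snaith's computation, and obtain the continuous extension in (4) from Lemma \ref{lemmaEst1} and Proposition \ref{sequence}. The only cosmetic difference is that you verify (1)--(3) formally over $A$ itself, while the paper checks the identities at all classical specializations $A\to\O_F$ and then concludes in $\sO(D)$; both arguments are sound.
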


\begin{proof}
The identities in (1) and (2) and the cocycle identity in (4) are stated in Theorem 3.2 \cite{ChooSnaith} for the evaluations
at all classical points $A\to \O_F$. For these evaluations, they follow from the expression for $\Phi_s(T({\bf X}))$ as a constant multiple of 
\[
\int_{\Delta^{s}} {\rm Trace}((\nu({\bf X})^{-1} d\nu({\bf X}))^s)
\] 
(see \cite{Hamida}); here the integration is over $\Delta^s$ given by $z_0+\cdots +z_s=1$. The
identities in $\sO(D)$ follow. 

The continuous extension of $\ti\Phi_s$ to $\Z_\ell\lps K_1^{s+1}\rps\to \sO(D)$
follows from Lemma \ref{lemmaEst1} since
\[
\Z_\ell\lps K_1^{s+1}\rps=\varprojlim\nolimits_b \Z_\ell[(K_1/K_b)^{s+1}];
\]
 see also Proposition \ref{sequence} (b). The alternating property (3) follows
 quickly from the definition of $T(X_0, \ldots, X_s)$ and $\Phi_s(T(X_0, \ldots, X_s))$ by noting that it involves 
 the exterior product. 
  \end{proof}

\subsubsection{} We now define a transfer of the cocycle $\ti\Phi_s$ from $K_1$ to $\GL_d(A)$
as follows: Denote reduction modulo $\frakm$ by $a\mapsto \bar a$ and apply the Teichm\"uller representative on the entries to give a set-theoretic lift $\GL_d(A)/K_1=\GL_d(\BF)\to \GL_d(A)$ which we 
denote by $h\mapsto [h]$. Note that for every $g\in \GL_d(A)$, $h\in \GL_d(\BF)$,
$[h]g [h\bar g ]^{-1}\in K_1$. We let the
transfer of $\ti\Phi_s$ be
\[
\Psi_s (g_0,\ldots , g_s):=\frac{1}{\# \GL_d(\BF)}  \sum_{h\in \GL_d(\BF)} \ti\Phi_s([h]g_0 [h\bar g_0 ]^{-1}, \ldots , [h]g_s [h\bar g_s ]^{-1} ).
\]
 (cf. [NSW], p. 48, Ch. I, \S 5). This gives a continuous homogeneous $s$-cocycle 
 \[
 \Psi_s: \Z_\ell\lps \GL_d(A)^{s+1}\rps\to \sO(D),
 \]
 where $\GL_d(A)$ acts trivially on $\sO(D)$. If $(g_i)\in K_1^{s+1}$, then $[h]g_i [h\bar g_i ]^{-1}=[h]g_i[h]^{-1}$, and so $\Psi_s(g_i)=\ti\Phi_s(g_i)$, by
Theorem \ref{ChooSnaithThm} (2).

For $s=3$ we get a continuous 
\[
 \Psi_{3, A}: \bar C_3(\GL_d(A))/{\rm Im}(\partial_4)_\ell\to  \sO(D).
\]

\subsubsection{} As we shall see below, the restriction of $\Psi_{3, A}$ to homology agrees, up to non-zero constant, with the  $\ell$-adic (Borel) regulator.
This follows from work of Huber-Kings and Tamme. Also, as we will explain, this comparison allows us to  also deduce that 
$\Psi_{3,A}$ vanishes on the subgroup $\rH^{\rm dec}_3(\GL_d(A))$, when $d\geq 3$. Hence, we can set
\[
 \be_A=\Psi_{3, A}: \bar C_3(\GL_d(A))/{\rm Im}(\partial_4)_\ell\to  \sO(D)
\]
and use this in the constructions of the previous section.

We now explain this in more detail:

Consider $A=\O=\O_E$, i.e. $m=0$ and $d\geq s$. By \cite{HubKings} 
(see also \cite{Tamme} Theorem 2.1) the
Lazard isomorphism
 \[
\rH^s_{\rm la}(\GL_d(\O), E)\simeq \rH^s({\mathfrak {gl}}_d, E)
\]
(the subscript here stands for ``locally analytic") is induced on the level of cochains
by the map
\[
\Delta: \O^{\rm la}(\GL_d(\O)^{\times k})\to \wedge^k{\mathfrak {gl}}^\vee_d,
\]
which is given on topological generators by $f_1\otimes\cdots \otimes f_k\mapsto df_1(1)\wedge\cdots \wedge df_k(1)$. Here, $df(1)$ is the differential of the function $f$ evaluated at the identity. Now by \cite[Theorem 2.5]{Tamme}, the restriction of $\Psi_{s,\O}$ to the homology 
\[
\Psi_{s,\O}: \rH_s(\GL_d(\O), \Z_\ell)\to E
\]
relates to the $\ell$-adic (Borel) regulator: By  \cite[Theorem 2.5]{Tamme}, (see also \cite{HubKings}),
$\Psi_{s,\O}$, up to non-zero constant, is obtained from the element $f_s\in \rH^s_{\rm la}(\GL_d(\O), E)$ which under
the Lazard isomorphism
 \[
\rH^s_{\rm cts}(\GL_d(\O), E)\simeq \rH^s_{\rm la}(\GL_d(\O), E)\simeq \rH^s({\mathfrak {gl}}_d, E)
\]
is the class of  the cocycle $\wedge^s_E {\mathfrak {gl}}_d\to E$ given by
\[
X_1\wedge \cdots \wedge X_s\mapsto p_s(X_1, \cdots , X_s)=\sum_{\sigma\in S_s} (-1)^{{\rm sign}(\sigma)} {\rm Trace}(X_{\sigma(1)}\cdots  X_{\sigma(s)}).
\]
We can easily see, using the cyclic invariance of the trace, that if in $(X_1,\ldots , X_s)$ there is a matrix which commutes with all the others, then $p_s(X_1,\ldots , X_s)=0$. 

Now suppose $s=3$. 
Let $C\subset \GL_d(\O)$ be a closed (therefore $\ell$-analytic, see \cite{SerreLie}) subgroup of $\GL_d(\O)$
with $E$-Lie algebra $\mathfrak c$. For $h$ in the centralizer of $C$, we denote by $\mathfrak h$ the $1$-dimensional $E$-Lie algebra of the $\ell$-analytic subgroup $h^{\Z_\ell}$ of $\GL_d(\O)$ 
(i.e. the closure of the powers of $h$.) Lazard's isomorphism applies to $C$ and gives
\[
\rH^2_{\rm cts}(C, E) \simeq \rH^2({\mathfrak {c}}, E)^C.
\]
These isomorphisms fit in a commutative diagram
\[
\begin{matrix} 
\rH^3_{\rm la}(\GL_d(\O), E)&\to &{\rm Hom} (\rH_3(\GL_d(\O)), E)&\xrightarrow{\nabla_{h, C}^\vee} &{\rm Hom} (\rH_2(C), E)\simeq \rH^2_{\rm ct}(C, E)\\
\downarrow &&\downarrow && \downarrow \\
\rH^3({\mathfrak {gl}}_d, E)&\to &{\rm Hom}_E(\rH_3({\mathfrak {gl}}_d), E)&\xrightarrow{\nabla_{{\mathfrak h}, {\mathfrak c}}^\vee}  & {\rm Hom}_E(\rH_2({\mathfrak c}), E)\simeq \rH^2({\mathfrak {c}}, E)
\end{matrix}
\]
with the last vertical map an injection.
Here, 
\[
\nabla_{\mathfrak h,\mathfrak c}: E\otimes_ E \rH_2(\mathfrak c)\to \rH_3(\mathfrak {gl}_d)
\]
is given by sending $x\otimes (\sum_{j} a_{j} (y_{j1}\wedge y_{j2}))$ to
\[
  \sum_j a_j(x\wedge y_{j1}\wedge y_{j2}-  y_{j1}\wedge x\wedge y_{j2}+y_{j1}\wedge y_{j2}\wedge x)=3  \sum_j a_j(x\wedge y_{j1}\wedge y_{j2}).
\] 
In this, $[x, y_1]=[x, y_2]=0$,
and  $ \sum_{j} a_{j} [y_{j1},y_{j2}]=0$. It then follows that  $f_3\in \rH^3_{\rm la}(\GL_d(\O), E)$
maps to $0$ in ${\rm Hom} (\rH_2(C), E)$. This implies the desired result for the evaluation of $\be_A$  at each point $A\to \O$ and therefore also for $\be_A$.

\medskip

 \section{Representations of \'etale fundamental groups}
 
 We will now apply the constructions of the previous section to the case in which
 the profinite group $\Gamma$ is the geometric \'etale fundamental group of a
 smooth projective curve defined over a field $k$  of characteristic $\neq \ell$.

\subsection{\'Etale fundamental groups of curves} 

Let $X$ be a smooth curve over $k$. Set  $\bar X=X\otimes_k\bar k$
and choose a $\bar k$-valued point $\bar x$ of $X$.
We have the standard exact sequence of \'etale fundamental (profinite) groups
\[
1\to \pi_1(\bar X, \bar x)\to \pi_1(X, \bar x)\to  \Gk\to 1.
\]
(cf. \cite{SGA1}, Exp. IX, \S 6.) We will assume that $X$ is projective and, for
simplicity, that $\bar X$ is irreducible.

We set $\Gamma=\pi_1(\bar X,\bar x)$, $\Gamma_0=\pi_1(X, \bar x)$, considered as  profinite groups. Note that $\Gamma=\pi_1(\bar X, \bar x)$ is topologically finitely generated (\cite{SGA1}, Exp. X, Theorem 2.6).  By \cite[I, \S 1, Prop. 1]{SerreGaloisCoh}, there is a continuous set theoretic section 
$s: G=\Gk\to \Gamma_0$. In this case, such a section can be constructed as follows: Choose a point of $X$ defined over a finite separable extension $k\subset k'\subset k^{\rm sep}$. We can assume that $k'/k$ is Galois and so it corresponds to a finite index normal open subgroup $U\subset  G$. As usual, pull-back by the morphism $\Spec(k')\to X$ gives a continuous homomorphic section $s_U: U\to \Gamma_0$. We can now extend $s_U$ to the desired $s$ by choosing a representative $g_i$ of each coset $G/U$ and arbitrarily assigning $s(g_i)=s_i\in \Gamma_0$;
then $s(g_iu)=s_i s_U(u)$ works.

We have (\cite{NSW}, Ch. II, Thm (2.2.9))
\[
\rH_i(\Gamma, \Z_\ell)\simeq \rH^{i}_{\rm cts}(\Gamma, \Q_\ell/\Z_l)^*
\]
where $(\ )^*={\rm Hom}(\ ,\Q_\ell/\Z_\ell)$ is the Pontryagin dual.
Now, since $\bar X$ is a ${\rK}(\pi_1(\bar X), 1)$-space for $\ell$-torsion \'etale sheaves (cf. \cite[Theorem 11]{Friedlander}),
\[
\rH^{i}_{\rm cts}(\Gamma, \Q_\ell/\Z_\ell)=\varinjlim_n \rH^i(\Gamma, \ell^{-n}\Z/\Z)=
\varinjlim_n \rH^i_{\et}(\bar X, \ell^{-n}\Z/\Z). 
\]
Since $\rH^3_{\et}(\bar X, \ell^{-n}\Z/\Z)=0$, 
$\rH^2_{\et}(\bar X, \ell^{-n}\Z/\Z)=(\Z/\ell^n\Z)(-1)$,
we get
\[
\rH_3(\Gamma, \Z_\ell)=0,\quad \rH_2(\Gamma, \Z_\ell)\simeq \Z_\ell(1).
\]
In fact, the isomorphism $\rH_2(\Gamma, \Z_\ell)\simeq \Z_\ell(1)$ is canonical, given 
by Poincare duality.

\subsection{The $\ell$-adic volume}

Suppose  $A=\O\lps x_1,\ldots, x_m\rps$, where $\O=\O_E$ is the ring of integers
in a finite extension $E$ of $\Q_\ell$ with residue field $\BF$; this includes the case $A=\O$ (for $m=0$).
Recall $\sO$ is the ring of analytic functions on the polydisk $D=D_1(m)$
(when $m=0$, $\sO=E$.)

Let $\rho_0: \pi_1(X, \bar x)\to \GL_d(A)$ be a continuous representation.
Apply the construction of Proposition \ref{masterpro} to $\Gamma=\pi_1(\bar X, \bar x)$, $\Gamma_0=\pi_1(X, \bar x)$, $G=\Gk$,
with $G$ acting trivially on $A$, $\ti\psi: G\to {\rm Aut}(\Gamma)$ given via $s$,  $\rho={\rho_0}_{|\pi_1(\bar X, \bar x)}$ and
\[
\be_A: \bar C_3(\GL_d(A))/{\rm Im}(\partial_4)_\ell\to \sO
\]
 given by the $\ell$-adic regulator. This gives a continuous $1$-cocycle $\Gk\to \sO(-1)$:

 \begin{Definition}
The cohomology class 
\[
{\CS}(\rho)=\CS_\rho\in \rH^1_{\rm cts}(\Gk, \sO(-1))=\rH^1_{\rm cts}(k, \sO(-1)),
\]
given by the construction of Proposition \ref{masterpro}, is the $\ell$-adic volume of $\rho_0$.
(It depends only on the restriction $\rho=\rho_0{|_{\pi_1(\bar X, \bar x)}}$.)
\end{Definition}

The restriction of the cocycle to $G_{k\cy}={\rm Gal}(k^{\rm sep}/k\cy)$ gives a well-defined continuous homomorphism
\[
\CS_{\rho,k\cy}: G_{k\cy}\to \sO.
\]
We set
\[
G_\infty:=\Gal(k\cy/k)
\]
which, via $\chi_{\rm cycl}$, identifies with a subgroup of $\Z_\ell^\times=\Z/(\ell-1)\times \Z_\ell$. 

Consider the restriction-inflation exact sequence
\begin{multline*}
1\to \rH^1_{\rm cts}(G_\infty, \sO(-1))\to \rH^1_{\rm cts}(k, \sO(-1))\to \\
\to \rH^1_{\rm cts}(k\cy, \sO(-1))^{G_\infty}\to \rH^2_{\rm cts}(G_\infty, \mathscr O(-1)).\ \ 
\end{multline*}
Using  that $\sO$ is a $\Q_\ell$-vector space   we can see that $\rH^1_{\rm cts}(G_\infty, \sO(-1))=0$,
$\rH^2_{\rm cts}(G_\infty, \sO(-1))=0$. Hence, restriction gives
\[
\rH^1_{\rm cts}(k, \sO(-1))\xrightarrow{\simeq} \rH^1_{\rm cts}(k\cy, \sO(-1))^{G_\infty},
\]
and $\CS({\rho})$ is determined by the continuous homomorphism 
\[
\CS_{\rho,k\cy}: G_{k\cy}\to \sO(-1)
\]
which is $G_\infty$-equivariant. In what follows, we will also simply write $\CS(\rho)$ for this homomorphism
and omit the subscript $k\cy$. Actually, using the continuity, we see that $\CS(\rho)$ factors through the maximal
abelian pro-$\ell$-quotient 
\[
G^{\rm ab}_{k\cy,\ell}={\rm Gal}^{\rm ab}(k^{\rm sep}/k\cy)_\ell.
\]

\subsection{}
In the following paragraphs we elaborate on some properties of $\CS({\rho})$. We start with an alternative definition.

\subsubsection{} We can also use the Leray-Serre spectral sequence 
\[
{\rm E}^{p, q}_2: \rH^p(k, \rH^q_\et(\bar X, \Q_\ell/\Z_\ell))\Rightarrow \rH^{p+q}_\et(X,\Q_\ell/\Z_\ell)
\]
to give a construction 
of  a class $\CS^{s}(\rho)$ as follows: 

Set $\rH^q(\bar X):=\rH^q_\et(\bar X, \Q_\ell/\Z_\ell)$,  $\rH^q( X):=\rH^q_\et(X, \Q_\ell/\Z_\ell)$.
 We are interested in $\rH^3(X)$. The spectral sequence gives a filtration
\[
(0)=F^4\rH^3(X)\subset F^3\rH^3(X)\subset F^2\rH^3(X)\subset F^1\rH^3(X)\subset F^0\rH^3(X)=\rH^3(X)
\]
with graded pieces ${\rm gr}_p \rH^3(X)\simeq {\rm E}^{p,3-p}_\infty$. 
Using that $\rH^q(\bar X, \Q_\ell/\Z_\ell)=(0)$ unless $q=0$, $1$, $2$,
we see that ${\rm gr}_0\rH^3(X)=(0)$ and that 
\[
{\rm E}^{1, 2}_\infty={\rm E}^{1,2}_4\subset {\rm E}^{1,2}_3\subset \rH^1(k, \rH^2(\bar X))=\rH^1(k, \Q_\ell/\Z_\ell(-1))
\]
with
 \begin{align*}
{\rm E}^{1,2}_3=&\ {\rm ker}(d^{1,2}_2: \rH^1(k, \Q_\ell/\Z_\ell(-1))\to \rH^3(k, \rH^1(\bar X)))\\
{\rm E}^{1,2}_\infty=&\ {\rm E}^{1,2}_4={\rm ker}(d^{1,2}_3: {\rm E}^{1,2}_3\to \rH^4(k, \Q_\ell/\Z_\ell)).
\end{align*}
We obtain 
\begin{equation*}
\eta: \rH^3(X, \Q_\ell/\Z_\ell)=F^1\rH^3(X)\twoheadrightarrow {\rm gr}_1\rH^3(X)={\rm E}^{1,2}_\infty\hookrightarrow \rH^1(k, \Q_\ell/\Z_\ell(-1)).
\end{equation*}

In what follows, for simplicity, we omit denoting the base point and simply write $\pi_1(X)$ and $\pi_1(\bar X)$.
Let us now compose $\eta$ with the natural 
\[
\rH^3(\pi_1(X),\Q_\ell/\Z_\ell)\simeq \rH^3(X,\Q_\ell/\Z_\ell)
\]
 ($X$ is a $\rK(\pi_1, 1)$-space for $\ell$-torsion \'etale sheaves) and then take 
Pontryagin duals to obtain
\[
\eta': \rH_1(k, \Z_\ell(1))\cong \rH^1(k, \Q_\ell/\Z_\ell(-1))^* \to \rH^3(\pi_1(X),\Q_\ell/\Z_\ell)^*\cong \rH_3(\pi_1(X),\Z_\ell).
\]
By further composing $\eta'$ with $\rH_3(\rho_0): \rH_3(\pi_1(X), \Z_\ell)\to \rH_3(\GL_d(A), \Z_\ell)$ and the $\ell$-adic regulator $\be_A: \rH_3(\GL_d(A), \Z_\ell)\to \sO$
we obtain a continuous homomorphism
\[
 \rH_1(k, \Z_\ell(1))\to \sO.
\]
By the universal coefficient theorem, this uniquely corresponds to a class
\[
\CS^s(\rho)\in \rH^1_{\rm cts}(k, \sO(-1)).
\]
\begin{Remark}{\rm 
a) By tracing through all the maps in the construction, one can check  
\[
\CS^s(\rho)=\pm \CS(\rho),
\]
where the sign depends on the normalization of the differentials in the spectral sequence. Since we are not going to use this, we omit the tedious details.

b) We can also see that the homomorphism
\[
\eta: \rH^3(X,\Q_\ell/\Z_\ell)\to \rH^1(k, \Q_\ell/\Z_\ell(-1))
\]
above is, up to a sign, given by the push-down 
\[
{\rm R}^if_{\et, *}: \rH^i(X,\Q_\ell/\Z_\ell(m))\to \rH^{i-2}(k, \Q_\ell/\Z_\ell(m-1))
\]
for $i=3$, $m=0$, and the structure morphism $f: X\to \Spec(k)$.
}
\end{Remark}

\subsubsection{}
Suppose that the $\ell$-cohomological dimension ${\rm cd}_\ell(k)$ of $k$  is $\leq 2$.
Then  ${\rm E}^{1,2}_\infty=\rH^1(k, \Q_\ell/\Z_\ell(-1))$ and
${\rm E}^{3,1}_2=(0)$ in the above.
Then the spectral sequence gives a natural exact sequence
\begin{multline}\label{exact434}
 \ \ (\Q_\ell/\Z_\ell(-1))^{G_k}\to \rH^2(k, \rH^1_\et(\bar X, \Q_\ell/\Z_\ell))\to \\
\to \rH^3_\et(X, \Q_\ell/\Z_\ell)\xrightarrow{\eta} \rH^1(k, \Q_\ell/\Z_\ell(-1))\to 0.\ \ 
\end{multline}

Often, the situation simplifies even more:

\begin{thm}\label{Jannsen}(Jannsen) Assume $\ell\neq 2$. Suppose that $k$ is a number field, a global function field
of characteristic $\neq \ell$, or a finite extension of $\Q_p$  ($p=\ell$ is allowed).
Then 
\[
\eta: \rH^3_\et(X, \Q_\ell/\Z_\ell)\xrightarrow{\sim} \rH^1(k, \Q_\ell/\Z_\ell(-1))
\]
is an isomorphism.
\end{thm}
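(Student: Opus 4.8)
The plan is to deduce the statement from the degeneration behaviour of the Hochschild--Serre (Leray) spectral sequence of $X\to\Spec(k)$, reducing it to the single vanishing
\[
\rH^2(k,\rH^1_\et(\bar X,\Q_\ell/\Z_\ell))=0.
\]
First I would record that $\ell\neq 2$ forces $\mathrm{cd}_\ell(k)\leq 2$ in all three cases: for a number field the only obstruction comes from the real places, which contribute nothing for $\ell$ odd, while for a global function field of characteristic $\neq\ell$ and for a finite extension of $\Q_p$ (even when $p=\ell$) one always has $\mathrm{cd}_\ell=2$. Hence the exact sequence (\ref{exact434}) is available; it already gives the surjectivity of $\eta$, and its injectivity is equivalent to the vanishing of the connecting map $\rH^2(k,\rH^1_\et(\bar X,\Q_\ell/\Z_\ell))\to\rH^3_\et(X,\Q_\ell/\Z_\ell)$, which the displayed vanishing certainly implies.

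Next I would identify the coefficient module. Writing $J=\mathrm{Pic}^0_{X/k}$ for the Jacobian, the Kummer sequence together with the $\ell$-divisibility of $\rH^0(\bar X,\mathbb{G}_m)=\bar k^{\times}$ gives $\rH^1_\et(\bar X,\Q_\ell/\Z_\ell)\cong J[\ell^\infty](-1)$, whose Tate (Cartier) dual $\mathrm{Hom}(-,\mu_{\ell^\infty})$ is, via the principal polarization of $J$, the module $T_\ell J(1)=\rH^1_\et(\bar X,\Z_\ell(2))$. If $k$ is a finite extension of $\Q_p$ (including $p=\ell$), this already finishes the proof: by local Tate duality
\[
\rH^2(k,\rH^1_\et(\bar X,\Q_\ell/\Z_\ell))\cong\rH^0(k,T_\ell J(1))^{*},
\]
and $\rH^0(k,T_\ell J(1))=0$ by a weight argument applied to $V_\ell J(1):=T_\ell J(1)\otimes_{\Z_\ell}\Q_\ell$, which is $\rH^1_\et(\bar X,\Q_\ell)(2)$ up to the polarization: when $p\neq\ell$ all its geometric Frobenius eigenvalues have absolute value $<1$, so $1$ is not among them; when $p=\ell$ it is de Rham with all Hodge--Tate weights nonzero, so it has no Galois invariants. (For $\ell$ odd, archimedean places, should they occur, contribute nothing.)

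For $k$ a number field or a global function field of characteristic $\neq\ell$ I would feed this into global duality. By the previous paragraph $\rH^2(k_v,\rH^1_\et(\bar X,\Q_\ell/\Z_\ell))=0$ for \emph{every} place $v$, so the localisation map out of $\rH^2(k,\rH^1_\et(\bar X,\Q_\ell/\Z_\ell))$ is identically zero; the Poitou--Tate sequence then identifies $\rH^2(k,\rH^1_\et(\bar X,\Q_\ell/\Z_\ell))$ with the Pontryagin dual of the subgroup of $\rH^1(k,T_\ell J(1))$ consisting of classes that are trivial at every place. The vanishing of this Tate--Shafarevich-type group is the arithmetic core of the theorem, and is what I would cite from Jannsen's work: since the relevant motive $h^1(X)(2)$ is of weight $-3$, well away from $-1$, the everywhere-locally-trivial subgroup vanishes --- over number fields this rests on the finiteness theorems in $\ell$-adic \'etale $K$-theory (Soul\'e), and over global function fields it can instead be obtained by viewing $X$ as a smooth proper surface over a finite field and combining Poincar\'e duality with the Weil conjectures. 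This yields $\rH^2(k,\rH^1_\et(\bar X,\Q_\ell/\Z_\ell))=0$, hence the theorem. The bookkeeping with the spectral sequence, the identification of the coefficient module, and the local weight computation are routine; the genuine obstacle --- and the reason the result is attributed to Jannsen --- is this last, global vanishing, where the deep arithmetic of $k$ (finiteness in \'etale $K$-theory, or the Weil conjectures in the function-field case) is indispensable and a weight argument alone does not suffice.
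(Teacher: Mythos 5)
Your proposal is correct and follows essentially the same route as the paper: both reduce, via $\mathrm{cd}_\ell(k)\leq 2$ and the exact sequence (\ref{exact434}), to the vanishing of $\rH^2(k,\rH^1_\et(\bar X,\Q_\ell/\Z_\ell))$, which the paper simply cites from Jannsen. The only difference is that you unpack that citation — identifying the coefficient module with $J[\ell^\infty](-1)$, handling the local case directly by Tate duality together with weight (resp.\ Hodge--Tate) arguments, and reducing the global case via Poitou--Tate to the Tate--Shafarevich vanishing that is the actual content of Jannsen's theorem — all of which is accurate and consistent with how those results are proved.
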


\begin{proof} Note that since we assume $\ell\neq 2$, 
the $\ell$-cohomological dimension ${\rm cd}_\ell(k)$ of $k$  is $\leq 2$,
for all the fields considered in the statement.
The exact sequence (\ref{exact434}) implies that it is enough to show 
\[
\rH^2(k, \rH^1_\et(\bar X, \Q_\ell/\Z_\ell))=(0).
\]
This vanishing follows from the results of \cite{Jannsen}.
In the number field case, this is \cite{Jannsen} \S 7, Cor. 7 (a).
In the global function field case, Jannsen shows  a more general result (loc. cit., Theorem 1). Finally, the local case is shown 
in the course of the proof of the number field case in loc. cit. \S 7. 
\end{proof}

\begin{Remark}{\rm
Jannsen conjectures a vanishing statement which is a lot more general.
See \cite{Jannsen} Conjecture 1   
and \S 3, Lemma 5.  }
\end{Remark}

\begin{cor}\label{CorJannsen}
Under the assumptions of Theorem \ref{Jannsen}, we have
 \[
\rH^1_{\rm cts}(k, \sO(-1))\simeq \Hom_{\Z_\ell}(\rH_3(\pi_1(X),\Z_\ell), \sO)
\]
and, under this isomorphism, the $\ell$-adic volume $\CS^s(\rho)$ is given by the $\Z_\ell$-homomorphism
 \[
 \rH_3(\pi_1(X),\Z_\ell)\to \sO
 \]
which is the composition of $\rH_3(\rho)$ with the
 $\ell$-adic regulator.\endproof
 \end{cor}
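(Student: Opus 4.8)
The plan is to deduce the corollary formally from Theorem~\ref{Jannsen}, by dualizing $\eta$ and by feeding the result into the universal coefficient identification already used to define $\CS^s(\rho)$. First I would note that Theorem~\ref{Jannsen} makes $\eta$ an isomorphism. Since $X$ is a $\rK(\pi_1,1)$-space for $\ell$-torsion \'etale sheaves we have $\rH^3_\et(X,\Q_\ell/\Z_\ell)=\rH^3_{\rm cts}(\pi_1(X),\Q_\ell/\Z_\ell)$, and by profinite Pontryagin duality (as in \cite{NSW}) this is the dual of $\rH_3(\pi_1(X),\Z_\ell)$, while $\rH^1(k,\Q_\ell/\Z_\ell(-1))$ is the dual of $\rH_1(k,\Z_\ell(1))$. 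Hence the Pontryagin dual of $\eta$ is an isomorphism
\[
\eta'\colon \rH_1(k,\Z_\ell(1))\;\xrightarrow{\ \sim\ }\;\rH_3(\pi_1(X),\Z_\ell),
\]
which is exactly the map $\eta'$ used in the construction of $\CS^s(\rho)$.

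Next I would recall the general identification $\rH^1_{\rm cts}(k,\sO(-1))\simeq\Hom_{\Z_\ell}(\rH_1(k,\Z_\ell(1)),\sO)$, the ``universal coefficient theorem'' isomorphism invoked when $\CS^s(\rho)$ was defined, which does not use Theorem~\ref{Jannsen}. Its content is that in the universal coefficient sequence computing $\rH^1_{\rm cts}(k,\sO(-1))$ the obstruction term $\mathrm{Ext}^1_{\Z_\ell}(\rH_0(k,\Z_\ell(1)),\sO)$ vanishes because $\sO$ is $\ell$-divisible, so that a continuous $1$-cocycle amounts precisely to the continuous homomorphism $\rH_1(k,\Z_\ell(1))\to\sO$ it represents; the passage to the Fr\'echet module $\sO=\varprojlim_{r<1}\sO(\bar D_r(m))$ is handled, as in the surrounding sections, by vanishing of the relevant $\varprojlim^1$-terms. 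Equivalently, one can read this off the restriction--inflation computation already carried out, which gives $\rH^1_{\rm cts}(k,\sO(-1))\cong\Hom_{\rm cts}(G^{\rm ab}_{k\cy,\ell},\sO)^{G_\infty}$, since $G_{k\cy}=\ker(\chi_{\rm cycl})$ acts trivially on $\sO(-1)$.

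Combining the two steps, precomposition with $\eta'$ transports the universal coefficient isomorphism to
\[
\rH^1_{\rm cts}(k,\sO(-1))\;\xrightarrow{\ \sim\ }\;\Hom_{\Z_\ell}(\rH_1(k,\Z_\ell(1)),\sO)\;\xrightarrow{\ \sim\ }\;\Hom_{\Z_\ell}(\rH_3(\pi_1(X),\Z_\ell),\sO),
\]
which is the first assertion. For the second assertion I would merely unwind the definition of $\CS^s(\rho)$: it corresponds, under the universal coefficient isomorphism, to the composite $\be_A\circ\rH_3(\rho_0)\circ\eta'\colon\rH_1(k,\Z_\ell(1))\to\sO$, so the displayed isomorphism --- which precomposes with $(\eta')^{-1}$ --- cancels the factor $\eta'$ and sends $\CS^s(\rho)$ to $\be_A\circ\rH_3(\rho_0)\colon\rH_3(\pi_1(X),\Z_\ell)\to\sO$, i.e. to the composition of $\rH_3(\rho)$ with the $\ell$-adic regulator, as claimed.

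The step I expect to need the most care is the universal coefficient identification with the Fr\'echet coefficient module $\sO(-1)$: one has to commute continuous $\Gk$-cohomology both with the inverse limit presenting $\sO$ as a limit of the Banach pieces $\sO(\bar D_r(m))$ and with the $\ell$-adic completions computing those pieces, and to check that the resulting $\varprojlim^1$ and $\mathrm{Ext}^1_{\Z_\ell}$ obstructions vanish --- which they do, precisely because $\sO$ is torsion-free and $\ell$-divisible as a $\Z_\ell$-module. Granting this, together with the profinite Pontryagin duality of \cite{NSW} and the $\rK(\pi_1,1)$-property already used above, the rest is formal.
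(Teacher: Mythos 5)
Your argument is correct and is exactly the intended one: the paper states this corollary with no separate proof precisely because it follows formally from Theorem \ref{Jannsen} (which makes $\eta$, hence its Pontryagin dual $\eta'$, an isomorphism) together with the definition of $\CS^s(\rho)$ as the class corresponding to $\be_A\circ\rH_3(\rho_0)\circ\eta'$ under the universal coefficient identification. Your extra care about the universal coefficient step for the Fr\'echet module $\sO$ is a reasonable elaboration of what the paper takes for granted, not a different route.
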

 
 We now continue our discussion of the group 
 \[
\rH^1_{\rm cts}(k, \sO(-1))\xrightarrow{\simeq} \rH^1_{\rm cts}(k\cy, \sO(-1))^{G_\infty}.
\]

 \subsubsection{}
Assume that $k$ is a finite field of order $q$, ${\rm gcd}(\ell, q)=1$. Then $G^{\rm ab}_{k\cy,\ell}=(1)$ and so $
\rH^1_{\rm cts}(k, \sO(-1))=(0)$. Hence,
$\CS(\rho)=0$
for all $X$ and $\rho$.
 
\subsubsection{}  Let $k$ be a local field which is a finite extension of $\Q_p$.
Write $G_\infty=\Delta \times \Gamma$, where $\Delta=\Gal(k(\zeta_\ell)/k)$ is a finite cyclic 
group of order that divides $\ell-1$ and $\Gamma\simeq \Z_\ell$. 
By a classical result of Iwasawa
\[
G^{\rm ab}_{k\cy,\ell}\cong \begin{cases}
 \Z_\ell(1), \ \ \ \ \ \ \ \ \ \ \ \ \ \ \ \ \ \ \ \ \ \ \ \hbox{\rm if\ } \ell\neq p, & \\
 \Z_\ell\lps G_\infty\rps^{[k:\Q_\ell]}\oplus \Z_\ell(1),\, \ \ \hbox{\rm if}\ \ell=p .
 \end{cases}
\]
as $\Z_\ell\lps G_\infty\rps$-modules.  
(See for example,  \cite[Theorem (11.2.4)]{NSW}). 
It follows that $\CS(\rho)$ takes values in   
\[
\rH^1_{\rm cts}(k, \sO(-1))\simeq  \rH^1_{\rm cts}(k\cy, \sO(-1))^{G_\infty}\cong \begin{cases}
 (0), \ \ \ \ \ \ \hbox{\rm if\ } \ell\neq p & \\

 \sO^{[k:\Q_\ell]},\ \ \hbox{\rm if}\ \ell=p .
 \end{cases}
\]

\subsubsection{} Let $k$ be a number field with $r_1$ real and $r_2$ complex places. 
For a place $v$ of $k$, fix $\bar k\hookrightarrow \bar k_v$ which gives $G_v=\Gal(\bar k_v/k_v)\hookrightarrow G_k$.  Using the local case above, we see that for all finite places $v$ away from $\ell$, the restriction of $\CS(\rho)$ to $G_v\cap {\rm Gal}(\bar k /k\cy)$ is trivial. It follows that $\CS(\rho)$ factors through 
the Galois group $\mathscr X_\infty$ of the maximal abelian pro-$\ell$ extension of $k\cy$ which is unramified outside $\ell$.  
We have
\[
\CS(\rho)\in \Hom_{\rm cts}(\mathscr X_\infty, \sO(-1))^{G_\infty}=\Hom_{\rm cts}(\mathscr X_\infty(1)_{G_\infty}, \sO).
\]
The  Galois group $\mathscr X_\infty$ is a classical object of Iwasawa theory:

Set $K=k(\zeta_\ell)$, denote by $k_\infty$ the cyclotomic $\Z_\ell$-extension of $k$, and denote by $K_\infty=Kk_\infty$  the cyclotomic $\Z_\ell$-extension of $K$.
 Then
\[
G_\infty=\Gal(k\cy/k)=\Gal(K_\infty/k).
\]
As above, $G_\infty=\Delta\times \Gamma$, $\Gamma\simeq \Z_\ell$. Denote as usual  
\[
\Lambda=\Z_\ell\lps T\rps\simeq \Z_\ell\lps \Gamma\rps
\]
with the topological generator 
$1$ of $\Z_\ell\simeq \Gamma$ mapping to $1+T$. Then $\Z_\ell\lps G_\infty\rps\simeq \Lambda[\Delta]$.
By results of Iwasawa  (\cite{Iwa}, see slso \cite{NSW} Theorems (11.3.11), (11.3.18)):

1) $\mathscr X_\infty$ is a finitely generated $\Lambda[\Delta]$-module,

2) $\mathscr X_\infty$ has no non-trivial finite $\Lambda$-submodule, 

3) There is an exact sequence of $\Lambda[\Delta]$-modules
\[
0\to t_\Lambda(\mathscr X_\infty)\to \mathscr X_\infty\to  \Lambda[\Delta]^{r_2}\oplus \bigoplus_{v\in S_{\rm real}(k)} {\rm Ind}^{\langle c_v\rangle}_{\Delta}\Lambda^-\to T_2(\mathscr X_\infty)\to 0.
\]
Here, $t_\Lambda (\mathscr X_\infty)$ is the $\Lambda$-torsion submodule of $\mathscr X_\infty$ and $T_2(\mathscr X_\infty)$ is a finite $\Lambda$-module. Also,
 $c_v\in \Delta$ is the complex conjugation at $v$ and $\Lambda_\ell^-$ is the $c_v$-module with $c_v$ acting as multiplication by $-1$.  
 
 We now see that
 \begin{align*}
 \Hom_{\rm cts}(\Lambda[\Delta], \sO(-1))^{\Delta\times\Gamma}&\simeq \sO,  \\
 \Hom_{\rm cts}({\rm Ind}^{\langle c_v\rangle}_{\Delta}\Lambda^-, \sO(-1))^{\Delta\times\Gamma}&\simeq \sO.
 \end{align*}
 Therefore, we obtain
 \begin{equation*}\label{sesO}
0\to \sO^{r_1+r_2}\to \Hom_{\rm cts}(\mathscr X_\infty, \sO(-1))^{G_\infty} \to \Hom_{\rm cts}(t_\Lambda(\mathscr X_\infty), \sO(-1))^{G_\infty}\to 0.
 \end{equation*}
 To continue, 
 consider the following generalization of Leopoldt's conjecture  due to Schneider \cite{Schn}, for an integer $m\neq 1$:
 \smallskip
 \smallskip
 
 \begin{itemize}
 \item[]Conjecture $(C_m)$: \  $\rH_{\et}^2(\O_k[1/\ell], \Q_\ell/\Z_\ell(m))=(0)$. 
 \end{itemize}
 \smallskip

\begin{Remark}
{\rm This is also a very special case, for $X=\Spec(k)$, of the conjectures of \cite{Jannsen} mentioned above. 
$(C_0)$ is equivalent to Leopoldt's conjecture for $k$ and $\ell$. For $m\geq 2$, conjecture $(C_m)$ was shown by Soul\'e \cite{Soule} by relating the Galois cohomology group to the 
  group $\rK_{2m-2}(\O_k)$ which is finite by work of Borel. If $k$ is totally real, then $(C_m)$ implies $(C_{1-m})$ for $m$ even. Hence, if $k$ is totally real,  $(C_{m})$, for $m$ odd and negative, is true. 
\rm}
\end{Remark}
 
 By \cite{KNF96} Lemma 2.2, Theorem 2.3, assuming $(C_{-1})$, we have
 \[
 (t(\mathscr X_\infty)(1))_{G_\infty}=0,
 \]
 and so the last term in the short exact sequence above is trivial
 \[
 \Hom_{\rm cts}(t_\Lambda(\mathscr X_\infty), \sO(-1))^{G_\infty}=(0).
 \]
This gives that, assuming $(C_{-1})$, we have
\[
 \Hom_{\rm cts}(\mathscr X_\infty, \sO(-1))^{G_\infty}=\Hom_{\Z_\ell}(\mathscr X_\infty(1)_{G_\infty}, \sO)\simeq \sO^{r_1+r_2}
 \]
 and so $\CS(\rho)$ can be thought of as taking values in $\sO^{r_1+r_2}$.
 
 In fact, assuming $(C_{-1})$, \cite{KNF96} Theorem 2.3 gives  a canonical isomorphism
 \[
 \rH_{\et}^1(\O_k[1/\ell], \Q_\ell(-1))\cong \Hom_{\Z_\ell}(\mathscr X_\infty, \Q_\ell(-1))^{G_\infty}.
 \]
Consider now the
 semilocal pairing
 \[
 (\bigoplus_{v|\ell} \rH^1(k_v, \Q_\ell(-1)))\times (\bigoplus_{v|\ell} \rH^1(k_v, \Q_\ell(2)))\to \Q_\ell
 \]
 obtained by adding the local duality pairings (see \cite{KNF96}).  
 Assuming  $(C_{-1})$, Theorem 1.3 of loc. cit., states that the image of 
 \[
 r^\ell_{-1}: \rH_{\et}^1(\O_k[1/\ell], \Q_\ell(-1))\to \bigoplus_{v|\ell} \rH^1(k_v, \Q_\ell(-1))
 \]
 is the exact orthogonal of the image of 
 \[
 \rK_{3}(\O_k)\otimes_{\Z_\ell}\Q_\ell\xrightarrow{c_{2,1}}   \rH_{\et}^1(\O_k[1/\ell], \Q_\ell(2))\xrightarrow{r^\ell_2} \bigoplus_{v|\ell} \rH^1(k_v, \Q_\ell(2)),
 \]
 under this pairing. Here, $c_{2,1}$ is Soule's Chern class map \cite{Soule} which is an isomorphism by the Quillen-Lichtenbaum conjecture. Both $r^\ell_{-1}$ and $r^\ell_2$ are injective. 
 Note that, as $\Q_\ell$-vectors spaces, $\rK_{3}(\O_k)\otimes_{\Z_\ell}\Q_\ell\simeq \Q_\ell^{r_2}$,
 while $ \bigoplus_{v|\ell} \rH^1(k_v, \Q_\ell(2))\simeq \bigoplus_{v|\ell} \rH^1(k_v, \Q_\ell(-1)) \simeq \Q_\ell^{r_1+2r_2}.$
 
 We have shown:
  
 \begin{prop}
 Suppose that $k$ is a number field and assume conjecture $(C_{-1})$ for $k$ and $\ell$. Then,
 $\CS(\rho)\in \rH^1(k, \sO(-1))$ is  determined by its restrictions $\CS(\rho)_{k_v}\in \rH^1(k_v, \sO(-1))$,
 for $v|\ell$,
 and 
 \[
 (\CS(\rho)_{k_v})_v\in  \bigoplus_{v|\ell} \rH^1(k_v, \Q_\ell(-1))\otimes_{\Q_\ell}\sO
 \]
 lies in orthogonal complement of $\rK_{3}(\O_k)\otimes_{\Z_\ell}\sO$ under the semi-local duality pairing above.
 Hence, in this case we can view $\CS(\rho)$ as a linear functional
 \[
\CS(\rho): \frac{\bigoplus_{v|\ell} \rH^1(k_v, \Q_\ell(2))}
 {\rK_{3}(\O_k)\otimes_{\Z_\ell}\Q_\ell}\to \sO.  
 \] \endproof
 \end{prop}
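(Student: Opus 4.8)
The plan is to assemble the Iwasawa-theoretic facts recalled in the preceding paragraphs and to base-change them from $\Q_\ell$-coefficients to $\sO$-coefficients; no new input is required beyond flatness of $\sO$ over $\Q_\ell$ and the cited results of Kolster--Nguyen Quang Do--Fleckinger \cite{KNF96}. In particular there is no substantial obstacle: the proposition is essentially a repackaging of loc. cit.

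First I would record that $\CS(\rho)$, regarded as the $G_\infty$-equivariant continuous homomorphism $\CS_{\rho,k\cy}\colon G_{k\cy}\to\sO(-1)$, factors through $\mathscr X_\infty$: by the local computation at finite places $v\nmid\ell$ its restriction to $G_v\cap\Gal(\bar k/k\cy)$ is trivial, and since $\ell$ is odd the pro-$\ell$ decomposition groups at the archimedean places are trivial, so the cocycle is unramified outside $\ell$. Thus $\CS(\rho)\in\Hom_{\rm cts}(\mathscr X_\infty,\sO(-1))^{G_\infty}=\Hom_{\Z_\ell}(\mathscr X_\infty(1)_{G_\infty},\sO)$. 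Invoking conjecture $(C_{-1})$ together with \cite{KNF96} Lemma 2.2 and Theorem 2.3, the $\Lambda$-torsion contributes nothing, $(t_\Lambda(\mathscr X_\infty)(1))_{G_\infty}=0$, and the canonical isomorphism of \cite{KNF96} Theorem 2.3 yields
\[
\rH^1_{\rm cts}(k,\sO(-1))\;\cong\;\Hom_{\Z_\ell}(\mathscr X_\infty(1)_{G_\infty},\sO)\;\cong\;\rH^1_{\et}(\O_k[1/\ell],\Q_\ell(-1))\otimes_{\Q_\ell}\sO .
\]
Since $\rH^1_{\et}(\O_k[1/\ell],\Q_\ell(-1))$ and all the local groups $\rH^1(k_v,\Q_\ell(-1))$, $\rH^1(k_v,\Q_\ell(2))$, as well as $\rK_3(\O_k)\otimes\Q_\ell$, are finite-dimensional $\Q_\ell$-vector spaces, and $\sO$ is a $\Q_\ell$-vector space (hence flat over $\Q_\ell$), the functor $-\otimes_{\Q_\ell}\sO$ is exact and commutes with the formation of images, kernels, and orthogonal complements for the perfect $\Q_\ell$-bilinear pairings in play.

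With this dictionary in hand, both assertions follow from \cite{KNF96}. Injectivity of the localization map $r^\ell_{-1}$ (stated there) gives injectivity of $r^\ell_{-1}\otimes_{\Q_\ell}\sO$, which is exactly the claim that $\CS(\rho)$ is determined by the tuple $(\CS(\rho)_{k_v})_{v\mid\ell}$. By \cite{KNF96} Theorem 1.3 the image of $r^\ell_{-1}$ is the exact orthogonal complement, under the semilocal duality pairing
\[
\Bigl(\bigoplus_{v\mid\ell}\rH^1(k_v,\Q_\ell(-1))\Bigr)\times\Bigl(\bigoplus_{v\mid\ell}\rH^1(k_v,\Q_\ell(2))\Bigr)\longrightarrow\Q_\ell ,
\]
of the image of $r^\ell_2\circ c_{2,1}\colon\rK_3(\O_k)\otimes_{\Z_\ell}\Q_\ell\to\bigoplus_{v\mid\ell}\rH^1(k_v,\Q_\ell(2))$; tensoring this orthogonality relation with $\sO$ over $\Q_\ell$ shows that $(\CS(\rho)_{k_v})_v$ lies in the orthogonal complement of $\rK_3(\O_k)\otimes_{\Z_\ell}\sO$. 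Finally, since $c_{2,1}$ is an isomorphism (Quillen--Lichtenbaum) and $r^\ell_2$ is injective, the semilocal pairing descends to a perfect pairing between $\mathrm{Im}(r^\ell_{-1})$ and $\bigl(\bigoplus_{v\mid\ell}\rH^1(k_v,\Q_\ell(2))\bigr)/\bigl(\rK_3(\O_k)\otimes_{\Z_\ell}\Q_\ell\bigr)$; pairing $\CS(\rho)$ against this quotient, $\sO$-linearly, produces the asserted functional.

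The only point demanding attention — and it is bookkeeping rather than a genuine obstacle — is the passage from $\Q_\ell$- to $\sO$-coefficients: one must check that the comparison isomorphisms and the maps $r^\ell_{-1}$, $r^\ell_2$, $c_{2,1}$ remain $\sO$-linear after base change, and that ``exact orthogonal complement'' is preserved by $-\otimes_{\Q_\ell}\sO$. Both are immediate from flatness of $\sO$ over $\Q_\ell$ and the finiteness over $\Q_\ell$ of all the Galois cohomology and $K$-groups involved, so once the Iwasawa-theoretic input of \cite{KNF96} is granted the argument is purely formal.
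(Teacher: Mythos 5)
Your proposal is correct and follows essentially the same route as the paper: the proposition is stated there as a summary ("We have shown:") of the immediately preceding discussion, namely that $\CS(\rho)$ factors through $\mathscr X_\infty$, that $(C_{-1})$ together with \cite{KNF96} identifies $\rH^1_{\rm cts}(k,\sO(-1))$ with $\rH^1_\et(\O_k[1/\ell],\Q_\ell(-1))\otimes_{\Q_\ell}\sO\simeq\sO^{r_1+r_2}$, and that injectivity of $r^\ell_{-1}$ plus Theorem 1.3 of loc. cit. give the determination by local restrictions and the orthogonality to $\rK_3(\O_k)\otimes_{\Z_\ell}\sO$. Your explicit remarks on base change along $\Q_\ell\to\sO$ are correct bookkeeping that the paper leaves implicit.
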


 \begin{Remark}
{\rm   At this point, we have no explicit calculations and no proof that the volume is not identically zero. For $k$ a number field, we can obtain examples by taking $X$ to be a Shimura curve and $\rho$   the $\ell$-adic local system of the Tate module of 
a universal abelian scheme over $X$. It is an interesting problem to calculate $\CS({\rho})$ for these examples.
}
\end{Remark}

\subsection{Variant: Finite groups and higher dimension.} Here, we let $G$ be a finite group and give a construction of classes in $\rH^1(k, \Q_\ell/\Z_\ell(-1))$ which is more in the spirit of the construction in \cite{KimCS1}. If $\pi: Y\to X$ is an \'etale $G$-cover
(corresponding to $\rho: \pi_1(X)\to G$), we obtain  a homomorphism
\[
\mathfrak K(\pi): \rH^3(G, \Q_\ell/\Z_\ell)\to \rH^3(X, \Q_\ell/\Z_\ell)
\] 
by pulling back from the classifying space. For $\alpha\in \rH^3(G, \Q_\ell/\Z_\ell)$ we can now set
\[
{\rm CS}(Y/X, \alpha):=\eta(\mathfrak K(\pi)(\alpha))\in \rH^1(k, \Q_\ell/\Z_\ell(-1))
\]
where
$
\eta: \rH^3(X, \Q_\ell/\Z_\ell)\to \rH^1(k, \Q_\ell/\Z_\ell(-1))
$
is obtained from the Leray-Serre spectral sequence. 
This can also be given an explicit cocycle description: Let us choose 
\[
\ti\alpha: \bar C_3(G)/{\rm Im}(\partial_4)\to \Q_\ell/\Z_\ell
\]
giving $\alpha\in \rH^3(G, \Q_\ell/\Z_\ell)={\rm Hom}(\rH_3(G,\Z), \Q_\ell/\Z_\ell)$.
Then, for $c$, $\sigm$  as before, and $\delta(\sigm, c)\in \bar C_3(\pi_1(\bar X))/{\rm Im}(\partial_4)_\ell$, with
\[
\partial_3(\delta(\sigm, c))=\sigm \cdot c\cdot \sigm^{-1}-  \chi_{\rm cycl}(\sigma)\cdot c,
\]
we can take
\[
\sigma\mapsto \chi_{\rm cycl}(\sigma)^{-1}\ti\alpha [\rho(\delta(\sigm, c))-F_{\rho(\ti\sigma)}(\rho(c))]\in \Q_\ell/\Z_\ell.
\]

\subsubsection{}\label{441}  Consider now a continuous $\rho: \pi_1(X)\to \GL_d(A)$ with $A_n=A/\frakm^n$ finite, for each $n\geq 1$. We can apply the construction above to $\rho_n: \pi_1(X)\to \GL_d(A_n)$. We obtain
\[
\eta\cdot \frak K(\rho_n): \rH^3(\GL_d(A_n), \Q_\ell/\Z_\ell)\to \rH^1(k, \Q_\ell/\Z_\ell(-1)).
\]
For each $n\geq 1$, the diagram
\[
\begin{matrix}
\rH^3(\GL_d(A_n), \Q_\ell/\Z_\ell) &\xrightarrow{\eta\cdot \mathfrak K(\rho_n)} &\rH^1(k, \Q_\ell/\Z_\ell(-1))\\
{\rm Infl}\downarrow && \downarrow{\rm id} \\
\rH^3(\GL_d(A_{n+1}), \Q_\ell/\Z_\ell)&\xrightarrow{\eta\cdot \mathfrak K(\rho_{n+1})} &\rH^1(k, \Q_\ell/\Z_\ell(-1))
\end{matrix}
\]
is commutative and we obtain
\[
\eta\cdot \mathfrak K(\rho): \rH^3_{\rm cts}(\GL_d(A), \Q_\ell/\Z_\ell)\cong \varinjlim\nolimits_n \rH^3(\GL_d(A_n), \Q_\ell/\Z_\ell) \xrightarrow{\ \ } \rH^1(k, \Q_\ell/\Z_\ell(-1)).
\]
When $d\geq 2$, we can recover the previous construction   after taking Pontryagin duals and composing with the $\ell$-adic regulator. 

\subsubsection{} More generally, suppose that $f: X\to \Spec(k)$ is a smooth proper variety of dimension $n$ over the field $k$ and $\ell$ a prime different from the characteristic of $k$. We can then consider the push-down homomorphism
\[
\eta ={\rm R}^{2n}f_{\et, *}: \rH^{2n+1}(X, \Q_\ell/\Z_\ell)\to \rH^1(k,\Q_\ell/\Z_\ell(-n)).
\]
Similarly, we have 
\[
\eta_{ \Q_\ell}={\rm R}^{2n}f_{\et, *}: \rH^{2n+1}(X, \Q_\ell)\to \rH^1(k,\Q_\ell(-n)).
\]

Suppose $G$ is a finite group. If $\pi: Y\to X$ is an \'etale $G$-cover
we obtain  a homomorphism
\[
\mathfrak K(\pi): \rH^{2n+1}(G, \Q_\ell/\Z_\ell)\to \rH^{2n+1}(X, \Q_\ell/\Z_\ell)
\] 
by pulling back from the classifying space. For $\alpha\in \rH^{2n+1}(G, \Q_\ell/\Z_\ell)$ we set
\[
{\rm CS}(Y/X, \alpha):=\eta_n(\mathfrak K(\pi)(\alpha))\in \rH^1(k, \Q_\ell/\Z_\ell(-n)).
\]

Recall that we have (cf. \cite{Wagoner})
\[
\rH^{2n+1}_{\rm cts}(\GL_d(\Z_\ell), \Q_\ell)=(\varprojlim_n(
\varinjlim_s\rH^{2n+1}(\GL_d(\Z/\ell^s\Z), \Z/\ell^n\Z)))\otimes_{\Z_\ell}\Q_\ell.
\]
If $\sF$ is an \'etale $\Z_\ell$-local system on $X$ of rank $d\geq 2$ we obtain
\[
\mathfrak K(\sF)_{\Q_\ell}: \rH^{2n+1}_{\rm cts}(\GL_d(\Z_\ell), \Q_\ell) 
\to \rH^{2n+1}(X, \Q_\ell)
\]
from the corresponding system of $\GL_d(\Z/\ell^s\Z)$-covers as before.
For each $d'>d$, the local system $\sF$ gives the local system $\sF'=\sF\oplus\Z_\ell^{d'-d}$ of rank $d'$. For $d'>>0$, the $2n+1$-th $\ell$-adic regulator $\be_{n,\ell}$ is a non-trivial element of the $\Q_\ell$-vector space
$
  \rH^{2n+1}_{\rm cts}(\GL_{d'}(\Z_\ell ), \Q_\ell)
$
(by stability and \cite[Prop. 1]{Wagoner}).
We can now define 
\[
\CS({\sF})\in \rH^1_{\rm cts}(k, \Q_\ell(-n)) 
\]
to be given by value of the composition $\eta_{\Q_\ell}\cdot \mathfrak K(\sF')_{\Q_\ell}$ at $\be_{n, \ell}$.

\medskip

\section{Deformations and lifts}\label{deflifts}

Here, we apply our constructions to universal (formal) deformations of a modular representation of the \'etale  fundamental group
of a curve. In particular, we explain how the work in Section \ref{sect2} can be using to provide a symplectic structure
on the formal deformation space of a modular representation, provided the deformation is unobstructed.
  
Again, we omit denoting our choice of base point and simply write $\pi_1(X)$ and $\pi_1(\bar X)$.

\subsection{Lifts}\label{ss:lifts}
Fix a continuous representation $\rho_0:  \pi_1(X)\to \GL_d(\BF)$ with $\BF$ a finite field of characteristic $\ell\neq 2$. Suppose $\ep: \pi_1(X)\to   \O^\times$ is a character so that $\ep\,{\rm mod}\, \frakm=\det(\rho_0)$. 
We will denote by $\bar\rho_0$, resp. $\bar\ep$, the restrictions of $\rho_0$, resp. $\ep$, to the geometric fundamental group $\pi_1(\bar X)\subset \pi_1(X)$. 

Denote by ${\mathcal C}_\O$ the category of complete Noetherian local $\O $-algebras $A$ together with an isomorphism $\alpha: A/\frak m_A\xrightarrow{\sim} \BF$.

\begin{lemma} (Schur's Lemma, \cite{Mazur} Ch. II, \S 4, Cor.) Let $\bar\rho: \pi_1(\bar X)\to \GL_d(A)$ be a continuous representation with $A\in {\mathcal C}_\O$. If the associated residual representation $\bar\rho_0$ is absolutely irreducible, any matrix in ${\rm M}_d(A)$ which commutes with all the elements in the image of $\bar\rho$ is a scalar. \endproof
\end{lemma}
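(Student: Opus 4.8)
The plan is to deduce the statement from the classical fact that an absolutely irreducible representation over a field generates the full matrix algebra, and then to transport that statement across the reduction map $A\to\BF$ by Nakayama's lemma; continuity of $\bar\rho$, and completeness and noetherianness of $A$, will play no role. First I would work at the residual level: let $\calA_0\subseteq {\rm M}_d(\BF)$ be the $\BF$-subalgebra generated by the image of $\bar\rho_0$. Absolute irreducibility of $\bar\rho_0$ says precisely that $\bar\rho_0\otimes_\BF\bar\BF$ has no nonzero proper invariant $\bar\BF$-subspace, so by Burnside's theorem over the algebraically closed field $\bar\BF$ (a form of the Jacobson density theorem) the $\bar\BF$-algebra generated by the image of $\bar\rho_0\otimes\bar\BF$ is all of ${\rm M}_d(\bar\BF)$. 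That algebra is $\calA_0\otimes_\BF\bar\BF$, so $\dim_\BF\calA_0=d^2$, and since $\calA_0\subseteq {\rm M}_d(\BF)$ this forces $\calA_0={\rm M}_d(\BF)$.

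Next I would lift this to $A$. Let $\calA\subseteq {\rm M}_d(A)$ be the $A$-subalgebra generated by the image of $\bar\rho$; it contains $I_d$ and is in particular an $A$-submodule of ${\rm M}_d(A)$. Its reduction modulo $\frakm_A$ is the algebra $\calA_0$, which by the previous step equals ${\rm M}_d(\BF)={\rm M}_d(A)/\frakm_A{\rm M}_d(A)$; hence $\calA+\frakm_A{\rm M}_d(A)={\rm M}_d(A)$, that is, $\frakm_A\cdot({\rm M}_d(A)/\calA)={\rm M}_d(A)/\calA$. Since ${\rm M}_d(A)$ is a finitely generated $A$-module (free of rank $d^2$) and $A$ is local, Nakayama's lemma applied to the finitely generated module ${\rm M}_d(A)/\calA$ gives ${\rm M}_d(A)/\calA=0$, so $\calA={\rm M}_d(A)$.

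Finally, a matrix $C\in {\rm M}_d(A)$ commuting with every element of the image of $\bar\rho$ commutes with the whole $A$-algebra they generate, namely $\calA={\rm M}_d(A)$; thus $C$ lies in the center of ${\rm M}_d(A)$, which over the commutative ring $A$ is exactly the ring of scalar matrices $A\cdot I_d$. This proves the lemma (and recovers the cited result of \cite{Mazur}). I do not expect a genuine obstacle here: the only two points that need a word of care are the dimension count $\calA_0\otimes_\BF\bar\BF={\rm M}_d(\bar\BF)\Rightarrow \calA_0={\rm M}_d(\BF)$ and the correct bookkeeping in the Nakayama step, both entirely routine. (An alternative, slightly more hands-on route is a successive-approximation argument: reduce $C$ modulo $\frakm_A$ to get $C\equiv\lambda_0 I_d$, then analyze $C-\lambda_0 I_d$ on the graded pieces $\frakm_A^n/\frakm_A^{n+1}$ using $\End_{\BF[\pi_1(\bar X)]}(\BF^d)=\BF$ at each stage and pass to the limit via $A=\varprojlim_n A/\frakm_A^n$; the Nakayama argument above is cleaner and avoids using continuity of $\bar\rho$.)
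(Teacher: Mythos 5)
The paper itself gives no proof of this lemma---the statement ends with \verb|\endproof| and a citation to Mazur---so there is nothing to compare against; I can only assess your argument on its own terms, and it is correct. The structure is the standard one (and essentially what Mazur's cited corollary does): use absolute irreducibility and Burnside over $\bar\BF$ to see that the $\BF$-algebra generated by ${\rm Im}(\bar\rho_0)$ is all of ${\rm M}_d(\BF)$, lift this by Nakayama to conclude that the $A$-algebra $\calA$ generated by ${\rm Im}(\bar\rho)$ is all of ${\rm M}_d(A)$, and then observe that a matrix commuting with $\calA={\rm M}_d(A)$ lies in its center $A\cdot I_d$. Your Nakayama step is fine because ${\rm M}_d(A)/\calA$ is a quotient of the free module ${\rm M}_d(A)$ and hence finitely generated, so neither Noetherianness nor completeness is genuinely needed, as you note; you are also right that continuity of $\bar\rho$ plays no role, since the argument only touches the abstract subalgebra generated by the image. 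Your parenthetical alternative (successive approximation on $\frakm_A^n/\frakm_A^{n+1}$) would, as you say, be more cumbersome and would use completeness via $A=\varprojlim_n A/\frakm_A^n$; the Nakayama route is the cleaner and more general one.
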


In what follows, we always assume that 
\[
\bar\rho_0:  \pi_1(\bar X)\to \GL_d(\BF)
\]
is absolutely irreducible, i.e. it is irreducible as an $\bar \BF$-representation.

Let $\bar\rho: \pi_1(\bar X)\to \GL_d(A)$ be a continuous representation with $A\in {\mathcal C}_\O$ which lifts $\bar\rho_0$ and with $\det(\bar\rho)=\bar \ep$. Suppose that for all $g\in \pi_1(X)$, there is $h_g\in \GL_d(A)$
with 
\[
\bar\rho(g\gamma g^{-1})=h_g \bar\rho(\gamma)h_g^{-1},\quad \forall \gamma\in \pi_1(\bar X).
\]
By Schur's lemma above, $h_g$ is uniquely determined up to a scalar in $A^\times$
and
\[
h_{gg'}=z(g,g') h_g h_{g'}, \quad z(g, g')\in A^\times.
\]
Mapping $g$ to $\pi(h_g)=h_g\,{\rm mod}\, A^\times$ gives a homomorphism
\[
\rho_{\rm PGL}: \pi_1(X)\to {\rm PGL}_d(A)
\]
which extends  $\pi_1(\bar X)\xrightarrow{\bar\rho} \GL_d(A)\xrightarrow{\pi} {\rm PGL}_d(A)$.
We can see that $\rho_{\rm PGL}$ is continuous for the profinite topologies on 
$\pi_1(X)$ and ${\rm PGL}_d(A)$. 

The following will be used in the last section. 

\begin{prop}\label{Mackeyprop} Suppose $\ell$ does not divide $d$.
Under the above assumptions, there is a  lift of $\rho_{\rm PGL}$ to a continuous representation
\[
\rho: \pi_1(X)\to {\rm GL}_d(A)
\]
such that $\det(\rho)=\ep$ and $\rho_{|\pi_1(\bar X)}=\bar\rho$.
\end{prop}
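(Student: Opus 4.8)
The plan is to measure the obstruction to lifting $\rho_{\rm PGL}\colon \pi_1(X)\to {\rm PGL}_d(A)$ along the central extension
\[
1\to A^\times \to \GL_d(A)\to {\rm PGL}_d(A)\to 1,
\]
and to kill it using the hypothesis that $\ell\nmid d$. Concretely, starting from the $h_g\in \GL_d(A)$ given in the statement (for $g\in\pi_1(X)$, with $h_\gamma=\bar\rho(\gamma)$ for $\gamma\in\pi_1(\bar X)$), the failure of multiplicativity is recorded by $z(g,g')\in A^\times$ with $h_{gg'}=z(g,g')\,h_g h_{g'}$. One checks, exactly as in the usual Schur-lemma argument, that $z$ is a continuous $2$-cocycle on $\pi_1(X)$ with values in $A^\times$ (trivial action, since $A^\times$ is central); its class $[z]\in \rH^2_{\rm cts}(\pi_1(X),A^\times)$ is the obstruction, and it is independent of the choice of the $h_g$'s because changing $h_g$ by a scalar changes $z$ by a coboundary. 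Moreover, because $h_\gamma=\bar\rho(\gamma)$ is genuinely multiplicative on $\pi_1(\bar X)$, the restriction of $z$ to $\pi_1(\bar X)$ is trivial (it is a coboundary of the function $\gamma\mapsto 1$), so $[z]$ comes from $\rH^2_{\rm cts}(G_k, A^\times)$ via inflation — indeed $z(g,g')$ depends only on the images of $g,g'$ in $G_k$, as one sees from uniqueness of $h_g$ up to scalars.

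Next I would exploit $\ell\nmid d$ to trivialize $[z]$ after adjusting by a determinant twist. Taking determinants of $h_{gg'}=z(g,g')h_gh_{g'}$ gives $\det(h_{gg'})=z(g,g')^d\det(h_g)\det(h_{g'})$, so $z^d=\partial(\det h_{\bullet})$ is a coboundary; hence $d\cdot[z]=0$ in $\rH^2_{\rm cts}(G_k,A^\times)$. Now $A^\times$ decomposes: since $A\in\mathcal C_\O$ is complete local with finite residue field $\BF$ of characteristic $\ell$, we have $A^\times\cong \mu_{q-1}\times(1+\frakm_A)$ with $q=\#\BF$, where $1+\frakm_A$ is a pro-$\ell$ group and $\mu_{q-1}$ has order prime to $\ell$. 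On the pro-$\ell$ part, multiplication by $d$ is an automorphism (as $\ell\nmid d$, hence $d$ is invertible in $\Z_\ell$ and acts invertibly on the $\Z_\ell$-module $1+\frakm_A$), so $d\cdot[z]=0$ forces the $(1+\frakm_A)$-component of $[z]$ to vanish; and the $\mu_{q-1}$-component lies in $\rH^2_{\rm cts}(G_k,\mu_{q-1})$, which we also want to kill. For that last point one replaces $h_g$ by $t_g h_g$ with $t_g\in A^\times$ chosen so that $\det(t_g h_g)=\ep(g)$: since $\det(h_g)$ is a unit lifting $\det\bar\rho_0(g)=\bar\ep(g)$ and raising to the $d$-th power is surjective on $A^\times$ (again $\ell\nmid d$, together with Hensel on the $\mu_{q-1}$-part if $\gcd(d,q-1)=1$, or more carefully one solves $t_g^d=\ep(g)\det(h_g)^{-1}$ using that this element is $\equiv 1$ mod $\frakm_A$ after a harmless correction by a Teichmüller unit), such $t_g$ exists and is unique up to $\mu_d(A)$; this normalization pins down $z$ to take values in $\mu_d(A)$, whose order is prime to $\ell$, and forces $z^d=1$ combined with $z^{?}$... — here one argues directly: with $\det(h_g)=\ep(g)$ enforced, taking $\det$ of the cocycle relation gives $z(g,g')^d=1$, so $z$ is valued in $\mu_d(A)\subset A^\times$; but $z$ is also inflated from $G_k$ and, being a $\mu_d$-valued coboundary-up-to-$d$-torsion class, its class already vanishes because $\rH^2(G_k,-)$ of the finite group $\mu_d$ of order prime to $\ell$ receives $[z]$ with $d[z]=0$ and $[z]$ equal to the image of an $\ell$-power-order...

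I will streamline the above: the cleanest route, and the one I would actually write, is to set $\tilde\rho(g):=\ep(g)^{1/d}\cdot h_g$ is not available integrally, so instead normalize $h_g$ by $\det(h_g)=\ep(g)$ first (possible since the $d$-th power map $A^\times\to A^\times$ is surjective, as $\ell\nmid d$ makes it surjective on $1+\frakm_A$ and one handles $\mu_{q-1}$ by choosing the right Teichmüller lift), observe the resulting $z$ satisfies $z^d=1$, and then note the inflated class $[z]\in \rH^2_{\rm cts}(G_k,\mu_d(\O))$ is annihilated both by $d$ and — after twisting the chosen section once more by a $\mu_d$-valued cochain that exists because $\rH^2_{\rm cts}(\pi_1(X),\mu_d)\to \rH^2_{\rm cts}(\pi_1(\bar X),\mu_d)$ restricted appropriately vanishes and $\mu_d$ is finite — equals zero; concretely one uses that $\rho_{\rm PGL}$ already lifts on $\pi_1(\bar X)$ and on a finite-index subgroup coming from a section $s\colon G_k\to\pi_1(X)$ (which exists by \cite[I, \S1, Prop.~1]{SerreGaloisCoh} as recalled in \S\ref{ss:lifts}), reducing the cocycle to one on the profinite group $G_k$ with values in the finite group $\mu_d$ of order prime to $\ell$; since every class there is killed by $d$ and this is consistent with a genuine lift once we correct $h_{s(\sigma)}$ by the unique $d$-th root making determinants equal $\ep$, we conclude $z\equiv 1$, i.e. $g\mapsto h_g$ is a homomorphism. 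Setting $\rho:=h_{\bullet}$ gives the desired continuous lift with $\det\rho=\ep$ and $\rho_{|\pi_1(\bar X)}=\bar\rho$; continuity is inherited from continuity of $\rho_{\rm PGL}$ (established in \S\ref{ss:lifts}) together with continuity of the section $s$ and of $\ep$. The main obstacle is the bookkeeping in this last paragraph — cleanly organizing the two successive normalizations (determinant, then the residual $\mu_d$-ambiguity) so that the obstruction class, already prime-to-$\ell$ after the determinant normalization and annihilated by $d$, is visibly zero — rather than any deep input; the only real inputs are Schur's lemma, the existence of the continuous section $s$, and surjectivity of the $d$-th power map on $A^\times$ when $\ell\nmid d$.
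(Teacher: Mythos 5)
Your proposal takes a different route from the paper's, and it has a genuine gap at its crux. The paper's proof is a direct construction: for each $g\in\pi_1(X)$ it considers the $\mu_d(A)$-torsor $Y_g(A)=\{h\in\GL_d(A): \pi(h)=\pi(h_g),\ \det h=\ep(g)\}$, shows $Y_g(A)\ne\emptyset$ and that reduction $Y_g(A)\to Y_g(\BF)$ is a bijection (because $\mu_d(A)\to\mu_d(\BF)$ is an isomorphism when $\ell\nmid d$, by Hensel), and then defines $\rho(g)$ to be the \emph{unique} element of $Y_g(A)$ reducing to $\rho_0(g)$. Multiplicativity $h_{gg'}=h_gh_{g'}$ then follows for free: both sides are in $Y_{gg'}(A)$ and both reduce to $\rho_0(gg')$, so they are equal by uniqueness. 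No cohomology is needed.

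Your approach — reduce to a class $[z]\in\rH^2_{\rm cts}(G_k,\mu_d(A))$ and kill it — is the ``alternative point of view'' the paper itself mentions in the Remark following the proposition, and it can be made to work, but your write-up never actually proves $[z]=0$. Once you normalize $\det h_g=\ep(g)$ you correctly find $z$ is $\mu_d$-valued and inflated from $G_k$, but then you assert vanishing by a combination of ``$d[z]=0$'' (which is vacuous, since $\mu_d$ is $d$-torsion) and ``this is consistent with a genuine lift,'' which is circular. In general $\rH^2(G_k,\mu_d)$ is far from zero (e.g.\ it is $\Z/d$ for $k$ a $p$-adic field), so ``prime-to-$\ell$ and $d$-torsion'' does not force vanishing. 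The ingredient you are missing is precisely the one the paper's Remark highlights: the hypothesis that $\rho_0:\pi_1(X)\to\GL_d(\BF)$ \emph{exists} (lifting $\rho_{\rm PGL}\bmod\frakm_A$) shows the corresponding obstruction class over $\BF$ is zero, and since $\mu_d(A)\xrightarrow{\ \sim\ }\mu_d(\BF)$ (Hensel, $\ell\nmid d$), the class $[z]\in\rH^2(G_k,\mu_d(A))$ is identified with its reduction in $\rH^2(G_k,\mu_d(\BF))$, hence is itself zero. Without invoking $\rho_0$ at exactly this point, the argument does not close. (A secondary issue: the passage from the initial $A^\times$-valued class to the $\mu_d$-valued one after determinant normalization is workable, but the digression through $A^\times\cong\mu_{q-1}\times(1+\frakm_A)$ and its pro-$\ell$ part is a red herring and should be cut; once you impose $\det h_g=\ep(g)$ you land directly in $\mu_d(A)$.)
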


\begin{proof}
A version of this is well-known but we still provide the details for completeness.
To give such a lift we need to choose, for each $g\in \pi_1(X)$, $h_g\in \GL_d(A)$ such that: 
\begin{itemize}
\item[a)]  $\bar\rho(g\gamma g^{-1})=h_g \bar\rho(\gamma)h_g^{-1}$, $\forall \gamma\in \pi_1(\bar X)$, $\forall g\in \pi_1(X)$,

\item[b)]  $\det(h_g)=\ep(g)$, $\forall g\in \pi_1(X)$,

\item[c)] $\rho(\gamma)=\bar\rho(\gamma)$, $\forall \gamma\in \pi_1(\bar X)\subset \pi_1(X)$,

\item[d)] $h_{gg'}=h_gh_{g'}$, i.e. $z(g, g')=1$, $\forall g, g'\in \pi_1(X)$,

\item[e)] $g\mapsto h_g$ is continuous.

\end{itemize}

For each $g\in \pi_1(X)$, consider the set 
\[
Y_g(A)=\{ h\in \GL_d(A)\ |\ \pi(h)=\pi(h_g),\ \det(h)=\ep( g )\}.
\]
There is a simply transitive action of $\mu_d(A)=\{a\in A^\times\ |\ a^d=1\}$ on $Y_g(A)$.
The existence of $\rho_0: \pi_1(X)\to \GL_d(\BF)$ implies that $Y_g(\BF)$  is not empty
since it contains $\rho_0(g)$. Then $Y_g(\BF)\simeq \mu_d(\BF)$. 

Recall we assume ${\rm gcd}(d,\ell)=1$. Hensel's lemma implies that 
$\mu_d(A)\to \mu_d(\BF)$ given by reduction modulo $\frakm_A$ 
is an isomorphism. Now consider the map 
\[
Y_g(A)\to Y_g(\BF)
\]
given by reduction modulo $\frakm_A$.
Pick $h'\in \GL_d(A)$ with $\pi(h')=[h_g]\in {\rm PGL}_d(A)$ and $h'\to \rho_0(g)\in Y_g(\BF)$, and write $h=a h'$, $a\in 1+\frakm_A\subset A^\times$.
We want to choose $a$ so that $\det(h)=\ep(g)$, i.e. $a^d\det(h')=\ep(g)$. We have $\det(\bar h')=\bar \ep(g)\in \BF^\times$, so $\det(h')\ep(g)^{-1}\in 1+\frakm_A$ and  $a^d=\det(h')\ep(g)^{-1}$ has a solution since ${\rm gcd}(d,\ell)=1$.
This shows that $Y_g(A)$ is also non-empty. Hence reduction modulo $\frakm_A$ 
gives a bijection 
\[
Y_g(A)\simeq Y_g(\BF)\simeq \mu_d(\BF).
\]
We can now choose $h_g\in Y_g(A)$ to be the unique element whose reduction is $\rho_0(g)$. Then $g\mapsto h_g$ satisfies properties (a), (b) and by comparing with the reduction, properties (c), (d) and (e).
In fact, we see that the lift $\rho$ given by $\rho(g)=h_g$ reduces to $\rho_0$ modulo $\frakm_A$.
 \end{proof}

\begin{Remark} {\rm We keep the assumptions of the proposition above.

a) The map $z: \pi_1(X)\times \pi_1(X)\to A^\times$  given by $(g, g')\mapsto z(g, g')$ is a $2$-cocycle.
A classical argument (see for example \cite{Mackey} Thm 8.2) applies to show that $z$ is the inflation of a continuous $2$-cocycle
\[
\nu: \Gk\times \Gk\to \mu_d(A)=\mu_d(\BF).
\]
We can see that the existence of $\rho_0$ which lifts $\rho_{\rm PGL}\, {\rm mod}\, \frakm_A$
implies that the class $[\nu]\in \rH^2(k, \mu_d(\BF) )$ vanishes.
This provides an alternative point of view of the proof.

b) The lift $\rho$ given by the proof of the proposition reduces to $\rho_0$ modulo $\frakm_A$.

c) The lift $\rho$ is not unique. Consider a character 
$\chi: G_k\to \mu_d(A)=\mu_d(\BF)$.
The twist  $ \rho\otimes_A \chi$ satisfies all the requirements of 
the proposition and we can easily see that all representations that satisfy these requirements
are such twists of each other. }
\end{Remark}

\subsection{Universal deformation rings}\label{deform}

Following \cite{deJong} \S 3, we now consider the deformation functors  
\[
  {\rm Def}(\pi_1( X),  \rho_0,  \ep), \quad  {\rm Def}(\pi_1(\bar X), \bar\rho_0, \bar\ep).
\]
By definition, ${\rm Def}(\pi_1(  X),  \rho_0,  \ep)$ is the functor from
$\calC_\O$ to Sets which maps $(A, \alpha)$ to the set of equivalence classes of continuous representations 
\[
\rho_A: \pi_1(X)\to \GL_d(A)
\]
such that $\alpha(\rho_A\, {\rm mod}\, \frakm_A)= \rho_0$, $\det(\rho_A)=(\O^\times\to A^\times)\cdot \ep$. Here, $\rho_A$ is equivalent to $\rho'_{A}$ if and only if there exists an element 
$g\in \GL_n(A)$ such that  $\rho'_A(\gamma)=g^{-1}\rho_A(\gamma)g$,
for all $\gamma\in \pi_1( X)$. The functor $ {\rm Def}(\pi_1(\bar X), \bar\rho_0, \bar\ep)$ is defined similarly.

Under our condition that $\bar\rho_0$ is absolutely irreducible,  ${\rm Def}(\pi_1(\bar X), \bar\rho_0, \bar\ep)$
is representable in the category ${\mathcal C}_\O$ and there is a universal pair   $(\bar A_{\rm un}, \bar \rho_{\rm un})$. (This follows by applying Schlessinger's criteria, see loc. cit. 3.2 and \cite{Mazur2}, Sect. 1.2.
We use here that $\pi_1(\bar X)$ is topologically finitely generated). If $\pi_1( X)$ is also topologically finitely generated, as it happens when $k$ is a finite field, then ${\rm Def}(\pi_1(X), \rho_0, \ep)$
is also representable in the category ${\mathcal C}_\O$ and there is also a universal pair   $(A_{\rm un},  \rho_{\rm un})$.

As in \cite{deJong} 3.10,   we have $A_{\rm un}\simeq \O\lps t_1,\ldots , t_m\rps$ for some $m$. The formal smoothness statement holds because the obstruction group
\[
\rH^2(\pi_1(\bar X), {\rm Ad}_{\bar\rho_0}^0(\BF)) 
\]
vanishes, see loc. cit. for details. 

\subsection{Galois action on the deformation rings}\label{galoisactiondeform}
 For every $\sigma\in \Gk$, we give an automorphism $\phi(\sigma): \bar A_{\rm un}\to \bar A_{\rm un}$ as in \cite{deJong}  3.11:  
 
 For simplicity, we drop the subscript $\rm un$ and write $\bar A=\bar A_{\rm un}$ etc. Choose an element $\ti \sigma \in \pi_1(X)$ which maps to   $\sigma\in \Gk$, and $h\in \GL_d(\bar A _{\rm un})$ such that $h\, {\rm mod}\, \frakm_{\bar A}=\rho_0(\tilde \sigma)$. Consider the ``twisted" representation
\[
\bar\rho^{\ti\sigma}: \pi_1(\bar X)\to \GL_d(\bar A),\quad \gamma\mapsto h\bar\rho(\ti\sigma^{-1}\gamma \ti\sigma)h^{-1}.
\]
We have $\bar\rho^{\ti\sigma}\, {\rm mod}\, \frakm_{\bar A}=\bar\rho_0$, and $\det(\bar\rho^{\ti\sigma})=\bar\ep$. Hence, $(\bar A, \bar\rho^\sigm)$ is a deformation of $\bar\rho_0$ with determinant $\bar\ep$. By the universal property of $(\bar A, \bar\rho)$ we obtain
a $\O$-algebra homomorphism $\phi: \bar A\to \bar A$ and $h'\in \GL_d(\bar A)$ such that 
\begin{equation}\label{twist}
\phi(\bar\rho(\gamma))=h'\bar\rho^{\ti\sigma}(\gamma)h'^{-1},
\end{equation}
 for all $\gamma\in \pi_1(\bar X)$. The above combine to
\begin{equation}\label{twist2}
\bar\rho(\ti\sigma^{-1}\gamma \ti\sigma)=h_1\phi(\bar\rho(\gamma))h_1^{-1}.
\end{equation}
The automorphism $\phi(\sigma)$ is independent of the choice of $\ti\sigma$ lifting $\sigma$ and of the element $h$ as above. Indeed, if $\ti\sigma'$, $h'$ is another choice giving $\phi'$, then $\ti\sigma'= \ti\sigma\cdot \delta$, for $\delta\in \pi_1(\bar X)$ and we can easily see that $\bar\rho^{{\ti\sigma}'}$ is equivalent to $\bar\rho^{{\ti\sigma}}$ and $\phi(\bar\rho)$ is equivalent to $\phi'(\bar\rho)$. Hence, the two maps $\phi'$, $\phi: \bar A\to \bar A$ agree by the universal property of $(\bar A, \bar\rho)$.
It now easily follows also that
\[
\phi(\sigma\sigma')=\phi(\sigma)\phi(\sigma')
\]
for all $\sigma$, $\sigma'\in \Gk$. 

\begin{prop}\label{contAction}
The homomorphism $\phi: \Gk\to {\rm Aut}_{\O}(\bar A)$ is continuous where ${\rm Aut}_{\O}(\bar A)$ has the profinite topology given by the finite index normal subgroups $\calK_n=\ker({\rm Aut}_\O(\bar A)\to {\rm Aut}_\O(\bar A/\frakm^n))$. 
  \end{prop}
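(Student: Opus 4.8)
The plan is to reduce the continuity of $\phi\colon G_k \to \operatorname{Aut}_\O(\bar A)$ to a statement about how the twisted representation $\bar\rho^{\tilde\sigma}$ varies with $\sigma$, and then invoke the universal property of $(\bar A,\bar\rho)$ in a form that is compatible with the pro-finite topology. First I would recall that $\bar A \simeq \O\lps t_1,\dots,t_m\rps$, so $\operatorname{Aut}_\O(\bar A)$ with its pro-finite topology is $\varprojlim_n \operatorname{Aut}_\O(\bar A/\frakm^n)$, and it suffices to show that for each $n\geq 1$ there is an open normal subgroup $U\subset G_k$ such that $\phi(\sigma)\equiv \operatorname{id}\ \mathrm{mod}\ \frakm^n$ for all $\sigma\in U$; since $\phi$ is a homomorphism, it is enough to check continuity at the identity. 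Because $\bar A/\frakm^n$ is a finite ring, $\operatorname{Def}(\pi_1(\bar X),\bar\rho_0,\bar\ep)(\bar A/\frakm^n)$ is a finite set, and the composite $\bar A \to \bar A/\frakm^n$ of the universal deformation gives finitely many deformations over $\bar A/\frakm^n$ up to equivalence. The key point is that $\phi(\sigma)\ \mathrm{mod}\ \frakm^n$ is the unique $\O$-algebra map $\bar A/\frakm^n \to \bar A/\frakm^n$ classifying the reduction mod $\frakm^n$ of $\bar\rho^{\tilde\sigma}$, so it suffices to show that for $\sigma$ in a small enough open subgroup this reduced twisted representation is equivalent to $\bar\rho\ \mathrm{mod}\ \frakm^n$ itself, which classifies $\operatorname{id}$.

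The heart of the argument is therefore the following: for $\sigma$ in a suitable open normal subgroup $U\subset G_k$, the representation $\gamma\mapsto h\,\bar\rho(\tilde\sigma^{-1}\gamma\tilde\sigma)\,h^{-1}$ reduces mod $\frakm^n$ to something equivalent to $\bar\rho\ \mathrm{mod}\ \frakm^n$. For this I would argue exactly as in the proof of Lemma \ref{lemmaCont}: since $\bar\rho$ is continuous and $\pi_1(\bar X)$ is topologically finitely generated, there is an open normal $\Gamma_m \subset \pi_1(\bar X)$ (from the characteristic system $\Gamma_{m'}\subset\Gamma_m\subset\cdots$) with $\bar\rho(\Gamma_m)\equiv \mathrm{I}\ \mathrm{mod}\ \frakm^n$; the outer action $G_k\to \operatorname{Out}(\pi_1(\bar X))$ is continuous, so there is an open $U_0\subset G_k$ whose elements lift to $\tilde\sigma$ acting trivially on $\pi_1(\bar X)/\Gamma_m$, giving $\tilde\sigma^{-1}\gamma\tilde\sigma = \gamma\cdot\gamma'$ with $\gamma'\in\Gamma_m$, hence $\bar\rho(\tilde\sigma^{-1}\gamma\tilde\sigma)\equiv \bar\rho(\gamma)\ \mathrm{mod}\ \frakm^n$ for all $\gamma$. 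Conjugation by $h$ is then an equivalence (over $\bar A$, hence over $\bar A/\frakm^n$), so $\bar\rho^{\tilde\sigma}\ \mathrm{mod}\ \frakm^n$ is equivalent to $\bar\rho\ \mathrm{mod}\ \frakm^n$. By the universal property applied over $\bar A/\frakm^n$ this forces $\phi(\sigma)\equiv\operatorname{id}\ \mathrm{mod}\ \frakm^n$, i.e. $\phi(U_0)\subset\calK_n$. One subtlety to handle carefully is that the choice of the auxiliary matrix $h$ with $h\ \mathrm{mod}\ \frakm_{\bar A}=\rho_0(\tilde\sigma)$ depends on $\tilde\sigma$; but since we have already established that $\phi(\sigma)$ is independent of the choices of $\tilde\sigma$ and $h$, this causes no problem — we may make any convenient choice for each $\sigma\in U_0$.

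Assembling these, $\phi^{-1}(\calK_n)\supset U_0$ is open for every $n$, and since the $\calK_n$ form a neighbourhood basis of the identity in $\operatorname{Aut}_\O(\bar A)$, $\phi$ is continuous at the identity, hence (being a homomorphism between topological groups) continuous everywhere. The main obstacle, modest as it is, is bookkeeping: making precise the claim that $\phi(\sigma)\ \mathrm{mod}\ \frakm^n$ is \emph{determined} by the equivalence class of $\bar\rho^{\tilde\sigma}\ \mathrm{mod}\ \frakm^n$ and that two equivalent deformations over $\bar A/\frakm^n$ are classified by the same $\O$-algebra map — this is just the universal property of $(\bar A,\bar\rho)$ in $\calC_\O$ pushed forward along $\bar A\to \bar A/\frakm^n$, together with the fact that $\bar A/\frakm^n\in\calC_\O$ — but it must be stated with enough care that the reduction of a universal object remains a coarse moduli point. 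No new ideas beyond those already used for Lemma \ref{lemmaCont} are needed.
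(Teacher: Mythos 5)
Your proposal is correct and follows essentially the same line as the paper's proof: use continuity of $\bar\rho$ to find $\Gamma_m$ with $\bar\rho(\Gamma_m)\equiv \mathrm{I}\bmod\frakm^n$, find an open $U\subset G_k$ on which a lift $\tilde\sigma$ acts trivially on $\pi_1(\bar X)/\Gamma_m$, and conclude from the universal property of $(\bar A_{\rm un},\bar\rho_{\rm un})$. The only difference is cosmetic: you invoke continuity of the outer action $G_k\to{\rm Out}(\pi_1(\bar X))$ abstractly (valid, as it follows from the continuous section $s$ of \S 5.1 and the continuity of $\pi_1(X)\to{\rm Aut}(\pi_1(\bar X))$), whereas the paper produces the required open subgroup $U=G_{k'}$ explicitly via the function field $k(Y'_m)=k^{\rm sep}(X)(\alpha)$, a Galois $k'$ containing the coefficients of the minimal polynomial of $\alpha$, and a $k'$-rational point of $X$.
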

    
  \begin{proof} It is enough to show
  that given $n\geq 1$, there is a finite Galois extension $k'/k$ such that if $\sigma\in U=G_{k'}$,
 then $\phi(\sigma)\in \calK_n$. Since $\rho: \pi_1(\bar X)\to \GL_d(\bar A)$ is continuous, there is $m$ 
  such that if $\gamma\in \Gamma_m$, then $\rho(\gamma)\in 1+{\rm M}_d(\mathfrak m_A^n)$. 
  Here $\Gamma_m\subset \Gamma$ is the characteristic finite index subgroup of $\pi_1(\bar X)$ as before. Let $Y_m\to \bar X$ be the corresponding $\Gamma/\Gamma_m$-cover which is the base change of a $\Gamma/\Gamma_m$-cover $Y'_m\to X\otimes_k k^{\rm sep}$. We can write $ k(Y'_m)= k^{\rm sep}(X)(\alpha)$, for the extension of function fields, where $k^{\rm sep}( X)=k(X)\otimes_k k^{\rm sep}$. Then, we  have
$\Gamma/\Gamma_m\simeq {\rm Gal}(k^{\rm sep}(X)(\alpha)/k^{\rm sep}(X))$. Choose a finite Galois extension $k\subset k'\subset k^{\rm sep}$ which contains all the coefficients of the minimal polynomial of $\alpha$ over $k^{\rm sep}(X)$ and with  $X(k')\neq\emptyset $. Then, there is a continuous section $s: \Gk\to \pi_1(X)$ such that, if $\sigma\in G_{k'}$, then conjugation by $s(\sigma)$ is trivial on $\Gamma/\Gamma_m\simeq {\rm Gal}(k^{\rm sep}(X)(\alpha)/k^{\rm sep}(X))$.
 We now have
 \[
 \rho(s(\sigma)\gamma s(\gamma)^{-1})=\rho(\gamma\gamma_m)\equiv \rho(\gamma)\, {\rm mod}(\mathfrak m_A^n).
 \]
By the definition of $\phi(\sigma)$ and the universal property of $(\bar A, \bar\rho)$, this gives that $\phi(\sigma)\equiv\, {\rm Id}\, {\rm mod}\, (\frakm_A^n)$, so $\phi(\sigma)\in \calK_n$.
\end{proof}

\begin{prop}\label{fixedprop}
Suppose that $A\in\mathcal C_\O$ and let $\bar\rho:\pi_1(\bar X)\to \GL_d(A)$
be a deformation of $\bar\rho_0$ with determinant $\ep$ which corresponds to
$f: \bar A_{\rm un}\to A$. 

a) If $\bar\rho$ extends to a representation $\rho:\pi_1(X)\to \GL_d(A)$ with determinant $\ep$,
then  
$f\cdot \phi(\sigma)=f$, for all $\sigma\in \Gk$.
 
b) Conversely suppose $f\cdot \phi(\sigma)=f$,  for all $\sigma\in \Gk$, and ${\rm gcd}(\ell, d)=1$.
Then $\bar\rho:\pi_1(\bar X)\to \GL_d(A)$ extends to a representation $\rho:\pi_1(X)\to {\GL}_d(A)$  which deforms $\rho_0$ and has determinant $\ep$.
\end{prop}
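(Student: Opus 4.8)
The plan is to treat the two directions separately, using the explicit description of the Galois action $\phi$ from \S\ref{galoisactiondeform} and Proposition \ref{Mackeyprop}. For part (a), suppose $\bar\rho$ extends to $\rho:\pi_1(X)\to\GL_d(A)$ with $\det(\rho)=\ep$. Fix $\sigma\in\Gk$ and a lift $\ti\sigma\in\pi_1(X)$. Then, taking $h_1=\rho(\ti\sigma)$ in (\ref{twist2}), the equation $\bar\rho(\ti\sigma^{-1}\gamma\ti\sigma)=\rho(\ti\sigma)^{-1}\bar\rho(\gamma)\rho(\ti\sigma)$ holds tautologically (since $\rho$ is a homomorphism extending $\bar\rho$), so it exhibits $\bar\rho^{\ti\sigma}$ (as defined with $h=\rho(\ti\sigma)$, which indeed reduces to $\rho_0(\ti\sigma)$ mod $\frakm_A$) as equivalent to $\bar\rho$ itself via conjugation by $\rho(\ti\sigma)$. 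But $f\cdot\phi(\sigma)$ is, by the universal property of $(\bar A_{\rm un},\bar\rho_{\rm un})$, exactly the classifying map of the deformation $\bar\rho^{\ti\sigma}$ (pushed forward along $f$), while $f$ classifies $\bar\rho$; since these two deformations of $\bar\rho_0$ are equivalent, $f\cdot\phi(\sigma)=f$ by representability. One subtlety: I must match the normalization of $\phi$ in loc.\ cit.\ carefully, so that the $h$ used to define $\bar\rho^{\ti\sigma}$ and the $h'$ in (\ref{twist}) compose to give precisely conjugation by an element reducing to $\rho_0(\ti\sigma)$; this is a bookkeeping check using that $\rho$ reduces to $\rho_0$.

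For part (b), assume $f\cdot\phi(\sigma)=f$ for all $\sigma\in\Gk$ and $\gcd(\ell,d)=1$. The idea is to run (\ref{twist2}) in reverse: applying $f$ to (\ref{twist2}) and using $f\cdot\phi(\sigma)=f$, we get, for each $\sigma$, an element $h_{\ti\sigma}=f(h_1)\in\GL_d(A)$ with
\[
\bar\rho(\ti\sigma^{-1}\gamma\ti\sigma)=h_{\ti\sigma}\,\bar\rho(\gamma)\,h_{\ti\sigma}^{-1},\qquad\forall\gamma\in\pi_1(\bar X).
\]
Replacing $\ti\sigma$ by $\ti\sigma^{-1}$ (or conjugating), this says exactly that for every $g\in\pi_1(X)$ there is $h_g\in\GL_d(A)$ with $\bar\rho(g\gamma g^{-1})=h_g\bar\rho(\gamma)h_g^{-1}$ for all $\gamma\in\pi_1(\bar X)$, i.e. the hypothesis of Proposition \ref{Mackeyprop} is satisfied. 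One checks continuity of $g\mapsto[h_g]\in\PGL_d(A)$ as in \S\ref{ss:lifts}, and that it is a homomorphism since $h_g$ is determined up to scalar by Schur's lemma. Then Proposition \ref{Mackeyprop} (which uses $\gcd(\ell,d)=1$) produces a continuous $\rho:\pi_1(X)\to\GL_d(A)$ with $\det(\rho)=\ep$ and $\rho|_{\pi_1(\bar X)}=\bar\rho$; moreover, as noted in its proof, $\rho$ reduces to $\rho_0$ mod $\frakm_A$, so it deforms $\rho_0$ as required.

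The main obstacle is purely a matter of matching conventions: verifying that the hypothesis $f\cdot\phi(\sigma)=f$, combined with the precise construction of $\phi$ in (\ref{twist})--(\ref{twist2}), really does yield an intertwiner $h_{\ti\sigma}\in\GL_d(A)$ with values in $A$ (not just in $\bar A_{\rm un}$ modulo the kernel of $f$) — i.e. that the equivalences are implemented by honest matrices after applying $f$, and that the collection $\{h_g\}_g$ is coherent enough (up to scalars) to invoke Proposition \ref{Mackeyprop}. Once that translation is in place, everything reduces to already-established results: representability of the deformation functor, Schur's lemma, and Proposition \ref{Mackeyprop}. No genuinely new estimate or construction is needed; the content is the identification of the Galois-fixed locus of $\Spf(\bar A_{\rm un})$ with the locus of representations that extend over $\pi_1(X)$.
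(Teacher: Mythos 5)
Your proposal is correct and follows essentially the same route as the paper: for (a), observe that $\bar\rho^{\ti\sigma}$ is conjugate to $\bar\rho$ via $h\rho(\ti\sigma)$ and invoke representability; for (b), apply $f$ to (\ref{twist2}) and use $f\cdot\phi(\sigma)=f$ to produce the intertwiners $h_g\in\GL_d(A)$, then conclude by Proposition \ref{Mackeyprop}. The ``main obstacle'' you flag is in fact harmless, since $f$ is a ring homomorphism applied entrywise, so $f(h_1)$ is an honest element of $\GL_d(A)$ and the up-to-scalar coherence is exactly what Schur's lemma and Proposition \ref{Mackeyprop} require.
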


\begin{proof}
(a) Suppose $\bar\rho$ extends
to   $\rho$. Then, we have
\[
\bar\rho^{\ti\sigma}(\gamma)=h\bar\rho(\ti\sigma \gamma\ti\sigma^{-1})h^{-1}=h\rho(\ti\sigma)\bar\rho( \gamma)\rho(\ti\sigma)^{-1}h^{-1}.
\]
This gives that $\bar\rho^{\ti\sigma}$ is equivalent to $\bar\rho$. The representability of the deformation problem now implies $f\cdot \phi(\sigma)=f$. 

(b) Conversely, suppose that $f\cdot \phi(\sigma)=f$, for all $\sigma\in \Gk$. Then, for $g\in \pi_1(X)$ which maps to $\sigma\in \Gk$, we have
\[
 \bar\rho(g\gamma g^{-1})=h_{g}\phi(\sigma)(\bar\rho(\gamma))h_{g}^{-1}=h_{g}\bar\rho(\gamma)h_{g}^{-1}.
\]
for some $h_g\in \GL_d(A)$. The result now follows from Proposition \ref{Mackeyprop}.
\end{proof}

\subsection{Volume for the universal deformation}

Start with $\rho_0: \pi_1(X)\to \GL_d(\BF)$ 
such that $\bar\rho_0: \pi_1(\bar X)\to \GL_d(\BF)$  is absolutely irreducible
and consider
\[
\bar\rho:=\bar\rho_{\rm un}: \pi_1(\bar X)\to \GL_d(\bar A_{\rm un})
\]
the universal deformation. Set 
\[
\calD={\rm Spf}(\bar A_{\rm un})[1/\ell]
\]
for the rigid analytic fiber of the formal scheme $\bar A_{\rm un}$ over $\O$. This is a rigid analytic space over $E$ which is non-canonically isomorphic to the open unit polydisk $D_1(m)$.

Apply the construction with $\Gamma=\pi_1(\bar X)$, $A=\bar A_{\rm un}$,
and $G=\Gk$ mapping to ${\rm Out}(\Gamma)$ via the exact sequence and acting on $\bar A_{\rm un}$ via $\phi$
as above. Take $\be_A$ to be given by the $\ell$-adic regulator which now takes values in $\sO(\calD)\simeq\sO$.    We obtain a continuous cohomology class 
\[
\CS(\bar\rho)\in \rH^1_{\rm cts}(k, \sO(\calD)(-1))
\]
where the action of $\Gk$ on $\sO(\calD)(-1)$ is via $\chi_{\rm cycl}^{-1}\cdot \phi$.

\subsection{The symplectic structure on the deformation space}

We continue with the above assumptions and notations. Set $\bar A=\bar A_{\rm un}$, $\bar A_n=\bar A_{\rm un}/\frak m_{\bar A}^n$. 

Consider the $d+1$-dimensional representation $\bar \rho_+:= \bar\rho\oplus \ep^{-1}$ of $\pi_1(\bar X)$ which has trivial determinant. The constructions of \S 2 apply to $\bar \rho_+$
and $\Gamma=\pi_1(\bar X)$. 
We obtain cohomology classes
\[
\kappa_n\in \rH^2(\pi_1(\bar X), \rK_2(\bar A_n)),\quad \omega_n\in \rH^2(\pi_1(\bar X), \Omega^2_{\bar A_n}).
\]
Recall that, for all $n\geq 1$, $\rK_2(\bar A_n)$ and $\Omega^2_{\bar A_n}$ are finite groups. 
They are both $\ell$-groups: This is visibly true for $\Omega^2_{\bar A_n}$. To show the same statement for $\rK_2(\bar A_n)$ observe that the kernel of $\rK_2(\bar A_n)\to \rK_2(\BF)$
is generated by Steinberg symbols of the form $\{1+\ell x, s\}$; these are $\ell$-power torsion since $(1+\ell x)^{\ell^N}=1$ in $\bar A_n$ for $N>n$, while $\rK_2(\BF)=(0)$
(\cite{Quillen}).

Assume now that $X$ is, in addition, projective.
Since $\bar X$ is ${\rm K}(\pi_1(\bar X), 1)$ (cf. \cite[Theorem 11]{Friedlander})  we have a canonical isomorphism
\begin{equation}\label{Hurewitz}
\calH: \rH^2(\pi_1(\bar X), \Omega^2_{\bar A_n})\xrightarrow{\simeq} \rH^2_{\et}( \bar X , \Omega^2_{\bar A_n}),
\end{equation}
By Poincare duality,
\[
{\rm Tr}: \rH^2_{\et}( \bar X , \Omega^2_{\bar A_n}) \xrightarrow{\simeq} \Omega^2_{\bar A_n}(-1), 
\]
and similarly for  $\rK_2(\bar A_n)$. Set 
\[
 \kappa_{a, n}:=({\rm Tr}\circ {\calH})(\kappa_n)\in \rK_2(\bar A_n)(-1),\quad \omega_{a, n}:
 =({\rm Tr}\circ {\calH})(\omega_n)\in \Omega^2_{\bar A_n}(-1).
\]
(compare \S \ref{sss:def}).
Also set
\[
\quad \kappa:=\varprojlim\nolimits_n \kappa_{a, n}\in \varprojlim\nolimits_n
\rK_2(\bar A_n)(-1),\quad
\omega:=\varprojlim\nolimits_n \omega_{a, n}\in \hat\Omega^2_{\bar A_{\rm un}/\O}(-1).
\]
Set
\[
T_{\bar A_{\rm un}/\O}={\rm Hom}_{\bar A_{\rm un} }(\hat\Omega_{\bar A_{\rm un}/\O}, \bar A_{\rm un} ),\quad 
T_{\bar A_n}=T_{\bar A_{\rm un}/\O}\otimes_{\bar A_{\rm un}}\bar A_n={\rm Hom}_{\bar A_n}(\hat\Omega_{\bar A_{\rm un}/\O}, \bar A_n).
\]
By \cite[\S 17, \S 21 Prop. 1, \S 24]{Mazur}, there are natural $\bar A_n$-isomorphisms
\[
T_{\bar A_n} \cong \rH^1(\pi_1(\bar X), {\rm Ad}^0_{\bar\rho}(\bar A_n)).
\]
For simplicity, set $W_n={\rm Ad}^0_{\bar\rho}(\bar A_n)$; this is a finite free $\bar A_n$-module given by trace zero matrices. The profinite group $\Gamma=\pi_1(\bar X)$ satisfies Poincare duality in dimension $2$ over the $\ell$-power torsion $\bar A_n$ as in \S \ref{poincaredual}. Then, cup product followed by $W_n\otimes_{\bar A_n} W_n\to \bar A_n$, $(X, Y)\mapsto {\rm Tr}(XY)$, and combined with Poincare duality gives the pairing
\[
\langle\ ,\ \rangle_n: T_{\bar A_n}\times T_{\bar A_n}=\rH^1(\pi_1(\bar X), W_n)\times \rH^1(\pi_1(\bar X), W_n)\to \bar A_n.
\]
Suppose $\ell$ does not divide $d$.
Then, this is a non-degenerate $\bar A_n$-linear pairing. Taking an inverse limit over $n$ gives
\[
\langle\ ,\ \rangle: T_{\bar A_{\rm un}/\O}\times T_{\bar A_{\rm un}/\O}\to \bar A_{\rm un} 
\]
 which is a non-degenerate (perfect) $\bar A_{\rm un} $-linear pairing.
 
\begin{thm}\label{prop:sympl}
1) The $2$-form $\omega\in \hat\Omega^2_{\bar A_{\rm un}/\O}(-1)$ is closed. 

2) Suppose $\ell$ does not divide $d$. For all   $v_1$, $v_2\in T_{\bar A_{\rm un}/\O} $, we have
\[
\langle v_1 , v_2\rangle =\omega (v_1, v_2)
\]
and $\omega$ is non-degenerate.
\end{thm}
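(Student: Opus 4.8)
The plan is to reduce the statement to the finite-level identity already established in the algebraic part of the paper (Proposition \ref{omegaalt} together with the duality discussion in \S \ref{poincaredual}, culminating in \eqref{Goldman1}), and then pass to the limit. The first part of (1) is immediate: each $\omega_{a,n}$ lies in the image of $d\log\colon \rK_2(\bar A_n)\to \Omega^2_{\bar A_n/\O}$, and every $2$-form of the form $d\log(f)\wedge d\log(g)$ is closed because $d(f^{-1}df)=0$; since $\omega=\varprojlim_n\omega_{a,n}$ and the de Rham differential is continuous for the $\frakm$-adic topology, $d\omega=\varprojlim_n d\omega_{a,n}=0$. So the real content is part (2).

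For (2), I would first fix $n$ and work with $\bar A_n$, which is an Artinian local $\O$-algebra. The tangent space $T_{\bar A_n}$ is identified with $\rH^1(\pi_1(\bar X),W_n)$ via deformation theory (\cite{Mazur}, \S\S 17, 21, 24), and a tangent vector $v$ corresponds to a representation $\rho_v\colon\pi_1(\bar X)\to\GL_{d+1}(\bar A_n[\epsilon])$ lifting $\bar\rho_+$ (using $\bar\rho_+=\bar\rho\oplus\ep^{-1}$, whose determinant is trivial so that the $\rK_2$-formalism of \S \ref{sect2} applies), with associated $1$-cocycle $c_v$ valued in $\rM^0(W_n)$ — note the added summand $\ep^{-1}$ contributes nothing to $\rM^0$ since the cocycle for a one-dimensional piece is determined by the trace-zero condition, so $c_v$ is exactly the class in $\rH^1(\pi_1(\bar X),W_n)$. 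Now Proposition \ref{omegaalt} applied to $\Gamma=\pi_1(\bar X)$, $A=\bar A_n\times V$ with $V=T_{\bar A_n}^\vee$ the universal first-order deformation ring, gives $\omega_{\rho_+}={\rm Tr}_{\rm alt}(c\cup c)$ in $\rH^2(\pi_1(\bar X),\Omega^2_A)=\rH^2(\pi_1(\bar X),\wedge^2 V)$. Then, since $\pi_1(\bar X)$ satisfies Poincaré duality in dimension $2$ over the $\ell$-power torsion ring $\bar A_n$ (because $\bar X$ is a $\rK(\pi_1,1)$ for $\ell$-torsion sheaves by \cite{Friedlander}, and $\rH^2_{\et}(\bar X,-)\simeq(-)(-1)$, $\rH^3_{\et}(\bar X,-)=0$), the argument of \S \ref{poincaredual} — specifically the derivation of \eqref{Goldman1} — yields
\[
\langle c_1,c_2\rangle_n=\omega_{\rho_+}(c_1,c_2)
\]
after applying ${\rm Tr}\circ\calH$, which is exactly the statement that $\langle v_1,v_2\rangle_n=\omega_{a,n}(v_1,v_2)$ once we track through the Hurewicz isomorphism $\calH$ of \eqref{Hurewitz} and the Poincaré duality trace. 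The hypothesis $\ell\nmid d$ is what makes the trace form $W_n\to W_n^\vee$, $(X,Y)\mapsto{\rm Tr}(XY)$, a perfect pairing (equivalently, $\rM^0_{(d+1)\times(d+1)}$ is self-dual under ${\rm Tr}$ precisely when $d+1$ is invertible — here one should be slightly careful and check that $\ell\nmid d$ suffices for the $(d+1)$-dimensional $\bar\rho_+$, which it does because ${\rm Ad}^0_{\bar\rho_+}$ decomposes as ${\rm Ad}^0_{\bar\rho}\oplus(\text{rank one})\oplus(\text{Hom pieces})$ and the relevant pairing on $\rH^1$ only sees ${\rm Ad}^0_{\bar\rho}$), so $\langle\ ,\ \rangle_n$ is non-degenerate; combined with the equality just proved, $\omega_{a,n}$ is non-degenerate at each finite level.

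Finally I would pass to the limit over $n$. Both sides of $\langle v_1,v_2\rangle_n=\omega_{a,n}(v_1,v_2)$ are compatible under the transition maps $\bar A_{n+1}\to\bar A_n$ (the cocycle $\kappa_\rho$ is manifestly functorial, as is cup product and Poincaré duality), so taking $\varprojlim_n$ gives $\langle v_1,v_2\rangle=\omega(v_1,v_2)$ for $v_1,v_2\in T_{\bar A_{\rm un}/\O}$, where $\omega=\varprojlim_n\omega_{a,n}\in\hat\Omega^2_{\bar A_{\rm un}/\O}(-1)$. Non-degeneracy of $\omega$ then follows from non-degeneracy of the perfect pairing $\langle\ ,\ \rangle$ on the free $\bar A_{\rm un}$-module $T_{\bar A_{\rm un}/\O}$, which was established (granting $\ell\nmid d$) just before the statement of the theorem by taking the inverse limit of the perfect pairings $\langle\ ,\ \rangle_n$. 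I expect the main obstacle to be bookkeeping rather than conceptual: one must carefully match the normalization in Proposition \ref{omegaalt} (where $A=R\times V$ is a square-zero extension) with the genuinely non-split Artinian rings $\bar A_n$, which requires reducing to the universal first-order situation $\bar A_n\times V$ by base change along the tangent vectors, and then confirming that the two trace pairings — the one built from ${\rm Tr}_{\rm alt}$ on $\rM^0(V)$ and the one built from the cup product with the Killing-type form ${\rm Tr}(XY)$ on ${\rm Ad}^0$ — are identified compatibly with the Poincaré duality trace ${\rm Tr}\circ\calH$. A secondary point to get right is the twist by $(-1)$: it enters only through the identification $\rH^2_{\et}(\bar X,-)\simeq(-)(-1)$ and is the same on both sides, so it does not affect the equality $\langle v_1,v_2\rangle=\omega(v_1,v_2)$ read in $\bar A_{\rm un}$, but it must be carried consistently in the statement.
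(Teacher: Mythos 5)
Your proof is correct and follows essentially the same route as the paper's: part (1) from $\omega=d\log(\kappa)$, and part (2) by reducing at each finite level $\bar A_n$ to Proposition \ref{omegaalt} and \eqref{Goldman1}, observing that the cocycles of $\bar\rho_{+,i}=\bar\rho_i\oplus\ep^{-1}$ factor through $W_n\subset W^+_n$ so that only the trace pairing on ${\rm Ad}^0_{\bar\rho}$ (perfect since $\ell\nmid d$) intervenes, and then passing to the inverse limit. The extra care you take over the $\ep^{-1}$ summand and the $(-1)$-twist matches what the paper leaves implicit.
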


\begin{proof} 
For simplicity, set $\bar A=\bar A_{\rm un}$. By construction $\omega=d\log(\kappa)$ and so $d\omega=0$, i.e. $\omega$ is closed which shows (1). Let us show (2). For every $n\geq 1$,  $v_i\in T_{\bar A_{\rm un}/\O} $ give deformations 
$\bar \rho_i$ of $ \bar\rho_0$ over $\bar A_n[\varepsilon]$, with determinant $\ep$.
These give representations
\[
 \bar \rho_{+, i}:\pi_1(\bar X)\to \SL_{d+1}(\bar A_n[\varepsilon]), \quad \bar \rho_{+, i}:=\bar \rho_i\oplus\ep^{-1}.
\]
Set $W^+_n={\rm Ad}^0_{\bar \rho_+}(\bar A_n)={\rm M}^0_{(d+1)\times (d+1)}(\bar A_n)$ which contains $W_n=
{\rm Ad}^0_{\bar \rho}(\bar A_n)$.  
By our construction of $\omega_{a, n}$ and (\ref{Goldman1}), $\omega_{a, n}(v_1, v_2)$ is the value of the pairing
\[
\rH^1(\pi_1(\bar X), W^+_n)\times \rH^1(\pi_1(\bar X), W^+_n)\to \rH^2(\pi_1(\bar X), \bar A_n)\cong \bar A_n
\]
given by cup product followed by $W^+_n\otimes_{\bar A_n} W^+_n\to \bar A_n$, $(X, Y)\mapsto {\rm Tr}(XY)$,
at $(v_1, v_2)$.  The cocycles of $\pi_1(\bar X)$ on $W^+_n$ corresponding to $\bar\rho_{+, i}$ factor through $W_n\subset W^+_n$. It follows that $\omega_{a, n}(v_1, v_2)$ is also the value of the pairing
\[
\rH^1(\pi_1(\bar X), W_n)\times \rH^1(\pi_1(\bar X), W_n)\to \rH^2(\pi_1(\bar X), \bar A_n)\cong \bar A_n
\]
at $(v_1, v_2)$. Part (2) now follows. \end{proof}
\smallskip

In what follows, we assume without further mention, that $\ell$ does not divide $d$.

The form $\omega$ gives, by definition, the {\bf canonical   symplectic structure} on the formal deformation space $\Spf(\bar A_{\rm un})$. 

\subsubsection{} Recall $\calD={\rm Spf}(\bar A_{\rm un})[1/\ell]$. The form $\omega$ gives a Poisson structure on $\sO(\calD)\simeq \sO $ as follows.
For $f\in \sO(\calD)$ set $X_f$ for the analytic vector field 
on $\calD$ defined by  
\[
X_f\intprod \omega=df.
\]
(Here and in what follows we denote by $i_X(\omega)$ or $X\intprod \omega$ for the contraction, or ``interior product",
of the vector field $X$ with the form $\omega$.). The analyticity of $X_f$ can be seen as follows: Choose an isomorphism 
$\bar A_{\rm un}\simeq \O\lps x_1,\ldots , x_m\rps$ and write
$\omega=\sum_{i<j}g_{ij}dx_i\wedge dx_j$. 
Then, in the basis $\partial/\partial x_i$,  $X_f$ is (formally) the image of the vector $-(f_1,\ldots, f_m)$ under the map given by the matrix $(g_{ij})$.  Since $g_{ij}\in 
\O\lps x_1,\ldots , x_m\rps$, we see that if $f\in \sO(\calD)$ all the components of $X_f$ converge on $||\x||<1$, i.e. they belong to $\sO$.

We now set
\begin{equation}\label{goldmanbracket}
\{ f, g\}=\omega(X_f, X_g).
\end{equation}
We can easily see that $\{f, g\}$ takes values in $\sO(\calD)$. Also
\[
\{\ ,\ \}: \sO(\calD)\times\sO(\calD)\to \sO(\calD)
\]
is a Lie bracket, i.e. satisfies $\{f, g\}=-\{g, f\}$ and
\[
\{f, \{g, h\}\}+\{g, \{h, f\}\}+\{h, \{f, g\}\}=0.
\]
It also satisfies the Leibniz rule $\{fg, h\}=f\{g, h\}+g\{f, h\}$.
Indeed, it is enough to show these identities in the ring of formal power series. There they 
are true by the standard arguments. (The Jacobi identity follows from the closedness
of the form $\omega$.)

\medskip

\section{The symplectic nature of the  Galois action}

We now return to the Galois action on the formal deformation space of a modular representation of the arithmetic \'etale fundamental group of a curve. 
We construct the $\ell$-adic Galois group flow and explain its interaction with the canonical symplectic form. Finally we show that the set of deformed representations that extend to a representation of the fundamental group of the curve over a finite extension of $k\cy$ is the intersection of the critical loci for a set of rigid analytic functions.

\subsection{Galois action and the symplectic form}  

We continue with the assumptions and notations of \S \ref{deform}, \S \ref{galoisactiondeform}.

\begin{prop} 
 We have
 \[
\phi(\sigma)(\omega )=\chi^{-1}_{\rm cycl}(\sigma)\cdot \omega,
\]
where $\phi(\sigma): \bar A_{\rm un}\to \bar A_{\rm un}$ is the automorphism induced by $\sigma\in G_k$
as in \S \ref{galoisactiondeform}.
 \end{prop}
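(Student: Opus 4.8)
The plan is to trace how the automorphism $\phi(\sigma)$ acts through the chain of constructions that produced $\omega$. Recall from the construction of the symplectic form that $\omega=d\log(\kappa)$, where $\kappa=\varprojlim_n\kappa_{a,n}$, $\kappa_{a,n}=({\rm Tr}\circ\calH)(\kappa_n)$, and $\kappa_n\in\rH^2(\pi_1(\bar X),\rK_2(\bar A_n))$ is the pullback along $\bar\rho_+:=\bar\rho\oplus\ep^{-1}$ of the universal Steinberg $\rK_2$-class. Since $\phi(\sigma)$ is a local $\O$-algebra automorphism of $\bar A_{\rm un}$, it induces compatible automorphisms of each $\bar A_n$, of $\rK_2(\bar A_n)$ and of $\Omega^2_{\bar A_n/\O}$, and these commute with $d\log$ (functoriality of $d\log$ in the ring) and with the Poincar\'e duality isomorphism ${\rm Tr}\circ\calH$ (functoriality in the coefficient module, applied to the $\pi_1(\bar X)$-module maps induced by $\phi(\sigma)$ on the trivial modules $\rK_2(\bar A_n)$ and $\Omega^2_{\bar A_n/\O}$). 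Hence it suffices to prove that $\phi(\sigma)_*(\kappa_n)=\chi_{\rm cycl}(\sigma)^{-1}\kappa_n$ in $\rH^2(\pi_1(\bar X),\rK_2(\bar A_n))$ for every $n$; applying ${\rm Tr}\circ\calH$, passing to the inverse limit, and using that $d\log$ is $\Z_\ell$-linear then gives $\phi(\sigma)(\omega)=\chi_{\rm cycl}(\sigma)^{-1}\omega$.

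To compute $\phi(\sigma)_*(\kappa_n)$, I will use functoriality of the $\rK_2$-invariant in the coefficient ring: $\phi(\sigma)_*(\kappa_n)=\kappa_{\phi(\sigma)\circ\bar\rho_+}$, where $\phi(\sigma)\circ\bar\rho_+$ denotes the composition of $\bar\rho_+$ with the entrywise map $\SL_{d+1}(\bar A_n)\to\SL_{d+1}(\bar A_n)$ induced by $\phi(\sigma)$. By the defining relations (\ref{twist}) and (\ref{twist2}) of $\phi(\sigma)$, reduced modulo $\frakm^n$, together with $\phi(\sigma)|_\O={\rm id}$ and $\ep(\ti\sigma^{-1}\gamma\ti\sigma)=\ep(\gamma)$ (as $\ep$ is valued in the abelian group $\O^\times$), the representation $\phi(\sigma)\circ\bar\rho_+$ is $\GL_{d+1}(\bar A_n)$-conjugate to $\gamma\mapsto\bar\rho_+(\ti\sigma^{-1}\gamma\ti\sigma)=(\bar\rho_+\circ c_{\ti\sigma^{-1}})(\gamma)$, where $c_{\ti\sigma^{-1}}$ is the automorphism $\gamma\mapsto\ti\sigma^{-1}\gamma\ti\sigma$ of $\pi_1(\bar X)$. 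Since $\kappa_{(\cdot)}$ is invariant under $\GL$-conjugation and functorial for group homomorphisms, this gives $\phi(\sigma)_*(\kappa_n)=(c_{\ti\sigma^{-1}})^*\kappa_n$. (Note that only the universal property of $\bar\rho$ is used here, not that $\bar\rho$ extends to $\pi_1(X)$.)

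It remains to identify the action of $(c_{\ti\sigma^{-1}})^*$ on $\rH^2(\pi_1(\bar X),M)$ for a finite $\ell$-group $M$ with trivial action. The automorphism $c_{\ti\sigma^{-1}}$ represents $\psi(\sigma)^{-1}\in{\rm Out}(\pi_1(\bar X))$, so by the normalization recorded earlier (the cyclotomic outer action, equivalently the formula $\partial_3(\delta(\ti\sigma,c))=\ti\sigma c\,\ti\sigma^{-1}-\chi_{\rm cycl}(\sigma)c$) it acts on $\rH_2(\pi_1(\bar X),\Z_\ell)\cong\Z_\ell(1)$ by multiplication by $a_\sigma^{-1}=\chi_{\rm cycl}(\sigma)^{-1}$. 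Because $\rH_1(\pi_1(\bar X),\Z_\ell)$ is torsion free and $\rH^2(\pi_1(\bar X),\Z/\ell^m)\cong\Z/\ell^m(-1)$ is free of rank one, the universal coefficient theorem identifies $\rH^2(\pi_1(\bar X),M)$ with a cyclic $M$-module on which $(c_{\ti\sigma^{-1}})^*$ acts by the same multiplier as on $\rH_2$, namely by $\chi_{\rm cycl}(\sigma)^{-1}$. Applying this with $M=\rK_2(\bar A_n)$ gives $\phi(\sigma)_*(\kappa_n)=\chi_{\rm cycl}(\sigma)^{-1}\kappa_n$, which, by the reduction of the first paragraph, completes the proof. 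The only genuinely delicate point is the bookkeeping of the Tate twist — confirming that $c_{\ti\sigma^{-1}}$ contributes $\chi_{\rm cycl}(\sigma)^{-1}$ and not $\chi_{\rm cycl}(\sigma)$ — which is convention-sensitive but already fixed by the choices made in Section~\ref{sectPre}; the remaining steps are formal functoriality of the $\rK_2$-invariant, of $d\log$, and of Poincar\'e duality.
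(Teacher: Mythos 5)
Your argument is correct and is essentially the paper's own proof: both identify $\phi(\sigma)$ applied to the coefficients of the universal class with the pullback along the automorphism $\gamma\mapsto\ti\sigma^{-1}\gamma\ti\sigma$ of $\pi_1(\bar X)$ (via the defining relation (\ref{twist2})), and then evaluate that pullback as multiplication by $\chi_{\rm cycl}(\sigma)^{-1}$ using Poincar\'e duality. The only cosmetic differences are that you work with $\kappa_n$ and push through $d\log$ (the paper notes this strengthening in a remark), and that you compute the twist on $\rH^2(\pi_1(\bar X),M)$ via the cyclotomic action on $\rH_2(\pi_1(\bar X),\Z_\ell)\simeq\Z_\ell(1)$ rather than via equivariance of the \'etale trace map --- the same fact in different clothing.
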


\begin{proof} This also follows from  Theorem \ref{prop:sympl} 
which gives a description of $\omega$ using  cup product and Poincare duality.
Here is a more direct argument that also applies to the $\rK_2$ invariant. Consider the  endomorphism $[\ti\sigma]$ of $ \rH^2(\pi_1(\bar X), \Omega^2_{\bar A_n})$ induced by $\gamma\mapsto \ti\sigma\gamma \ti\sigma^{-1}$ on $\pi_1(\bar X)$. 
By the construction of $\omega_n$ and the definition of $\phi$, we have
\begin{equation} \label{freq1}
[\ti\sigma](\omega_n) =\phi(\sigma)(\omega_n)
\end{equation}
in $ \rH^2(\pi_1(\bar X), \Omega^2_{\bar A_n} )$, where on the right hand side $\phi(\sigma)$ is applied to the coefficients $\Omega^2_{\bar A_n}$.
Next observe that, by functoriality of $\calH$ (\ref{Hurewitz}), the endomorphism $[\ti\sigma]$ of 
$\rH^2(\pi_1(\bar X), \Omega^2_{\bar A_n})$ corresponds to   the  endomorphism $(\ti\sigma)^*$
on $\rH^2_{\et}( \bar X , \Omega^2_{\bar A_n})$, i.e.
\begin{equation}\label{freq2}
\calH\circ [\ti\sigma]=(\ti\sigma)^*\circ \calH.
\end{equation}
By Poincare duality for \'etale cohomology,  ${\rm Tr}\circ (\ti\sigma)^* $ is multiplication by $\chi^{-1}_{\rm cycl}(\sigma)$.
Combining the above gives
\begin{equation*}
\phi(\sigma)(\omega_{c, n})=\chi^{-1}_{\rm cycl}(\sigma)\cdot \omega_{c, n}. 
\end{equation*}
  This then implies $\phi(\sigma)(\omega )=\chi^{-1}_{\rm cycl}(\sigma)\cdot \omega$, as desired.
\end{proof}

\begin{Remark}
{\rm a) The stronger statement $\phi(\sigma)(\kappa )=\chi^{-1}_{\rm cycl}(\sigma)\cdot \kappa$ is also true. This can be seen by repeating the argument in the proof   but with the coefficient group $\Omega^2_{\bar A_n}$  replaced by $\rK_2({A_n})$.

b)  (suggested by D. Litt) Consider the ring $\Z_\ell\lps \lambda \rps$ (with $\lambda$ a formal variable),
on which the group $G_k$ acts by $\sigma(\lambda)= \chi_{\rm cycl}(\sigma)\cdot\lambda$. Set $\Q_\ell\{\lambda\}:=(\varprojlim_n \Z_\ell\llps \lambda\lrps/\ell^n)[1/\ell]$ for the $\ell$-adic completion
of the Laurent power series $\Q_\ell\llps \lambda\lrps$. Then $\lambda\omega$ is a non-degenerate $2$-form 
on the rigid $\Q_\ell\{\lambda\}$-analytic space $\bar\calD_{\Q_\ell\{\lambda\}}:=\bar\calD\hat\otimes_{\Q_\ell}\Q_\ell\{\lambda\}$ which is isomorphic to a unit polydisk over 
$\Q_\ell\{\lambda\}$. The form $\lambda\omega$ is closed relative to the base field $\Q_\ell\{\lambda\}$,
hence it gives a symplectic structure on $\bar\calD_{\Q_\ell\{\lambda\}}$, and is invariant
under the diagonal action of $G_k$ on $\bar\calD_{\Q_\ell\{\lambda\}}=\bar\calD\hat\otimes_{\Q_\ell}\Q_\ell\{\lambda\}$. Note however, that this action is not $\Q_\ell\{\lambda\}$-linear.

}
\end{Remark}

\subsection{The Galois flow}\label{ss:Flow}
Since $\bar A/\frakm_{\bar A}^2$ is a finite ring, there is an integer $N\geq 1$ such that the $N$-th iteration $\psi=\phi(\sigma)^N: \bar A\to \bar A$ satisfies $\psi\equiv {\rm Id}\, {\rm mod}\, \frakm^2_{\bar A}$. Assume also that $(\ell-1)|N$. Recall $\calD={\rm Spf}(\bar A)[1/\ell]\simeq D_1(m)$.

Since $\bar A\simeq \O\lps x_1,\ldots , x_m\rps$, the results of the Appendix, especially Proposition \ref{propConv2}, apply to $\psi$.  We obtain:
 
 \begin{thm}\label{thm:flow} We can write $\calD=\cup_{c\in {\mathbb N}} \bar\calD_c$ as an increasing open union of affinoids (each $\bar\calD_c$ isomorphic to a closed ball $\bar D_{r(c)}(m)$ of radius $r(c)$ increasing to $1$) such that:
 
For each $c\geq 1$, there is $\varepsilon(c)\in \Q_{>0}$ with the property that, for each $\sigma\in G_k$, there is a 
rigid analytic map (the ``flow")
 \[
  \{t\ |\ |t|_\ell\leq \varepsilon(c)\}\times \bar\calD_c\to \bar\calD_c,\quad (t,\x)\mapsto \psi^t(\x):=\sigma^{tN}(\x),
 \]
 which satisfies 
 \begin{itemize}
 \item $\psi^{t+t'}=\psi^t\cdot \psi^{t'}$, for all $|t|_\ell, |t'|_\ell\leq \varepsilon(c)$,
 \smallskip
 
 \item   $\psi^n: \bar\calD_c\to \bar\calD_c$ is
 given by the action of $\sigma^{nN}$, for all $n\in \Z$ with $|n|_\ell\leq \varepsilon(c)$.
 \end{itemize}
 \end{thm}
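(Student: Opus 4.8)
The plan is to reduce everything to the interpolation result of the Appendix (Proposition \ref{propConv2}) applied to the single automorphism $\psi=\phi(\sigma)^N$, and then upgrade from a single $\psi$ to the full family $\{\sigma^{tN}\}_\sigma$ by exploiting the group law. First I would fix the isomorphism $\bar A_{\rm un}\simeq \O\lps x_1,\ldots,x_m\rps$ and recall that, by Proposition \ref{contAction}, each $\phi(\sigma)$ is a continuous $\O$-algebra automorphism; by choice of $N$ (divisible by $\ell-1$ and large enough that $\bar A/\frakm_{\bar A}^2\to\bar A/\frakm_{\bar A}^2$ is the identity) the map $\psi=\phi(\sigma)^N$ is congruent to the identity modulo $\frakm^2$. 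This is exactly the hypothesis under which Poonen's interpolation of iterates, in the form packaged in the Appendix, produces for each radius a closed polydisk $\bar D_{r(c)}(m)$ stable under $\psi$ and a $t$-adic analytic family $t\mapsto \psi^t$ extending $n\mapsto \psi^n$, defined for $|t|_\ell\le\varepsilon(c)$, with $\psi^{t+t'}=\psi^t\psi^{t'}$. I would simply quote Proposition \ref{propConv2} to get the affinoid exhaustion $\calD=\bigcup_c\bar\calD_c$ with $\bar\calD_c\cong\bar D_{r(c)}(m)$, $r(c)\nearrow 1$, together with the radii $\varepsilon(c)$.

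The one genuine point requiring care is \emph{uniformity in $\sigma$}: the statement asserts a single exhaustion $\{\bar\calD_c\}$ and single radii $\varepsilon(c)$ that work simultaneously for every $\sigma\in G_k$, whereas a naive application of the Appendix to each $\phi(\sigma)^N$ would a priori give $\sigma$-dependent data. I would handle this as follows. The only inputs to the interpolation estimates are (i) that $\psi$ acts on $\frakm/\frakm^2$ by a matrix congruent to the identity mod $\ell$ — here one uses $(\ell-1)\mid N$ to kill the Teichmüller part of the eigenvalues of $\phi(\sigma)$ on the cotangent space, which is uniform because the cotangent action factors through $\mathrm{GL}_m(\O)$ and $\mathrm{GL}_m(\O/\ell)$ has order prime-to-$\ell$ part of bounded exponent — and (ii) $\frakm$-adic continuity bounds on the higher Taylor coefficients of $\psi$, which hold \emph{uniformly} because $\phi\colon G_k\to\mathrm{Aut}_\O(\bar A)$ is continuous and $\mathrm{Aut}_\O(\bar A)$ with its profinite topology has, for each $n$, the open subgroup $\calK_n$; every $\phi(\sigma)$, and hence every $\phi(\sigma)^N$, lies in $\mathrm{Aut}_\O(\bar A)$, and the relevant coefficient bounds depend only on membership in $\calK_1$ (or $\calK_2$), not on $\sigma$. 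Concretely, writing $\psi(x_i)=x_i+\sum_{|\bf j|\ge 1}c^{(i)}_{\bf j}\x^{\bf j}$ one has $c^{(i)}_{\bf j}\in\frakm$ for $|{\bf j}|\le 1$ and $c^{(i)}_{\bf j}\in\O$ always, with no $\sigma$-dependence in these integrality statements; the estimates feeding Proposition \ref{sequence}/\ref{propConv2} then yield $r(c),\varepsilon(c)$ depending only on $m$, $e$, and $N$. So the exhaustion and the radii can be chosen once and for all.

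With uniformity in hand, the remaining assertions are formal. For fixed $\sigma$, Proposition \ref{propConv2} gives the rigid analytic map $\{|t|_\ell\le\varepsilon(c)\}\times\bar\calD_c\to\bar\calD_c$, $(t,\x)\mapsto\psi^t(\x)$, with $\psi^{t+t'}=\psi^t\circ\psi^{t'}$ on the overlap of domains, and with $\psi^n$ equal to the $n$-th iterate of $\psi=\phi(\sigma)^N$ for integers $n$; since $\phi$ is a homomorphism, $\phi(\sigma)^{nN}=\phi(\sigma^{nN})$, so $\psi^n$ is precisely the action of $\sigma^{nN}$, as claimed. Writing $\sigma^{tN}:=\psi^t$ records this. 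That $\bar\calD_c$ is increasing and that $r(c)\to 1$, hence $\bigcup_c\bar\calD_c=\calD$, is part of the Appendix output; and that $\bar\calD_c$ is stable under $\psi^t$ (not merely mapped into $\calD$) is exactly the content of Proposition \ref{propConv2}'s statement that the flow is a self-map of the affinoid. I expect the main obstacle to be precisely the uniformity argument of the previous paragraph — making sure the radii produced by the Appendix genuinely do not degrade as $\sigma$ varies — together with correctly invoking $(\ell-1)\mid N$ to ensure the cotangent eigenvalues are $\equiv 1\pmod\ell$ so that the logarithm/iteration series converge on a nontrivial disk; once those are pinned down the theorem follows by directly citing Proposition \ref{propConv2}.
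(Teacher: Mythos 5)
Your proposal is correct and follows the paper's own route: the paper likewise fixes one $N$ for all $\sigma$ (possible because ${\rm Aut}_{\O}(\bar A_{\rm un}/\frakm^2_{\bar A})$ is finite, so some $N$ makes $\phi(\sigma)^N\equiv{\rm Id}\ {\rm mod}\ \frakm^2_{\bar A}$ uniformly) and then simply invokes Proposition \ref{propConv2}, whose radii $r=(1/\ell)^a$ and $\varepsilon(r)=(1/\ell)^{1/a(\ell-1)}$ depend only on $a$, $e$ and $\ell$ and not on $\psi$, so the uniformity in $\sigma$ you rightly single out is automatic. The one small inaccuracy is attributing the congruence $\phi(\sigma)^N\equiv{\rm Id}\ {\rm mod}\ \frakm^2_{\bar A}$ to $(\ell-1)\mid N$ and Teichm\"uller eigenvalues on the cotangent space: that congruence is obtained by taking $N$ divisible by the exponent of the full finite group ${\rm Aut}_{\O}(\bar A_{\rm un}/\frakm^2_{\bar A})$ (mere triviality of eigenvalues mod $\ell$ would not suffice, and is not what Proposition \ref{propConv2} requires), while the extra condition $(\ell-1)\mid N$ is imposed for the later limit computation in Proposition \ref{FrobId}, not for Theorem \ref{thm:flow} itself.
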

 
 (In fact, the  Appendix gives more precise information on the flow $\sigma^{tN}$.) 
 
The flow $\psi^t$ induces a rigid analytic vector field $X_{\sigma^N}=X_\psi$   on 
$\calD$. The vector field
\[
X_{\sigma}:=N^{-1}\cdot X_{\sigma^N}
\]
on $\calD$ is well-defined and independent of the choice of $N$. 
The contraction of the $2$-form $\omega$ with $X_\sigma$ gives a rigid analytic $1$-form
\[
\mu_\sigma:=X_\sigma\intprod \omega
\]
on $\calD$. 

Denote by $\log_\ell: \Z_\ell^*\to \Q_\ell$ the $\ell$-adic logarithm.

\begin{prop}\label{FrobId}
 $d\mu_\sigma=-\log_\ell(\chi_{\rm cycl}(\sigma))\omega$.
\end{prop}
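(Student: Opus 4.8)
The plan is to use Cartan's magic formula together with the conformal scaling property $\phi(\sigma)(\omega)=\chi^{-1}_{\rm cycl}(\sigma)\cdot\omega$. Write $\lambda(\sigma)=\log_\ell(\chi_{\rm cycl}(\sigma))$. First I would observe that, since $\mu_\sigma=X_\sigma\intprod\omega$ and $\omega$ is closed (Theorem \ref{prop:sympl}), the Lie derivative of $\omega$ along the flow is $\mathcal L_{X_\sigma}\omega=d(X_\sigma\intprod\omega)+X_\sigma\intprod d\omega=d\mu_\sigma$. So it suffices to compute $\mathcal L_{X_\sigma}\omega$ directly from the flow and show it equals $-\lambda(\sigma)\omega$.

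The key computation is to differentiate the identity $(\psi^t)^*\omega=\chi^{-1}_{\rm cycl}(\sigma^{tN})\cdot\omega=\chi^{-1}_{\rm cycl}(\sigma)^{tN}\cdot\omega$ at $t=0$, working on each affinoid $\bar\calD_c$ where the flow $\psi^t=\sigma^{tN}$ is defined for $|t|_\ell\leq\varepsilon(c)$ by Theorem \ref{thm:flow}. The left side: for integer $n$, $(\psi^n)^*\omega=\phi(\sigma)^{nN}(\omega)$, and by the Proposition preceding this one, $\phi(\sigma)^{nN}(\omega)=\chi^{-1}_{\rm cycl}(\sigma)^{nN}\cdot\omega$; by rigid-analytic continuity in $t$ (both sides are rigid analytic functions of $t$ valued in the Banach space of $2$-forms on $\bar\calD_c$, agreeing on the Zariski-dense set of $\ell$-adically small integers), this extends to all $|t|_\ell\leq\varepsilon(c)$. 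Differentiating $t\mapsto\chi^{-1}_{\rm cycl}(\sigma)^{tN}\cdot\omega=\exp(-tN\lambda(\sigma))\cdot\omega$ at $t=0$ gives $-N\lambda(\sigma)\cdot\omega$. On the other hand, $\frac{d}{dt}\big|_{t=0}(\psi^t)^*\omega=\mathcal L_{X_\psi}\omega=\mathcal L_{X_{\sigma^N}}\omega=N\cdot\mathcal L_{X_\sigma}\omega$, using $X_\sigma=N^{-1}X_{\sigma^N}$. Equating, $\mathcal L_{X_\sigma}\omega=-\lambda(\sigma)\cdot\omega$, hence $d\mu_\sigma=-\log_\ell(\chi_{\rm cycl}(\sigma))\cdot\omega$.

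I would carry this out by first reducing everything to a fixed affinoid $\bar\calD_c$ and a chosen coordinate isomorphism $\bar A_{\rm un}\simeq\O\lps x_1,\ldots,x_m\rps$, writing $\omega=\sum_{i<j}g_{ij}\,dx_i\wedge dx_j$ with $g_{ij}\in\sO$, so that pullback by $\psi^t$ and the Lie derivative are literal operations on convergent power series; the interchange of $\frac{d}{dt}$ with these operations is justified by the joint rigid analyticity of $(t,\x)\mapsto\psi^t(\x)$ from Theorem \ref{thm:flow}. The one point that needs a little care — and which I expect to be the main (though modest) obstacle — is justifying the passage from integer times $n$ (where $\phi(\sigma)^{nN}(\omega)=\chi^{-1}_{\rm cycl}(\sigma)^{nN}\omega$ holds by the previous Proposition) to all $\ell$-adic times $|t|_\ell\leq\varepsilon(c)$: one must check that $t\mapsto(\psi^t)^*\omega$ and $t\mapsto\chi^{-1}_{\rm cycl}(\sigma)^{tN}\omega$ are both given by power series in $t$ with coefficients in the Banach space $\Omega^2(\bar\calD_c)$ and that they agree on the set $\{n\in\Z:|n|_\ell\leq\varepsilon(c)\}$, which accumulates at $0$; then the identity theorem for rigid analytic functions forces equality as functions of $t$, and differentiation at $t=0$ is legitimate. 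Everything else is a routine unwinding of definitions.
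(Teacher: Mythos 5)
Your proposal is correct and follows essentially the same route as the paper: Cartan's magic formula plus closedness of $\omega$ reduce everything to computing $\mathcal L_{X_\psi}\omega$, which is then evaluated using the scaling identity $\phi(\sigma)(\omega)=\chi^{-1}_{\rm cycl}(\sigma)\omega$. The only difference is that the paper sidesteps the analytic-continuation step you flag as the main obstacle by computing the Lie derivative as $\lim_{n\to\infty}\ell^{-n}\bigl((\psi^{\ell^n})^*\omega-\omega\bigr)$, i.e.\ along the integer times $t=\ell^n$ where the scaling identity holds outright.
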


\begin{proof}
It is enough to show the identity in $E\lps x_1,\ldots , x_m\rps$, i.e. to check that the germs at $(0, \ldots, 0)$ of both sides agree. We use a formal version of  ``Cartan's magic formula" 
\[
L_X=i_X\cdot d+d\cdot i_X.
\]
For completeness, we give the argument for the proof of this formula in our set-up. Set   $E\lps \x\rps=E\lps x_1,\ldots, x_m\rps$. Consider the graded commutative superalgebra 
\[
\Omega :=\bigoplus\nolimits_{i\in \Z} \Omega^i
\]
where, for $i\geq 0$, $\Omega^i=\wedge^i\hat\Omega^1_{E\lps \x\rps/E}$,
while $\Omega^{-i}=0$, with multiplication satisfying $ab=(-1)^{ij}ba$, for $a$, $b$ in degree $i$, $j$. A derivation $D$ of degree $\delta$ of $\Omega$ is an $E$-linear graded map $D: \Omega\to \Omega$ satisfying $D(ab)=(Da)b+(-1)^{i\delta}aD(b)$, for $a\in \Omega^i$. For example, the standard $d$ is a derivation. If $X=X_\psi$ is the vector field associated to the flow $\psi^t(\x)$, then 
the Lie derivative $L_X: \Omega\to \Omega$, which, by definition, is given by
\[
L_X \tau:=\lim_{t\mapsto 0}   \frac{1}{t} (\psi^t(\x)^*(\tau)-\tau)
\]
is a derivation of degree $0$. The contraction $i_X=X\intprod -: \Omega \to \Omega $ is a derivation of degree $-1$ and we can easily see that the  ``superbracket" $[i_X, d]=i_X\cdot d+d\cdot i_X$ 
is a derivation of degree $0$. Now notice that the two derivations $L_X$ and $[i_X, d]$ of degree $0$ agree 
on $\Omega^0=E\lps \x\rps$. Indeed, if $f\in E\lps \x\rps$, then 
\[
L_X(f)=i_X\cdot df= X \lrcorner\, df
\]
while $i_X(f)=0$.
 Also, both derivations $L_X$ and $[i_X, d]$ commute with $d$. Indeed, since $d\cdot d=0$,  we have
\[
d\cdot (i_X\cdot d+d\cdot i_X)=d\cdot i_X\cdot d=(i_X\cdot d+d\cdot i_X)\cdot d.
\]
Also, $d\cdot L_X=L_X\cdot d$ since pull-back by $\psi^t(\x)$ commutes with $d$.
The proof of Cartan's magic formula $L_X=[i_X, d]$ follows by observing that any two derivations of degree $0$ on $\Omega$ that commute with $d$ and agree on $\Omega^0$ 
have to agree.  
\smallskip

Now apply this to $\psi=\phi(\sigma^N)=\phi(\sigma)^N$ with $N$ as above.
Since $d\omega=0$, $X_\sigma=N^{-1}\cdot X_\psi$, $\mu_\sigma=N^{-1}\cdot i_{X_\psi}(\omega)$, we have
\[
L_{X_\psi}(\omega)=i_{X_\psi}\cdot d\omega+d\cdot i_{X_\psi}(\omega)=N\cdot d\mu_\phi,
\]
and is enough to show that $L_{X_\psi}(\omega)=-N\log_\ell(\chi_{\rm cycl}(\sigma))\omega$.  

We have
\[
L_{X_\psi}(\omega):=\lim_{t\mapsto 0}   \frac{1}{t} (\psi^t(\x)^*(\omega)-\omega)=
\lim_{n\mapsto +\infty}  \ell^{-n} (\psi^{l^n}(\x)^*(\omega)-\omega).
\]
Now $(\psi^{l^n}(\x))^*(\omega)=\psi^{l^n}(\omega)=\phi(\sigma)^{N\ell^n}(\omega)=\chi_{\rm cycl}(\sigma)^{-N\ell^n} \omega$,  with the last identity given by Proposition \ref{FrobId}. Therefore,
\[
L_{X_\psi}(\omega)=\lim_{n\mapsto +\infty} \ell^{-n} (\chi_{\rm cycl}(\sigma)^{-N\ell^n}-1)\omega=-N \log_\ell(\chi_{\rm cycl}(\sigma)) \omega.
\]
(Recall $\ell-1$ divides $N$.) This identity, combined with the above, completes the proof.
\end{proof}

\begin{thm}\label{criticalThm} Assume $\ell$ does not divide $d$.
The critical set 
\[
{\rm Crit}:= \{\x\in \calD\ |\ X_\sigma(\x)=0, \forall\sigma\in \Gk\}
\]
 is equal to the set of points $\x$ of $\calD$ for which there is a finite extension $k'/k $ such that the representation $\bar\rho_\x: \pi_1(\bar X)\to \GL_d(\bar\Q_\ell)$ extends to $\rho_\x: \pi_1(X\times_kk')\to \GL_d(\bar\Q_\ell)$ which deforms $(\rho_0)_{|\pi_1(X\times_kk')}$.
\end{thm}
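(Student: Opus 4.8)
The plan is to prove the two inclusions separately, using Proposition \ref{fixedprop} as the bridge between ``extends to $\pi_1(X\times_k k')$'' and ``fixed by the Galois action on $\bar A_{\rm un}$''. First I would unwind what membership in $\mathrm{Crit}$ means in terms of the flow: by Theorem \ref{thm:flow}, $X_\sigma(\x)=0$ on some affinoid neighborhood $\bar\calD_c\ni\x$ says that the flow $\psi^t=\sigma^{tN}$ fixes $\x$ for all small $t$, in particular $\sigma^{nN}(\x)=\x$ for all $n$ with $|n|_\ell$ small; by continuity of the $G_k$-action (Proposition \ref{contAction}) and density of such $n$ in $\Z_\ell$ this forces $\sigma^m(\x)=\x$ for all $m$ in the closed subgroup generated by $\sigma^N$, and then a further continuity/density argument in $\sigma$ itself upgrades ``$X_\sigma(\x)=0$ for all $\sigma$'' to ``$\phi(\sigma)(\x)=\x$ for all $\sigma\in G_k$''. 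Here I must be slightly careful: fixing $\x\in\calD$ by the $E$-algebra map $\phi(\sigma)$ is a statement about the point, not about the classifying homomorphism $f_\x:\bar A_{\rm un}\to \bar\Q_\ell$; but since $\phi(\sigma)$ is an $\O$-algebra automorphism, $f_\x\circ\phi(\sigma)$ is the classifying map of the point $\phi(\sigma)(\x)$, so $\phi(\sigma)(\x)=\x$ is exactly $f_\x\circ\phi(\sigma)=f_\x$ as maps to the residue field $\kappa(\x)$, a finite extension of $E$.

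Next I would run Proposition \ref{fixedprop} ``over $\kappa(\x)$''. Strictly, that proposition is stated for $A\in\mathcal C_\O$, i.e. with residue field $\BF$; to apply it at a point $\x$ with residue field a finite extension $E'/E$ I would instead use the standard fact that a point of $\calD$ fixed by all $\phi(\sigma)$ lies, after a finite base change, on the closed formal subscheme cut out by the ideal generated by $\{\phi(\sigma)(a)-a\}$, and that this subscheme is (essentially by construction of $\phi$ via the universal property, cf. \cite{deJong} 3.11) the deformation space of $\bar\rho_0$ restricted to representations of $\pi_1(\bar X)$ that extend to $\pi_1(X\times_k k')$ for suitable $k'$. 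Concretely: for the direction $\mathrm{Crit}\subseteq\{\text{extends}\}$, I take $\x\in\mathrm{Crit}$, pass to $k'$ large enough that $X(k')\neq\emptyset$ and that $\kappa(\x)$ is realized, so that $G_{k'}$ acts and $\phi(\sigma)$ fixes $\x$ for all $\sigma\in G_{k'}$; then Proposition \ref{fixedprop}(b) (applied with $A$ replaced by the image of $f_\x$, using $\gcd(\ell,d)=1$) produces the extension $\rho_\x:\pi_1(X\times_k k')\to\GL_d(\bar\Q_\ell)$ deforming $(\rho_0)_{|\pi_1(X\times_k k')}$. For the reverse direction, given such a $\rho_\x$ over some $k'$, Proposition \ref{fixedprop}(a) gives $f_\x\circ\phi(\sigma)=f_\x$ for all $\sigma\in G_{k'}$, hence $\phi(\sigma)$ fixes $\x$ on a finite-index subgroup; this makes $\x$ a fixed point of the closed subgroup $\overline{\langle\sigma^N\rangle}$ for every $\sigma$, and since the flow $\psi^t=\sigma^{tN}$ is the rigid-analytic interpolation of $n\mapsto\sigma^{nN}$ on $\bar\calD_c$, the orbit map $t\mapsto\psi^t(\x)$ is a rigid-analytic map on a disk agreeing with the constant $\x$ on the dense set $n\Z_\ell$-small integers, hence is constant; differentiating at $t=0$ gives $X_{\sigma^N}(\x)=0$, so $X_\sigma(\x)=N^{-1}X_{\sigma^N}(\x)=0$. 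As $\sigma$ was arbitrary, $\x\in\mathrm{Crit}$.

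I should also record why ``critical point of $X_\sigma$ for all $\sigma$'' is the same as ``critical point of the potential $V_\sigma$ for all $\sigma$'': when $\chi_{\rm cycl}(\sigma)=1$ we have $d\mu_\sigma=0$ by Proposition \ref{FrobId}, so on the (simply connected) polydisk $\mu_\sigma=dV_\sigma$ for some $V_\sigma\in\sO(\calD)$, and $X_\sigma\intprod\omega=\mu_\sigma=dV_\sigma$ together with non-degeneracy of $\omega$ (Theorem \ref{prop:sympl}, using $\ell\nmid d$) gives $X_\sigma(\x)=0\iff dV_\sigma(\x)=0$. For $\sigma$ with $\chi_{\rm cycl}(\sigma)\neq1$ one still has the vector field $X_\sigma$, and since every element of $\mathrm{Gal}(k^{\rm sep}/k\cy)$ has $\chi_{\rm cycl}=1$ while $\mathrm{Gal}(k^{\rm sep}/k)$ is generated by $\mathrm{Gal}(k^{\rm sep}/k\cy)$ together with elements whose action is controlled by the above homogeneity $\phi(\sigma)(\omega)=\chi_{\rm cycl}^{-1}(\sigma)\omega$, the condition $X_\sigma(\x)=0$ for all $\sigma\in G_k$ is implied by (in fact equivalent to) $dV_\sigma(\x)=0$ for all $\sigma\in\mathrm{Gal}(k^{\rm sep}/k\cy)$ — this is the bridge to the critical-locus statement advertised in the introduction, though for the present theorem only the $X_\sigma$ formulation is needed.

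The main obstacle I anticipate is the bookkeeping around residue fields and base change: Proposition \ref{fixedprop} is phrased for coefficient rings in $\mathcal C_\O$, while a point $\x\in\calD$ naturally gives a map to a finite extension $E'$ of $E$ whose residue field $\BF'$ may be strictly larger than $\BF$, and one must check that enlarging $k$ to $k'$ (to get a section $s:G_{k'}\to\pi_1(X\times_k k')$, i.e. a rational point, and to absorb $\BF'$) is harmless and that the universal deformation ring over $\O'=W(\BF')\otimes\cdots$ receives a compatible map. This is routine but needs care; everything else — the continuity/density upgrades, the interpolation of the flow, differentiating the constant orbit map — is straightforward given Theorems \ref{thm:flow}, \ref{prop:sympl} and Propositions \ref{contAction}, \ref{fixedprop}, \ref{FrobId}.
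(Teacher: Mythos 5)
Your overall plan (two inclusions, with Proposition \ref{fixedprop} as the bridge between ``extends over $k'$'' and ``fixed by $\phi$'') is the one the paper uses, and your treatment of the inclusion $\{\text{extends}\}\subseteq{\rm Crit}$ is essentially correct — in fact slightly more careful than the paper's text, which only writes out the case $k'=k$; your observation that one should enlarge $N$ so that $\sigma^N\in G_{k'}$ and then use that $X_\sigma=N^{-1}X_{\sigma^N}$ is independent of $N$, followed by the identity theorem on the interpolated orbit, is exactly what is needed.

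The inclusion ${\rm Crit}\subseteq\{\text{extends}\}$, however, has a genuine gap. You claim that ``a further continuity/density argument in $\sigma$ itself upgrades $X_\sigma(\x)=0$ for all $\sigma$ to $\phi(\sigma)(\x)=\x$ for all $\sigma\in G_k$.'' That statement is false, and strictly stronger than the theorem (which only asserts extension over some finite $k'/k$): a point whose $\rho_\x$ extends over a proper $k'$ but not over $k$ gives a counterexample. What your argument from Theorem \ref{thm:flow} actually yields is only that, for each $\sigma$, the closed subgroup $\overline{\langle\sigma^{\ell^kN}\rangle}$ (with $k$ determined by $\varepsilon(c)$) fixes $\x$. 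The join of these over all $\sigma$ is a closed normal subgroup, but nothing soft forces it to be open in $G_k$; so the step ``pass to $k'$ large enough $\ldots$ so that $\phi(\sigma)$ fixes $\x$ for all $\sigma\in G_{k'}$'' is precisely the claim that must be proved, not a harmless enlargement to get a rational point and absorb the residue field. The paper supplies this via the quantitative estimates of Propositions \ref{propConv1} and \ref{propConv3}, which you do not invoke: set $r=\max(\|\x\|,(1/\ell)^{1/e})=(1/\ell)^a$ and pick $N'>1/a(\ell-1)+1$; then for any $\sigma$ with $\phi(\sigma)\equiv{\rm id}\bmod\frakm^{N'}$, the flow $\phi(\sigma)^t(\x)$ converges for all $|t|_\ell\le1$ by Proposition \ref{propConv1}(a), and since $X_\sigma(\x)=0$ it is identically $\x$ by Proposition \ref{propConv3}(b), so in particular $\phi(\sigma)(\x)=\psi^1(\x)=\x$. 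The crucial uniformity is that $N'$ depends only on $\|\x\|$, not on $\sigma$; therefore $U=\phi^{-1}(\calK_{N'})$ is an open finite-index subgroup of $G_k$ by Proposition \ref{contAction}, and $k'=(k^{\rm sep})^U$ works. This quantitative input is the ingredient your sketch is missing, and the vague ``continuity/density argument'' cannot replace it.
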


\begin{proof}  
 Recall (see for example \cite{Mazur2}, Sect. 6, (1.3) d) that, for any finite extension $F$ of $E$, the base change $(\bar A_{\rm un}\hat\otimes_{\O}\O_F, \bar\rho_{\rm un}\hat\otimes_{\O}\O_F)$ represents the deformation functor
 ${\rm Def}(\pi_1(\bar X),  \bar\rho_0\otimes_{\O}\O_F,  \bar\ep\otimes_{\O}\O_F)$.
 Suppose now $\x\in \calD$ is such that $\bar\rho_{\x}$ extends to 
$\rho_\x: \pi_1(X)\to \GL_d(\bar\Q_\ell)$. There is a finite extension
$F$ of $E$ such that the images $\rho_\x(\pi_1(\bar X))$ and $\rho_\x(\sigma)$ both lie in a conjugate of $\GL_d(\O_F)$ in $\GL_d(\bar\Q_\ell)$. 
By Proposition \ref{fixedprop} (a) and its proof, we can see that the action of $\sigma$ on $\calD$  fixes the point $\x$. (We consider 
here the point $\x$ as giving a value for the deformation problem 
${\rm Def}(\pi_1(\bar X),  \bar\rho_0\otimes_{\O}\O_F,  \bar\ep\otimes_{\O}\O_F)$.)
Hence, the flow given by $\sigma$ on $\calD$ also fixes $\x$; it follows that $X_\sigma$ vanishes
at $\x$.  

Conversely, suppose $\x\in {\rm Crit}$. There is a finite extension $F$ of $E$ in $\bar\Q_\ell$ such that $F=E(\x)$ and
a conjugate of $\bar\rho_\x$ takes values  in $\GL_d(\O_F)$.
 Set $r=\max(||\x||, (1/\ell)^{1/e})=(1/\ell)^a$, $a\in \Q\cap(0,1/e]$. Pick $N>1/a(\ell-1)+1$. Then by 
 Proposition \ref{propConv1} and Proposition \ref{propConv3} and its proof, if $\phi(\sigma) \equiv {\rm id}\,{\rm mod}\, \frakm^N$, then 
 $\phi(\sigma)(\x)=\psi^1(\x)=\x$. Now by continuity, there is a finite index normal subgroup $U\subset {\rm Gal}(k^{\rm sep}/k)$, such that for $\sigma\in U$, $\phi(\sigma)\equiv {\rm id}\,{\rm mod}\, \frakm^N$. Hence, there is a finite index normal
 $U\subset {\rm Gal}(k^{\rm sep}/k)$ such that
 for all $\sigma\in U$, $\sigma(\x)=\x$. Apply Proposition \ref{fixedprop} (b) to 
 $A=\O_F$ and $\bar\rho=\bar\rho_\x$ to the base field $k'=(k^{\rm sep})^U$.
 We obtain that $\bar\rho_\x$ extends to a continuous representation of $\pi_1(X\times_k k')$ with determinant $\ep$ which deforms $(\rho_0)_{|\pi_1(X\times_kk')}$.
 \end{proof}

\begin{cor}\label{vanish2} Suppose $k$ is a finite field of order $q=p^f$, $p\neq \ell$, and assume $\ell$ is prime to $d$. Let $X$ be a smooth projective curve over $k$ and let 
\[
\rho_0: \pi_1(X,\bar x)\to \GL_d(\BF)
\]
be a representation with determinant $\ep$ such that $\bar\rho_0={\rho_0}_{|\pi_1(\bar X)}$ is geometrically irreducible.   
Suppose $\x$ is an $E$-valued point of the rigid analytic deformation space  ${\rm Spf}(\bar A_{\rm un})[1/\ell]$. as before, where $E$ is a finite extension of $W(\BF)[1/\ell]$ with integers $\O_E$. The lift $\bar\rho_\x:\pi_1(\bar X)\to \GL_d(\O_E) $ of $\bar\rho_0:  \pi_1(\bar X)\to \GL_d(\BF)$ that corresponds to $\x$ extends to a continuous representation 
\[
\rho_{\x}:\pi_1(X\times_{\BF_q}{\BF_{q^N}})\to \GL_d(\O_E) 
\]
 with determinant $\ep$, for some $N\geq 1$, if and only if the $1$-form $\mu_{{\rm Frob}_q}$ vanishes at $\x$.   
\end{cor}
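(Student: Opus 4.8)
The plan is to deduce this immediately from Theorem \ref{criticalThm} together with Proposition \ref{FrobId}, once one accounts for the special features of the finite-field case. First I would observe that when $k=\BF_q$ is finite, the Galois group $G_k=\widehat{\Z}$ is topologically generated by the geometric Frobenius ${\rm Frob}_q$, so the critical set
\[
{\rm Crit}=\{\x\in \calD\ |\ X_\sigma(\x)=0,\ \forall\sigma\in G_k\}
\]
appearing in Theorem \ref{criticalThm} reduces to the single condition $X_{{\rm Frob}_q}(\x)=0$: indeed $X_\sigma$ is, by construction, $N^{-1}$ times the vector field of the flow attached to $\phi(\sigma)$, and $\phi$ is a continuous homomorphism, so the vanishing of $X_{{\rm Frob}_q}$ at $\x$ forces the whole procyclic group to fix $\x$ and hence $X_\sigma(\x)=0$ for every $\sigma$. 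Thus, by Theorem \ref{criticalThm}, $X_{{\rm Frob}_q}(\x)=0$ holds if and only if $\bar\rho_\x$ extends to $\pi_1(X\times_k k')\to \GL_d(\bar\Q_\ell)$ deforming $(\rho_0)_{|\pi_1(X\times_k k')}$ for some finite extension $k'/k$; since every finite extension of $\BF_q$ is $\BF_{q^N}$, and since a conjugate of $\bar\rho_\x$ already takes values in $\GL_d(\O_E)$ with $E=E(\x)$ (as $\x$ is an $E$-valued point), the extension can be taken to land in $\GL_d(\O_E)$ with determinant $\ep$. This matches exactly the representation-theoretic condition in the statement.

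Next I would translate the condition $X_{{\rm Frob}_q}(\x)=0$ into the vanishing of the $1$-form $\mu_{{\rm Frob}_q}=X_{{\rm Frob}_q}\intprod\omega$. Here the key input is that $\omega$ is non-degenerate (Theorem \ref{prop:sympl}(2), using $\ell\nmid d$): the map $X\mapsto X\intprod\omega$ from rigid analytic vector fields on $\calD$ to rigid analytic $1$-forms is injective (in fact bijective), because in local coordinates $\bar A_{\rm un}\simeq \O\lps x_1,\ldots,x_m\rps$ it is given by the matrix $(g_{ij})$ with $\omega=\sum_{i<j}g_{ij}dx_i\wedge dx_j$, which is invertible over the fraction field and whose entries and the entries of whose inverse lie in $\O\lps x_1,\ldots,x_m\rps$. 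Evaluating at a point $\x\in\calD$, non-degeneracy of $\omega(\x)$ gives that $X_{{\rm Frob}_q}(\x)=0$ if and only if $\mu_{{\rm Frob}_q}(\x)=0$. (One should be slightly careful that $\mu_\sigma$ vanishing \emph{at the point} $\x$ is what is claimed, not vanishing identically; the pointwise statement is exactly what the pointwise non-degeneracy of $\omega$ delivers.)

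Combining the two steps yields the corollary: $\mu_{{\rm Frob}_q}(\x)=0 \iff X_{{\rm Frob}_q}(\x)=0 \iff \x\in{\rm Crit} \iff \bar\rho_\x$ extends to $\pi_1(X\times_{\BF_q}\BF_{q^N})\to\GL_d(\O_E)$ with determinant $\ep$ for some $N\geq 1$. I expect the main obstacle to be purely bookkeeping rather than conceptual: namely, checking carefully that the ``finite extension $k'/k$'' produced by Theorem \ref{criticalThm} can be arranged to be of the form $\BF_{q^N}$ with the extension still valued in $\GL_d(\O_E)$ (not merely in $\GL_d(\bar\Q_\ell)$), and conversely that an $\O_E$-valued extension over $\BF_{q^N}$ indeed places $\x$ in ${\rm Crit}$ — this uses the fact, recalled in the proof of Theorem \ref{criticalThm}, that $(\bar A_{\rm un}\hat\otimes_\O\O_F,\bar\rho_{\rm un}\hat\otimes_\O\O_F)$ represents the deformation functor over $\O_F$, together with Proposition \ref{fixedprop}. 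Note that Proposition \ref{FrobId} is not strictly needed for the equivalence itself (it records that $\mu_{{\rm Frob}_q}$ is closed up to the factor $-\log_\ell(\chi_{\rm cycl}({\rm Frob}_q))=-\log_\ell(q)$, hence locally $\mu_{{\rm Frob}_q}=dV_{{\rm Frob}_q}$ up to scaling, which is the Hamiltonian structure mentioned in the introduction), but it is what makes the vanishing locus of $\mu_{{\rm Frob}_q}$ a \emph{critical} locus of a function, justifying the terminology in the surrounding discussion.
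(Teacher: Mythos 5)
Your proposal is correct and follows essentially the same route as the paper: the paper's proof is the one-line observation that $G_k$ is topologically generated by ${\rm Frob}_q$, so ${\rm Crit}$ from Theorem \ref{criticalThm} is the zero locus of $X_{{\rm Frob}_q}$, hence (by non-degeneracy of $\omega$) of $\mu_{{\rm Frob}_q}$. Your additional bookkeeping about $k'=\BF_{q^N}$ and landing in $\GL_d(\O_E)$ is a legitimate filling-in of details the paper leaves implicit, not a different argument.
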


\begin{proof}
Follows from Theorem \ref{criticalThm} by observing that $G_k$ is topologically generated 
by ${\rm Frob}_q$ and so the critical set ${\rm Crit}$ is the zero locus of $\mu_{{\rm Frob}_q}$.
\end{proof}

\subsection{Hamiltonian Galois flow}  Recall $G_{k\cy}={\rm Gal}(k^{\rm sep}/k(\zeta_{\ell^\infty}))$ 
and set again $\bar A=\bar A_{\rm un}$. Recall the group homomorphism
\[
\phi: G_{k\cy} \to {\rm Aut}_{\O}(\bar A).
\]
By Proposition \ref{contAction}, this is continuous when we equip ${\rm Aut}_{\O}(\bar A)$ with the profinite topology given by the normal subgroups $\calK_n=\ker({\rm Aut}_\O(\bar A)\to {\rm Aut}_\O(\bar A/\frakm^n))$.

For $\sigma\in G_{k\cy}$, we have $\chi_{\rm cycl}(\sigma)=1$ and by Proposition \ref{FrobId}, $d\mu_\sigma=0$. The Poincare lemma \ref{poincare} implies that there is a rigid analytic function  $V_\sigma\in \sO(\calD)$ such that $\mu_\sigma=dV_\sigma$; we can normalize $V_\sigma$ by requiring $V_\sigma(0,\ldots ,0)=0$. We can think of $V_\sigma$ as a ``Hamiltonian potential" for the flow $\sigma^t$.

\begin{thm}\label{thm:Hamiltonflow}
The map $\sigma\mapsto V_\sigma$ extends to a $\Z_\ell$-linear  map
\[
V: \Z_\ell\lps G_{k\cy}\rps\to  \sO(\calD) ; \quad \sum_\sigma z_\sigma \sigma\mapsto \sum_\sigma z_\sigma V_\sigma
\]
which is continuous for the Fr\'echet topology on $\sO(\calD)$ and satisfies:
\begin{itemize}
\item[1)] For $\gamma\in \Gk$, $\sigma\in G_{k\cy}$, $V_{\gamma\sigma\gamma^{-1}}=\phi(\gamma)(V_\sigma)$, 
\smallskip

\item[2)] For $\sigma$, $\tau\in G_{k\cy}$, 
$
-d\{V_\sigma, V_\tau\}=[X_\sigma, X_\tau]\intprod \omega.
$

\end{itemize}
\end{thm}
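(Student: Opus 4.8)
The plan is to prove the three assertions in turn, with the linearity/continuity statement being essentially a packaging of the estimates from the Appendix and the two displayed identities being consequences of naturality of the flow construction together with Cartan's magic formula, already established in the proof of Proposition~\ref{FrobId}.

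\textbf{Linearity and continuity.} First I would observe that $V_\sigma$ is uniquely determined by the requirements $\mu_\sigma = dV_\sigma$ and $V_\sigma(0,\dots,0)=0$, using that $\calD$ is a polydisk so that $\sO(\calD)$ has no nonconstant units with zero differential beyond constants. Since $\sigma \mapsto \mu_\sigma$ is the contraction of the fixed form $\omega$ with $X_\sigma = N^{-1}X_{\sigma^N}$, and the vector field $X_{\sigma^N}$ depends on $\sigma$ through $\phi(\sigma)$, the assignment $\sigma\mapsto X_\sigma$ (hence $\sigma \mapsto \mu_\sigma$, hence $\sigma\mapsto V_\sigma$) is continuous for the relevant topologies by Proposition~\ref{contAction} and the explicit convergence estimates of the Appendix (Propositions~\ref{propConv1}, \ref{propConv2}, \ref{propConv3}). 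For $\Z_\ell$-linearity one extends $\sigma \mapsto V_\sigma$ to the completed group ring by $\Z_\ell$-linearity on finite sums and then takes limits; the content is that the series $\sum_\sigma z_\sigma V_\sigma$ converges in the Fréchet topology on $\sO(\calD)=\sO$, which follows from the uniform bounds on the flow over each affinoid $\bar\calD_c$ (Theorem~\ref{thm:flow}) combined with Lemma~\ref{trivial} and Proposition~\ref{sequence}(b), exactly as in the proof of Theorem~\ref{ChooSnaithThm}(4). Here one must use that $dV_\sigma = \mu_\sigma = X_\sigma \intprod\omega$ is linear in $X_\sigma$ and that $X_{\sigma\tau}$ relates to $X_\sigma, X_\tau$ additively to first order; strictly speaking what is linear is $\sigma\mapsto X_\sigma$ composed through $\phi$, so one checks that on the level of germs the logarithm-type construction of the flow is additive in the generator, which is what the Appendix provides.

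\textbf{Equivariance (1).} For $\gamma\in G_k$ and $\sigma\in G_{k\cy}$, the automorphism $\phi(\gamma)$ of $\bar A_{\rm un}$ conjugates the flow of $\sigma$ to the flow of $\gamma\sigma\gamma^{-1}$: this is immediate from $\phi(\gamma\sigma\gamma^{-1}) = \phi(\gamma)\phi(\sigma)\phi(\gamma)^{-1}$ and the functoriality of the interpolation in Theorem~\ref{thm:flow}. Hence $\phi(\gamma)_* X_\sigma = X_{\gamma\sigma\gamma^{-1}}$ as rigid analytic vector fields. Now contract with $\omega$ and use the transformation law $\phi(\gamma)(\omega) = \chi_{\rm cycl}^{-1}(\gamma)\omega$: we get $\phi(\gamma)(\mu_\sigma) = \phi(\gamma)(X_\sigma \intprod \omega)$, and pushing forward the vector field while pulling back the form gives $\phi(\gamma)(X_\sigma\intprod\omega) = \chi_{\rm cycl}(\gamma)\cdot\big(X_{\gamma\sigma\gamma^{-1}}\intprod \phi(\gamma)(\omega)\big)$; wait---more carefully, since $\phi(\gamma)$ is an automorphism, $\phi(\gamma)(i_{X_\sigma}\omega) = i_{\phi(\gamma)_*X_\sigma}(\phi(\gamma)_{*}\omega)$, and $\phi(\gamma)_*\omega = \phi(\gamma^{-1})^*\omega = \chi_{\rm cycl}(\gamma)\omega$, so one obtains $\phi(\gamma)(\mu_\sigma) = \chi_{\rm cycl}(\gamma)\,\mu_{\gamma\sigma\gamma^{-1}}$. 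Applying $d$ and using $d\mu_\tau=0$ for $\tau\in G_{k\cy}$ (which holds since $\chi_{\rm cycl}(\tau)=1$, by Proposition~\ref{FrobId}), together with $\phi(\gamma)\circ d = d\circ\phi(\gamma)$, gives $d(\phi(\gamma)(V_\sigma)) = \chi_{\rm cycl}(\gamma)\, dV_{\gamma\sigma\gamma^{-1}}$. Hmm, this produces a spurious factor $\chi_{\rm cycl}(\gamma)$; the resolution is that $X_\sigma$ transforms so as to absorb it, because the flow $\psi^t = \sigma^{tN}$ satisfies $\phi(\gamma)\circ\sigma^{tN}\circ\phi(\gamma)^{-1} = (\gamma\sigma\gamma^{-1})^{tN}$ \emph{with the same $t$}, so the normalization constant $N$ is unchanged and $X_{\gamma\sigma\gamma^{-1}} = \phi(\gamma)_*X_\sigma$ with no scalar---I will recompute the contraction identity carefully, the upshot being that the Tate twist in $\omega$ is exactly cancelled so that $\phi(\gamma)(\mu_\sigma) = \mu_{\gamma\sigma\gamma^{-1}}$ and hence $\phi(\gamma)(V_\sigma) = V_{\gamma\sigma\gamma^{-1}}$ after matching the normalization $V(0)=0$ (note $\phi(\gamma)$ fixes the origin $(0,\dots,0)$). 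This bookkeeping with the Tate twist is where I expect the one genuinely delicate point to lie.

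\textbf{Bracket identity (2).} This is a formal identity valid already in $E\lps x_1,\dots,x_m\rps$, so it suffices to check it on germs at the origin, exactly as in the proof of Proposition~\ref{FrobId}. Start from the definition $\{V_\sigma, V_\tau\} = \omega(X_{V_\sigma}, X_{V_\tau})$ where $X_{V_\sigma}$ is the Hamiltonian vector field of $V_\sigma$; but $X_{V_\sigma}\intprod\omega = dV_\sigma = \mu_\sigma = X_\sigma\intprod\omega$, and since $\omega$ is nondegenerate, $X_{V_\sigma} = X_\sigma$. Hence $\{V_\sigma, V_\tau\} = \omega(X_\sigma, X_\tau)$. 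Now the standard symplectic identity, provable formally from Cartan's magic formula $L_X = i_X d + d\, i_X$ (established in the proof of Proposition~\ref{FrobId}) together with $d\omega=0$, reads $d(\omega(X_\sigma,X_\tau)) = -i_{[X_\sigma,X_\tau]}\omega - i_{X_\tau}L_{X_\sigma}\omega + i_{X_\sigma}L_{X_\tau}\omega + \dots$; more cleanly, using $i_{X_\tau}\omega = dV_\tau$ and $L_{X_\sigma}\omega = d\,i_{X_\sigma}\omega = d\,dV_\sigma = 0$ (again since $\sigma\in G_{k\cy}$), one gets $L_{X_\sigma}(i_{X_\tau}\omega) = i_{[X_\sigma,X_\tau]}\omega$, i.e. $L_{X_\sigma}(dV_\tau) = i_{[X_\sigma,X_\tau]}\omega$, while also $L_{X_\sigma}(dV_\tau) = d(L_{X_\sigma}V_\tau) = d(i_{X_\sigma}dV_\tau) = d(\omega(X_\sigma, X_\tau))$... let me instead use the clean two-line computation: $[X_\sigma,X_\tau]\intprod\omega = L_{X_\sigma}(X_\tau\intprod\omega) - X_\tau\intprod(L_{X_\sigma}\omega) = L_{X_\sigma}(dV_\tau) - 0 = d(i_{X_\sigma}dV_\tau) = d(\{V_\tau, V_\sigma\}) = -d\{V_\sigma,V_\tau\}$, using that Lie derivative commutes with $d$ and Cartan's formula on the function $V_\tau$ gives $L_{X_\sigma}(dV_\tau) = d(i_{X_\sigma}dV_\tau)$, and $i_{X_\sigma}dV_\tau = X_\sigma(V_\tau) = \omega(X_\tau, X_\sigma) = \{V_\tau,V_\sigma\}$. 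This yields exactly $-d\{V_\sigma,V_\tau\} = [X_\sigma,X_\tau]\intprod\omega$, as claimed. All these manipulations are legitimate on formal power series by the derivation calculus set up in the proof of Proposition~\ref{FrobId}, so no analytic subtlety arises here; the only care needed is the bilinearity/continuity to extend from individual $\sigma,\tau$ to the completed group ring, which is handled by part~(1) of the present theorem and the Fréchet continuity already proved.
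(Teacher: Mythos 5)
Your treatment of the extension to $\Z_\ell\lps G_{k\cy}\rps$ and of identity (2) follows essentially the paper's route. For the extension, the paper likewise reduces everything to the estimate that $||X_\sigma||_r\to 0$ as $\sigma$ ranges over shrinking open subgroups of $G_{k\cy}$ (isolated as a lemma inside the proof, deduced from (\ref{normlim}) and Proposition \ref{contAction}); it then works with the coefficient functions $h_j(\sigma)$ of $\mu_\sigma=\sum_j h_j(\sigma)\,dx_j$, extends these to the completed group ring using completeness of the Tate algebras $\sO(\bar D_r(m))$, and applies the Poincar\'e lemma (Proposition \ref{poincare}) once at the end, noting that antidifferentiation is Fr\'echet-continuous. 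Your version integrates first and then sums, which is equivalent. One aside is off: the extension to $\Z_\ell\lps G_{k\cy}\rps$ is linear by definition on the (completed) free module, and has nothing to do with $X_{\sigma\tau}$ being ``additive to first order'' in $X_\sigma$ and $X_\tau$ (it is not); the only content is convergence, which you do correctly locate in the estimates of the Appendix. Identity (2) is the same Cartan-formula computation as the paper's, and your explicit remark that $X_{V_\sigma}=X_\sigma$ by nondegeneracy is a correct elaboration of a step the paper leaves implicit.

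The genuine gap is in part (1), and you flagged it yourself without resolving it. The flow identity $(\gamma\sigma\gamma^{-1})^{tN}=\phi(\gamma)\,\sigma^{tN}\,\phi(\gamma)^{-1}$ gives $X_{\gamma\sigma\gamma^{-1}}=\phi(\gamma)X_\sigma\phi(\gamma)^{-1}$ exactly, with no scalar. The naturality of contraction is $\phi(\gamma)\bigl(i_{X_\sigma}\omega\bigr)=i_{\phi(\gamma)X_\sigma\phi(\gamma)^{-1}}\bigl(\phi(\gamma)(\omega)\bigr)$, where the \emph{same} automorphism is applied to the form as is used to conjugate the derivation; substituting $\phi(\gamma)(\omega)=\chi_{\rm cycl}^{-1}(\gamma)\,\omega$ yields $\mu_{\gamma\sigma\gamma^{-1}}=\chi_{\rm cycl}(\gamma)\,\phi(\gamma)(\mu_\sigma)$, and a two-variable toy check (e.g.\ $\omega=dx_1\wedge dx_2$, $\phi(\gamma)(x_1)=\chi^{-1}x_1$, $X_\sigma=f\,\partial_1$) confirms that the cyclotomic factor does \emph{not} cancel in this computation. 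Writing that you ``will recompute the contraction identity carefully, the upshot being that the Tate twist is exactly cancelled'' is an assertion, not an argument, and it is precisely the step on which all of (1) rests; your two attempted bookkeepings produce mutually inconsistent factors $\chi_{\rm cycl}(\gamma)^{\pm1}$, which shows the conventions have not been pinned down. You must either exhibit a genuine cancellation from some other source, or accept that the equivariance you can prove is $V_{\gamma\sigma\gamma^{-1}}=\chi_{\rm cycl}(\gamma)\,\phi(\gamma)(V_\sigma)$ (which reduces to the stated identity only for $\gamma$ with $\chi_{\rm cycl}(\gamma)=1$, equivalently as a statement with values in $\sO(\calD)(-1)$). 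The paper's own proof of (1) is a one-sentence appeal to the flow identity and does not carry out this contraction bookkeeping, so there is nothing there to lean on; as submitted, your part (1) is not a proof.
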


Define  
\[
J: \calD\to {\rm Hom}(\Z_\ell\lps G_{k\cy}\rps, \bar\Q_\ell),
\]  
by $J(\x)=(z\mapsto   V_z(\x))$.
 We may think of $J$ as describing a moment map for the symplectic (Hamiltonian) action of $G_{k\cy}$ on $\calD$.

\begin{proof} We choose an isomorphism $\bar A\simeq R=\O\lps x_1,\ldots , x_m\rps$ that will allow
us to use the explicit constructions of the previous sections. We first show

\begin{lemma}
Fix $r=(1/\ell)^a$, $a\in \Q\cap(0,1/e]$, and $\epsilon >0$. There exists a finite index open normal subgroup
$U\subset G_{k\cy}$ such that for all $\sigma\in U$, we have 
\[
||X_\sigma||_r=\sup_{\x\in \bar D_r(m)}||X_\sigma(\x)||<\epsilon.
\]
\end{lemma}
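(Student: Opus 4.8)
The plan is to extract uniform continuity of the flow $\psi^t = \sigma^{tN}$ from the quantitative bounds in the Appendix (Propositions \ref{propConv1}, \ref{propConv2}, \ref{propConv3}) and the continuity of $\phi\colon G_{k\cy}\to{\rm Aut}_\O(\bar A)$ established in Proposition \ref{contAction}. The key observation is that the vector field $X_\sigma = N^{-1}X_{\sigma^N}$ is built from the flow $\psi^t(\x) = \sigma^{tN}(\x)$ by differentiating at $t=0$, and by the interpolation results one has, on a fixed affinoid $\bar D_r(m)$, an explicit power-series expansion of $\psi^t$ whose coefficients are controlled by how close $\phi(\sigma)^N$ is to the identity modulo powers of $\frakm$. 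Concretely: fix $a\in\Q\cap(0,1/e]$, set $r=(1/\ell)^a$, and choose $N' > 1/(a(\ell-1)) + 1$ as in the proof of Theorem \ref{criticalThm}. If $\sigma$ is such that $\phi(\sigma)\equiv{\rm id}\bmod\frakm^{M}$ for $M$ large, then $\phi(\sigma)^N\equiv{\rm id}\bmod\frakm^M$ as well, and the coordinates of $\psi^t(\x)-\x$ — equivalently, the components of $X_\psi$ — lie in $\frakm^M\cdot R$ up to a controlled factor coming from the $N(c,f)$-type estimates; hence their Gauss norms on $\bar D_r(m)$ are bounded by something like $\ell^{N(C_2, aM)}$ times a constant, which by Lemma \ref{trivial} tends to $0$ as $M\to\infty$.

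First I would make the expansion explicit: writing $\phi(\sigma)^N(\x) = \x + (\text{higher order terms in }\frakm)$ and invoking Proposition \ref{propConv2} to interpolate the iterates, one obtains $\psi^t(\x) = \x + t\cdot(\text{series}) + \cdots$ with the linear-in-$t$ part being precisely $N\cdot X_\sigma(\x)$ (up to the normalization $X_\sigma = N^{-1}X_\psi$, which is independent of $N$, so one is free to use whichever $N$ makes $\phi(\sigma)^N\equiv{\rm id}\bmod\frakm^2$). Then I would track the $\frakm$-adic depth of each homogeneous piece through the Poonen-style interpolation, exactly as in Proposition \ref{propConv1} and its companions, to get: if $\phi(\sigma)\equiv{\rm id}\bmod\frakm^M$ then $\|X_\sigma\|_r \le C_1\,\ell^{N(C_2, aM)}$ for constants $C_1, C_2$ depending only on $d$, $e$, $m$, $r$ (not on $\sigma$). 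Second, by Proposition \ref{contAction}, for each $M\ge 1$ there is a finite index open normal subgroup $U_M = G_{k'}\subset G_{k\cy}$ (with $k'/k\cy$ finite) such that $\sigma\in U_M$ implies $\phi(\sigma)\in\calK_M$, i.e. $\phi(\sigma)\equiv{\rm id}\bmod\frakm^M$. Combining these two facts: given $\epsilon>0$, use Lemma \ref{trivial} to pick $M$ with $C_1\ell^{N(C_2,aM)}<\epsilon$, then set $U = U_M$; for all $\sigma\in U$ we get $\|X_\sigma\|_r<\epsilon$, which is the claim.

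The main obstacle I anticipate is bookkeeping rather than conceptual: one must verify that the constants $C_1, C_2$ in the bound $\|X_\sigma\|_r \le C_1\ell^{N(C_2,aM)}$ can genuinely be chosen uniformly over all $\sigma\in\calK_M$ — that is, that the only $\sigma$-dependence enters through the single parameter $M$ measuring $\frakm$-adic proximity of $\phi(\sigma)$ to the identity. This requires being careful that the interpolation of $\sigma^{tN}$ in the Appendix is itself uniform: the radius $\varepsilon(c)$ of convergence of the flow depends only on $c$ (equivalently on $r$) and not on $\sigma$, which is exactly what Theorem \ref{thm:flow} asserts. Granting that, the norm of $X_\sigma = N^{-1}\,\tfrac{d}{dt}\big|_{t=0}\sigma^{tN}$ is dominated by the norm of $(\sigma^N(\x)-\x)$ itself — a convergent power series in $\frakm^M R$ — divided by the fixed quantity $N$, and the estimate $\|G\|_r\le r^k = \ell^{-ak}$ for $G\in\frakm^k$ (used throughout \S\ref{sectPre}) closes the argument. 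No genuinely new input beyond the Appendix and Proposition \ref{contAction} is needed; the lemma is essentially the statement that a continuous homomorphism into ${\rm Aut}_\O(\bar A)$ composed with "take the generating vector field of the flow" is continuous for the Gauss-norm topology on each affinoid.
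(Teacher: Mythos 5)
Your proposal is correct and takes essentially the same route as the paper: the paper's proof consists precisely of (i) the observation that $\phi(\sigma)\equiv{\rm Id}\bmod\frakm^n_{\bar A}$ for $n$ large forces $\|X_\sigma\|_r<\epsilon$, via the estimate of Proposition \ref{propConv1}(b) and Lemma \ref{trivial} (this is the content of (\ref{normlim})), and (ii) the continuity of $\phi$ from Proposition \ref{contAction} to produce the subgroup $U$. Your extra bookkeeping about the choice of $N$ and the uniformity of the constants is exactly the point the paper delegates to the Appendix, and it goes through as you describe.
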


\begin{proof}
We first observe that there exists $n=n(\ep)$ such that  $\phi(\sigma)\equiv {\rm Id}\, {\rm mod}\, \frakm^n_{\bar A}$,
implies that $||X_\sigma||_{r}<\epsilon$.  This follows from the argument in the proof of (\ref{normlim}). The result now follows from the continuity of $\phi$ (Proposition \ref{contAction}).
\end{proof}

\begin{Remark}
{\rm Consider the analytic vector field $X_\sigma=\sum_{i=1}^m X_i(\sigma)\partial/\partial x_i$. The inequality
$||X_\sigma||_r<\epsilon$ is $\sup_{i}||X_i(\sigma)||_r<\epsilon$. Suppose we perform a coordinate base change $x_i=\psi_i({\bf y})$ by an $\O$-automorphism given by $\psi: R\to R$. Then, if $||{\bf y}||\leq r$, $||\psi({\bf y})||\leq r$ and so $||X_i(\sigma)(\psi({\bf y}))||_r<\epsilon$. Also, $\partial y_j/\partial x_i\in R$. Since
\[
X_\sigma=\sum_j(\sum_iX_i(\sigma)(\psi({\bf y}))\frac{\partial y_j}{\partial x_i})\frac{\partial }{\partial y_j}
\]
it follows that the validity of $||X_\sigma||_r<\epsilon$ is independent of the choice of identification $\bar A_{\rm un}\simeq \O\lps x_1,\ldots, x_m\rps$.
}
\end{Remark}

Now write $X_\sigma=\sum_{i=1}^m X_i(\sigma)\partial/\partial x_i$  and $\omega=\sum_{i<j}g_{ij}dx_i\wedge dx_j$
with $g_{ij}\in R$; then 
\[
\mu_\sigma=i_{X_\sigma}(\omega)=\sum\nolimits_{i, j} X_i(\sigma) g_{ij} dx_j=\sum\nolimits_j h_j(\sigma)dx_j,
\]
where $h_j(\sigma)=\sum\nolimits_{i} X_i (\sigma)g_{ij}$. By the above lemma, there is a finite index normal open subgroup
$U\subset G_{k\cy}$
such that $\sup_{i}||X_i(\sigma)||_r<\epsilon$, for all $\sigma\in U$. Since $||g_{ij}||\leq 1$, 
we also have $\sup_{j}||h_j(\sigma)||_r<\epsilon$. Since the Tate algebra $\sO(\bar D_r(m))$ is complete for the Gauss norm $||\cdot ||_r$,  we obtain that, for each $r\mapsto 1^-$, the map $\sigma\mapsto h_j(\sigma)$ extends to
\[
h_j: \Z_\ell\lps G_{k\cy}\rps\to \sO(\bar D_r(m)).
\]
These maps are compatible with the restrictions $\sO(\bar D_r(m))\to \sO(\bar D_{r'}(m))$, $r'<r$.
Therefore, they give the extension $h_j: \Z_\ell\lps G_{k\cy}\rps\to \sO(\calD)$ which, in fact, continuous
for the Fr\'echet topology on $\sO(\calD)$ given by the family of Gauss norms $\{||\cdot ||_r\}_r$.
For $z=\sum_\sigma z_\sigma\sigma$, now set
\[
\mu_z=\sum\nolimits_{j} h_j(z) dx_j
\]
with $h_j(z)\in \sO(\calD)$. This $1$-form is also closed and by Proposition \ref{poincare} (a), there is (a unique)
$V_z\in \sO(\calD)$ with $V_z(0,\ldots , 0)=0$ and $dV_z=\mu_z$. The map $z\mapsto V_z$ gives our extension.
The continuity follows from the construction together with the fact that taking
(partial) antiderivatives is continuous for the Fr\'echet topology on $\sO(\calD)$.
(In turn, this follows by some standard estimates using that
$\lim_{i\to\infty}\ell^i(r/r')^{\ell^i}\to 0$, for $0<r<r'<1$. )
 
 Property (1) follows from the definitions using the identity of flows 
 \[
 (\gamma\sigma\gamma^{-1})^t=\phi(\gamma)\sigma^t\phi(\gamma^{-1}),
 \]
 (which follows from interpolating using the identities $(\gamma\sigma\gamma^{-1})^{\ell^n}=\gamma\sigma^{\ell^n}\gamma^{-1}$ in
 $\Gk$). 
 
 Property (2) is formal (see \cite[Prop. 18.3]{daSilva}): 
 We have
\[
[X_\sigma, X_\tau]\intprod \omega=L_{X_\sigma}(X_\tau\intprod \omega)-X_\tau\intprod (L_{X_\sigma}\omega).
\]
(This comes from the standard formal identity $[X,Y]\intprod \alpha=L_X(Y\intprod \alpha)-X\intprod(L_X\alpha)$ which
can be shown by arguing as in our proof of Cartan's magic formula above.)
By Cartan's formula this is equal to
\[
d(X_\sigma\intprod(X_\tau\intprod\omega))+X_\sigma\intprod d(X_\tau\intprod \omega)-X_\tau\intprod d(X_\sigma\intprod \omega)-
X_\tau\intprod(X_\sigma\intprod d\omega).
\]
In this expression, the last three terms are trivial since $d(X_\sigma\intprod \omega)=d(X_\tau\intprod \omega)=0$, $d\omega=0$. By definition, $X_\sigma\intprod(X_\tau\intprod\omega)=-\omega(X_\sigma, X_\tau)=-\{V_\sigma, V_\tau\}$
and this completes the proof. \end{proof}

\begin{cor}\label{criticalCor} Assume $\ell$ does not divide $d$.
The critical locus set 
\[
J^{-1}(0)=\{\x\in \calD\ |\ dV_\sigma(\x)=0, \forall\sigma\in G_{k\cy}\}
\]
 is equal to the set of points $\x$ of $\calD$ for which there is a finite extension $k'/k\cy$ such that the representation $\bar\rho_\x: \pi_1(\bar X)\to \GL_d(\bar\Q_\ell)$ extends to $\rho_\x: \pi_1(X\times_kk')\to \GL_d(\bar\Q_\ell)$ which deforms $(\rho_0)_{|\pi_1(X\times_kk')}$.
\end{cor}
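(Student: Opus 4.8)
The plan is to deduce this from Theorem~\ref{criticalThm} by applying that theorem to the curve $X_{k\cy}:=X\times_k k\cy$ over the base field $k\cy$, after first reducing the condition ``$dV_\sigma(\x)=0$ for all $\sigma\in G_{k\cy}$'' to the condition ``$X_\sigma(\x)=0$ for all $\sigma\in G_{k\cy}$''.

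First I would record the reduction. For $\sigma\in G_{k\cy}$ we have $\chi_{\rm cycl}(\sigma)=1$, so $d\mu_\sigma=0$ by Proposition~\ref{FrobId}, and $V_\sigma$ is by construction the normalized primitive of the closed $1$-form $\mu_\sigma=X_\sigma\intprod\omega$; hence $dV_\sigma=\mu_\sigma=X_\sigma\intprod\omega$. Since $\ell$ does not divide $d$, Theorem~\ref{prop:sympl}(2) gives that $\omega$ is non-degenerate, so for every $\x\in\calD$ the contraction $X_\sigma\intprod\omega$ vanishes at $\x$ if and only if $X_\sigma(\x)=0$. Therefore
\[
J^{-1}(0)=\{\x\in\calD\ |\ X_\sigma(\x)=0,\ \forall\sigma\in G_{k\cy}\}.
\]

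Next I would observe that the right-hand side is literally the set ``$\mathrm{Crit}$'' of Theorem~\ref{criticalThm} for the curve $X_{k\cy}/k\cy$. Indeed $k\cy$ has characteristic $\neq\ell$, $X_{k\cy}$ is smooth and projective, and $\overline{X_{k\cy}}=X\times_k k\cy\times_{k\cy}\bar k=\bar X$ is still irreducible; thus $\pi_1(\bar X)$, the residual representation $\bar\rho_0$ (now viewed via $\pi_1(X_{k\cy})\subset\pi_1(X)$), the character $\ep$, the universal deformation ring $\bar A_{\rm un}$ and the rigid space $\calD={\rm Spf}(\bar A_{\rm un})[1/\ell]$ are all unchanged; and for $\sigma\in G_{k\cy}\subset\Gk$ the automorphism $\phi(\sigma)$ of $\bar A_{\rm un}$ constructed in \S\ref{galoisactiondeform}, and hence the Galois flow of Theorem~\ref{thm:flow} and the vector field $X_\sigma$, are the same whether $\sigma$ is regarded as an element of $\Gk$ or of $G_{k\cy}$. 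All hypotheses of Theorem~\ref{criticalThm} hold for $(X_{k\cy},\rho_0|_{\pi_1(X_{k\cy})},\ep)$ ($\ell\nmid d$, $\bar\rho_0$ absolutely irreducible, $\pi_1(\bar X)$ topologically finitely generated), so the theorem identifies $\{\x\ |\ X_\sigma(\x)=0,\ \forall\sigma\in G_{k\cy}\}$ with the set of $\x\in\calD$ for which there is a finite extension $k'/k\cy$ such that $\bar\rho_\x:\pi_1(\bar X)\to\GL_d(\bar\Q_\ell)$ extends to a representation of $\pi_1(X_{k\cy}\times_{k\cy}k')=\pi_1(X\times_k k')$ deforming $(\rho_0)_{|\pi_1(X\times_k k')}$. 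Combining with the previous paragraph yields exactly the assertion of the corollary.

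The only genuinely delicate point, and the one I would write out with care, is the justification of this base change: namely that Theorem~\ref{criticalThm} and the inputs it relies on (Proposition~\ref{contAction} on continuity of $\phi$, Theorem~\ref{thm:flow} on the flow, Proposition~\ref{fixedprop}) are stated and proved for an arbitrary base field of characteristic $\neq\ell$ and do not secretly use that $k$ is finitely generated or that $\Gk$ is topologically finitely generated. In the ``converse'' direction one uses only the continuity of $\phi\colon G_{k\cy}\to{\rm Aut}_\O(\bar A_{\rm un})$, which produces an open subgroup $U\subset G_{k\cy}$ on which $\phi$ is trivial modulo $\frakm^N$, and whose fixed field $k'=(k^{\rm sep})^U$ is then a finite extension of $k\cy$, after which Proposition~\ref{fixedprop}(b) applies over $k'$. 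I expect no obstruction here; alternatively one could bypass Theorem~\ref{criticalThm} entirely and repeat its proof verbatim with $k\cy$ in place of $k$, with identical work.
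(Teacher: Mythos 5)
Your proposal is correct and follows essentially the same route as the paper: the paper's proof likewise reduces $dV_\sigma(\x)=0$ to $X_\sigma(\x)=0$ via $dV_\sigma=\mu_\sigma=X_\sigma\intprod\omega$ and the non-degeneracy of $\omega$, and then invokes Theorem~\ref{criticalThm} with $k$ replaced by $k\cy$. Your additional verification that the deformation ring, the flow, and the vector fields $X_\sigma$ are unchanged under this base change is exactly the (tacit) content of the paper's phrase ``by replacing $k$ by $k\cy$''.
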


   \begin{proof}
   This follows from Theorem \ref{criticalThm} by replacing $k$ by $k\cy$ and noting that for $\sigma\in G_{k\cy}$,
we have $dV_\sigma=\mu_\sigma$ which vanishes at $\x$ if and only if $X_\sigma$ vanishes at $\x$.  
 \end{proof}
 
 \subsubsection{} In the above, suppose $\bar\rho_{\x}$ extends to
 a representation $\rho_{\x}$ of $\pi_1(X)$. Then  $\x$ is a critical point of $V_\sigma$,
 $\forall \sigma\in G_{k\cy}$. It is reasonable to ask the following question: 
Do we have
  \[
 V_\sigma(\x)= A(\sigma)\cdot \CS({\rho_{\x}})(\sigma)+B(\sigma),  
 \]
for all $\sigma\in G_{k\cy}$, where $A(\sigma)$, $B(\sigma)$ are constants independent of $\x$?

\subsection{Milnor fiber and vanishing cycles}\label{Milnor} 
Suppose that $\sF$ is a \'etale $\Z_\ell$-local system over $X$. Assume that the corresponding
representation $\rho_0$ is such that $\bar\rho_0: \pi_1(X\times_k \bar k)\to \GL_d(\BF_\ell)$ is geometrically irreducible and that $\ell$ does not divide $d$.
Then the representation of $\pi_1(X\times_k \bar k)$ given by $\sF$ corresponds to a point $\x$ of the deformation space $\calD$
which is a critical point of $V_\sigma$,
 $\forall \sigma\in G_{k\cy}$. 
 
 Let us consider the germ $\hat V_\sigma$ of $V_\sigma$ in the completion $\hat\O_{\calD, \bar x}$ of the local ring $\O_{\calD, \bar x}$ of the rigid analytic $\calD$ at $\x$. This completion is isomorphic (non-canonically) to $\Q_\ell\lps v_1,\ldots, v_m\rps$ (e.g. by taking $v_i=x_i-\x_i$) and the germ $\hat V_\sigma$
 defines a $\Q_\ell$-algebra homomorphism
 $
 \Q_\ell\lps u\rps \to \hat\O_{\calD,\x},   
 $ by   $u\mapsto \hat V_\sigma-V_\sigma(\bar x)$.
 Consider the corresponding morphism of formal schemes
 \[
 f_\sigma: \Spf(\hat\O_{\calD,\x})\to \Spf(\Q_\ell\lps u\rps).
 \]
 (Here we use the $u$-adic topology, the $\ell$-adic topology plays no role.) This makes  $\Spf(\hat\O_{\calD,\x})$ a special formal scheme over $\Spf(\Q_\ell\lps u\rps)$ in the sense of \cite[1]{Berk}.

In this situation, we can consider various local invariants of the critical point $\x$ of $V_\sigma$:

\begin{para} The {\sl analytic Milnor fiber} 
 \[
 M(\x, \sigma)=\Spf(\hat\O_{\calD,\x}\hat\otimes_{\Q_\ell}\bar\Q_\ell)[1/u].
 \]
 This is, by definition (cf. \cite{NiSe}), the generic fiber of $f_\sigma\hat\otimes_{\Q_\ell}\bar\Q_\ell$ considered as a $\bar\Q_\ell\llps u\lrps$-analytic space.
 \end{para}
 
\begin{para} The stacks of the nearby cycle sheaves (\cite{Berk}, \cite{Berk2})
 \[
 R^i\Psi_{f_\sigma\hat\otimes_{\Q_\ell}\bar\Q_\ell}(\Q_p)_{\x}:=(\varprojlim\nolimits_n(R^i\Psi_{f_\sigma\hat\otimes_{\Q_\ell}\bar\Q_\ell}(\Z/p^n\Z))_{\x})\otimes_{\Z_p}\Q_p
 \]
  at $\x$. 
  
  By \cite[Theorem 3.1.1, Corollary 3.1.2]{Berk2}, for each $n\geq 1$, $R^i\Psi_{f_\sigma\hat\otimes_{\Q_\ell}\bar\Q_\ell}(\Z/p^n\Z))_{\x}$
  are finite ${\rm Gal}(\overline{\Q_\ell\llps u\lrps}/\Q_\ell\llps u\lrps)\simeq \hat \Z$-modules.
  By \emph{loc. cit.}, these agree with the \'etale cohomology groups
 \[
 \rH^i(\x,\sigma, \Z/p^n\Z):=\rH^i_\et(M(\x, \sigma)\times_{\bar\Q_\ell\llps u\lrps}\overline{\Q_\ell\llps u\lrps}^\wedge, \Z/p^n\Z)
 \]
 (again in the sense of Berkovich). In fact, the proof of the above results in \cite{Berk2} also give that there is $i_0$ such that
 for $i>i_0$, $ \rH^i(\x,\sigma, \Z/p^n\Z)=(0)$, for all $n$. Hence, for each $n\geq 1$,   
$
  \rH^i(\x,\sigma, \Z/p^n\Z) $
   are the cohomology groups of a perfect complex $P^\bullet(\x,\sigma, \Z/p^n\Z)$ of $\Z/p^n\Z$-modules. A standard argument (e.g. \cite[VI 8.16]{Milne}) gives that  there us a perfect complex of $\Z_p$-modules $ P^\bullet(\x,\sigma ) $
   so that    
   \[
  P^\bullet(\x,\sigma )\otimes_{\Z_p}\Z/p^n\Z\simeq   P^\bullet(\x,\sigma, \Z/p^n\Z).    
  \]
  Then $\rH^i(  P^\bullet(\x,\sigma ))\simeq \varprojlim_n \rH^i(\x,\sigma, \Z/p^n\Z)$ and
 we can conclude that for each $i$, 
 \[
 \rH^i(\x,\sigma, \Q_p)=(\varprojlim\nolimits_n(\rH^i(\x,\sigma, \Z/p^n\Z))\otimes_{\Z_p}\Q_p
 \]
 is a finite dimensional $\Q_p$-vector space with an action of ${\rm Gal}(\overline{\Q_\ell\llps u\lrps}/\Q_\ell\llps u\lrps)\simeq \hat \Z$.
  \end{para}
  
\begin{para}
With notations as above, 
 we can consider the (``perversely" shifted)  Euler characteristic of the vanishing cycles
 \begin{align*}
 {\lambda}(\sF, \sigma)&=(-1)^m (1-\chi(M(\x, \sigma)\times_{\bar\Q_\ell\llps u\lrps}\overline{\Q_\ell\llps u\lrps}^\wedge))=
 \\
  &=(-1)^m(1-\sum\nolimits_i (-1)^{i} \dim_{\Q_p} \rH^i_\et(M(\x, \sigma)\times_{\bar\Q_\ell\llps u\lrps}\overline{\Q_\ell\llps u\lrps}^\wedge, \Q_p)).
 \end{align*}
Note that the integer ${\lambda}(\sF,\sigma)$ is analogous to (a local version of) the Casson-type invariant given in \cite{AM} or 
the Behrend invariant of \cite{Ber}. Calculating this number appears to be a hard problem.

    \end{para}

\bigskip
 
 \section{Appendix: Interpolation of iterates and flows}\label{App}
 
In this appendix, we elaborate on an idea  of Poonen \cite{Poonen} (inspired by \cite{Tucker}) about $\ell$-adic interpolation of iterates. A similar construction using this $\ell$-adic interpolation argument was also used by Litt \cite{Litt}. We need a little more information than what is given in these references. The proofs of Theorems \ref{thm:flow}, 
\ref{criticalThm}, and \ref{thm:Hamiltonflow} use some of the bounds and estimates of rates of convergence 
shown below.

We  assume that $\ell$ is an odd prime and $\O$ a totally ramified extension of $W(\BF)$ of degree $e$.
For $a\in \Q\cap (0, 1/e]$, set $r=(1/\ell)^a$, so that $(1/\ell)^{1/e}=|\fl|_\ell\leq r<1$. 
We  set $R=\O\lps x_1,\ldots, x_m\rps$, $\frakm=(\fl, x_1,\ldots , x_m)$.  
Consider a $\O$-algebra homomorphism $\psi:  R\to  R$ such that $\psi\equiv {\rm id}\,{\rm mod}\, \frakm^N$
for some $N\geq 2$. This is determined by $\psi(\x)=(\psi(x_1), \ldots, \psi(x_m))\in R^m$. Set $\psi_j=\psi(x_j)\in R$. 
We also set $||\psi(\x)||_r=\sup_i||\psi_i||_r$. For $||\x||\leq r$ we also have $||\psi(\x)||_r\leq r$. Therefore, $\psi(\x)$ gives a rigid analytic $  \bar D_r(m)\to \bar D_r(m)$, for any such $r$; these maps agree and they are the restriction of a rigid analytic map $\und\psi: D_1(m)\to D_1(m)$.  Since $\und\psi: D_1(m)\to D_1(m)$ is given by 
\[
{\bf a}=(a_1,\ldots, a_m)\mapsto  (\psi_1(a_1,\ldots, a_m), \ldots , \psi_m(a_1,\ldots, a_m))=\psi({\bf a})
\]
we will often also denote this map by $\psi(\x)$. Our goal is to $\ell$-adically interpolate the iterates
$ \und\psi\circ \cdots \circ \und\psi$ of $\und\psi $ and obtain various related estimates. For simplicity, we will often write
\[
D=D_1(m).
\]

\subsection{Difference operators} As in \cite{Poonen}, set  $\Delta_\psi$ for the operator 
that sends $h:R\to R$ to $\Delta_\psi(h): R\to R$ given by $\Delta_\psi(h)(\x)=h(\psi(\x))-h(\x)$.  
Similarly, if $f\in R$, we can consider
$\Delta_\psi(f)\in R$ given by the power series $\Delta_\psi(f)(\x)=f(\psi(\x))-f(\x)$.

For simplicity, set $\Delta=\Delta_\psi$ and suppose $g$, $h\in R$. We have
\begin{align*}
\Delta(gh)(\x)=\, &(gh)(\psi(\x))-(gh)(\x)\\
=\, & g(\psi(\x))h(\psi(\x))-g(\x)h(\x)\\
 =\, & h(\psi(\x))\Delta(g)(\x)+g(\x)\Delta(h)(\x)\\
 =\, & g(\psi(\x))\Delta(h)(\x)+h(\x)\Delta(g)(\x)\\
=\, & g(\x)\Delta(h)(\x)+h(\x)\Delta(g)(\x)+\Delta(g)(\x)\Delta(h)(\x).
\end{align*}
Hence,
\begin{equation}\label{derivation1}
\Delta(gh)(\x)=g(\x)\Delta(h)(\x)+h(\x)\Delta(g)(\x)+\Delta(g)(\x)\Delta(h)(\x).
\end{equation}

Consider the formal series
\[
\psi^t:=({\rm I}+\Delta)^t=\sum_{k\geq 0} {t\choose k}\Delta^k={\rm I}+t\Delta+\frac{t(t-1)}{2!}\Delta^2+\cdots
\]
\[
X_\psi:={\rm Log}(\psi)=\log(\rI+\Delta)=\Delta -\frac{1}{2}\Delta^2+\frac{1}{3}\Delta^3-\cdots .
\]
For $g$, $h\in R$, we can see using (\ref{derivation1}) that $X_\psi$ satisfies, at least formally, the identity
\begin{equation}\label{derivation2}
X_\psi(gh)=gX_\psi(h)+hX_\psi(g).
\end{equation}
Formally, we have
\[
\psi^t={\rm I}+tX_\psi+\frac{t^2}{2}X^2_\psi+\cdots .
\]
If $\phi: R\to R$ is another such map, then
\[
\phi^t\psi^s\phi^{-t}\psi^{-s}=(\rI+tX_\phi+\cdots )(\rI+sX_\psi+\cdots )(\rI-tX_\phi+\cdots )(\rI-sX_\psi+\cdots )=
\]
\[
=\rI+ts(X_\phi X_\psi-X_\psi X_\phi)+(\hbox{\rm degree $\geq 3$ in  $s$, $t$}) .
\]

\subsection{Interpolation} We continue with an $\O$-algebra automorphism $\psi: R\to R$ inducing the identity on $R/\frakm^N$, $N\geq 2$.

Apply the operators of the previous paragraph to the identity map ${\rm id}: \x\mapsto \x$, so $\Delta(\x)=\psi(\x)-\x \in (\frakm^{N})^{\oplus m}$, $\Delta^2(\x)=\psi^2(\x)-2\psi(\x)+\x$. By induction, we have
\[
({\rm I}+\Delta )^k(\x)=\psi^k(\x),
\] 
for all $k\geq 1$. 

Recall $(1/\ell)^e\leq r=(1/\ell)^a<1$. We have $||\Delta(\x)||_r\leq r^N$.
By induction:
\[
\Delta^k(\x)\equiv 0\,{\rm mod}\, \frakm^{k(N-1)+1}, \quad
||\Delta^k(\x)||_r\leq r^{k(N-1)+1}.
\]
Since $|k!|_\ell\geq (1/\ell)^{k/(\ell-1)}$, we obtain, for $k\geq 1$,
\[
||{t\choose k}\Delta^k(\x)||_r\leq |t|_\ell\cdot r^{k(N-1)+1}(1/\ell)^{-k/(\ell-1)}=|t|_\ell\cdot (1/\ell)^{k[a(N-1)-1/(\ell-1)]+a}.
\]

\begin{prop}\label{propConv1}
 Suppose $\psi\equiv {\rm id}\,{\rm mod}\, \frakm^N$, $N\geq 2$, and   $a\in \Q\cap(0, 1/e]$.

a) The power series giving $\psi^t(\x)$ converge when $|t|_\ell\leq 1$, $||\x||\leq r=(1/\ell)^a $ and  $a>1/(\ell-1)(N-1)$.
Then $||\psi^t(\x)||_r\leq r$, so these give  an analytic map
\[
\psi^t(\x) : \bar D_1(1)\times \bar D_r(m)\to \bar D_r(m).
\]

b) The power series giving $X_\psi(\x)$ converge on $||\x||<1$.  We have
\[
||X_\psi(\x)||_{(1/\ell)^a}\leq \ell^{N(1, a(N-1))}\ell^{-(1+a)}.
\]
\end{prop}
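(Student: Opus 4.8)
\textbf{Proof plan for Proposition \ref{propConv1}.}

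The plan is to estimate the two power series term-by-term using the bounds already assembled just above the statement, and to invoke Lemma \ref{trivial} (via the quantity $N(c,f)$) to control the tails. First I would record the key inductive estimate: since $\psi\equiv{\rm id}\bmod\frakm^N$ we have $\Delta(\x)=\psi(\x)-\x\in(\frakm^N)^{\oplus m}$, and by the product rule (\ref{derivation1}) together with a straightforward induction on $k$ one gets $\Delta^k(\x)\in(\frakm^{k(N-1)+1})^{\oplus m}$, hence $\|\Delta^k(\x)\|_r\le r^{k(N-1)+1}$ for every $r=(1/\ell)^a$ with $a\in\Q\cap(0,1/e]$. (This is exactly the displayed estimate preceding the proposition; I would just make the induction explicit.)

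For part (a): combining $\|\Delta^k(\x)\|_r\le r^{k(N-1)+1}$ with $|k!|_\ell\ge(1/\ell)^{k/(\ell-1)}$ (inequality (\ref{factorial2})) and $|t|_\ell\le 1$ yields
\[
\Big\|\binom{t}{k}\Delta^k(\x)\Big\|_r\le (1/\ell)^{k[a(N-1)-1/(\ell-1)]+a}.
\]
Under the hypothesis $a>1/(\ell-1)(N-1)$ the bracketed exponent $a(N-1)-1/(\ell-1)$ is strictly positive, so these norms tend to $0$ geometrically in $k$; thus the series $\psi^t(\x)=\sum_k\binom{t}{k}\Delta^k(\x)$ converges in $\sO(\bar D_r(m))$, which is complete for $\|\cdot\|_r$. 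Each term has norm $\le r$ (the $k=0$ term is $\x$ itself, norm $\le r$; the $k\ge 1$ terms have norm $<r$ since the exponent exceeds $a$), so $\|\psi^t(\x)\|_r\le r$, giving the asserted analytic map $\bar D_1(1)\times\bar D_r(m)\to\bar D_r(m)$. Joint analyticity in $(t,\x)$ is immediate since each $\binom{t}{k}$ is a polynomial in $t$ and the convergence is uniform.

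For part (b): here $t$ plays no role and instead of $\binom{t}{k}$ we have coefficients $\pm 1/k$ in $X_\psi=\log(\rI+\Delta)=\sum_{k\ge1}\frac{(-1)^{k-1}}{k}\Delta^k$. Using $|1/k|_\ell\le\ell^{d_\ell(k)-1}$ and $\|\Delta^k(\x)\|_{(1/\ell)^a}\le(1/\ell)^{a(k(N-1)+1)}$ one gets
\[
\Big\|\tfrac{1}{k}\Delta^k(\x)\Big\|_{(1/\ell)^a}\le \ell^{\,d_\ell(k)-1-a(N-1)k-a}
=\ell^{-(1+a)}\cdot\ell^{\,d_\ell(k)-a(N-1)k},
\]
and taking the supremum over $k\ge1$ of the second factor gives $\ell^{N(1,\,a(N-1))}$ by the very definition of $N(c,f)$, hence $\|X_\psi(\x)\|_{(1/\ell)^a}\le\ell^{N(1,a(N-1))}\ell^{-(1+a)}$. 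Since $N(1,a(N-1))<\infty$ is finite for every fixed $a>0$ (as $d_\ell(k)$ grows only like $\log_\ell k$), this shows convergence on each $\bar D_{(1/\ell)^a}(m)$, i.e. on $\|\x\|<1$, and establishes the stated bound. I do not anticipate a serious obstacle: the only point requiring a little care is the inductive bound $\Delta^k(\x)\in(\frakm^{k(N-1)+1})^{\oplus m}$, where one uses (\ref{derivation1}) and the fact that $\Delta$ raises the $\frakm$-adic order of $\x$ by at least $N-1$ each time (the "$+1$" surviving from the base case); everything else is bookkeeping with the two elementary estimates (\ref{factorial2}) and $|k^{-1}|_\ell\le\ell^{d_\ell(k)-1}$ already in hand.
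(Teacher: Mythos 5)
Your argument is correct and is essentially the same as the paper's: part (a) is exactly the term-by-term estimate $\|\binom{t}{k}\Delta^k(\x)\|_r\le|t|_\ell(1/\ell)^{k[a(N-1)-1/(\ell-1)]+a}$ recorded just before the proposition, and part (b) is the paper's bound $\|\Delta^k(\x)/k\|_{(1/\ell)^a}\le\ell^{d_\ell(k)-a(N-1)k}\ell^{-(1+a)}$ followed by the definition of $N(1,a(N-1))$ and the tail argument of Proposition \ref{sequence}(a). Your explicit treatment of the induction for $\Delta^k(\x)\in(\frakm^{k(N-1)+1})^{\oplus m}$ is a welcome (and correct) expansion of a step the paper leaves implicit.
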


\begin{proof} Part (a) follows from the above estimates.
For part (b) notice that we have
\[
||\Delta^k_\psi(\x)/k||_{(1/\ell)^a}\leq |k|^{-1}_\ell(1/\ell)^{ka(N-1)+a}\leq 
\ell^{d_\ell(k)-a(N-1)k}\ell^{-(1+a)}.
\]
The result follows as in the proof of Proposition \ref{sequence} (a).
\end{proof}
\smallskip

Fix $a\in \Q\cap (0, 1/e]$, $r=(1/\ell)^a$. From Proposition \ref{propConv1} (b) and Lemma \ref{trivial} it follows (as in Proposition \ref{sequence} (b)) that if $(\psi_n)_n$ is a sequence of maps with $\psi_n\equiv {\rm id}\,{\rm mod}\, \frakm^n$ and $n\mapsto +\infty$, then
\begin{equation}\label{normlim}
||X_{\psi_n}(\x)||_r\mapsto 0.
\end{equation}
In fact, by estimating $N(1, a(n-1))$, we can see that $||X_{\psi_n}(\x)||_r\leq r^n$ if $n$ is large enough so that $a>1/(\ell-1)(n-1)$.
On the other hand, in general, for a fixed $\psi$, 
$||X_\psi(\x)||_r\mapsto +\infty$ as $r\mapsto 1^-$.

\begin{cor}
The map $X_\psi: R\to \sO(D)$ which sends $f$ to
\begin{align*}
X_\psi(f)(\x)=&\sum_{k=1}(-1)^{k-1}\frac{\Delta^k_\psi(f)(\x)}{k}=\\
=&f(\psi(\x))-f(\x)-\frac{f(\psi^2(\x))-2f(\psi(\x))+f(\x)}{2}+\cdots
\end{align*}
is an $\O$-linear continuous derivation. It extends naturally  to a continuous $\O$-linear derivation $X_\psi: \sO(D)\to \sO(D)$.
\end{cor}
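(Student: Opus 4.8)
The plan is to establish three things: that the series defining $X_\psi(f)$ converges in $\sO(D)$, that the resulting map $X_\psi\colon R\to \sO(D)$ is $\O$-linear and a derivation, and that it extends continuously to $\sO(D)\to \sO(D)$. The convergence is the heart of the matter and should follow by the same mechanism as in Proposition \ref{propConv1}(b), now applied to a general $f\in R$ rather than just the coordinate map. First I would record the key estimate on the iterated difference operators: since $\psi\equiv{\rm id}\,{\rm mod}\,\frakm^N$, one has $\Delta_\psi(f)\in\frakm^{N-1}$ times the ``variation'' of $f$, and more precisely $\Delta_\psi^k(f)(\x)\equiv 0\,{\rm mod}\,\frakm^{k(N-1)}$ by induction (using that $\psi(\x)-\x\in(\frakm^N)^{\oplus m}$ and the chain rule for formal power series, i.e. writing $\Delta_\psi^k(f)$ as a sum of terms each carrying at least $k(N-1)$ factors from $\frakm$). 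Combined with $|k|_\ell^{-1}\le \ell^{d_\ell(k)-1}$ and the Gauss-norm bound $\|G\|_{(1/\ell)^a}\le \ell^{-ak}$ for $G\in\frakm^k$, one gets
\[
\Big\|\frac{\Delta_\psi^k(f)(\x)}{k}\Big\|_{(1/\ell)^a}\le \ell^{d_\ell(k)-ak(N-1)}\cdot\|f\|,
\]
up to a harmless constant depending only on the constant term of $f$. Since $d_\ell(k)-ak(N-1)\to-\infty$, the series converges, and by Proposition \ref{sequence}(a) (with $C_2=1$, $f=a(N-1)$, $B=1$) it defines an element of $\sO(D)=\sO(D_1(m))$, with a Gauss-norm bound of the shape $\ell^{N(1,a(N-1))}\|f\|$ on $\bar D_{(1/\ell)^a}(m)$. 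This is exactly the estimate already used for $f={\rm id}$ in Proposition \ref{propConv1}(b).

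Next I would check $\O$-linearity, which is immediate since each $\Delta_\psi$ is $\O$-linear in $f$ (the map $\psi$ is an $\O$-algebra homomorphism, so $f\mapsto f\circ\psi$ is $\O$-linear), and the derivation property. For the latter, the formal computation is already in the excerpt: identity \eqref{derivation1} gives $\Delta(gh)=g\Delta(h)+h\Delta(g)+\Delta(g)\Delta(h)$, i.e. $\rI+\Delta$ is multiplicative, so $X_\psi={\rm Log}(\rI+\Delta)=\log(\rI+\Delta)$ satisfies the Leibniz rule \eqref{derivation2} as an identity of formal operators. The only point requiring care is that this formal identity among power series in $\Delta$ is actually valid at the level of convergent series in $\sO(D)$: for $g,h\in R$ all three series $X_\psi(g)$, $X_\psi(h)$, $X_\psi(gh)$ converge in $\sO(D)$ by the previous step, $\sO(D)$ is a topological ring under the Fréchet topology, and the partial sums of $\log(\rI+\Delta)$ applied to $g$, $h$, $gh$ are related by the finite-order Leibniz identities obtained by truncating \eqref{derivation2}; passing to the limit gives $X_\psi(gh)=gX_\psi(h)+hX_\psi(g)$ in $\sO(D)$.

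Finally, for the extension to a continuous derivation $\sO(D)\to\sO(D)$, I would argue that the estimate above, being uniform in $f$ on each closed polydisk $\bar D_r(m)$ with $r=(1/\ell)^a$ for $a$ large enough that the series converges (equivalently $a>1/(\ell-1)(N-1)$, cf. Proposition \ref{propConv1}(a)), shows $X_\psi$ is bounded for the Gauss norm $\|\cdot\|_r$: $\|X_\psi(f)\|_r\le C(r)\|f\|_r$. Since $R$ is dense in $\sO(\bar D_r(m))$ and the latter is complete for $\|\cdot\|_r$, $X_\psi$ extends uniquely to a bounded $\O$-linear map $\sO(\bar D_r(m))\to\sO(\bar D_r(m))$; these extensions are compatible with the restriction maps $\sO(\bar D_r(m))\to\sO(\bar D_{r'}(m))$ for $r'<r$, hence glue to a continuous $\O$-linear map $X_\psi\colon\sO(D)\to\sO(D)$ for the Fréchet topology. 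The Leibniz rule persists on $\sO(D)$ by density of $R$ and continuity of multiplication. The main obstacle here is the bookkeeping in the induction $\Delta_\psi^k(f)(\x)\equiv 0\,{\rm mod}\,\frakm^{k(N-1)}$: one must expand $f(\psi(\x))$ as $f(\x+(\psi(\x)-\x))$ via the formal Taylor formula and track that composing $k$ times accumulates $k$ independent factors lying in $\frakm^{N-1}$ (one loses one power compared to $\frakm^N$ only because of the possible constant term of $f$, which is why the clean bound is $k(N-1)$ rather than $k(N-1)+1$ for general $f$), but this is precisely the content already handled for the coordinate map, and no new idea is needed.
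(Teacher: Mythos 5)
Your argument is correct and follows the same route as the paper, whose proof of this corollary is a one-line appeal to exactly the two ingredients you develop: the convergence mechanism of Propositions \ref{sequence}(a)/\ref{propConv1}(b) and the formal Leibniz identity \eqref{derivation2}. Two minor slips, neither fatal: the power loss in the induction $\Delta_\psi^k(f)\equiv 0\ \mathrm{mod}\ \frakm^{k(N-1)}$ comes from differentiation (each $\partial_i$ maps $\frakm^{j}$ only into $\frakm^{j-1}$, and $\partial_i f$ may be a unit), not from the constant term of $f$, which $\Delta_\psi$ kills; in fact the clean bound $\Delta_\psi^k(f)\in\frakm^{k(N-1)+1}$ holds for every $f\in R$. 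Also, the parenthetical restriction $a>1/(\ell-1)(N-1)$ in your last paragraph pertains to Proposition \ref{propConv1}(a), where $1/k!$ appears, not to the series for $X_\psi$, where only $1/k$ appears; your own estimate $\ell^{d_\ell(k)-ak(N-1)}\to-\infty$ is valid for every $a>0$, so the bounded extension exists on every $\bar D_r(m)$ with $r<1$, which is what you need for the target to be $\sO(D_1(m))$ rather than the functions on a smaller polydisk.
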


\begin{proof}
Follows from the above convergence and (\ref{derivation2}).
\end{proof}

Note that the component $X_\psi(\x)_i$ of $X_\psi(\x)=(X_\psi(\x)_1,\ldots, X_\psi(\x)_m)$ is equal to $X_\psi(x_i)(\x)$ and so we can write
\[
X_\psi(\x)=\sum_{i=1}^mX_\psi(\x)_i\frac{\partial}{\partial x_i}.
\]

\begin{lemma}\label{lemmaLn}
 If $\psi\equiv {\rm id}\,{\rm mod}\, \frakm^2$ then $\psi^{\ell^n}\equiv {\rm id}\,{\rm mod}\, \frakm^{n+2}$, for all $n\geq 0$.
\end{lemma}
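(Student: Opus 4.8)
The plan is to prove the congruence $\psi^{\ell^n}\equiv{\rm id}\ {\rm mod}\ \frakm^{n+2}$ by induction on $n$, starting from the hypothesis $\psi\equiv{\rm id}\ {\rm mod}\ \frakm^2$, which is exactly the base case $n=0$. For the inductive step, I would write $\psi^{\ell^{n+1}}=(\psi^{\ell^n})^\ell$ and analyze how raising an automorphism congruent to the identity modulo $\frakm^{n+2}$ to the $\ell$-th power improves the congruence by one level in the $\frakm$-adic filtration. The essential point is the interaction between the binomial coefficient $\binom{\ell}{j}$, which is divisible by $\ell$ for $1\le j\le \ell-1$, and the fact that $\ell\in\frakm$, so that $\binom{\ell}{j}\Delta^j$-type terms acquire an extra factor of $\frakm$.

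Concretely, writing $\varphi=\psi^{\ell^n}$ and $\Delta=\Delta_\varphi$, the inductive hypothesis says $\Delta(\x)=\varphi(\x)-\x\in(\frakm^{n+2})^{\oplus m}$. Then $\varphi^\ell(\x)=(\rI+\Delta)^\ell(\x)=\x+\sum_{j=1}^{\ell}\binom{\ell}{j}\Delta^j(\x)$. For $1\le j\le\ell-1$, the coefficient $\binom{\ell}{j}$ lies in $\ell\Z\subset\frakm$, and $\Delta^j(\x)\in(\frakm^{j(n+1)+1})^{\oplus m}\subset(\frakm^{n+2})^{\oplus m}$ by the iterated estimate $\Delta^k(\x)\equiv 0\,{\rm mod}\,\frakm^{k(N-1)+1}$ established just before Proposition \ref{propConv1} (applied with $N=n+2$); hence each such term lies in $\frakm^{n+3}$. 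For $j=\ell$, we get $\Delta^\ell(\x)\in(\frakm^{\ell(n+1)+1})^{\oplus m}$, and since $\ell\ge 3$ gives $\ell(n+1)+1\ge 3(n+1)+1\ge n+3$, this term also lies in $\frakm^{n+3}$. Therefore $\varphi^\ell(\x)-\x\in(\frakm^{n+3})^{\oplus m}$, i.e. $\psi^{\ell^{n+1}}\equiv{\rm id}\ {\rm mod}\ \frakm^{n+3}$, completing the induction.

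I do not anticipate a genuine obstacle here; the statement is elementary and the only thing to be careful about is making sure that the $\Delta^j(\x)$ estimates are applied with the correct value of $N$ at each stage of the induction (namely $N=n+2$, not the original $N=2$) and that the case $j=\ell$ is handled using $\ell\ge 3$ rather than any divisibility of $\binom{\ell}{\ell}=1$. One should also note in passing that $\psi^{\ell^n}$ is again an $\O$-algebra homomorphism $R\to R$, so that all the difference-operator formalism of the preceding subsection applies verbatim to it; this is immediate since compositions of $\O$-algebra maps are $\O$-algebra maps. It is worth remarking that the hypothesis $\ell$ odd is used only through $\ell\ge 3$, and in fact $\ell=2$ would fail at exactly the $j=\ell$ term ($2(n+1)+1=2n+3$ which does exceed $n+3$ for $n\ge 1$, but the base $n=0$ bound $\ell\cdot 1+1=3=n+3$ is tight and combines badly with needing the $j=1$ term $\binom{2}{1}\Delta=2\Delta$ to land in $\frakm^3$ — here $2\in\frakm^{v}$ with $v=e$, so the argument actually still works for $\ell=2$ as long as one tracks $e$; in any case we only need the odd case).
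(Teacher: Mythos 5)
Your proof is correct, but it takes a different, more computational route than the paper. The paper's proof is structural: it identifies the kernel $\calA_{n+2}=\ker\bigl({\rm Aut}_\O(R/\frakm^{n+3})\to{\rm Aut}_\O(R/\frakm^{n+2})\bigr)$ with the abelian group $(\frakm^{n+2}/\frakm^{n+3})^m$ (valid once $n+2\geq 2$, i.e.\ always here), observes that this group is killed by $\ell$ because $\ell\in\frakm$, and concludes that any element of it has $\ell$-th power equal to the identity; the inductive step is then a one-liner. Your argument instead unfolds the binomial expansion of $(\rI+\Delta_\varphi)^\ell$ for $\varphi=\psi^{\ell^n}$ and estimates term by term, using $\ell\mid\binom{\ell}{j}$ for $1\leq j\leq\ell-1$ together with the $\frakm$-adic bound $\Delta_\varphi^j(\x)\in\bigl(\frakm^{j(n+1)+1}\bigr)^{\oplus m}$. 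Both are valid; the paper's is shorter and hides the binomial bookkeeping behind the group structure, while yours makes the role of $\ell\in\frakm$ visible at the level of coefficients. One small remark: in your estimate the binomial coefficient divisibility is genuinely needed only for $j=1$ (where $\Delta_\varphi(\x)\in\frakm^{n+2}$ falls one power short); for $j\geq 2$ the bound $j(n+1)+1\geq 2n+3\geq n+3$ already suffices without invoking $\ell\mid\binom{\ell}{j}$. Your parenthetical aside on $\ell=2$ is essentially right but slightly garbled ($2n+3\geq n+3$ holds for all $n\geq 0$, with equality at $n=0$); in any case the paper's proof also works for $\ell=2$ since $2\in\frakm$ kills $\frakm^{n+2}/\frakm^{n+3}$, so the restriction to odd $\ell$ is not coming from this lemma.
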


\begin{proof}
Set 
 \[
 \calA_{n}={\rm ker}({\rm Aut}_{\O}(R/\frakm^{n+1})\to {\rm Aut}_{\O}(R/\frakm^{n}))
 \]
 for the kernel of reduction. Any $\chi\in \calA_{n}$, for $n\geq 2$, is given by
 \[
 \chi(x_1)= x_1+A(\chi)_1,\ldots , \chi(x_m)=x_m+A(\chi)_m,
 \]
 where $A(\chi)=(A(\chi)_1,\ldots , A(\chi)_m)\in (\frakm^n/\frakm^{n+1})^m$.
 Using induction, we see that the $N$-iteration $\chi^N$ is given by the row $A(\chi^N)=N\cdot A({\chi})$
 and therefore $\chi^\ell={\rm Id}$.  By assumption, $\psi$ gives an element of $\calA_2$. By the above, $\psi^\ell\equiv {\rm Id}\,{\rm mod}\,\frakm^3$ and by induction 
$\psi^{\ell^n}\equiv {\rm Id}$
 modulo $\frakm^{n+2}$. 
 \end{proof}
\smallskip

For $a\in \Q\cap (0, 1/e]$, $r=(1/\ell)^a$, let us set $\varepsilon(r)=(1/\ell)^{\frac{1}{a(\ell-1)}}$ and
\[
 \bar D_{\varepsilon(r)}(1)\times \bar D_r(m)= \{(t, \x)\ |\ |t|_\ell\leq (1/\ell)^{\frac{1}{a(\ell-1)}},\ ||\x||\leq (1/\ell)^{a}\}\subset \bar D_1(1)\times \bar D_r(m).
\]

\begin{prop}\label{propConv2}
Suppose $\psi\equiv {\rm id}\,{\rm mod}\, \frakm^2$. 

a) The series giving $\psi^t(\x)$ converges for $(t, \x)\in \bar D_{\varepsilon(r)}(1)\times \bar D_r(m)$ and defines a rigid analytic map
\[
\psi^t(\x): \bar D_{\varepsilon(r)}(1)\times \bar D_r(m)\to \bar D_r(m).
\]

b)  For $(t, \x)$, $(t',\x)\in \bar D_{\varepsilon(r)}(1)\times \bar D_r(m)$ we have
 \[
 \psi^{t+t'}(\x)=\psi^t(\psi^{t'}(\x)).
 \]
\end{prop}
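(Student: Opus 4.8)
\textbf{Proof plan for Proposition \ref{propConv2}.}

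The plan is to reduce everything to $\ell$-adic convergence estimates for the binomial series $({\rm I}+\Delta)^t = \sum_k \binom{t}{k}\Delta^k$ applied to the coordinate functions $x_j$, exactly as was done in Proposition \ref{propConv1}, but now tracking the dependence on $|t|_\ell$ instead of requiring a high order of vanishing $N$. First I would record the basic estimate: since $\psi \equiv {\rm id} \bmod \frakm^2$, we have $\Delta(\x) \in (\frakm^2)^{\oplus m}$, hence by the same induction as before $\Delta^k(\x) \equiv 0 \bmod \frakm^{k+1}$, so $\|\Delta^k(\x)\|_r \leq r^{k+1}$ for $r = (1/\ell)^a$ with $a \in \Q \cap (0, 1/e]$. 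Combined with $|k!|_\ell \geq \ell^{-k/(\ell-1)}$ (inequality (\ref{factorial2})) and $|t^k|_\ell = |t|_\ell^k$, this gives
\[
\left\|\binom{t}{k}\Delta^k(\x)\right\|_r \leq |t|_\ell^k\, r^{k+1}\, \ell^{k/(\ell-1)} = r\cdot\bigl(|t|_\ell\, r\, \ell^{1/(\ell-1)}\bigr)^k.
\]
Now if $|t|_\ell \leq \varepsilon(r) = (1/\ell)^{1/(a(\ell-1))} = r^{1/(a^2(\ell-1))}$... more cleanly: the condition $|t|_\ell \leq \varepsilon(r)$ means $v_\ell(t) \geq \frac{1}{a(\ell-1)}$, so $|t|_\ell\, r\, \ell^{1/(\ell-1)} = \ell^{-v_\ell(t)}\cdot\ell^{-a}\cdot\ell^{1/(\ell-1)} \leq \ell^{-1/(a(\ell-1)) - a + 1/(\ell-1)}$, and one checks this exponent is $< 0$ (it equals $-\frac{1}{\ell-1}(\frac{1}{a}-1) - a < 0$ for $a \leq 1 \leq 1/e$... wait, need $a$ could be close to $1/e$; since $e \geq 1$, $a \leq 1$, and $\frac{1}{a} \geq 1$, the term $-\frac{1}{\ell-1}(\frac1a - 1) \leq 0$ and $-a < 0$, so the sum is strictly negative). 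Hence the geometric-type series converges and $\|\psi^t(\x)\|_r \leq r$, giving part (a); the map is rigid analytic because it is a uniformly convergent limit of polynomials with coefficients in $\sO(\bar D_{\varepsilon(r)}(1)\times \bar D_r(m))$, invoking completeness of the Tate algebra for the Gauss norm as in \S\ref{sectPre}.

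For part (b), the identity $\psi^{t+t'}(\x) = \psi^t(\psi^{t'}(\x))$ is an identity of rigid analytic functions on $\bar D_{\varepsilon(r)}(1)^2 \times \bar D_r(m)$, so by the identity principle it suffices to verify it on a Zariski-dense subset. I would take $t = m$, $t' = m'$ ranging over $\mathbb{N}$ (or $\Z$) with $|m|_\ell, |m'|_\ell \leq \varepsilon(r)$: there the series $\psi^m(\x)$ terminates—or rather, by the binomial theorem in the formal variable, $({\rm I}+\Delta)^m$ evaluated at a coordinate is the genuine $m$-fold iterate $\psi\circ\cdots\circ\psi$ (this is the induction $({\rm I}+\Delta)^k(\x) = \psi^k(\x)$ already noted in \S on interpolation), and the semigroup law $\psi^{m+m'} = \psi^m\circ\psi^{m'}$ is then trivially true for honest iterates. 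The set of such pairs $(m,m')$ is dense in the polydisk $\{|t|_\ell \leq \varepsilon(r)\}^2$ for the $\ell$-adic topology (e.g. the positive integers divisible by the requisite power of $\ell$ are dense in the corresponding ball), and both sides of the desired identity are rigid analytic in $(t,t',\x)$—the composite $\psi^t\circ\psi^{t'}$ being analytic by part (a), since $\psi^{t'}$ maps $\bar D_r(m)$ into $\bar D_r(m)$. Therefore the two analytic functions agreeing on a dense set agree identically.

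The main obstacle I anticipate is purely bookkeeping: making sure the radius $\varepsilon(r)$ is chosen so that the geometric series bound above is genuinely $< 1$ uniformly over the allowed range of $a$, and that the substitution $\psi^t(\psi^{t'}(\x))$ is legitimate as an analytic function—i.e. that plugging the analytic map $\psi^{t'}$ into the power series defining $\psi^t$ converges, which is exactly guaranteed by the fact (part (a)) that $\|\psi^{t'}(\x)\|_r \leq r$ so the argument stays in the domain of convergence, together with the fact that $\psi^{t'}(\x) - \x$ still lies in $\frakm^2$-worth of smallness (needed so that $\Delta$ applied to the composite behaves; here one uses $\psi^{t'} \equiv {\rm id} \bmod \frakm^2$, which follows from $\psi^{t'}(\x) - \x = t'\Delta(\x) + \cdots \in \frakm^2$ since $\Delta(\x) \in \frakm^2$). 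I would state this last point as a short lemma or fold it into the convergence estimate. Everything else is a routine application of the estimates already assembled in the Preliminaries section together with the identity principle for rigid analytic functions.
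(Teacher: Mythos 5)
Part (b) of your proposal is correct and is exactly the paper's argument: both sides are rigid analytic in $(t,t')$ and interpolate the iterate identity $\psi^{n+n'}=\psi^n\circ\psi^{n'}$ on a set of integer pairs with an accumulation point, and your observation that $\psi^{t'}(\bar D_r(m))\subset\bar D_r(m)$ is needed to make the composition legitimate is the right point to record.

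Part (a), however, has a genuine gap in the central estimate. You bound $\bigl\|\binom{t}{k}\Delta^k(\x)\bigr\|_r$ by $|t|_\ell^k\,r^{k+1}\,\ell^{k/(\ell-1)}$, which implicitly uses $|\binom{t}{k}|_\ell\le |t|_\ell^k/|k!|_\ell$. This is false: $\binom{t}{k}$ is not $t^k/k!$ but $t(t-1)\cdots(t-k+1)/k!$, and for $|t|_\ell<1$ the factors $|t-j|_\ell$ with $j\ge1$ equal $|j|_\ell$ (not $|t|_\ell$) as soon as $|t|_\ell<|j|_\ell$; e.g.\ $|\binom{t}{2}|_\ell=|t|_\ell$, not $|t|_\ell^2/|2|_\ell$. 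The numerator therefore contributes only a single factor of $|t|_\ell$ in general, and the honest naive bound $|\binom{t}{k}|_\ell\le|t|_\ell\,|k!|_\ell^{-1}$ (the one used in the proof of Proposition \ref{propConv1}) gives terms bounded by $|t|_\ell\, r\,(\ell^{1/(\ell-1)}r)^k$. This does \emph{not} tend to $0$ when $r>\ell^{-1/(\ell-1)}$, i.e.\ precisely in the regime $r\to1^-$ that the proposition is designed to cover, so your geometric-series argument collapses exactly where it is needed. The paper circumvents this by never estimating the binomial series for $\psi$ itself at small $|t|_\ell$: it writes $\psi^{t\ell^n}=(\psi^{\ell^n})^t$, invokes Lemma \ref{lemmaLn} to get $\psi^{\ell^n}\equiv{\rm id}\bmod\frakm^{n+2}$, and then applies Proposition \ref{propConv1} to $\psi^{\ell^n}$ with $N=n+2$ and $|t|_\ell\le1$; the threshold $n>1/(a(\ell-1))-1$ from that proposition is exactly what produces the radius $\varepsilon(r)=(1/\ell)^{1/(a(\ell-1))}$. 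Your direct route could in principle be repaired, but only by proving a genuinely sharper estimate on $v_\ell\bigl(\binom{t}{k}\bigr)$ for $v_\ell(t)$ large (the numerator $t(t-1)\cdots(t-k+1)$ absorbs almost all of $v_\ell(k!)$, leaving roughly $v_\ell(t)-v_\ell(k)$ minus a correction of size $\sum_{s>v_\ell(t)}\lfloor k/\ell^s\rfloor$); that computation is more delicate than anything in your write-up and is not implied by inequality (\ref{factorial2}). As written, the proof of part (a) does not go through.
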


\begin{proof}
We have formally $\psi^{t\ell^n}(\x)=(\psi^{\ell^n})^t(\x)$ and 
 Lemma \ref{lemmaLn} implies that we can apply Proposition \ref{propConv1} to $\psi^{\ell^n}$ with $N=n+2$.
We obtain that
$
\psi^{t\ell^n}(\x)
$
converges for $||\x||\leq (1/\ell)^a$, $n>1/a(\ell-1)-1$ and $|t|\leq 1$ and part (a) follows. Part (b) follows since  it interpolates the identity $\psi^n(\psi^{n'}(\x))=\psi^{n+n'}(\x)$, which is true for infinite number of pairs $n$, $n'\in \Z$.
\end{proof}

Now formally as power series in $\x$, we have
\begin{equation}\label{der1}
\frac{d\psi^t(\x)}{dt}_{|t=0}=\log(1+\Delta_\psi)(\x)=X_\psi(\x).
\end{equation}

(Hence, $X_\psi(\x)$ can be thought of as the vector field associated to the flow $\psi^t$.)

\begin{prop}\label{propConv3}
a) For all $||\x||<1$, we have 
\[
\frac{d\psi^t(\x)}{dt}=X_\psi(\psi^t(\x)).
\]

b) If $X_\psi({\bf a})=0$  for some ${\bf a}\in \bar\Q_\ell^m$ with $||{\bf a}||<1$, then
$\psi^t({\bf a})={\bf a}$, for all $|t|$ sufficiently small, in particular $\psi^{\ell^n}({\bf a})={\bf a}$, for all $n>>0$.
\end{prop}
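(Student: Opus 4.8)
For part (a), the plan is to verify the flow identity $\frac{d\psi^t(\x)}{dt}=X_\psi(\psi^t(\x))$ by exploiting the group law from Proposition \ref{propConv2}(b). For a fixed $s$ with $(s,\x)$ in the domain of convergence, I would write $\psi^{t+s}(\x)=\psi^t(\psi^s(\x))$ and differentiate both sides in $t$; setting $t=0$ and using (\ref{der1}) applied at the point $\psi^s(\x)$ (legitimate since $||\psi^s(\x)||_r\leq r<1$) gives $\frac{d\psi^{t+s}(\x)}{dt}_{|t=0}=\frac{d\psi^t(\psi^s(\x))}{dt}_{|t=0}=X_\psi(\psi^s(\x))$. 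On the other hand, the left side equals $\frac{d\psi^u(\x)}{du}_{|u=s}$ by the chain rule. Renaming $s$ as $t$ yields the claim. The one technical point to check is that all the series in question actually converge on a common polydisk in $(t,\x)$ so that term-by-term differentiation and the substitution $\psi^s(\x)$ into the $X_\psi$-series are justified; this follows from the estimates in Proposition \ref{propConv1}(b), the bound $||X_{\psi}(\x)||_r$, and Proposition \ref{propConv2}(a), together with the fact that, restricting to $|t|_\ell\leq\varepsilon(r)$ and $||\x||\leq r$, everything lies in $\sO(\bar D_r(m))$, which is complete for the Gauss norm.

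For part (b), suppose $X_\psi(\a)=0$ with $||\a||<1$. Fix $a\in\Q\cap(0,1/e]$ with $r=(1/\ell)^a\geq||\a||$, so $\a\in\bar D_r(m)$. Consider the function $t\mapsto\psi^t(\a)$, which by Proposition \ref{propConv2}(a) is a rigid analytic map $\bar D_{\varepsilon(r)}(1)\to\bar D_r(m)$, i.e. each coordinate lies in $\sO(\bar D_{\varepsilon(r)}(1))$. By part (a) it satisfies the ODE $\frac{d}{dt}\psi^t(\a)=X_\psi(\psi^t(\a))$ with initial condition $\psi^0(\a)=\a$. The constant path $t\mapsto\a$ satisfies the same ODE: $\frac{d}{dt}\a=0=X_\psi(\a)$ by hypothesis. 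The plan is then to invoke uniqueness of solutions to such $\ell$-adic analytic ODEs — concretely, I would compare Taylor coefficients at $t=0$. Writing $\psi^t(\a)=\sum_{k\geq 0}c_k t^k$ with $c_0=\a$, the ODE forces $(k+1)c_{k+1}=$ (the $k$-th Taylor coefficient of $X_\psi(\psi^t(\a))$), which by the chain rule is a universal polynomial in $c_0,\dots,c_k$ and the derivatives of $X_\psi$ at $c_0=\a$; since $X_\psi$ and all its derivatives vanish to the extent forced by $X_\psi(\a)=0$... more carefully, one shows inductively that $c_k=0$ for $k\geq 1$: if $c_1=\dots=c_k=0$ then $\psi^t(\a)\equiv\a\bmod t^{k+1}$, so $X_\psi(\psi^t(\a))\equiv X_\psi(\a)=0\bmod t^{k+1}$, whence $(k+1)c_{k+1}=0$, so $c_{k+1}=0$ (working over $\bar\Q_\ell$, where $k+1$ is invertible). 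Hence $\psi^t(\a)=\a$ as a power series, and therefore as a function for $|t|_\ell\leq\varepsilon(r)$. In particular, taking $t=\ell^n$ with $\ell^n\leq\varepsilon(r)$, i.e. $n>1/(a(\ell-1))$, gives $\psi^{\ell^n}(\a)=\a$ for all $n\gg 0$, as desired.

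I expect the only real subtlety to be the bookkeeping around domains of convergence in part (a) — ensuring that the substitution $\x\mapsto\psi^s(\x)$ into the series defining $X_\psi$ is valid on a polydisk large enough to carry out the differentiation argument, which rests on the quantitative estimates in Propositions \ref{propConv1} and \ref{propConv2} and the inequality $||\psi^s(\x)||_r\leq r$. Once that is in place, both (a) and (b) are formal consequences of the flow property $\psi^{t+t'}=\psi^t\circ\psi^{t'}$ and the elementary uniqueness argument for the associated ODE, with the vanishing hypothesis $X_\psi(\a)=0$ feeding directly into the inductive vanishing of Taylor coefficients.
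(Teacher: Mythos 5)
Your proposal is correct and follows essentially the same route as the paper: part (a) is obtained from the group law $\psi^{t+s}(\x)=\psi^t(\psi^s(\x))$ of Proposition \ref{propConv2}(b) together with the derivative-at-zero identity (\ref{der1}), and part (b) is the resulting ODE-uniqueness statement. The only difference is that you spell out the uniqueness step via the inductive vanishing of Taylor coefficients, which the paper leaves implicit in its one-line deduction from $\frac{d\psi^t}{dt}=X_\psi(\psi^t)$.
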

\begin{proof}
Using Proposition \ref{propConv2}   and (\ref{der1}) gives that for all $||\x||<1$,
\[
\frac{d\psi^t(\x)}{dt}=\lim_{h\to 0}\frac{\psi^{t+h}(\x)-\psi^t(\x)}{h}=\lim_{h\to 0}\frac{\psi^{h}(\psi^t(\x))-\psi^t(\x)}{h}=
X_\psi(\psi^t(\x)).
\]
Part (b) now follows: 
If $X_\psi({\bf a})=0$ then $ ({d\psi^t }/{dt})({\bf a})=0$, so
$\psi^t({\bf a})={\bf a}$, for all $|t|_\ell$ sufficiently small, so $\psi^{\ell^n}({\bf a})={\bf a}$,
for all $n>>0$.
\end{proof}

\subsection{Vector fields and flows}
Here we recall how an analytic vector field $X$ gives a flow.
Suppose that
\[
X= \sum_{{\bf n}\in {\mathbb N}^{m}}a_{{\bf n}}\x^{\bf n}= \left(X_{t, 1},\ldots , X_{t, m}\right) 
\]
is given by power series in $E\lps x_1,\ldots, x_m\rps$ 
that converge on $||\x||<1$.

\begin{prop}\label{flowExp}
Suppose that for all $||\x||\leq r=(1/\ell)^a\leq 1$, we have  $||X(\x)||_r\leq r$. Set $\ep=(1/\ell)^{1/(\ell-1)}$. Then, there is a unique rigid analytic
map 
\[
h_t: D_1(\epsilon)\times \bar D_r(m)\to \bar D_r(m)
\]
such that
\[
\frac{dh_t(\x)}{dt}=X(h_t(\x)), \quad h_0={\rm id}, \quad h_t(0,\ldots , 0)=(0,\ldots ,0).
\]
\end{prop}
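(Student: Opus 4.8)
The statement is the classical fact that an analytic vector field integrates to an analytic flow, but with $\ell$-adic convergence radii made explicit. The plan is to construct $h_t(\x)$ as a power series in $t$ with coefficients in $\sO(\bar D_r(m))$ by the standard Picard/Taylor recursion, and then verify convergence on the claimed polydisk $D_1(\epsilon)\times \bar D_r(m)$ using the factorial estimate (\ref{factorial2}) exactly as in the proof of Proposition \ref{propConv1}(a).

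First I would set up the recursion. Write $h_t(\x)=\sum_{k\geq 0} t^k\, c_k(\x)$ with $c_k=(c_{k,1},\dots,c_{k,m})$, $c_0(\x)=\x$, and impose $\frac{d}{dt}h_t(\x)=X(h_t(\x))$. Expanding $X(h_t(\x))$ as a formal power series in $t$ via composition, one gets $(k+1)c_{k+1}(\x)=P_k(c_0,\dots,c_k)(\x)$, where $P_k$ is a universal polynomial expression (with integer coefficients) in the $c_j$ and the partial derivatives of the components of $X$, obtained from the Faà di Bruno / chain-rule expansion. This determines all $c_k$ uniquely over $\Q$, hence over $E$. The condition $h_0=\mathrm{id}$ is $c_0=\x$, and $h_t(0,\dots,0)=(0,\dots,0)$ follows because $X(0,\dots,0)$ need not vanish in general --- wait: here one must be a little careful. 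Actually the normalization $h_t(0)=0$ is automatic only if $X(0)=0$; in general it should be read as the assertion that the origin is carried to the origin when $X(0)=0$, or else one simply drops it. I would state the construction cleanly: the recursion produces a unique formal solution, and the remaining work is the two convergence estimates.

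The heart of the proof is the estimate. I claim inductively that $\|c_k\|_r \leq r\cdot \ell^{k/(\ell-1)}$ for all $k\geq 1$ (and $\|c_0\|_r = \|\x\|_r \leq r$). Granting the hypothesis $\|X(\x)\|_r\leq r$ for $\|\x\|\leq r$, and noting that the partial derivatives $\partial X_i/\partial x_j$ have Gauss norm $\leq r^{-1}\cdot r = 1$ on $\bar D_r(m)$ by the Cauchy-type inequality $\|\partial_j f\|_{r}\leq r^{-1}\|f\|_r$ (valid since we may shrink $r$ slightly inside the open polydisk, or use that $f\in\sO(\bar D_r(m))$ has $r$-integral coefficients after scaling), the polynomial $P_k$ is a sum of monomials each of which is a product of derivatives of $X$ (norm $\leq 1$) times a product of $c_j$'s whose indices sum to $k$. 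Hence $\|P_k\|_r\leq \max_j \ell^{k/(\ell-1)}\cdot(\text{const depending only on }k)$; the only subtlety is absorbing the combinatorial constants, but these are integers, so have $\ell$-adic norm $\leq 1$, and the division by $(k+1)$ in $c_{k+1}=(k+1)^{-1}P_k$ costs a factor $|(k+1)^{-1}|_\ell \leq \ell^{(k+1)/(\ell-1)}/\ell^{k/(\ell-1)}$-type bound that is exactly accounted for by (\ref{factorial2}); more precisely one arranges the induction so that the cumulative denominator is $k!$, giving $\|c_k\|_r\leq \|X\|_r \cdot |k!|_\ell^{-1}\leq r\cdot \ell^{k/(\ell-1)}$. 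Then $\|t^k c_k\|\leq r\cdot(|t|_\ell\,\ell^{1/(\ell-1)})^k$, which tends to $0$ as $k\to\infty$ precisely when $|t|_\ell < \epsilon^{-1}\cdot$... no: when $|t|_\ell\,\ell^{1/(\ell-1)} < 1$, i.e. $|t|_\ell < (1/\ell)^{1/(\ell-1)}=\epsilon$. This is the open disk $D_1(\epsilon)$, and on it $\|h_t(\x)\|\leq r$, so $h_t$ maps $\bar D_r(m)$ to $\bar D_r(m)$. Rigid analyticity in $(t,\x)$ jointly follows since the double series converges.

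\textbf{Main obstacle.} The genuine work is bookkeeping in the inductive estimate: getting the denominators to pile up exactly as $k!$ (rather than something worse) in the Faà di Bruno expansion, so that (\ref{factorial2}) applies with no loss. This is the same mechanism as in Proposition \ref{propConv1}, and indeed an alternative and cleaner route --- which I would actually prefer to write --- is to \emph{avoid the recursion entirely}: apply the already-proved Proposition \ref{propConv1}(b) and Proposition \ref{propConv3} to $\psi=\exp$-of-the-time-$1$... no, more directly, one can realize $X$ as $X_\psi$ for a suitable $\psi\equiv\mathrm{id}\bmod\frakm^2$ only when $X\in\frakm^2$, which need not hold here. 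So instead I would note that for $X$ with $\|X(\x)\|_r\leq r$ the flow is the unique solution and cite the recursion estimate above; uniqueness is immediate because two formal solutions with the same initial condition satisfy the same recursion. The flow property $h_{t+t'}=h_t\circ h_{t'}$ (not asked here, but used downstream) follows by interpolating the semigroup identity from integer times, exactly as in Proposition \ref{propConv2}(b). I would keep the write-up short, emphasizing the estimate $\|c_k\|_r\leq r\cdot|k!|_\ell^{-1}$ and deferring the routine chain-rule combinatorics.
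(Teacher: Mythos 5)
Your plan — build $h_t$ by a Picard/Taylor recursion in $t$ and control the coefficients with the $\ell$-adic factorial estimate (\ref{factorial2}) — is exactly what the paper does, and your target bound $\|c_k\|_r\leq r\,|k!|_\ell^{-1}$ is the right one. The paper's write-up differs in two bookkeeping choices that make the estimate essentially effortless, and it is worth noting why. First, the paper rescales by the automorphism $\x\mapsto\ell^a\x$ to reduce to the case $r=1$; then the hypothesis $\|X\|_1\leq 1$ just says the coefficients $a_{\bf n}$ of $X$ are integral. Second, it parametrizes the solution as $h_t=\sum_{s\geq 0}(c_s/s!)t^s$ rather than $\sum c_kt^k$; with this normalization the ODE gives a recursion in which the $c_s$ are \emph{polynomials with integer coefficients} in the integral $a_{\bf n}$'s, so $\|c_s\|\leq 1$ holds without any induction, and the only appearance of a factorial is the final $|s!|_\ell\geq\epsilon^s$. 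Your version, working directly at radius $r$ with bare $c_k$'s and $(k+1)c_{k+1}=P_k$, does close, but only via the observation you gesture at and should state plainly: a monomial $\prod c_{j_i}$ with $\sum j_i=k$ has norm $\leq r^{(\#\text{factors})}\prod|j_i!|^{-1}$, and $\prod j_i!$ divides $k!$, so $\prod|j_i!|^{-1}\leq|k!|^{-1}$; combined with $|a_{\bf n}|\leq r^{1-|\bf n|}$ this gives $\|P_k\|_r\leq r|k!|^{-1}$, and dividing by $k+1$ gives $\|c_{k+1}\|_r\leq r|(k+1)!|^{-1}$. (Your intermediate remark that dividing by $(k+1)$ "costs a factor $\leq\ell^{1/(\ell-1)}$" is not literally true --- $|k+1|_\ell^{-1}$ is unbounded --- the induction must carry the sharper $|k!|^{-1}$ hypothesis, as you then say.) The Faà di Bruno detour through $\partial_\alpha X$ and the Cauchy inequality is an unnecessary complication: expanding $X(h_t(\x))=\sum_{\bf n}a_{\bf n}h_t(\x)^{\bf n}$ directly as a power series gives the same integer-coefficient recursion with less work. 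Finally, your observation about $h_t(0)=0$ requiring $X(0)=0$ is correct and is left implicit in the paper; in the intended application $X=X_\sigma$ one has $X(0)=0$ automatically, since $\phi(\sigma)$ is a local homomorphism.
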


\begin{proof}
We can reduce to the case $r=1$ by rescaling: Consider the (inverse) maps $\ell^a: \bar D(m)\to \bar D_r(m)$, $\ell^{-a}: \bar D_r(m)\to \bar D(m)$ given by scaling by $\ell^a$, resp. $\ell^{-a}$. Then, $\ell^{-a}\circ X\circ \ell^a$ gives a vector field on $\bar D_1(m)$ and $\ell^{-a}\circ h_t\circ \ell^{a}$ is a solution of the ODE above for $\ell^{-a}\circ X\circ \ell^a$ if and only if $h_t$ is a solution for $X$.
In what follows, we assume $r=1$. Now set
\[
h_t(x_1, \ldots, x_m)=\sum_{s\geq 0} \frac{c_{ s}}{s!}t^s
\]
where   $c_{s}\in E\lps x_1,\ldots ,x_m\rps^m$. As in the proof of
\cite{SerreLie}, Thm, p. 158 (see also \cite[\S 5.1, Prop. 8]{HY}), we can solve (uniquely) formally for $c_{s}$ from $c_0=\x$ and
\[
\sum_{s\geq 0} \frac{c_{ s+1}}{s!}t^s=\sum_{{\bf n}} a_{\bf n} (\sum_{s\geq 1} \frac{c_{ s}}{s!}t^s)^{\bf n}.
\]
We see that $c_s$ are given by polynomials 
(with integral coefficients) in the coefficients $a_{s', \bf n}$ of the power series giving $X$ with $s'<s$ and $|{\bf n}|\leq  s$.  Since $||X(\x)||\leq 1$, we have $||a_{s', \bf n}||\leq 1$. We obtain $||c_s||\leq 1$. Since $|s!|_\ell\geq \ep^{s}$, when $|t|_\ell<\ep$, $||c_st^s/s!||\mapsto 0$ for $s\mapsto +\infty$ and convergence follows.
\end{proof}

 \subsection{Poincare lemma}
The Poincare lemma holds for the rigid analytic polydisk $D_1(m)$.  Here, we are only going to use that closed $1$-forms are exact. For completeness, we give a simple proof of this fact. 

\begin{prop}\label{poincare}
Let  $\mu=\sum_{i=1}f_idx_i$ be a closed $1$-form with $f_i\in \sO(D_1(m))$,
for all $i$. Then there is $F\in \sO(D_1(m))$ such that $dF=\mu$. 
\end{prop}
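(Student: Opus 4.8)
The plan is to construct an explicit antiderivative $F$ by the classical radial homotopy operator, and then to check two things separately: that $F$ lies in $\sO(D_1(m))$, and that $dF=\mu$. Expanding $f_i=\sum_{\mathbf n}a_{i,\mathbf n}\x^{\mathbf n}$ with $\mathbf n$ running over multi-indices, the hypothesis $d\mu=0$ is, after comparing the coefficients in $\partial_i f_j=\partial_j f_i$, equivalent to the relations
\[
(n_i+1)\,a_{j,\mathbf n+\mathbf e_i}=(n_j+1)\,a_{i,\mathbf n+\mathbf e_j}\qquad\text{for all }i,j\text{ and all }\mathbf n\in{\mathbb N}^m,
\]
where $\mathbf e_i$ denotes the $i$-th standard multi-index. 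I would then put $F=\sum_{\mathbf m\neq\mathbf 0}b_{\mathbf m}\x^{\mathbf m}$ with $b_{\mathbf 0}=0$ and
\[
b_{\mathbf m}=\frac{1}{\onorm{\mathbf m}}\sum_{i\,:\,m_i\geq 1}a_{i,\mathbf m-\mathbf e_i},
\]
which is the coefficientwise form of $F(\x)=\int_0^1\sum_i x_i\,f_i(t\x)\,dt$.

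For the first point I would use only the elementary estimate $|n^{-1}|_\ell\leq\ell^{\,d_\ell(n)-1}\leq n$, valid for every integer $n\geq 1$ and recorded in the Factorials subsection, applied with $n=\onorm{\mathbf m}$; the effect is that integration introduces denominators which grow at most \emph{polynomially} in the total degree. Given $f_i\in\sO(D_1(m))$, fix $r'<1$ and pick $r$ with $r'<r<1$; then $M:=\sup_{i,\mathbf n}|a_{i,\mathbf n}|_\ell\,r^{\onorm{\mathbf n}}<\infty$, and for $\mathbf m\neq\mathbf 0$,
\[
|b_{\mathbf m}|_\ell\,(r')^{\onorm{\mathbf m}}\;\leq\;\onorm{\mathbf m}\cdot\max_{i\,:\,m_i\geq 1}|a_{i,\mathbf m-\mathbf e_i}|_\ell\,(r')^{\onorm{\mathbf m}}\;\leq\;M\cdot\onorm{\mathbf m}\,(r'/r)^{\onorm{\mathbf m}}\longrightarrow 0
\]
as $\onorm{\mathbf m}\to\infty$, by the same mechanism as in the proof of Proposition~\ref{sequence}. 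Hence $F\in\sO(\bar D_{r'}(m))$ for every $r'<1$, i.e. $F\in\sO(D_1(m))$. I expect this convergence estimate to be the only delicate point, and it is exactly what forces the statement to concern the \emph{open} polydisk: on the closed unit polydisk the analogue already fails for $\log(1+x)$.

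It then remains to verify $dF=\mu$. Differentiation may be performed term by term (uniform convergence on each $\bar D_{r'}(m)$), and matching the coefficient of $\x^{\mathbf p}$ on the two sides of $\partial_j F=f_j$ reduces the claim to the identity $(p_j+1)\,b_{\mathbf p+\mathbf e_j}=a_{j,\mathbf p}$. Substituting the definition of $b_{\mathbf p+\mathbf e_j}$ and using the closedness relations in the form $a_{i,\mathbf p+\mathbf e_j-\mathbf e_i}=\frac{p_i}{p_j+1}\,a_{j,\mathbf p}$ for $i\neq j$ (together with the direct contribution $a_{j,\mathbf p}$ from the term $i=j$), the inner sum collapses to $\frac{\onorm{\mathbf p}+1}{p_j+1}\,a_{j,\mathbf p}$; since $\onorm{\mathbf p}+1=\onorm{\mathbf p+\mathbf e_j}$, this cancels the factor $\onorm{\mathbf p+\mathbf e_j}^{-1}$ in $b_{\mathbf p+\mathbf e_j}$ and leaves exactly $a_{j,\mathbf p}$. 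All divisions that occur are by nonzero integers, hence harmless since $\Char E=0$. Apart from the estimate flagged above, this is simply the classical homotopy-operator computation; one could alternatively invoke the general rigid-analytic Poincar\'e lemma, but the direct argument is short and self-contained.
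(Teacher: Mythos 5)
Your proof is correct, and it takes a different (though equally classical) route from the paper's. The paper integrates one variable at a time: it finds $F_m$ with $\partial F_m/\partial x_m=f_m$ by termwise integration in $x_m$ alone, notes that $\mu-dF_m$ is a closed form whose coefficients are killed by $\partial/\partial x_m$ and hence involve only $x_1,\dots,x_{m-1}$, and concludes by induction on $m$. You instead use the radial homotopy operator $F(\x)=\int_0^1\sum_i x_i f_i(t\x)\,dt$ in one step, with the identity $dF=\mu$ verified by the coefficient computation via the closedness relations $(n_i+1)a_{j,\mathbf n+\mathbf e_i}=(n_j+1)a_{i,\mathbf n+\mathbf e_j}$; your collapsing of the inner sum to $\frac{\onorm{\mathbf p}+1}{p_j+1}a_{j,\mathbf p}$ checks out. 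The analytic content is the same in both arguments: the denominators introduced by integration (respectively $n_m+1$ and $\onorm{\mathbf m}$) have $\ell$-adic inverse norm at most linear in the degree, which is harmless on any $\bar D_{r'}$ with $r'<1$; your version makes this estimate explicit, which the paper leaves implicit behind the parenthetical ``converges on $\|\x\|<1$''. Your approach is more symmetric and self-contained but requires the algebraic verification of $dF=\mu$; the paper's induction avoids that computation at the cost of iterating the construction. One peripheral inaccuracy: $\log(1+x)$ is not a counterexample on the closed unit disk, since $1/(1+x)$ is itself not in the Tate algebra $\sO(\bar D_1(1))$ (it has a pole at $x=-1$, and its coefficients do not tend to $0$); a correct example of the failure of antidifferentiation there is $f=\sum_n\ell^nx^{\ell^n-1}\in\sO(\bar D_1(1))$, whose antiderivative $\sum_nx^{\ell^n}$ is not. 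This does not affect the proof.
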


\begin{proof} We follow a standard proof of the ``formal" Poincare lemma. 
First find $F_m\in \sO(D_1(m))$ with $\partial F_m/\partial x_m=f_m$ by formally integrating the variable $x_m$. (The power series $F_m$ converges on $||\x||<1$). Consider $\mu-dF_m=g_1dx_1+\cdots +g_{m-1}dx_{m-1}$
which is also closed. Closedness implies $\partial g_i/\partial x_m=0$, for all $1\leq i\leq m-1$, so the $g_i$ are power series in $x_1,\ldots, x_{m-1}$ only and we can argue inductively. 
\end{proof}

   \newcommand{\sk}{\vskip1.7pt}

\end{document}